\newtheorem{theorem}{Theorem}[section]
\newtheorem{thm}[theorem]{Theorem}
\newtheorem{prop}[theorem]{Proposition}
\newtheorem{lem}[theorem]{Lemma}
\newtheorem{fact}[theorem]{Fact}
\newtheorem{cor}[theorem]{Corollary}
\newtheorem{conj}{Conjecture}
\theoremstyle{definition}
\theoremstyle{remark}
\newcommand{\rvec}{\R_{\mathrm{Vec}}}
\newcommand{\down}{\mathbf{D}}
\newcommand{\up}{\mathbf{U}}
\ProvideTextCommandDefault{\cprime}{(U+042C)}
\newcommand{\la}{\langle}
\newcommand{\ra}{\rangle}
\newcommand{\gr}{{\operatorname{gr}}}
\newcommand{\st}{\operatorname{st}}
\newcommand{\valp}{\operatorname{Val_p}}
\newcommand{\valq}{\operatorname{Val_q}}
\newcommand{\frp}{\mathfrak{p}_\lambda}
\newcommand{\zca}{(\Z,+,C_\alpha)}
\newcommand{\zcb}{(\Z,+,C_\beta)}
\newcommand{\shc}{(\Z,+,C_\alpha)^{\mathrm{Sh}}}
\newcommand{\rzc}{(\rz,+,C)}
\newcommand{\zp}{(\Z_p,+,\valp)}
\newcommand{\zq}{(\Z_q,+,\valq)}
\newcommand{\zpp}{(\Z,+,\valp)}
\newcommand{\Th}{\mathrm{Th}}
\newcommand{\gh}{\mathrm{GH}}
\newcommand{\lab}{L_{\mathrm{Ab}}}
\newcommand{\lval}{L_{\mathrm{Val}}}
\newcommand{\linduce}{L_{\mathrm{In}}}
\newcommand{\acl}{\operatorname{acl}}
\newcommand{\cl}{\operatorname{Cl}}
\newcommand{\bd}{\operatorname{Bd}}
\newcommand{\Sh}[1]{\ensuremath{\mathscr{#1}^{\mathrm{Sh}}}}
\newcommand{\Eq}[1]{\ensuremath{\mathscr{#1}^{\mathrm{eq}}}}
\newcommand{\Sq}[1]{\ensuremath{\mathscr{#1}^{\square}}}
\newcommand{\nip}{\mathrm{NIP}}
\newcommand{\minf}{\mathbf{Inf}}
\newcommand{\mfin}{\mathbf{Fin}}
\newcommand{\jac}{\mathbf{d}}
\newcommand{\Cal}[1]{\ensuremath{\mathcal{#1}}}
\newcommand{\Sa}[1]{\ensuremath{\mathscr{#1}}}
\newcommand{\lam}{\ensuremath{\lambda}}
\newcommand{\F}{\mathbb{F}}
\newcommand{\Z}{\mathbb{Z}}
\newcommand{\N}{\mathbb{N}}
\newcommand{\C}{\mathbb{C}}
\newcommand{\Q}{\mathbb{Q}}
\newcommand{\R}{\mathbb{R}}
\newcommand{\E}{\mathbb{E}}
\newcommand{\bS}{\mathbb{S}}
\newcommand{\G}{\mathbb{G}}
\newcommand{\bH}{\mathbb{H}}
\newcommand{\rz}{\R/\Z}
\newcommand{\rfield}{(\R,+,\times}
\newcommand{\pring}{(\Z_p,+,\times}
\newcommand{\pfield}{(\Q_p,+,\times}
\newcommand{\pexp}{\operatorname{Exp}}
\newcommand{\plog}{\operatorname{Log}}
\newcommand{\roag}{(\R,+,<)}
\begin{document}
\title[]{Dp-minimal expansions of $(\Z,+)$ via dense pairs via Mordell-Lang}

\author{Erik Walsberg}
\address{Department of Mathematics, Statistics, and Computer Science\\
Department of Mathematics\\University of California, Irvine, 340 Rowland Hall (Bldg.\# 400),
Irvine, CA 92697-3875}
\email{ewalsber@uci.edu}
\urladdr{http://www.math.illinois.edu/\textasciitilde erikw}

\date{\today}

\maketitle

\begin{abstract}
This is a contribution to the classification problem for dp-minimal expansions of $(\Z,+)$.
Let $S$ be a dense cyclic group order on $(\Z,+)$.
We use results on ``dense pairs" to construct uncountably many dp-minimal expansions of $(\Z,+,S)$.
These constructions are applications of the Mordell-Lang conjecture and are the first examples of ``non-modular" dp-minimal expansions of $(\Z,+)$.
We canonically associate
an o-minimal expansion $\Sa R$ of $\rfield)$, an $\Sa R$-definable circle group $\bH$, and a character $\Z \to \bH$ to a ``non-modular" dp-minimal expansion of $(\Z,+,S)$.
We also construct a ``non-modular" dp-minimal expansion of $(\Z,+,\valp)$ from the character $\Z \to \Z^\times_p, k \mapsto \pexp(pk)$.
\end{abstract}

\section{Introduction}
\noindent
We construct new dp-minimal expansions of $(\Z,+)$ and take some steps towards classifying dp-minimal expansions of $(\Z,+)$ which define either a dense cyclic group order or a $p$-adic valuation.
(Every known proper dp-minimal expansion of $(\Z,+)$ defines either a dense cyclic group order, a $p$-adic valuation, or $<$.)
\newline

\noindent
We recall the definition of dp-minimality in Section~\ref{section:prelim}.
Dp-minimality is a strong form of $\nip$ which is broad enough to include many interesting structures and narrow enough to have very strong consequences.
O-minimality and related notions imply dp-minimality. Johnson~\cite{Johnson-thesis} classified dp-minimal fields.
Simon~\cite{Simon-dp} showed that an expansion of $(\R,+,<)$ is dp-minimal if and only if it is o-minimal.
We summarize recent work on dp-minimal expansions of $(\Z,+)$ in Section~\ref{section:survey}.
\newline

\noindent
It was an open question for some years whether every proper dp-minimal expansion of $(\Z,+)$ is interdefinable with $(\Z,+,<)$ \cite[Question 5.32]{toomanyII}.
It turns out that this question was essentially answered before it was posed, in work on ``dense pairs".
We will show, applying work of Hieronymi and G\"{u}naydin~\cite{HiGu}, that 
if $\bS$ is the unit circle, $t \in \R$ is irrational, and $\chi : \Z \to \bS$ is the character $\chi(k) := e^{2\pi i tk}$ then the structure induced on $\Z$ by $\rfield)$ and $\chi$ is dp-minimal.
\newline

\noindent
Indeed, for every known dp-minimal expansion $\Sa Z$ of $(\Z,+)$ there is a dp-minimal field $\Sa K$, a semiabelian $\Sa K$-variety $V$, and a character $\chi : \Z \to V(\Sa K)$ such that the structure $\Sa Z_\chi$ induced on $\Z$ by $\Sa K$ and $\chi$ is dp-minimal and $\Sa Z$ is a reduct of $\Sa Z_\chi$.
\newline

\noindent
We now briefly describe how the known dp-minimal expansions of $(\Z,+)$ fall into this framework.
It follows directly from the Mordell-Lang conjecture that if $\beta \in \C^\times$ is not a root of unity then the structure induced on $\Z$ by $(\C,+,\times)$ and the character $k \mapsto \beta^k$ is interdefinable with $(\Z,+)$.
It follows from a result of Tychonievich~\cite[Theorem 4.1.2]{Tychon-thesis} that if $\beta \in \R^\times \setminus \{-1,1\}$ then the structure induced on $\Z$ by $\rfield)$ and the character $k \mapsto \beta^k$ is interdefinable with $(\Z,+,<)$.
(It is also shown in \cite{Ma-adic} that  if $\beta 
\in \Q^\times_p$ and $\valp(\beta) \neq 0$ then $(\Z,+,<)$ is interdefinable with the structure induced on $\Z$ by $\pfield)$ and $k \mapsto \beta^k$.)
Below we apply work of Mariaule~\cite{Ma-adic} to show that there is $\beta \in 1 + p\Z_p$ such that the structure induced on $\Z$ by $\pfield)$ and $k \mapsto \beta^k$ is a dp-minimal expansion of $\zpp$.
The only other previously known dp-minimal expansion of $(\Z,+)$ is $(\Z,+,S)$ where $S$ is a dense cyclic group order~\cite{TW-cyclic}.
There is a unique $\beta \in \bS$ such that $S$ is the pullback of the clockwise cyclic order on $\bS$ by $k \mapsto \beta^k$.
So the structure induced on $\Z$ by $\rfield)$ and $k \mapsto \beta^k$ is a dp-minimal expansion of $(\Z,+,S)$.
\newline

\noindent
We produce uncountably many new dp-minimal expansions of $(\Z,+,S)$.
Let $\E$ be an elliptic curve defined over $\R$, $\E^0(\R)$ be the connected component of the identity, and $\chi : \Z \to \E^0(\R)$ be a character such that $S$ is the pullback by $\chi$ of the natural cyclic order on $\E^0(\R)$.
We apply \cite{HiGu} to show that the structure $\Sa Z_\E$ induced on $\Z$ by $\rfield)$ and $\chi$ is a proper dp-minimal expansion of $(\Z,+,S)$.
We also show that $\E^0(\R)$ may be recovered up to semialgebraic isomorphism from $\Sa Z_\E$.
It follows that there is an uncountable family of dp-minimal expansions of $(\Z,+,S)$ no two of which are interdefinable.
\newline

\noindent
We describe how $\E^0(\R)$ may be recovered from $\Sa Z_\E$.
Let $C$ be the usual clockwise cyclic order on $\rz$.
Given any dp-minimal expansion $\Sa Z$ of $(\Z,+,S)$ we define a completion $\Sq Z$ of $\Sa Z$, this $\Sq Z$ is an o-minimal expansion of $\rzc$ canonically associated to $\Sa Z$.
We show that $\Sq Z_\E$ is the structure induced on $\rz$ by $\rfield)$ and the unique (up to sign) topological group isomorphism $\rz \to \E^0(\R)$.
The recovery of $\E^0(\R)$ from $\Sq Z_\E$ is a special case of a canonical correspondence between
\begin{enumerate}
    \item non-modular o-minimal expansions $\Sa C$ of $\rzc$, and
    \item pairs $\langle \Sa R,\bH \rangle$ where $\Sa R$ is an o-minimal expansion of $\Sa R$ and $\bH$ is an $\Sa R$-definable circle group.
\end{enumerate}
Given $\langle \Sa R,\bH \rangle$, $\Sa C$ is unique up to interdefinibility.
Given $\Sa C$, $\Sa R$ is unique up to interdefinibility and $\bH$ is unique up to $\Sa R$-definable isomorphism.
\newline

\noindent
We describe $\Sq Z$ for a fixed dp-minimal expansion $\Sa Z$ of $(\Z,+,S)$.
Let $\psi : \Z \to \rz$ be the unique character such that $S$ is the pullback of $C$ by $\psi$.
Let $\Sa Z \prec \Sa N$ be highly saturated, $\Sh N$ be the Shelah expansion of $\Sa N$, and $\minf$ be the natural subgroup of infinitesimals in $\Sa N$.
We identify $N/\minf$ with $\rz$ and identify the quotient map $N \to \rz$ with the standard part map.
As $\minf$ is $\Sh N$-definable we regard $\rz$ as an imaginary sort of $\Sh N$.
A slight adaptation of \cite{big-nip} shows that the following structures are interdefinable:
\begin{enumerate}
\item The structure on $\rz$ with an $n$-ary relation defining the closure in $(\rz)^n$ of $\{ (\psi(a_1),\ldots,\psi(a_n)) : (a_1,\ldots,a_n) \in X \}$ for each $\Sh Z$-definable $X \subseteq \Z^n$,
\item The structure on $\rz$ with an $n$-ary relation defining the image under the standard part map $N^n \to (\rz)^n$ of each $\Sa N$-definable subset of $N^n$,
\item The structure induced on $\rz$ by $\Sh N$.
\end{enumerate}
We refer to any of these structure as $\Sq Z$.
It follows from $(3)$ that $\Sq Z$ is dp-minimal, a slight adaptation of \cite{Simon-dp} shows that any dp-minimal expansion of $\rzc$ is o-minimal, so $\Sq Z$ is o-minimal.
We will see that the structure induced on $\Z$ by $\Sq Z$ and $\psi$ is a reduct of the Shelah expansion of $\Sa Z$.
In future work we intend to show that these two are interdefinable.
This will reduce the question ``what are the dp-minimal expansions of $(\Z,+,S)$ to ``for which o-minimal expansions $\Sa C$ of $\rzc$ is the structure induced on $\Z$ by $\Sa C$ and $\psi$ dp-minimal"?
\newline

\noindent
We also define an analogous completion $\Sq P$ of a dp-minimal expansion $\Sa P$ of $\zpp$, this $\Sq P$ is a dp-minimal expansion of $\zp$.
The structure induced on $\Z$ by $\Sq P$ is reduct of $\Sh P$.
We expect the induced structure to be interdefinable with $\Sh P$.
\newline


\noindent
It is easy to see that $\Sq Z_\E$ defines an isomorphic copy of $\rfield)$.
It follows that if $\Sa Z_\E \prec \Sa N_\E$ is highly saturated then the Shelah expansion of $\Sa N_\E$ interprets $\rfield)$, so $\Sa Z_\E$ should be ``non-modular".
(One can show that $\Sa Z_\E$ itself does not interpret an infinite field.)
At present there is no published notion of modularity for general $\nip$ structures, but there should be a notion of modularity for $\nip$ (or possibly just distal) structures which satisfies the following.
\begin{enumerate}[label=(A\arabic*)]
    \item A modular structure cannot interpret an infinite field.
    \item Abelian groups, linearly (or cyclically) ordered abelian groups, $\nip$ valued abelian groups, and ordered vector spaces are modular.
    \item If $\Sa M$ is modular and the structure induced on $A \subseteq M^n$ by $\Sa M$ eliminates quantifiers then the induced structure is modular.
    In particular the Shelah expansion of a modular structure is modular.
    (Recall that the induced structure eliminates quantifiers if and only if every definable subset of $A^m$ is of the form $A^m \cap X$ for $\Sa M$-definable $X$.)
    \item An o-minimal structure is modular if and only if it does not define an infinite field.
    (This should follow from the Peterzil-Starchenko trichotomy.)
\end{enumerate}
In this paper we will assume that there is a notion of modularity satisfying these conditions, but none of our results fail if this is not true.
$($A$2)$ implies that all previously known dp-minimal expansions of $(\Z,+)$ are modular.
$($A$1)$ and $($A$3)$ imply that if $\Sq Z$ defines $\rfield)$ then $\Sa Z$ is non-modular.
If $\Sq Z$ does not define $\rfield)$ then $($A$4)$ implies that $\Sq Z$ is modular.
We expect that if $\Sq Z$ is modular then $\Sa Z$ is modular.
\newline

\noindent
We will see that if $\Sa P$ is the structure induced on $\Z$ by $\pfield)$ and the character $k \mapsto \pexp(pk)$ then $\Sq P$ is interdefinable with the structure induced on $\Z_p$ by $\pfield)$ and the isomorphism $(\Z_p,+) \to (1 + p\Z_p,\times), a \mapsto \pexp(pa)$.
It follows that the Shelah expansion of a highly saturated $\Sa P \prec \Sa N$ interprets $\pfield)$, so $\Sa P$ is non-modular.
We again expect that $\Sa P$ is modular if and only if $\Sq P$ is modular, but we do not have a modular/non-modular dichotomy for dp-minimal expansions of $\zp$ (we lack a $p$-adic Peterzil-Starchenko.)
It seems reasonable to conjecture that a dp-minimal expansion of $\zp$ is non-modular if and only if it defines an isomorphic copy of $\pfield)$.
\newline

\noindent
We now summarize the sections.
In Section~\ref{section:prelim} we recall some background model-theoretic notions, in Section~\ref{section:cyclic-orders} we recall background on cyclically ordered abelian groups, and in Section~\ref{section:def-groups} we recall some basic facts on definable groups in o-minimal expansions of $\rfield)$.
In Section~\ref{section:survey} we survey previous work on dp-minimal expansions of $(\Z,+)$.
In Section~\ref{section:examples} we construct new dp-minimal expansions of $(\Z,+,S)$ where $S$ is a dense cyclic group order.
In Section~\ref{section:completion} we describe the o-minimal completion of a strongly dependent expansion of $(\Z,+,S)$.
We also show that the Shelah expansion $(\Z,+,S)^{\mathrm{Sh}}$ of $(\Z,+,S)$ is interdefinable with the structure induced on $\Z$ by $\rzc$ and $\psi$, where $\psi : \Z \to \rz$ is the unique character such that $S$ is the pullback of $C$ by $\psi$.
It follows that $(\Z,+,S)^{\mathrm{Sh}}$ is a reduct of each of our dp-minimal expansions of $(\Z,+,S)$.
In Section~\ref{section:interdef} we show that two of our dp-minimal expansions of $(\Z,+,S)$ are interdefinable if and only if the associated semialgebraic circle groups are semialgebraically isomorphic.
In Section~\ref{section:p-adic} we construct a new dp-minimal expansion $\Sa P$ of $\zpp$ and in Section~\ref{section:p-adic-completion} we describe the $p$-adic completion of a dp-minimal expansion of $\zpp$.
In Section~\ref{section:elliptic} we give a conjecture which implies that one can construct uncountably many dp-minimal expansions of $\zpp$ from $p$-adic elliptic curves.
Finally, in Section~\ref{section:gen-ques} we briefly discuss the question of whether our completion constructions are special cases of an abstract model-theoretic completion.

\subsection{Acknowledgements}
Thanks to Philipp Hieronymi for various discussions on dense pairs and thanks to the audience of the Berkeley logic seminar for showing interest in a talk that turned into this paper.
This paper owes a profound debt to Minh Chieu Tran.
He proposed that if $\E$ is an elliptic curve defined over $\Q$ and the group $\E(\Q)$ of $\Q$-points of $\E$ is isomorphic to $(\Z,+)$, then the structure induced on $\E(\Q)$ by $\pfield)$ might be dp-minimal and that one might thereby produce a new dp-minimal expansion of $(\Z,+)$.
Conjecture~\ref{conj:tate} is a modification of this idea (there does not appear to be anything to be gained by restricting to $\E(\Q)$ as opposed to other infinite cyclic subgroups of $\E(\Q_p)$, or by assuming that $\E$ is defined over $\Q$ as opposed to $\Q_p$.)

\section{Conventions, notation, and terminology}
\noindent
Given a tuple $x = (x_1,\ldots,x_n)$ of variables we let $|x| = n$.
Throughout $n$ is a natural number, $m,k,l$ are integers, $t,r,\lambda,\eta$ are real numbers, and $\alpha$ is an element of $\rz$.
Suppose $\alpha \in \rz$.
We let $\psi_\alpha$ denote the character $\Z \to \rz$ given by $\psi_\alpha(k) = \alpha k$.
We say that $\alpha$ is \textbf{irrational} if $\alpha = s + \Z$ for $s \in \R \setminus \Q$.
Note that $\alpha$ is irrational if and only if $\psi_\alpha$ is injective.
\newline

\noindent
All structures are first order and ``definable" means ``first-order definable, possibly with parameters".
Suppose $\Sa M$, $\Sa N$, and $\Sa O$ are structures on a common domain $M$.
Then $\Sa M$ is a \textbf{reduct} of $\Sa O$ (and $\Sa O$ is an expansion of $\Sa M$) if every $\Sa M$-definable subset of every $M^n$ is $\Sa O$-definable, $\Sa M$ and $\Sa O$ are \textbf{interdefinable} if each is a reduct of the other, $\Sa M$ is a \textbf{proper} reduct of $\Sa O$ (and $\Sa O$ is a proper expansion of $\Sa M$) if $\Sa M$ is a reduct of $\Sa O$ and $\Sa M$ is not interdefinable with $\Sa O$, and $\Sa N$ is \textbf{intermediate} between $\Sa M$ and $\Sa O$ if $\Sa M$ is a proper reduct of $\Sa N$ and $\Sa N$ is a proper reduct of $\Sa O$.
\newline

\noindent
Given a set $A$ and an injection $f : A \to M^m$ we say that the structure induced on $A$ by $\Sa M$ and $f$ is the structure on $A$ with an $n$-ary relation defining $\{ (a_1,\ldots,a_n) \in A^n : ((f(a_1)\ldots,f(a_n)) \in Y \}$ for every $\Sa M$-definable $Y \subseteq M^{nm}$.
If $A$ is a subset of $M^m$ and $f : A \to M^m$ is the identity we refer to this as the structure induced on $A$ by $\Sa M$.
\newline

\noindent
We let $\cl(X)$ denote the closure of a subset $X$ of a topological space.
\newline

\noindent
Suppose $L \subseteq L'$ are languages containing $<$, $\Sa R'$ is an $L'$-structure expanding $(\R,<)$ and $\Sa R$ is the $L$-reduct of $\Sa R'$.
The \textbf{open core} of $\Sa R'$ is the reduct of $\Sa R'$ generated by all closed $\Sa R'$-definable sets.
Furthermore $\Th(\Sa R)$ is \textbf{an open core} of $\Th(\Sa R')$ if, whenever $\Sa R' \prec \Sa N'$ then the $L'$-reduct of $\Sa N'$ is interdefinable with the open core of $\Sa N'$.
This notion clearly makes sense in much broader generality.
\newline


\noindent
We use \textbf{``semialgebraic"} as a synonym of either ``$\rfield)$-definable" or ``$\pfield)$-definable".
It will be clear in context which we mean.

\section{Model-theoretic preliminaries}
\label{section:prelim}
\noindent
Let $\Sa M$ be a structure and $\Sa M \prec \Sa N$ be highly saturated.

\subsection{Dp-minimality}
Our reference is \cite{Simon-Book}.
Recall that $\Sa M$ is \textbf{dp-minimal} if for every small set $A$ of parameters from $\Sa N$, pair $I_0,I_1$ of mutually indiscernible sequences in $\Sa N$ over $A$, and $b \in N$, $I_i$ is indiscernible over $A \cup \{b\}$ for some $i \in \{0,1\}$.
\newline

\noindent
We now describe a second definition of dp-minimality which will be useful below.
A family $( \theta_i : i \in I)$ of formulas is $n$-inconsistent if $\bigwedge_{i \in J} \theta_i$ is inconsistent for every $J \subseteq I, |J| = n$.
A pair $\varphi(x;y),\phi(x;z)$ of formulas and $n \in \N$ \textit{violate inp-minimality} if $|x| = 1$ and if for every $k \geq 1$ there are $a_1,\ldots,a_k \in M^{|y|}$ and $b_1,\ldots,b_k \in M^{|z|}$ such that $\varphi(x;a_1),\ldots,\varphi(x;a_k)$ and $\phi(x;b_1),\ldots,\phi(x;b_k)$ are both $n$-inconsistent and $\Sa M \models \exists x [\varphi(x;a_i) \land \phi(x;b_j)]$ for any $1 \leq i,j \leq k$.
We say that $\varphi(x;y)$ and $\phi(x;z)$ violate inp-minimality if there is $n$ such that $\varphi(x;y),\phi(x;z),n$ violate inp-minimality.
Then $\Sa M$ is \textbf{inp-minimal} if no pair of formulas violates inp-minimality.
Recall that $\Sa M$ is dp-minimal if and only if $\Sa M$ is inp-minimal and $\nip$.
\newline

\noindent
Fact~\ref{fact:union-reduce} is an easy application of Ramsey's theorem which we leave to the reader.

\begin{fact}
\label{fact:union-reduce}
Let $\varphi_1(x;y_1),\ldots,\varphi_m(x;y_m)$ and $\phi_1(x;z_1),\ldots,\phi_m(x;z_m)$ be formulas.
If 
\[
\varphi_\cup(x;y_1,\ldots,y_m) = \bigvee_{i = 1}^{m} \varphi_i(x;y_i), \quad \phi_\cup(x;z_1,\ldots,z_m) := \bigvee_{i = 1}^{m} \phi_i(x;z_i)
\]
violate inp-minimality then $\varphi_i(x;y_i),\phi_j(x;z_j)$ violate inp-minimality for some $i,j$.
\end{fact}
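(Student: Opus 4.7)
The plan is to take an inp-pattern witnessing failure of inp-minimality for $(\varphi_\cup,\phi_\cup)$ and apply a bipartite Ramsey argument to extract an inp-pattern for one of the pairs $(\varphi_i,\phi_j)$. Fix $n$ such that $\varphi_\cup(x;y_1,\ldots,y_m)$, $\phi_\cup(x;z_1,\ldots,z_m)$, $n$ violate inp-minimality. For each $k \geq 1$ I will produce, by a Ramsey-type refinement of a pattern of sufficiently large depth, an inp-pattern of depth $k$ for some pair $(\varphi_i,\phi_j)$ with the same $n$; a pigeonhole on $k$ will then yield a fixed pair $(i,j)$ that works for arbitrarily large $k$, giving the conclusion.

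Fix $k$ and choose $K$ large (to be determined by bipartite Ramsey). Take witnessing tuples $a_1,\ldots,a_K$ and $b_1,\ldots,b_K$ for an inp-pattern of $(\varphi_\cup,\phi_\cup,n)$ of depth $K$. Each $a_s$ decomposes as $(a^{(1)}_s,\ldots,a^{(m)}_s)$ and each $b_t$ as $(b^{(1)}_t,\ldots,b^{(m)}_t)$. For every pair $(s,t) \in [K]\times[K]$ pick a realization $c_{s,t}$ of $\varphi_\cup(x;a_s) \wedge \phi_\cup(x;b_t)$ and choose $i(s,t),j(s,t) \in \{1,\ldots,m\}$ such that $c_{s,t}$ satisfies $\varphi_{i(s,t)}(x;a^{(i(s,t))}_s) \wedge \phi_{j(s,t)}(x;b^{(j(s,t))}_t)$. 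Apply the bipartite Ramsey theorem (a consequence of Ramsey's theorem) to the $m^2$-coloring $(s,t) \mapsto (i(s,t),j(s,t))$ to obtain, for sufficiently large $K$, sets $S,T \subseteq [K]$ of size $k$ on which this coloring is constant, with value $(i_k,j_k)$.

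Restrict to $S$ and $T$. The subfamily $\{\varphi_{i_k}(x;a^{(i_k)}_s) : s \in S\}$ is $n$-inconsistent, because any realization of a conjunction of $n$ of its members is also a realization of the corresponding $n$ members of $\{\varphi_\cup(x;a_s) : s \in S\}$, contradicting $n$-inconsistency of the latter; the analogous argument handles $\{\phi_{j_k}(x;b^{(j_k)}_t) : t \in T\}$. Consistency of each mixed pair is witnessed by $c_{s,t}$. Hence we have an inp-pattern of depth $k$ for $(\varphi_{i_k},\phi_{j_k},n)$. Since $(i_k,j_k)$ ranges over the finite set $\{1,\ldots,m\}^2$, some pair $(i,j)$ is hit for arbitrarily large $k$, and then $(\varphi_i,\phi_j,n)$ violates inp-minimality.

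The only delicate ingredient is the bipartite Ramsey step, and it is standard: for every finite coloring of $[K]\times[K]$ there is, for sufficiently large $K$, a monochromatic combinatorial rectangle of any prescribed size. Since the author already delegates the proof to the reader, there is no genuine obstacle beyond bookkeeping.
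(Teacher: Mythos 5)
Your argument is correct, and it is exactly the route the paper intends: the paper gives no proof, stating only that the fact is ``an easy application of Ramsey's theorem,'' and your bipartite (product) Ramsey extraction of a constant pair of disjuncts on a large monochromatic grid, followed by the pigeonhole over $k$, is precisely that application. The key verifications—that each $\varphi_i(x;a^{(i)}_s)$ implies $\varphi_\cup(x;a_s)$ so $n$-inconsistency is inherited, and that $c_{s,t}$ witnesses the cross-consistency—are all in place, so there is nothing to add.
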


\noindent
We also leave the proof of Fact~\ref{fact:remove-finite} to the reader.

\begin{fact}
\label{fact:remove-finite}
Fix formulas $\varphi(x;y), \phi(x;y)$ with $|x| = 1$.
Suppose there is $n$ such that $\Sa M \models \forall y \exists^{\leq n} x \varphi(x;y)$.
Then $\varphi(x;y)$ and $\phi(x;y)$ do not violate inp-minimality.
\end{fact}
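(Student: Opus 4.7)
The plan is to argue by contradiction via a double-counting argument on the definable sets involved. Suppose $\varphi(x;y)$ and $\phi(x;z)$ do violate inp-minimality; fix the witnessing inconsistency level, call it $n$, and reserve the letter $N$ for the uniform bound from the hypothesis so that $|\varphi(M;a)| \leq N$ for every tuple $a$. For each $k \geq 1$ the definition produces parameter tuples $a_1,\ldots,a_k$ and $b_1,\ldots,b_k$ so that, writing $S_i := \varphi(M;a_i)$ and $T_j := \phi(M;b_j)$, the family $(S_i)$ is $n$-inconsistent, the family $(T_j)$ is $n$-inconsistent, and $S_i \cap T_j \neq \emptyset$ for all $i,j$. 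The case $n=1$ is immediate: it would force each $S_i$ empty, contradicting the nonempty intersections, so from now on I assume $n \geq 2$.

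Next, for each pair $(i,j)$ I would choose a witness $x_{i,j} \in S_i \cap T_j$ and study, for fixed $j$, the multiset $\{x_{1,j},\ldots,x_{k,j}\} \subseteq T_j$. Because the family $(S_i)$ is $n$-inconsistent, no element of $M$ can equal $x_{i,j}$ for $n$ different values of $i$ (that element would then lie in $n$ of the $S_i$'s). Hence each value is repeated at most $n-1$ times among these $k$ witnesses, and the number of \emph{distinct} values is at least $k/(n-1)$. All of these distinct values lie in $T_j \cap U$, where $U := \bigcup_{i=1}^k S_i$. The hypothesis gives $|U| \leq Nk$.

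I would then double-count the set of pairs $\{(x,j) : x \in U \cap T_j\}$. The preceding paragraph gives the lower bound $\sum_j |T_j \cap U| \geq k \cdot k/(n-1) = k^2/(n-1)$. On the other hand, the $n$-inconsistency of $(T_j)$ means that each fixed $x \in U$ belongs to at most $n-1$ of the sets $T_j$, so the same count is at most $(n-1)|U| \leq (n-1)Nk$. Combining these yields $k \leq (n-1)^2 N$, a bound independent of $k$, contradicting the fact that $k$ was arbitrary.

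I do not expect a serious obstacle here; the argument is a clean pigeonhole/double-counting calculation, and the only mild care is to keep the two roles of $n$ separate (the bound on fibres of $\varphi$ versus the level of $n$-inconsistency). If one prefers, the same count can be repackaged by noting that the bipartite incidence structure between $U$ and the indices $j$ has high average degree on one side and low maximum degree on the other, forcing $k$ to be bounded.
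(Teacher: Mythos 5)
The paper gives no proof to compare against: Fact~3.2 is explicitly left to the reader, so the only question is whether your argument is correct, and it is. Your bookkeeping checks out: with $N$ the uniform bound on $|\varphi(M;a)|$ and $n$ the inconsistency level, the $n$-inconsistency of the $\varphi$-row forces each witness value to occur for at most $n-1$ indices $i$, so each $T_j\cap U$ (with $U=\bigcup_i S_i$, $|U|\leq Nk$) has at least $k/(n-1)$ elements, while $n$-inconsistency of the $\phi$-row caps the incidence count at $(n-1)|U|$; comparing gives $k\leq (n-1)^2N$, contradicting that $k$ is arbitrary, and your separate treatment of $n=1$ avoids the division by zero. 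The one remark worth making is that the union $U$ and the double count are more machinery than the statement needs: since every $T_j$ meets $S_1$ and $|S_1|\leq N$, choosing $w_j\in T_j\cap S_1$ for $j=1,\ldots,k$ and applying pigeonhole to the at most $N$ possible values already produces, once $k\geq nN$, a single element of $S_1$ lying in $n$ of the sets $T_j$, contradicting $n$-inconsistency of the $\phi$-instances. This one-row pigeonhole is presumably the intended "reader's proof" and gives the cleaner bound $k<nN$, but your version is a valid, if slightly heavier, route to the same contradiction; your care in keeping the fibre bound $N$ separate from the inconsistency level $n$ is exactly the point where a sloppier write-up would go wrong.
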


\subsection{External definibility}
A subset of $X$ of $M^n$ is \textbf{externally definable} if there is an $\Sa N$-definable subset $Y$ of $N^n$ such that $X = M^n \cap Y$.
By saturation the collection of externally definable sets does not depend on choice of $\Sa N$.
The \textbf{Shelah expansion} $\Sh M$ of $\Sa M$ is the expansion by all externally definable subsets of all $M^n$, equivalently, the structure induced on $M$ by $\Sa N$.
We will make frequent use of the following elementary observation.

\begin{fact}
\label{fact:external-convex}
Suppose that $\Sa M$ expands a linear order.
Then every convex subset of $M$ is externally definable.
\end{fact}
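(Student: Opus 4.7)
The plan is to exploit saturation of $\Sa N$ to produce ``cut points'' in $N$ bounding $X$ from below and above. Let $X \subseteq M$ be a nonempty convex set (the empty case is trivial, taking $Y = \emptyset$). Set
\[
L = \{ y \in M : y < x \text{ for all } x \in X \}, \quad R = \{ y \in M : y > x \text{ for all } x \in X \}.
\]
By convexity, $L$, $X$, $R$ partition $M$. I would then find $a, b \in N$ such that $L < a \leq X \leq b < R$, and define $Y := \{ y \in N : a \leq y \leq b \}$. Clearly $Y$ is $\Sa N$-definable with parameters, and the displayed inequalities immediately give $X = M \cap Y$: every element of $X$ lies in $Y$, while any $y \in M \setminus X$ lies in either $L$ (so $y < a$) or $R$ (so $y > b$).

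The main step is producing $a$ (the argument for $b$ is symmetric). Consider the partial type
\[
\Sigma(v) \;=\; \{ v > c : c \in L \} \cup \{ v \leq d : d \in X \}
\]
over parameters from $M$. For any finite subtype, take $c^* = \max c_i$ among the finitely many $c_i \in L$ occurring (with $c^* := -\infty$ if none occur) and $d^* = \min d_j$ among the finitely many $d_j \in X$ occurring. Since $c_i < x$ for every $x \in X$, we have $c^* < d^*$, and then $v = d^*$ realizes the finite subtype in $\Sa M$. Thus $\Sigma(v)$ is finitely satisfiable, hence realized by some $a \in N$ by saturation. (If $L = \emptyset$, simply omit the lower bound and use $Y = \{ y \in N : y \leq b \}$; handle $R = \emptyset$ symmetrically.)

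There is no real obstacle here --- the content is entirely the standard saturation / finite satisfiability argument, and the convexity of $X$ is used exactly to ensure that $L$, $X$, $R$ form a consistent cut so that the relevant type is finitely satisfiable.
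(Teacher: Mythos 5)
Your argument is correct and is exactly the standard saturation argument the paper has in mind when it leaves this "elementary observation" without proof: realize the two cuts determined by the convex set in a highly saturated extension and intersect $M$ with the resulting interval $[a,b]$. Nothing to add.
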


\noindent
The first claim of Fact~\ref{fact:shelah} is a theorem of Shelah~\cite{Shelah-external}, see also Chernikov and Simon~\cite{CS-I}.
The latter claims follow easily from the first, see for example Onshuus and Usvyatsov~\cite{OnUs}.

\begin{fact}
\label{fact:shelah}
If $\Sa M$ is $\nip$ then every $\Sh M$-definable subset of every $M^n$ is externally definable in $\Sa M$.
If $\Sa M$ is $\nip$ then $\Sh M$ is $\nip$, if $\Sa M$ is strongly dependent then $\Sh M$ is strongly dependent, and if $\Sa M$ is dp-minimal then $\Sh M$ is dp-minimal.
\end{fact}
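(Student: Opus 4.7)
The plan is to first establish the closure-under-projection statement (Shelah's theorem) and then derive the preservation of $\nip$, strong dependence, and dp-minimality as transfer consequences.

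For the first claim, I would show that the family of externally definable subsets of $M^n$ (varying $n$) is closed under all first-order operations. Closure under Boolean operations is immediate by intersecting defining sets from $\Sa N$ with $M^n$; the nontrivial step is closure under projection. The standard tool is \emph{honest definitions}: for each $\Sa M$-formula $\varphi(x;y)$, which has finite VC-dimension by $\nip$, and each small $A \subseteq M$, there is an $\Sa M$-formula $\psi(x;z)$ and a tuple $c$ from $A$ such that the instance $\psi(x;c)$ is consistent with the $\varphi$-type of some fixed $b \in N$ over $A$ and implies $\varphi(x;a)$ (or its negation) on $A$, matching the $\varphi$-type of $b$. The existence of $\psi$ follows from the $(p,q)$-theorem applied to finite VC-dimension families. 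Granted honest definitions, an existentially quantified externally definable set $\exists y \, \theta(x,y)$ with $\theta \in \Sh M$ can be rewritten, after passing to $\Sa N$ and applying an honest definition of $\theta$ over suitable $M$-parameters, as an externally definable set in $\Sa M$, giving closure under projection and hence the claim that every $\Sh M$-definable set is externally definable.

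For the preservation claims, I would argue by contradiction using the first part. Suppose $\varphi^{\mathrm{Sh}}(x;y)$ is an $\Sh M$-formula witnessing the independence property in $\Sh M$. By the first claim, $\varphi^{\mathrm{Sh}}(x;y)$ is equivalent in $\Sh M$ to $\phi(x;y,c)$ for some $\Sa M$-formula $\phi(x;y,z)$ and some $c \in N^{|z|}$. The IP witnesses (parameters $a_i \in M^{|y|}$ and elements $b_J \in M$) then become IP witnesses for $\phi(x;y,z)$ in $\Sa N$ using the parameters $(a_i, c) \in N^{|y|+|z|}$, contradicting $\nip$ of $\Sa M \equiv \Sa N$. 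The argument for strong dependence is analogous: a sequence of mutually indiscernible sequences with a finite-dp-rank-witnessing formula in $\Sh M$ rewrites, using the same substitution, to such a configuration in $\Sa N$. For dp-minimality, I would use the inp-minimality characterization: if formulas $\varphi^{\mathrm{Sh}}, \phi^{\mathrm{Sh}}$ violate inp-minimality in $\Sh M$, rewrite both as $\Sa M$-formulas with parameters appended from $N$; the inp-pattern then transfers to $\Sa N$, contradicting dp-minimality of $\Sa M$. Fact~\ref{fact:union-reduce} and Fact~\ref{fact:remove-finite} are not strictly needed here but show that the transfer behaves well under common manipulations.

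The main obstacle is the honest definability step, which is the only place $\nip$ genuinely enters and which requires combinatorial work with VC-dimension. Everything else is a routine transfer argument: once $\Sh M$-formulas are equivalent to $\Sa M$-formulas with extra $\Sa N$-parameters, any configuration witnessing failure of a property in $\Sh M$ lifts verbatim to $\Sa N$, where by elementarity the corresponding property of $\Sa M$ applies.
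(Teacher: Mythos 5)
Your proposal is correct and follows essentially the route the paper itself relies on: the paper offers no proof of this Fact, only citations (Shelah and Chernikov--Simon for the first claim, Onshuus--Usvyatsov for the preservation claims), and your sketch --- honest definitions to get closure of externally definable sets under projection, then a routine transfer of IP/ict/inp configurations from $\Sh M$-formulas to $\Sa M$-formulas with extra parameters from $\Sa N$ --- is exactly that standard argument. One correction to the key lemma as you state it: the honest-definition parameter is not a tuple from $A$ but from $A'$ in a sufficiently saturated elementary extension $(N',A')$ of the pair $(N,A)$, with $\varphi(A;b)=\psi(A;d)$ and $\psi(A';d)\subseteq\varphi(A';b)$; if the parameter could be taken inside $A\subseteq M$, every externally definable set would already be $\Sa M$-definable, which is false in general. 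With the lemma stated correctly your projection step does go through: trace $\exists y\,\psi(x,y;d)$ on $M^n$, and use elementarity of the pair to pull the existential witness back into $M$.
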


\noindent
Fact~\ref{fact:cs-limit} is a theorem of Chernikov and Simon~\cite[Corollary 9]{CS-II}.

\begin{fact}
\label{fact:cs-limit}
Suppose $\Sa M$ is $\mathrm{NIP}$ and $X$ is an externally definable subset of $M^n$.
Then there is an $\Sa M$-definable family $(X_a : a  \in M^m)$ of subsets of $M^n$ such that for every finite $B \subseteq X$ there is $a \in M^m$ such that $B \subseteq X_a \subseteq X$.
\end{fact}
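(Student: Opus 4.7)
The plan is to deduce this from the Chernikov--Simon ``honest definitions'' theorem, which produces a single formula uniformly approximating externally definable sets from the inside. First, invoke the definition of external definability to write $X = \{b \in M^n : \Sa N \models \varphi(b;d)\}$ for some $L$-formula $\varphi(x;y)$ and parameter $d \in N^{|y|}$. It then suffices to produce an $L$-formula $\psi(x;z)$ with the property that for every finite $B \subseteq X$ there is $a \in M^{|z|}$ with $B \subseteq \psi(M^n;a) \subseteq X$; then the $\Sa M$-definable family $(\psi(M^n;a))_{a \in M^{|z|}}$ is exactly what is required (one simply ignores those $a$ for which the inclusion $\psi(M^n;a) \subseteq X$ fails, since they are not needed).

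Next, by a compactness argument in $\Sa N$, the existence of such a single $\psi$ is equivalent to the following local statement: for each $k$, there is an $L$-formula $\psi_k(x; z)$ such that for every $b_1,\ldots,b_k \in X$, there is $a \in M^{|z|}$ with
\[
\Sa N \models \bigwedge_{i=1}^{k} \psi_k(b_i; a) \ \land\  \forall x\,(\psi_k(x;a) \to \varphi(x;d)).
\]
A further compactness argument, combined with finiteness of VC dimension, is used to amalgamate the $\psi_k$ into a single $\psi$. (The point is that the ``alternation rank'' of $\varphi$ bounds how complicated $\psi$ needs to be, uniformly in $k$.)

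The central input is the NIP hypothesis, which I would use via Matou\v{s}ek's $(p,q)$-theorem. The dual family $\{\varphi(\cdot;y) : y \in N^{|y|}\}$ has finite VC dimension because $\Sa M$ is NIP, so any subfamily with the $(p,q)$-property admits piercing by a uniformly bounded number of points. Given a finite $B = \{b_1,\ldots,b_k\} \subseteq X$, the dual classes $\{\varphi^{\mathrm{opp}}(N^{|y|}; b_i)\}_{i \leq k}$ all contain $d$, so any subfamily has nonempty intersection; the $(p,q)$-theorem then produces a bounded number of piercing points $c_1,\ldots,c_r$, and by elementarity combined with the uniformity of the bound these piercing points may be chosen in $M$. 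The formula $\psi(x; c_1,\ldots,c_r)$ is then defined as the intersection of those $\varphi$-classes (based at $M$-parameters) in which all of $b_1,\ldots,b_k$ simultaneously lie, engineered so that the resulting set is contained in $\varphi(N^n;d) \cap M^n = X$.

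The main obstacle is packaging the application of $(p,q)$ so that the piercing points land in $M$ rather than in $N$, and uniformly producing a \emph{single} formula $\psi$ that works for all finite $B$; both steps rely essentially on the uniformity of the $(p,q)$-bound (which depends only on the VC dimension of $\varphi$, not on $|B|$) together with elementarity of $\Sa M \prec \Sa N$. Rather than reprove the $(p,q)$-theorem or the full honest definitions theorem, I would cite \cite{CS-II} for the technical heart of the argument.
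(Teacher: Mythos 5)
Your proposal ultimately rests on exactly the same source as the paper: the paper does not prove this statement but quotes it directly as \cite[Corollary 9]{CS-II}, i.e.\ the Chernikov--Simon (uniform) honest definitions theorem for $\nip$ theories, whose proof indeed goes through the $(p,q)$-theorem as you sketch. Since you defer the technical heart to \cite{CS-II} just as the paper does, your route is essentially the paper's, and the surrounding sketch serves only as motivation.
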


\noindent
Fact~\ref{fact:ms} is the Marker-Steinhorn theorem~\cite{Marker-Steinhorn}.

\begin{fact}
\label{fact:ms}
Suppose $\Sa R$ is an o-minimal expansion of $(\R,<)$.
Every externally definable subset of every $\R^n$ is definable.
Equivalently: $\Sh R$ and $\Sa R$ are interdefinable.
\end{fact}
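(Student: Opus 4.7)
The plan is to prove by induction on $n$ that every externally definable subset of $\R^n$ is $\Sa R$-definable, using o-minimal cell decomposition in $\Sa R$ and the Dedekind completeness of $\R$ as the two key inputs. For the base case $n=1$, let $X \subseteq \R$ be externally definable, so $X = Y \cap \R$ for some $\Sa N$-definable $Y \subseteq N$. By o-minimality of $\Sa N$, $Y$ is a finite union of points and open intervals of $N$; hence $X$ is a finite union of points of $\R$ and traces on $\R$ of such intervals, whose endpoints lie in $N \cup \{\pm\infty\}$. For each such endpoint $d \in N$, Dedekind completeness gives $c_d := \sup\{r \in \R : r < d\} \in \R \cup \{\pm\infty\}$, and the trace on $\R$ of a half-line cut out by $d$ is one of $(-\infty,c_d)$, $(-\infty,c_d]$, $(c_d,\infty)$, or $[c_d,\infty)$. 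Thus $X$ is a finite Boolean combination of half-lines with real (or infinite) endpoints, which is visibly $\Sa R$-definable.

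For the inductive step the cleanest route is to strengthen the assertion and induct on the length of the external parameter tuple: I would prove, by induction on $m$, that for every $\Sa R$-formula $\phi(x,y)$ with $|y|=m$ and every $b \in N^m$, the set $\{a \in \R^{|x|} : \Sa N \models \phi(a,b)\}$ is $\Sa R$-definable. The base of this auxiliary induction is $m=1$. Apply uniform cell decomposition in $\Sa R$ to $\phi(x,y)$, viewing $y$ as a single coordinate: there exist $\Sa R$-definable continuous functions $\xi_1(x) < \cdots < \xi_k(x)$ whose graphs, together with the open strips between them, partition $\R^{|x|+1}$ into cells on which the truth of $\phi$ depends only on the cell. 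Whether $\Sa N \models \phi(a,b)$ then reduces, for each $a \in \R^{|x|}$, to comparisons between $b$ and the $\Sa R$-definable values $\xi_j(a)$. By the base case applied to the cut $c_b \in \R \cup \{\pm\infty\}$ realized by $b$ over $\R$, each such comparison is $\Sa R$-definable in $a$ using $c_b$ as a real parameter. This disposes of $m=1$, and the general $m$ follows by peeling off parameters one at a time, at each stage regarding the remaining parameters as belonging to a new base structure satisfying the same hypotheses.

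The main obstacle is the interplay in the inductive step between uniform cell decomposition in $\Sa R$ and the cut analysis of the base case: one must show not merely that each fiber of the externally definable set is $\Sa R$-definable (which is immediate from the base case with parameters), but that a single $\Sa R$-formula works uniformly in $a$. This is handled exactly by noting that all of the boundary functions produced by cell decomposition live in $\Sa R$, while the entire external content of $b$ collapses to the single datum $c_b \in \R \cup \{\pm\infty\}$, which is absorbed as an $\Sa R$-parameter. Reformulating the argument as the statement ``every $1$-type over $\R$ realized in an elementary extension of $\Sa R$ is definable over $\R$,'' and iterating on the length of the external tuple, is the standard way to package this; it is the route taken by Marker and Steinhorn.
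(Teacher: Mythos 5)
The paper does not prove this statement at all: it is quoted as the Marker--Steinhorn theorem and cited to \cite{Marker-Steinhorn}, so your proposal has to be judged on its own merits as an attempted proof of that theorem. Your base case and your one-external-parameter case are fine (modulo the cosmetic point that the cell-decomposition functions $\xi_j$ are only defined and ordered cell-by-cell over a partition of $\R^{|x|}$): for a single $b \in N$, the whole external content of $b$ over $\R$ collapses to the cut point $c_b \in \R \cup \{\pm\infty\}$ plus one uniform bit recording how $b$ sits relative to $c_b$, and this is absorbed as a real parameter. That part is genuinely easy.

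The gap is the inductive step, and it is not a small one --- it is the entire content of the theorem. ``Peeling off parameters one at a time, at each stage regarding the remaining parameters as belonging to a new base structure satisfying the same hypotheses'' fails because the new base does \emph{not} satisfy the same hypotheses: the structure obtained from $\Sa R$ by adjoining a nonreal parameter (say a positive infinitesimal $b_m$, or the definable closure of $\R \cup \{b_m\}$ inside $\Sa N$) is no longer Dedekind complete, so neither your base case nor your $m=1$ argument can be re-run over it; the cut argument used there has nothing to grip. The other natural reading of the step is also unsound: if you first apply the one-parameter case to $\varphi(x,y',b_m)$ to get an $\Sa R$-formula $\psi(x,y',d)$ with the same trace on \emph{real} tuples $(a,c)$, you cannot then substitute the nonreal tuple $b'$ into $\psi$, since agreement of traces on $\R$-points says nothing about agreement at nonreal points, and in general $\{a \in \R^{|x|} : \Sa N \models \psi(a,b',d)\} \neq \{a \in \R^{|x|} : \Sa N \models \varphi(a,b',b_m)\}$. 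The standard ``composition of definable types'' trick does not rescue this either, because it would require definability of $\mathrm{tp}(b_m/\R b')$ over the non--Dedekind-complete base, which is exactly what is not known. Marker and Steinhorn do induct on the length of the tuple, but their inductive step proves a carefully strengthened statement precisely to get around this obstruction; as written, your step assumes the hard part. If you want a complete argument to include, either cite \cite{Marker-Steinhorn} as the paper does, or give the genuinely stronger induction (or one of the later NIP-style proofs via honest definitions), rather than the naive peeling.
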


\noindent 
Fact~\ref{fact:delon} is a theorem of Delon~\cite{Delon-def}.

\begin{fact}
\label{fact:delon}
Every subset of $\Q_p^n$ which is externally definable in $\pfield)$ is definable in $\pfield)$.
Equivalently: $\pfield)^{\mathrm{Sh}}$ and $\pfield)$ are interdefinable.
\end{fact}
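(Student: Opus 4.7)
My plan is to prove Delon's theorem by showing that for each formula $\varphi(\bar x; \bar y)$ and each tuple $\bar b$ in an elementary extension $\Sa N \succ \pfield)$, the set $\{\bar a \in \Q_p^{|\bar x|} : \Sa N \models \varphi(\bar a; \bar b)\}$ is $\pfield)$-definable. I would first invoke Macintyre's quantifier elimination in the language augmented by predicates $P_n$ for nonzero $n$-th powers, so that it suffices to handle atomic $\varphi$: polynomial equalities, valuation inequalities $\valp(f) \leq \valp(g)$, and $P_n$-memberships.

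The crucial structural input is that $\Q_p$ is metrically complete with discrete value group $\Z$. For a single $b \in \Sa N \setminus \Q_p$, I would argue that $\gamma := \sup_{a \in \Q_p} \valp(b-a)$ is a finite integer, since otherwise a Cauchy sequence $a_n \in \Q_p$ with $\valp(b-a_n) \to \infty$ would converge to some $a_\infty \in \Q_p$ by completeness, forcing $b = a_\infty$ and contradicting $b \notin \Q_p$; by discreteness of $\Z$ the supremum is attained at some $a_0 \in \Q_p$. Writing $b = a_0 + \pi$ one then has $\valp(\pi) = \gamma$ together with the ``genericity'' property $\valp(\pi - c) \leq \gamma$ for every $c \in \Q_p$ with $\valp(c) \geq \gamma$. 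Iterating coordinate by coordinate (working over the henselization of $\pfield)(b_1, \ldots, b_{i-1})$ at each step, which is again complete enough for the same argument) yields a normal form $b_i = a_{i,0} + \pi_i$ for every coordinate of $\bar b$.

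Substituting these decompositions into each atomic formula and expanding, the assertion ``$\varphi(\bar a; \bar b)$ holds'' becomes a condition on $\bar a$ together with the parameters $a_{i,0}, \gamma_i$: polynomial equalities reduce to $\pfield)$-definable conditions on $\bar a$ since each $\pi_i$ is transcendental over the previous henselization; valuation inequalities reduce by explicit valuation computation using $\valp(\pi_i) = \gamma_i$ and the genericity property; and $P_n$-memberships reduce, via Hensel's lemma and the finiteness of $\F_p^\times / (\F_p^\times)^n$, to residue conditions on finitely many Taylor coefficients of the relevant polynomials expanded around $a_{i,0}$. The main obstacle is the $P_n$-analysis, which requires a careful case split on the compatibility of each $\gamma_i$ with $n$ combined with Hensel-lifting; but once the decomposition of $\bar b$ is in hand it reduces to finite combinatorics in $\F_p$ and yields a $\pfield)$-definable condition on $\bar a$, completing the proof.
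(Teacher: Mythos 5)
The paper gives no proof of this fact; it is quoted as a theorem of Delon, so your attempt has to stand on its own, and its central step fails. You claim that for $b$ in an elementary extension $\Sa N$ of $(\Q_p,+,\times)$ with $b \notin \Q_p$, the quantity $\gamma = \sup_{a \in \Q_p} \valp(b-a)$ is a finite integer, attained at some $a_0$. Both halves are wrong. The inference ``a sequence $a_n \in \Q_p$ with $\valp(b-a_n)\to\infty$ converges to $a_\infty$, forcing $b=a_\infty$'' is invalid: the value group of $\Sa N$ is a proper elementary extension of $(\Z,+,<)$, so $\valp(b-a_\infty)$ exceeding every standard integer only says that $b$ lies in every standard-radius ball around $a_\infty$; it does not force $b=a_\infty$. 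In a saturated $\Sa N$ there is $t$ with $\valp(t)$ greater than every integer, and $b:=a_\infty+t$ is exactly such an element; metric completeness of $\Q_p$ concerns standard Cauchy sequences and cannot rule it out. (There are also elements with $\valp(b)$ below every integer, for which every value $\valp(b-a)$ is nonstandard.) Moreover, even when the values $\valp(b-a)$ are standard integers the supremum is \emph{never} attained: if $\valp(b-a_0)=\gamma\in\Z$, then, since the residue field of $\Sa N$ is still $\F_p$, subtracting the leading digit $up^{\gamma}$ (for the appropriate $u\in\{1,\dots,p-1\}$) produces $a_0+up^{\gamma}\in\Q_p$ with $\valp\bigl(b-(a_0+up^{\gamma})\bigr)>\gamma$. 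So the normal form $b=a_0+\pi$ with $\valp(\pi)=\gamma$ finite and ``generic'' simply never occurs, and the entire remainder of the argument (the coordinate-by-coordinate iteration, the valuation computations, the $P_n$ case analysis) is built on it. The elements your decomposition cannot represent --- those realizing cuts over $\Q_p$ at infinite or nonstandard valuation distance --- are precisely the ones any proof of this theorem must handle.

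Two further points. The claim that ``each $\pi_i$ is transcendental over the previous henselization'' is unjustified and can fail (e.g.\ $b_2=b_1^2$), so polynomial equalities do not reduce as you assert; and the henselization of $\Q_p(b_1,\dots,b_{i-1})$ is not complete, so the phrase ``complete enough for the same argument'' has no content --- in any case the approximating points must be taken in $\Q_p$, since that is where the trace is computed. For comparison, the actual route to this result (Delon's, and its modern reworkings) is to prove that every type over $\Q_p$ is definable: one analyzes which balls and which cosets of the $n$-th power groups a $1$-type concentrates on, using local compactness of $\Q_p$ and the fact that its value group is $\Z$, and then inducts on the number of variables; there is no decomposition of the realizing tuple into a standard part plus a generic part of finite valuation. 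If you want to salvage your strategy, you must first classify the possible cuts of a single element over $\Q_p$ --- including the infinitesimal and infinite ones --- and show the trace is definable in each case; that classification is exactly the missing content.
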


\subsection{Weak minimality}
Suppose $\Sa O$ expands $\Sa M$.
We say that $\Sa O$ is $\mathbf{\Sa M}$\textbf{-minimal} if every $\Sa O$-definable subset of $M$ is definable in $\Sa M$ and we say that $\Sa O$ is \textbf{weakly $\Sa M$-minimal} if every $\Sa O$-definable subset of $M$ is externally definable in $\Sa M$.
\newline

\noindent
Suppose $L \subseteq L'$ are languages, $T'$ is a complete consistent $L'$-theory, and $T$ is the $L$-reduct of $T'$.
We say that $T'$ is $T$-\textbf{minimal} if for every $L'$-formula $\varphi(x;y), |x| = 1$ there is an $L$-formula $\phi(x;z)$ such that for every $\Sa P \models T'$ and $a \in P^{|y|}$ there is $b \in P^{|z|}$ such that $\varphi(P;a) = \phi(P;b)$.
We say that $T'$ is \textbf{weakly} $T$-\textbf{minimal} if for every $L'$-formula $\varphi(x;y), |x| = 1$ there is an $L$-formula $\phi(x;z)$ such that for every $\Sa P \models T'$, highly saturated $\Sa P \prec \Sa Q$, and $a \in P^{|y|}$, there is $b \in Q^{|z|}$ such that $\varphi(P;a) = P \cap \phi(Q;b)$.
A structure is weakly $T$-minimal if its theory is.
\newline

\noindent
Weak minimality was introduced in \cite{SW-dp}.
If $T$ is a complete theory of dense linear orders then $T'$ is $T$-minimal if and only if $T'$ is o-minimal and $T'$ is weakly $T$-minimal if and only if $T'$ is weakly o-minimal.
\newline

\noindent
Suppose $\medblackstar$ is an $\nip$-theoretic property such that $T$ has $\medblackstar$ if and only if every $T$-model omits a certain configuration involving only unary definable sets.
It is then easy to see that if $T$ is $\medblackstar$ and $T'$ is weakly $T$-minimal then $T'$ is $\medblackstar$.

\begin{fact}
\label{fact:weak-to-dp}
Suppose $T'$ is weakly $T$-minimal.
If $T$ satisfies any one of the following properties, then so does $T'$.
\begin{enumerate}
\item stability,
\item $\nip$,
\item strong dependence,
\item dp-minimality.
\end{enumerate}
\end{fact}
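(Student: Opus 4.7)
Plan:

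My plan is to instantiate the abstract principle described in the remark preceding the fact: each of the four properties $\medblackstar$ should be characterized by omitting a ``forbidden configuration'' whose only formulas are of the form $\varphi(x;y)$ with $|x|=1$, and weak $T$-minimality is designed precisely to translate unary-in-$x$ $L'$-formulas into unary-in-$x$ $L$-formulas. So I first recall the relevant unary characterizations: stability and NIP are each detected by a single unary-in-$x$ formula (failing the order, resp.\ independence, property); dp-minimality is NIP plus inp-minimality (no pair of unary-in-$x$ formulas violates inp-minimality, by definition); and strong dependence is NIP plus no finite sequence of unary-in-$x$ formulas supports an infinite inp-pattern.

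I will then fix a witness $\Sa P \models T'$ to the failure of $\medblackstar$, pass to a highly saturated $\Sa P \prec \Sa Q$, and apply weak $T$-minimality to each $L'$-formula $\varphi_j'(x;y_j)$ appearing in the forbidden configuration to obtain a unary-in-$x$ $L$-formula $\tilde\varphi_j(x;z_j)$ together with, for every $a \in P^{|y_j|}$, a parameter $c \in Q^{|z_j|}$ satisfying $\varphi_j'(P;a) = P \cap \tilde\varphi_j(Q;c)$. For stability and NIP the forbidden configuration is purely existential with witnesses lying in $P$ --- an increasing chain for the order property, or subset-encoded witnesses $b_S$ for the independence property --- and since $b \in P \subseteq Q$ gives $\varphi_j'(b;a) \Leftrightarrow \tilde\varphi_j(b;c)$, the same tuples witness the forbidden configuration for $\tilde\varphi_j$ in the $L$-reduct $\Sa Q_0 \models T$, contradicting $T \models \medblackstar$.

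The harder step is dp-minimality and strong dependence, whose forbidden configurations additionally contain universal clauses expressing the $n$-inconsistency of $n$-subsets of instances $\varphi_\ell'(x;a_j^\ell)$. These do not pass directly to $\tilde\varphi_\ell(x;c_j^\ell)$ in $\Sa Q_0$: the emptiness $P \cap \bigcap_{j \in J} \tilde\varphi_\ell(Q;c_j^\ell) = \emptyset$ does not force $\bigcap_{j \in J} \tilde\varphi_\ell(Q;c_j^\ell) = \emptyset$ in $Q$. To repair this I will invoke a uniform form of Fact~\ref{fact:cs-limit} applied to the single $L$-formula $\tilde\varphi_\ell$: there should be one $L$-formula $\xi_\ell(x;u_\ell)$ such that every externally definable instance $P \cap \tilde\varphi_\ell(Q;c)$ is approximated from below by $L$-definable sets $\xi_\ell(P;d) \subseteq P \cap \tilde\varphi_\ell(Q;c)$, every finite subset lying in some such instance. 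Choosing $d_j^\ell \in P^{|u_\ell|}$ so that $\xi_\ell(P;d_j^\ell)$ contains the finitely many consistency witnesses $b_{i,j}$ relevant to the row, the inclusion $\xi_\ell(P;d_j^\ell) \subseteq \varphi_\ell'(P;a_j^\ell)$ forces $\bigcap_{j \in J} \xi_\ell(P;d_j^\ell) = \emptyset$ for free, while the $b_{i,j}$ still witness consistency of $\xi_1(P;d_i^1) \cap \xi_2(P;d_j^2)$. This produces a width-$k$ inp-configuration in $\Sa P_0 \models T$ via the unary-in-$x$ $L$-formulas $\xi_1, \xi_2$ for every $k$, contradicting dp-minimality of $T$; the strong dependence case is entirely analogous with more formulas. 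The main obstacle I anticipate is verifying the uniform form of Chernikov--Simon in the last step, which I expect to extract from Fact~\ref{fact:cs-limit} by a routine compactness argument using that $\tilde\varphi_\ell$ is a single $L$-formula and $T$ is NIP.
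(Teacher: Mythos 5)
Your handling of stability and $\nip$ is correct and is exactly the argument the paper intends: those configurations consist solely of (positive and negative) membership conditions of elements of $P$ in unary definable sets, so the trace equalities $\varphi'(P;a)=P\cap\tilde\varphi(Q;c)$, together with the fact that all witnesses lie in $P\subseteq Q$, transport the configuration verbatim to the $L$-reduct of $\Sa Q$, which is a model of $T$.

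The gap is in your treatment of (3) and (4). Your repair of the $n$-inconsistency clauses requires a \emph{single} $L$-formula $\xi_\ell(x;u_\ell)$ whose instances approximate from below \emph{every} trace $P\cap\tilde\varphi_\ell(Q;c)$; Fact~\ref{fact:cs-limit} as stated only provides, for each individual externally definable set, its own approximating family, so the formulas in a row may vary with the instance (and with $k$), and then no fixed pair of $L$-formulas violates inp-minimality --- pigeonhole does not help since the number of formulas is unbounded as the rows lengthen. The uniform statement you need is true for $\nip$ theories, but it is not ``a routine compactness argument'' from Fact~\ref{fact:cs-limit}: it is the uniform honest definitions theorem of Chernikov and Simon, whose proof goes through the $(p,q)$-theorem, so as written your key lemma is unjustified (though citable, at the cost of invoking much heavier machinery than the statement requires). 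The simpler route, and the one the remark preceding the Fact is pointing at, is to use the ict-pattern characterizations instead of inp-patterns: given $\nip$ (already transferred in case (2)), dp-minimality fails iff there are $\varphi(x;y),\phi(x;z)$ with $|x|=1$ and arrays $(a_i),(b_j)$ such that for every $(i_0,j_0)$ some $c\in P$ satisfies $\varphi(c;a_i)\Leftrightarrow i=i_0$ and $\phi(c;b_j)\Leftrightarrow j=j_0$, and strong dependence fails iff there is such a pattern with infinitely many rows (the reduction to $|x|=1$ uses subadditivity of dp-rank). These configurations contain no universal inconsistency clauses at all --- only membership and non-membership of elements of $P$ in unary definable sets --- so they transfer through the trace equalities exactly as in your stability and $\nip$ cases, with no honest definitions needed.
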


\section{Cyclically ordered abelian groups}
\label{section:cyclic-orders}
\noindent
We give basic definitions and results concerning cyclically ordered groups.
We also set notation to be used throughout.
See \cite{TW-cyclic} for more information and references.
\newline

\noindent
A \textbf{cyclic order} $S$ on a set $G$ is a ternary relation such that for all $a,b,c \in G$,
\begin{enumerate}
\item if $S(a,b,c)$, then $S(b,c,a)$,
\item if $S(a,b,c)$, then $\neg S(c,b,a)$,
\item if $S(a,b,c)$ and $S(a,c,d)$ then $S(a,b,d)$,
\item if $a,b,c$ are distinct, then either $S(a,b,c)$ or $S(c,b,a)$.
\end{enumerate}
An open $S$-interval is a set of the form $\{ b \in G : S(a,b,c) \}$ for some $a,c \in G$, likewise define closed and half open intervals.
A subset of $G$ is $S$-convex if it is the union of a nested family of intervals.
We drop the ``$S$" when it is clear from context.
\newline

\noindent
If $(G,+)$ is an abelian group then a cyclic group order on $(G,+)$ is a $+$-invariant cyclic order.
Suppose $S$ is a cyclic group order on $(G,+)$.
A subset of $G$ is an $\mathbf{S}$ \textbf{tmc set} if it is of the form $a + mJ$ for $S$-convex $J \subseteq G$ and $a \in G$.
We drop the ``$S$" when it is clear from context.
\newline

\noindent
Note that $\{ (a,b,c) \in G^2 : S(c,b,a) \}$ is a cyclic group order which we refer to as the \textbf{opposite} of $S$.
(If $<$ is a linear group order on $(G,+)$ then $\{ (a,b) \in G^2 : b < a \}$ is also a linear group order which we refer to as the opposite of $<$.)
\newline

\noindent
Throughout $C$ is the cyclic group order on $(\R/\Z,+)$ such that whenever $t,t',t'' \in \R$ and $0 \leq t,t',t'' < 1$ then $C(t + \Z, t' + \Z, t'' + \Z)$ holds if and only if either $t < t' < t''$, $t' < t'' < t$, or $t'' < t < t'$.
Given irrational $\alpha \in \rz$ we let $\pmb{C_\alpha}$ be the cyclic group order on $(\Z,+)$ where $C_\alpha(k,k',k'')$ if and only if $C(\alpha k,\alpha k', \alpha k'')$, so $C_\alpha$ is the pullback of $C$ by $\psi_\alpha$.
Every dense cyclic group order on $(\Z,+)$ is of this form for unique $\alpha \in \rz$.
\newline



\noindent
Let $<$ be a linear group order on $(G,+)$.
There are two associated cyclic orders:
$$ S_< := \{ (a,b,c) \in G^3 : (a < b < c) \vee (b < c < a) \vee (c < a < b) \}, $$
and
$$ S_> := \{ (a,b,c) \in G^3 : (c < b < a) \vee (b < a < c) \vee (a < c < b)\}. $$
Note that $S_<$ is the opposite of $S_>$.
See for example \cite{TW-cyclic} for a proof of Fact~\ref{fact:classify-cyclic}.

\begin{fact}
\label{fact:classify-cyclic}
Every cyclic group order on $(\Z,+)$ is either $C_\alpha$ for some irrational $\alpha \in \rz$ or $S_<$ or $S_>$ for the usual order $<$.
\end{fact}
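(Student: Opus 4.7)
The plan is to split into two cases depending on whether $S$ is \emph{dense}---meaning every nonempty open $S$-interval is infinite---and handle each case separately. A dense $S$ should be $C_\alpha$ for some irrational $\alpha$, while a non-dense $S$ should come from a linear group order.

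First I would establish the dichotomy. By translation-invariance, $S$ is dense if and only if no nonzero $a \in \Z$ has empty open interval $(0,a) := \{b \in \Z : S(0,b,a)\}$. Suppose $S$ is not dense and pick $a \neq 0$ with $(0,a) = \emptyset$. I would then argue that $a$ generates $\Z$: otherwise $\Z / \langle a \rangle$ is nontrivial, and a nonzero coset representative $b$ with $S(0,b,a)$ could be produced by averaging, contradicting emptiness. Thus $a = \pm 1$, and replacing $S$ by its opposite if needed we may assume $a = 1$.

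In the non-dense case, assuming $\{b : S(0,b,1)\} = \emptyset$, I would define $P := \{n \in \Z : n = 1 \text{ or } S(0,1,n)\}$ and verify that $P$ is a positive cone for a translation-invariant linear order on $\Z$. Specifically: $P \cup \{0\} \cup (-P) = \Z$ follows from axiom (4) together with the emptiness of $(0,1)$ and its translates $(n,n+1)$; disjointness $P \cap (-P) = \emptyset$ follows from axiom (2); and closure $P + P \subseteq P$ follows from axiom (3) via translation-invariance applied to $S(0,1,n)$ and $S(n, n+1, n+m)$. Since $<$ and $>$ are the only linear group orders on $\Z$, a direct verification identifies $S$ with $S_<$ for the standard order $<$.

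In the dense case, I would construct an irrational $\alpha \in \rz$ with $S = C_\alpha$ by approximation. For each $n \geq 1$, the relative cyclic positions of $\{0, 1, \ldots, n\}$ under $S$ determine a finite cyclically ordered subgroup of the hypothetical $\rz$-embedding, yielding a rational $\alpha_n \in \rz$ such that $k \mapsto \alpha_n k$ for $|k| \leq n$ is compatible with $S$; density plus invariance force the $\alpha_n$ to form a Cauchy sequence in $\rz$ converging to some $\alpha$, which must be irrational since the limit of $\{0, \pm\alpha, \pm 2\alpha, \ldots\}$ being finite would contradict density. One then checks that $\psi_\alpha : k \mapsto \alpha k$ pulls $C$ back to $S$, and that $\alpha$ is unique since $\psi_\alpha$ is injective.

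The main obstacle is the dense case, which is essentially a cyclic analogue of H\"older's theorem for Archimedean linearly ordered abelian groups---equivalently, an instance of Rieger's embedding theorem. A cleaner route is to apply Rieger directly: any cyclically ordered abelian group $(G,+,S)$ is the quotient of a linearly ordered abelian group $(\bar G, +, <)$ by a convex central subgroup $\langle z \rangle$ with $z$ positive and cofinal. Applying this with $G = \Z$, the cover $\bar G$ is an extension of $\Z$ by $\langle z \rangle$; Archimedean-ness of $\bar G$ lets H\"older embed $\bar G$ into $(\R,+,<)$ with $z \mapsto 1$, and the induced quotient map $\bar G / \langle z \rangle \to \R/\Z$ gives the character $\psi_\alpha$ producing $S = C_\alpha$.
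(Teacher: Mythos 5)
The paper gives no argument for this fact at all --- it simply cites \cite{TW-cyclic} --- so I can only judge your proposal on its own terms, and it has a genuine gap located exactly where you wave your hands: the claim, in the non-dense case, that $(0,a)=\emptyset$ forces $a=\pm 1$. There is no ``averaging'' available in $(\Z,+)$, and no argument of that shape can be routine, because $+$-invariant cyclic orders on $\Z$ in which the successor of $0$ fails to generate $\Z$ can be written down. Take $H=\Z^2$ with the lexicographic group order ($(x,y)>0$ iff $y>0$, or $y=0$ and $x>0$) and $u=(1,2)$, which is positive, primitive and has $u\Z$ cofinal; transporting the linear order on the set of representatives $\{h\in H: 0\le h<u\}$ to the quotient $H/u\Z\cong\Z$ (identify via $(x,y)\mapsto 2x-y$) yields a cyclic order $S$ on $\Z$, and the usual rotation argument for representatives shows $S$ is $+$-invariant. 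For this $S$ the interval $(0,2)$ is empty (hence every $(n,n+2)$ is empty) while $(0,1)$ is infinite, so $S$ is not dense, is not $C_\alpha$, and is not $S_<$ or $S_>$ (in those orders $(0,2)$ is nonempty). So the first step of your non-dense analysis breaks down, and the same example shows that in your ``cleaner route'' the assertion that the cover $\bar G$ is archimedean is not automatic: it fails precisely for such orders, and even in the dense case archimedeanness of the cover is something you must deduce from density of $S$ (non-archimedean covers have a convex subgroup $V\cong\Z$ whose minimal positive element maps to an immediate successor of $0$), not assert.

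The Rieger--H\"older strategy is the right machinery and matches the universal-cover setup the paper records in Section 3: density of $S$ gives an archimedean cover, H\"older embeds it in $(\R,+,<)$ with $u\mapsto 1$, and the image is dense because the cover has rank $2$, so $S=C_\alpha$ with $\alpha$ irrational. But when $S$ is not dense the cover is $\Z^2$ ordered lexicographically with respect to a convex subgroup $V\cong\Z$, and the quotient is $S_<$ or $S_>$ only when the image of $u$ generates the ordered quotient $H/V\cong\Z$; when that index is at least $2$ one obtains exactly the orders above. So either such windings have to be excluded by an argument you have not supplied, or the statement needs a density (equivalently, c-archimedean) hypothesis as in the dense-case assertion quoted in Section 3 of the paper; you should check the precise hypotheses in \cite{TW-cyclic} before relying on the bald statement. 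Separately, your first, approximation-based sketch of the dense case leaves the Cauchy property of the $\alpha_n$ and the compatibility of the limit entirely unproved; I would discard it and rework the whole proof around the convex-subgroup analysis of the cover rather than around the claim that the successor of $0$ generates $\Z$.
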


\noindent
We will frequently apply Fact~\ref{fact:unique-iso}, which is elementary and left to the reader.

\begin{fact}
\label{fact:unique-iso}
Suppose $\bH$ is a topological group and $\gamma$ is an isomorphism $\bH \to \rz$ of topological groups.
Then $\gamma$ is unique up to sign, i.e. if $\xi : \bH \to \rz$ is a topological group isomorphism then either $\xi = \gamma$ or $\xi = - \gamma$
.
\end{fact}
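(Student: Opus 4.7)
The plan is to reduce Fact~\ref{fact:unique-iso} to the statement that the only topological group automorphisms of $\rz$ are the identity and negation, and then to obtain this by passing to the universal cover $\R \to \rz$.

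First I would form $f := \xi \circ \gamma^{-1} : \rz \to \rz$, which is a topological group automorphism. It suffices to show $f$ is either $\mathrm{id}_{\rz}$ or $-\mathrm{id}_{\rz}$, since then $\xi = f \circ \gamma$ is $\gamma$ or $-\gamma$. So from now on the task is: if $f : \rz \to \rz$ is a topological group automorphism, then $f = \pm \mathrm{id}_{\rz}$.

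Next, let $\pi : \R \to \rz$ be the quotient map. The composition $f \circ \pi : \R \to \rz$ is continuous, and since $\R$ is simply connected and $\pi$ is the universal covering map, it admits a continuous lift $\tilde f : \R \to \R$ with $\pi \circ \tilde f = f \circ \pi$ and $\tilde f(0) = 0$. I would then verify that $\tilde f$ is a group homomorphism: for fixed $x, y \in \R$ the quantity $\tilde f(x+y) - \tilde f(x) - \tilde f(y)$ lies in $\ker \pi = \Z$, is a continuous function of $(x,y)$, and vanishes at $(0,0)$, hence is identically $0$. A standard argument (continuous additive maps $\R \to \R$ are linear) then yields $\tilde f(x) = cx$ for some $c \in \R$. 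The requirement $\tilde f(\Z) \subseteq \Z$ forces $c \in \Z$, and bijectivity of $f$ forces $c = \pm 1$, so $f = \pm \mathrm{id}_{\rz}$.

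The only delicate step is the construction of the lift and the verification that it is additive; everything else is an elementary manipulation. I do not expect any real obstacle — once the covering-space lift is in hand, the classification of continuous $\R$-endomorphisms and the integrality condition finish the proof immediately.
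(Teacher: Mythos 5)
Your proposal is correct, and all the steps check out: the reduction to classifying topological group automorphisms of $\rz$, the lift through the covering $\pi : \R \to \rz$, the connectedness argument showing the lift is additive, and the integrality/bijectivity argument forcing the slope to be $\pm 1$. The paper states this fact as elementary and leaves the proof to the reader, so there is no argument to compare against; your covering-space lifting proof is the standard one and settles the fact completely.
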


\subsection{The universal cover}
We describe the universal cover of $(G,+,S)$.
A universal cover of $(G,+,S)$ is an ordered abelian group $(H,+,<)$, a distinguished positive $u \in H$ such that $u\Z$ is cofinal in $H$, and a surjective group homomorphism $\pi : H \to G$ with kernel $u\Z$ such that if $a,b,c \in H$ and $0 \leq a,b,c < u$ then $S(\pi(a), \pi(b),\pi(c) )$ if and only if we either have $a < b < c$, $b < c < a$, or $c < a < b$.
The universal cover $(H,+,<,u,\pi)$ is unique up to unique isomorphism and every cyclically ordered abelian group has a universal cover.
\newline

\noindent
So $(\R,+,<,1,\pi)$ is a universal cover of $\rzc$, where $\pi(t) = t + \Z$ for all $t \in \R$ and $(\Z + s\Z,+,<,1,\pi)$ is a universal cover of $\zca$ when $\alpha = s + \Z$.

\section{Definable groups}
\label{section:def-groups}
\noindent
We recall some basic facts from the extensive theory of definable groups in o-minimal structures.
\textbf{Throughout this section $\Sa R$ is an o-minimal expansion of $\rfield)$, $\bH$ is an $\Sa R$-definable group, and ``dimension" without modification is the o-minimal dimension.}
\newline

\noindent
Fact~\ref{fact:pillay-top} follows from work of Pillay~\cite{pillay-groups-fields} and \cite[10.1.8]{Lou}.

\begin{fact}
\label{fact:pillay-top}
There is an $\Sa R$-definable group $\G$ with underlying set $G \subseteq \R^m$ such that $\G$ is a topological group with respect to the topology induced by $\R^m$ and an $\Sa R$-definable group isomorphism $\xi : \bH \to \G$.
If $\G'$ is an $\Sa R$-definable group with underlying set $G' \subseteq \R^n$, $\G'$ is a topological group with respect to the topology induced by $\R^n$, and $\xi' : \bH \to \G'$ is an $\Sa R$-definable group isomorphism, then $\xi' \circ \xi^{-1}$ is a topological group isomorphism $\G \to \G'$.
\end{fact}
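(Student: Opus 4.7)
The plan is to combine two standard results on definable groups in o-minimal structures. First, by Pillay's theorem \cite{pillay-groups-fields}, the group $\bH$ admits a unique $\Sa R$-definable manifold topology (given by finitely many definable charts) with respect to which multiplication and inversion are continuous; I will refer to this as the canonical topology on $\bH$. Second, by the affineness theorem for o-minimal expansions of $\rfield)$ \cite[10.1.8]{Lou}, every $\Sa R$-definable manifold is $\Sa R$-definably homeomorphic to a definable subset of some $\R^m$ equipped with its subspace topology.

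The existence half is then immediate. Applying the affineness theorem to $\bH$ equipped with its canonical topology produces a definable $G \subseteq \R^m$ and an $\Sa R$-definable homeomorphism $\xi : \bH \to G$. Transporting the group operation through $\xi$ endows $G$ with the structure of an $\Sa R$-definable group $\G$, and because $\xi$ is a homeomorphism while multiplication and inversion are continuous in the canonical topology on $\bH$, the operations on $\G$ are continuous with respect to the topology induced from $\R^m$; thus $\G$ is a topological group of the required form.

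For the uniqueness clause, suppose $\G' \subseteq \R^n$ and $\xi' : \bH \to \G'$ form a second pair satisfying the hypotheses. Pulling the subspace topology on $G'$ back along $\xi'$ equips $\bH$ with an $\Sa R$-definable group topology in which the operations are continuous, so by the uniqueness half of Pillay's theorem this topology coincides with the canonical topology on $\bH$; the same applies to $\xi$. Hence both $\xi$ and $\xi'$ are homeomorphisms out of $\bH$ endowed with its canonical topology, and consequently $\xi' \circ \xi^{-1}$ is simultaneously an $\Sa R$-definable group isomorphism and a homeomorphism, i.e.\ a topological group isomorphism $\G \to \G'$. The only nontrivial input is the uniqueness of Pillay's definable group topology; the affineness step and the transport-of-structure arguments are routine.
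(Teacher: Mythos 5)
Your proof is correct and is essentially the paper's own route: the paper states this Fact with no argument beyond citing Pillay \cite{pillay-groups-fields} for the definable manifold (t-)topology and \cite[10.1.8]{Lou} for affineness of regular definable spaces, which are exactly your two ingredients. The one point worth making explicit in the uniqueness step is that Pillay's uniqueness is for definable \emph{manifold} topologies (it fails for arbitrary definable group topologies, e.g.\ the discrete one), so before invoking it you should note that the subspace topology on $G'$ is such a topology -- this follows since a definable set is locally Euclidean off a definable subset of smaller dimension and translations of $\G'$ are homeomorphisms for the subspace topology, so every point of $G'$ has a definable Euclidean chart.
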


\noindent
We let $\Cal T_\bH$ be the canonical group topology on $\bH$ and consider $\bH$ as a topological group.
Recall that any connected topological group of topological dimension one is isomorphic (as a topological group) to either $(\R,+)$ or $(\rz,+)$.
It follows that if $\bH$ is one-dimensional and connected then $\bH$ is isomorphic as a topological group to either $(\R,+)$ or $(\rz,+)$.
In the first case we say that $\bH$ is a \textbf{line group}, in the second case $\bH$ is a \textbf{circle group}.
\newline

\noindent
Suppose $X$ is an $\Sa R$-definable subset of $\R^m$.
An easy application of the good directions lemma \cite[Theorem 4.2]{Lou} shows that if $X$ is homeomorphic to $\R$ then there is an $\Sa R$-definable homeomorphism $X \to \R$ and if $X$ is homeomorphic to $\rz$ then there is an $\Sa R$-definable homeomorphism from $X$ to the unit circle.
(The analogous fact fails in higher dimensions, there are homeomorphic semialgebraic sets $X,X'$ for which there is no homeomorphism $X \to X'$ definable in an o-minimal expansion of $\rfield)$, this is a consequence of Shiota's o-minimal Hauptvermutung~\cite{shiota-haup} together with the failure of the classical Hauptvermutung.)
Fact~\ref{fact:get-cyclic} easily follows.

\begin{fact}
\label{fact:get-cyclic}
Suppose $\bH$ is one-dimensional, connected, and has underlying set $H$ and group operation $\oplus$.
Then there is a unique up to opposite $\Sa R$-definable cyclic group order $S$ on $\bH$.
If $\bH$ is a line group then $(H,+,S)$ is isomorphic to $(\R,+,S_<)$.
If $\bH$ is a circle group the $(H,+,S)$ is isomorphic to $\rzc$.
\end{fact}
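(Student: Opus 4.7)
The plan is to realize $\bH$ concretely as a topological group inside some $\R^m$ via Fact~\ref{fact:pillay-top}, produce an $\Sa R$-definable homeomorphism from its underlying set onto either $\R$ or the unit circle $\bS$, and then simply pull back the standard (cyclic) order. Specifically, by Fact~\ref{fact:pillay-top} I may replace $\bH$ by an $\Sa R$-definable topological group $\G$ with underlying set $G \subseteq \R^m$ whose canonical group topology agrees with the subspace topology; since $\bH$ is one-dimensional and connected, $G$ is homeomorphic to $\R$ or to $\bS$. The good-directions argument recalled just before the statement then yields an $\Sa R$-definable homeomorphism $f \colon G \to \R$ (line case) or $f \colon G \to \bS$ (circle case). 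Let $S$ be the pullback through $f$ of $S_<$, respectively of the standard clockwise cyclic order on $\bS$; this is evidently an $\Sa R$-definable cyclic order on $G$.

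Next I would verify that $S$ is $\oplus$-invariant. For $b \neq 0$ the translation $\tau_b \colon a \mapsto a \oplus b$ is a fixed-point-free self-homeomorphism of $G$, so $f \circ \tau_b \circ f^{-1}$ is a fixed-point-free self-homeomorphism of $\R$ or of $\bS$. By the intermediate value theorem in the line case, and by the Lefschetz fixed-point theorem (any degree $-1$ self-map of $\bS$ has Lefschetz number $2$) in the circle case, an orientation-reversing homeomorphism of $\R$ or $\bS$ must fix a point; thus $f \circ \tau_b \circ f^{-1}$ is orientation-preserving and $\tau_b$ preserves $S$. To recover the isomorphism $(H,\oplus,S) \cong (\R,+,S_<)$ or $(\rz,+,C)$, I transport $\oplus$ across $f$ to a group operation $+_f$ on the target. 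Then $(\R,+_f)$ (resp.\ $(\bS,+_f)$) is a connected one-dimensional topological group in the standard topology, hence topologically isomorphic to $(\R,+)$ (resp.\ $(\rz,+)$); composing with $f$ and, if necessary, negating via Fact~\ref{fact:unique-iso} to match orientation, gives the required isomorphism of (cyclically) ordered abelian groups.

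For uniqueness, let $S'$ be any $\Sa R$-definable cyclic group order on $\bH$. Its order topology makes $\bH$ an $\Sa R$-definable topological group, and I would invoke the uniqueness of the group topology on an $\Sa R$-definable group to conclude that this order topology coincides with $\Cal T_\bH$. Consequently $S'$ transports under the topological group isomorphism of $\bH$ with $(\R,+)$ or $(\rz,+)$ to a topology-compatible cyclic group order on the target, and by a direct inspection the only such orders are $S_<,S_>$ on $(\R,+)$ and $C,-C$ on $(\rz,+)$, so $S'$ agrees with $S$ or its opposite. The main obstacle I anticipate is this final uniqueness step, as it hinges on the fact that a definable cyclic group order on a definable one-dimensional connected group in $\Sa R$ must in fact induce the canonical group topology $\Cal T_\bH$; the rest of the proof is a comparatively routine combination of Pillay's theorem, the good-directions lemma, and classical fixed-point results.
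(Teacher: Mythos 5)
Your construction half is correct and is essentially the route the paper sketches (the paper gives no details beyond the good-directions remark): Fact~\ref{fact:pillay-top} plus the good-directions discussion give an $\Sa R$-definable homeomorphism of $H$ with $\R$ or with the unit circle, you pull back $S_<$ or the standard circular order, and your fixed-point argument (intermediate value theorem on $\R$, degree/Lefschetz on the circle) correctly supplies the translation-invariance that the paper leaves implicit; transporting the group operation and composing with a topological group isomorphism onto $(\R,+)$ or $(\rz,+)$, negating if the orientation is reversed, yields the stated isomorphisms of (cyclically) ordered groups.

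The uniqueness step, however, has a genuine gap, which you yourself flag. Fact~\ref{fact:pillay-top} gives uniqueness of $\Cal T_\bH$ only among definable realizations of $\bH$ as a topological group sitting in some $\R^m$ with the subspace topology; it does not say that an arbitrary definable group topology on $\bH$ --- such as the order topology of a definable cyclic group order $S'$, whose basis is the definable family of $S'$-intervals but which is not presented as a Euclidean subgroup topology --- must coincide with $\Cal T_\bH$. As a blanket statement, ``uniqueness of the definable group topology'' is false: the discrete topology is always a definable group topology (here it is ruled out only via divisibility of $\bH$, which itself has to be argued), and on higher-dimensional groups there are non-discrete definable group topologies different from the canonical one (e.g.\ the product of a discrete and a Euclidean factor on $(\R^2,+)$). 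So the sentence in which you ``invoke the uniqueness of the group topology on an $\Sa R$-definable group'' is not backed by anything cited in the paper, and this is exactly where the real content of the uniqueness claim lies. To close it one must use o-minimality directly: each $S'$-interval is a definable subset of the one-dimensional set $H$, hence a finite union of points and $\Cal T_\bH$-arcs; using divisibility and invariance under the transitive, $\Cal T_\bH$-continuous translation action, one shows every interval is in fact one of the two open arcs spanned by its endpoints, and then a short combinatorial argument with the linear orders $x<_a y :\Leftrightarrow S'(a,x,y)$ identifies $S'$ with the pulled-back order or its opposite. (Alternatively one can appeal to the classification of one-dimensional definable Hausdorff topologies in o-minimal structures, but that is a separate result, not a consequence of Fact~\ref{fact:pillay-top}.)
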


\noindent
So if $\bH$ is one-dimensional and connected and $A$ is a subgroup of $\bH$ then we may speak without ambiguity of a tmc subset of $A$.
\newline

\noindent
Finally we recall the interpretation-rigidity theorem for o-minimal expansions of $\rfield)$.
Fact~\ref{fact:opp} is due to Otero, Peterzil, and Pillay~\cite{OPP-groups-rings}.

\begin{fact}
\label{fact:opp}
Let $\F$ be an infinite field interpretable in $\Sa R$.
Then there is either an $\Sa R$-definable field isomorphism $\F \to \rfield)$ or $\F \to (\C,+,\times)$.
It follows that if an expansion $\Sa S$ of $\rfield)$ is interpretable in $\Sa R$ then $\Sa S$ is isomorphic to a reduct of $\Sa R$, and if a structure $\Sa M$ is mutually interpretable with $\Sa R$ then $\Sa R$ is (up to interdefinibility) the unique expansion of $\rfield)$ mutually interpretable with $\Sa M$.
\end{fact}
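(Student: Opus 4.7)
The plan is to attribute the substantive first assertion to the Otero--Peterzil--Pillay theorem and then derive the downstream claims. First I would reduce to the case that $\F$ is a definable (rather than merely interpretable) field in $\Sa R$: this is possible because $\Sa R$ is o-minimal and hence admits definable Skolem functions, so every interpretable structure is definably isomorphic to a definable one. Next, by Pillay's analysis of definable groups in o-minimal structures, $\F$ equipped with its field operations is itself o-minimal in the induced language, and the Peterzil--Starchenko trichotomy then forces any such infinite field to be either real closed or algebraically closed of characteristic zero.

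The main step, which is the substantive Otero--Peterzil--Pillay content and the place I expect the real work, is to upgrade ``real closed'' to ``$\Sa R$-definably isomorphic to $\rfield)$'' and ``algebraically closed'' to ``$\Sa R$-definably isomorphic to $(\C,+,\times)$''. The hypothesis that $\Sa R$ expands $\rfield)$ is essential: the underlying set of $\F$ sits in some $\R^n$ and has finite o-minimal dimension, so one can exploit the ambient real-analytic structure to construct a local definable isomorphism between a neighborhood of the identity in $\F$ and an interval in $\R$ (respectively, a disk in $\R^2$) by trivializing the field operations via their derivatives, and then extend this to a global isomorphism using definable connectedness together with the translation action of the additive group. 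This is the main obstacle.

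Given the first assertion, the remaining two statements follow easily. For the second, an interpretation of $\Sa S$ in $\Sa R$ gives an $\Sa R$-interpretable field $\F_0$ isomorphic to $\rfield)$ together with additional $\Sa R$-interpretable relations encoding the rest of $\Sa S$; by the first part there is an $\Sa R$-definable field isomorphism $\F_0 \to \rfield)$, and transporting the extra relations along this isomorphism exhibits every $\Sa S$-definable subset of every $\R^n$ as $\Sa R$-definable, so $\Sa S$ is a reduct of $\Sa R$. Here one uses crucially that $\rfield)$ admits only the trivial field automorphism, so the identification of the two copies of $\R$ is forced to be the identity. For the third, if $\Sa R'$ is another expansion of $\rfield)$ mutually interpretable with $\Sa M$, hence with $\Sa R$, applying the second statement in both directions shows that $\Sa R'$ is a reduct of $\Sa R$ and $\Sa R$ is a reduct of $\Sa R'$, so they are interdefinable.
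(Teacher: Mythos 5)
Your proposal matches the paper: the paper gives no proof of this Fact at all, simply citing Otero--Peterzil--Pillay for the first assertion and treating the remaining claims as immediate consequences, which is exactly your approach (your speculative sketch of how the OPP argument goes is inessential since you cite it as a black box, and your use of rigidity of $\rfield)$ to turn ``isomorphic to a reduct'' into ``is a reduct'' is the right routine step). The only point left implicit in your uniqueness argument is that one should first conclude from the forward direction that the second expansion $\Sa R'$ is a reduct of $\Sa R$, hence o-minimal, before applying the second assertion with $\Sa R'$ as the ambient structure; this is immediate and not a gap.
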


\section{What we know about dp-minimal expansions of $(\Z,+)$}
\label{section:survey}
\noindent
We survey what is known about dp-minimal expansions of $(\Z,+)$.
\newline

\noindent
The first result on dp-minimal expansions of $(\Z,+)$ is Fact~\ref{fact:toomany}, proven in \cite[Proposition 6.6]{toomanyI}.
Fact~\ref{fact:toomany} follows easily from two results, the Michaux-Villemaire theorem~\cite{MV} that there are no proper $(\Z,+,<)$-minimal expansions of $(\Z,+,<)$, and Simon's theorem~\cite[Lemma 2.9]{Simon-dp} that a definable family of unary sets in a dp-minimal expansion of a linear order has only finitely many germs at infinity.

\begin{fact}
\label{fact:toomany}
There are no proper dp-minimal expansions of $(\Z,+,<)$.
Equivalently: there are no proper dp-minimal expansions of $(\N,+)$.
\end{fact}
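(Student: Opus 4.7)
My plan is to combine the two cited results directly. Let $\Sa Z$ be a dp-minimal expansion of $(\Z,+,<)$. By Michaux-Villemaire, it will suffice to show that every $\Sa Z$-definable subset $X \subseteq \Z$ is already definable in $(\Z,+,<)$, for then $\Sa Z$ is $(\Z,+,<)$-minimal and hence interdefinable with $(\Z,+,<)$.

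The key move is to apply Simon's germ lemma to the $\Sa Z$-definable family $(X - n : n \in \Z)$, where $X - n := \{m \in \Z : m + n \in X\}$. Simon's lemma produces only finitely many germs at $+\infty$, i.e. only finitely many equivalence classes of the translates $X - n$ under the ``agrees cofinitely to the right'' relation. A direct unwinding shows that $X - n_1$ agrees cofinitely to the right with $X - n_2$ precisely when $n_2 - n_1 \in P$, where
\[
P := \{ k \in \Z : l \in X \Lrarr l + k \in X \text{ for all sufficiently large } l \}.
\]
Then $P$ is a subgroup of $(\Z,+)$, and the number of germs is $[\Z : P]$, which is finite by Simon. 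Hence $P = d\Z$ for some integer $d \geq 1$, and unpacking the definition of $P$ yields $N_+$ such that $X \cap [N_+, \infty)$ is a union of residue classes modulo $d$ intersected with $[N_+, \infty)$, and is therefore $(\Z,+,<)$-definable.

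A symmetric argument, applied either in the opposite linear order on $\Z$ (which is also dp-minimal) or by replacing $X$ with $-X$, produces $N_-$ such that $X \cap (-\infty, N_-]$ is also a finite union of arithmetic progressions. The middle piece $X \cap [N_-, N_+]$ is finite, hence $(\Z,+,<)$-definable, so $X$ itself is $(\Z,+,<)$-definable. This completes the reduction. The equivalence with the statement for $(\N,+)$ is routine, since $<$ is $(\N,+)$-definable and $\N$ is a $(\Z,+,<)$-definable substructure, so dp-minimality transfers between the two perspectives.

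I do not anticipate a significant obstacle: all the nontrivial content is already packaged inside Michaux-Villemaire and Simon's germ lemma. The only mildly delicate point is recognising the ``agrees cofinitely'' equivalence as $\Z/P$-cosets, after which eventual periodicity of $X$ is forced by finiteness of the index $[\Z:P]$.
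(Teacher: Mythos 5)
Your proposal is correct and takes essentially the same route as the paper, which proves the fact only by citing \cite[Proposition 6.6]{toomanyI} and remarking that it follows from the Michaux--Villemaire theorem together with Simon's lemma that a definable family of unary sets in a dp-minimal expansion of a linear order has finitely many germs at infinity. Your unwinding (the translate family $(X-n)_{n\in\Z}$, the finite-index period subgroup $P$, eventual periodicity at both ends, then Michaux--Villemaire) is exactly the ``easy'' derivation the paper alludes to, and I see no gap in it.
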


\noindent
The authors of \cite{toomanyI} raised the question of whether there is a dp-minimal expansion of $(\Z,+)$ which is not a reduct of $(\Z,+,<)$.
Conant and Pillay~\cite{CoPi} proved Fact~\ref{fact:cp}.
Their proof relies on earlier work of Palac\'{i}n and Sklinos~\cite{PS-superstable}, who apply the Buechler dichotomy theorem and other sophisticated tools of stability theory.

\begin{fact}
\label{fact:cp}
There are no proper stable dp-minimal expansions of $(\Z,+)$.
\end{fact}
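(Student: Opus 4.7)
My plan is to argue by contradiction: suppose $\Sa Z$ is a proper stable dp-minimal expansion of $(\Z,+)$, and aim to show that every $\Sa Z$-definable subset of every $\Z^n$ is already $(\Z,+)$-definable, which contradicts properness. Since the $(\Z,+)$-definable sets are exactly the finite Boolean combinations of cosets of subgroups of $\Z^n$, the task reduces to analyzing potential ``new'' definable sets through the lens of stability theory applied to a group of $U$-rank $\omega$.

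I would first upgrade the hypothesis from stability to superstability, using dp-minimality to control dp-rank (which is $1$) together with the fact that $(\Z,+)$ is already superstable, so new instability can only come from new formulas and is strongly restricted. This is the point where the machinery of Palac\'{i}n--Sklinos for superstable expansions of abelian groups enters: their structural results on regular types, weight, and internality should force any genuinely new definable set in $\Sa Z$ to be controlled by the $\Sa Z$-definable subgroups of the powers of $\Z$.

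Next I would invoke the Buechler dichotomy for regular types in a superstable theory: any regular type is either locally modular or interprets a non-locally modular strongly minimal set. The non-locally modular case I would eliminate using dp-minimality, since a non-locally modular strongly minimal set carries enough independent families of plane curves to furnish two formulas violating inp-minimality in the sense of Section~\ref{section:prelim}. In the remaining locally modular case, a standard group configuration argument identifies every new $\Sa Z$-definable set with a finite Boolean combination of cosets of $\Sa Z$-definable subgroups of $\Z^n$; but every subgroup of $\Z^n$ is a sublattice, hence already $(\Z,+)$-definable, and no genuinely new set is added.

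The main obstacle is the initial step of extracting, from ``stable $+$ dp-minimal'' alone, enough superstability and rank control to feed into Buechler's dichotomy applied to a minimal-rank type witnessing a putative new definable set; this is precisely where the Palac\'{i}n--Sklinos analysis does the heavy lifting. Once that reduction is in place, excluding the non-locally modular case via dp-minimality and reading off $(\Z,+)$-definability from local modularity via the group configuration are comparatively routine.
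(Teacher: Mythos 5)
The paper does not actually prove this statement: Fact~\ref{fact:cp} is quoted from Conant and Pillay~\cite{CoPi}, whose argument in turn rests on Palac\'{i}n and Sklinos~\cite{PS-superstable} and the Buechler dichotomy, so your outline does track the route the paper attributes to the literature. But as a proof sketch it has two genuine gaps. First, your elimination of the non-locally-modular horn of Buechler's dichotomy is based on a false claim: a non-locally modular strongly minimal set does \emph{not} violate inp-minimality. Algebraically closed fields are strongly minimal, non-locally modular, and dp-minimal, and more generally any strongly minimal theory is dp-minimal (two infinite definable subsets of the home sort always intersect, so no $n$-inconsistent rows of infinite sets exist), so ``enough independent families of plane curves'' gives no inp-pattern. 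In the actual arguments this case is ruled out using specific features of $(\Z,+)$ rather than dp-minimality alone --- roughly, Morley rank~$1$ of the relevant weakly minimal type forces chain conditions on definable subgroups that are incompatible with the infinite strictly descending chain $\Z \supseteq 2\Z \supseteq 4\Z \supseteq \cdots$ of finite-index subgroups.

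Second, your opening reduction ``stable $+$ dp-minimal $\Rightarrow$ superstable (of finite $U$-rank)'' is asserted rather than proved, and it is not a general implication: there are strictly stable dp-minimal theories, so dp-rank~$1$ only buys you that every type has weight~$1$, i.e.\ finite weight, not ordinal-valued $U$-rank. Palac\'{i}n and Sklinos work under the hypothesis of a superstable expansion of finite $U$-rank; bridging from ``stable of finite weight'' to a situation where their analysis (and Buechler's dichotomy for weakly minimal types) applies is precisely the nontrivial content of Conant--Pillay, so the heavy lifting you defer to Palac\'{i}n--Sklinos is attributed to the wrong place and is missing from your argument. The final step (local modularity of the generic yielding, via the Hrushovski--Pillay theorem on $1$-based groups, that definable sets are Boolean combinations of cosets of subgroups of $\Z^n$, all of which are $(\Z,+)$-definable) is the right endgame, but it too needs the passage from local modularity of one type to $1$-basedness of the structure, which your sketch does not address.
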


\noindent
Conant~\cite{conant} proved Fact~\ref{fact:conant} via a geometric analysis of $(\Z,+,<)$-definable sets.
Facts~\ref{fact:cp} and \ref{fact:weak-to-dp} show that a proper dp-minimal expansion of $(\Z,+)$ is not $\Th(\Z,+)$-minimal.
Alouf and d'Elb\'{e}e~\cite{AldE} used this to give a quicker proof of Fact~\ref{fact:conant}.

\begin{fact}
\label{fact:conant}
There are no intermediate structures between $(\Z,+)$ and $(\Z,+,<)$.
\end{fact}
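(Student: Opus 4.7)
The plan is to assume toward a contradiction that some $\Sa Z$ is intermediate between $(\Z,+)$ and $(\Z,+,<)$ and deduce that $\Sa Z$ defines $<$. Since dp-minimality is preserved under reducts and Presburger arithmetic is dp-minimal, $\Sa Z$ is a proper dp-minimal expansion of $(\Z,+)$. I would invoke the remark preceding the statement (Facts~\ref{fact:cp} and~\ref{fact:weak-to-dp}(1), together with the observation that the stability of $\Th(\Z,+)$ makes weak $\Th(\Z,+)$-minimality coincide with ordinary $\Th(\Z,+)$-minimality, so that no new unary sets appear in elementary extensions) to produce an $\Sa Z$-definable set $X \subseteq \Z$ which is not $(\Z,+)$-definable.

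Next I would analyze $X$ using that it is also $(\Z,+,<)$-definable. Presburger quantifier elimination exhibits $X$ as eventually periodic in both directions: there are $N, m$ and subsets $A, B \subseteq \Z/m\Z$ with $k \in X \Leftrightarrow (k \bmod m) \in A$ for $k > N$ and $k \in X \Leftrightarrow (k \bmod m) \in B$ for $k < -N$. Since every $(\Z,+)$-definable subset of $\Z$ is a finite boolean combination of cosets of subgroups and finite sets (all admissible with parameters), $A = B$ would force $X$ to be $(\Z,+)$-definable. Hence $A \neq B$, and I would fix a residue $r \in A \setminus B$.

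The heart of the argument is a combinatorial extraction of $<$. The $\Sa Z$-definable set $Y := X \cap (r + m\Z)$ contains all sufficiently large positive elements of $r + m\Z$ and excludes all sufficiently large negative ones; hence $Y$ differs from the half-progression $P := \{k \in r + m\Z : k > N\}$ by a finite subset of $[-N, N]$, and that finite set, being a finite subset of $\Z$, is $(\Z,+)$-definable with parameters. So $P$ is $\Sa Z$-definable. A direct calculation shows $\bigcup_{i=0}^{m-1}(P - i) = \{k > N\} \cup F$ for some finite $F \subseteq [N - m + 2, N]$, so the half-line $\{k > N\}$ is $\Sa Z$-definable; translating by the parameter $N$ produces $\{k > 0\}$, and $<$ is recovered via $a < b \Leftrightarrow b - a \in \{k > 0\}$. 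Thus $(\Z,+,<)$ is a reduct of $\Sa Z$, which together with $\Sa Z$ being a reduct of $(\Z,+,<)$ forces interdefinability and contradicts intermediateness.

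The main obstacle is the opening step: translating the paper's theory-level statement that $\Th(\Sa Z)$ is not $\Th(\Z,+)$-minimal into the existence of a concrete $\Sa Z$-definable unary witness $X$, rather than merely one in a sufficiently saturated extension. The decisive input is that stability of $\Th(\Z,+)$ collapses the Shelah expansion, so no externally definable unary sets are introduced by moving to elementary extensions and any theory-level obstruction is already realized in $\Sa Z$ itself. Once $X$ is at hand, the residue-class bookkeeping that extracts a half-line is routine.
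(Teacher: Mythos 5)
The paper does not actually prove this statement: it is quoted from Conant, and the text only notes that Alouf and d'Elb\'ee gave a quicker proof starting from the observation (via Facts~\ref{fact:cp} and~\ref{fact:weak-to-dp}) that a proper dp-minimal expansion of $(\Z,+)$ is not $\Th(\Z,+)$-minimal. Measured against that, the second half of your argument is fine and standard: if an intermediate $\Sa Z$ defines even one subset of $\Z$ that is Presburger-definable but not $(\Z,+)$-definable, your residue bookkeeping recovers a half-line and hence $<$. The genuine gap is exactly the bridge you flag yourself, and your proposed fix does not close it. Failure of (weak) $\Th(\Z,+)$-minimality is a statement about \emph{uniform families over all models of the theory}; it does not produce a single $\Sa Z$-definable subset of the standard model $\Z$ that fails to be $(\Z,+)$-definable. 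For instance, the formula $\varphi(x;y_1,y_2) := y_1 \le x \le y_2$ already witnesses that Presburger arithmetic is not weakly $\Th(\Z,+)$-minimal (in a nonstandard model an infinite interval is bounded and infinite, hence not the trace of any group-language formula, whose traces are finite unions of cosets of finite-index subgroups modified by finitely many points), yet every standard instance $\varphi(\Z;a)$ is finite and so $(\Z,+)$-definable. Similarly, a family like $(y_1\le x\le y_2)\vee(x\equiv 0 \bmod 2)$ fails uniformity while each standard instance is tame. Stability of $\Th(\Z,+)$ does collapse external definability and does make weak $\Th(\Z,+)$-minimality coincide with $\Th(\Z,+)$-minimality, but it does nothing to push a theory-level or nonstandard obstruction down to a concrete new unary set in $\Z$, which is what your step requires.

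Put differently, your argument only excludes intermediate structures that define a new \emph{unary} set over $\Z$; it does not address the possibility of a proper reduct of $(\Z,+,<)$ all of whose new definable sets have arity at least two, and ruling that out is precisely the hard content of the theorem. That is what Conant's geometric analysis of Presburger-definable subsets of $\Z^n$ for all $n$ accomplishes, and the Alouf--d'Elb\'ee shortcut cited in the paper, while it does start from the non-minimality observation you use, must still carry out additional work specific to reducts of $(\Z,+,<)$ to descend from the theory-level failure to definability of $<$. To repair your proof you would need to supply exactly that missing implication: a proof that any proper reduct of $(\Z,+,<)$ properly expanding $(\Z,+)$ already defines a new subset of $\Z$ itself (or, equivalently, some other route from instability/non-minimality of $\Th(\Sa Z)$ to definability of the order), and neither the stability of $\Th(\Z,+)$ nor Facts~\ref{fact:cp} and~\ref{fact:weak-to-dp} provide it.
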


\noindent
Alouf and d'Elb\'{e}e~\cite{AldE} proved Fact~\ref{fact:ae}.
Given a prime $p$ we let $\valp$ be the $p$-adic valuation on $(\Z,+)$ and $\prec_p$ be the partial order on $\Z$ where $m \prec_p n$ if and only if $\valp(m) < \valp(n)$.
We can view $\zpp$ as either $(\Z,+,\prec_p)$ or as the two sorted structure with disjoint sorts $\Z$ and $\N \cup \{\infty\}$, addition on $\Z$, and $\valp : \Z \to \N \cup \{\infty\}$.
It makes no difference which of these two options we take.

\begin{fact}
\label{fact:ae}
Let $p$ be a prime.
Then $\zpp$ is dp-minimal and $(\Z,+)$-minimal, and there are no structures intermediate between $(\Z,+)$ and $\zpp$.
\end{fact}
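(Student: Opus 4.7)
The plan is to address the three claims—dp-minimality, $(\Z,+)$-minimality, and the absence of intermediate structures—in that order, with the last being the main obstacle.

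For dp-minimality and $(\Z,+)$-minimality, the natural route is a quantifier elimination result. I would work in the language augmented by Presburger congruence predicates $D_n$ and the binary relation $\prec_p$, and verify QE via a standard back-and-forth, viewing $\zpp$ inside $\zp$ where the theory is well understood. Granting QE, every one-variable formula over parameters reduces to a Boolean combination of translates of the ideals $p^k\Z$ and singletons. Every such set is Presburger, giving $(\Z,+)$-minimality. For dp-minimality, Fact~\ref{fact:union-reduce} reduces inp-minimality to pairs of atomic formulas, and Fact~\ref{fact:remove-finite} disposes of those that cut out boundedly many points in $x$. The remaining cases involve cutting out cosets $a + p^k\Z$; the ultrametric dichotomy (two such cosets are either nested or disjoint) rules out the required inp-pattern. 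NIP is likewise automatic from QE: any family of translates of $p^k\Z$ forms a tree-like system of bounded VC-dimension.

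For the absence of intermediate structures, let $\Sa N$ satisfy $(\Z,+) \subsetneq \Sa N \subseteq \zpp$ with the first inclusion proper. Since dp-minimality passes to reducts, $\Sa N$ is dp-minimal; and $(\Z,+)$-minimality of $\zpp$ is inherited by $\Sa N$, so every $\Sa N$-definable subset of $\Z$ is Presburger. By Fact~\ref{fact:cp}, $\Sa N$ is unstable, and since $\Th(\Z,+)$ is stable, Fact~\ref{fact:weak-to-dp}(1) applied contrapositively shows that $\Th(\Sa N)$ is not weakly $\Th(\Z,+)$-minimal. Consequently there is an $\Sa N$-formula $\varphi(x;y)$ with $|x|=1$ such that the family $\{\varphi(\Z;a) : a \in \Z^{|y|}\}$ of (Presburger) fibres is not uniformly Presburger, even after passing to an elementary extension and allowing external parameters. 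The extra complexity must therefore lie in how the Presburger description of the fibre depends on $a$. Using the QE description of $\zpp$-definable sets, I would show that the only way the modulus and offset of a Presburger fibre can depend on $a$ while remaining $\zpp$-definable is through the value of $\valp$ applied to some coordinate of $a$. From such a family one extracts an $\Sa N$-definable binary relation that agrees, up to a Presburger correction, with $\prec_p$, and then $\valp$ is recovered in $\Sa N$.

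The hard step is the last one: proving that the only mechanism by which a $\zpp$-definable family of Presburger unary sets can fail to be Presburger-uniform is through a $\valp$-comparison. This is a rigidity statement about $\zpp$-definable families and would be established by a case analysis over the QE-normal form of $\varphi$, in the spirit of the Alouf--d'Elb\'{e}e treatment of Fact~\ref{fact:conant}, where the analogous rigidity inside $(\Z,+,<)$ forces any non-uniform family to define, up to finite symmetric difference, an initial segment with a parameter-dependent endpoint, and thereby recovers $<$. I expect the $p$-adic version to be analogous but combinatorially richer, because the ``endpoint'' must be replaced by the valuation-theoretic parameter controlling the modulus of the fibre.
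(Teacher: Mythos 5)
The paper offers no proof of this statement; it is quoted from Alouf--d'Elb\'{e}e \cite{AldE}. Measured against what a proof must contain, your proposal has a genuine gap, and it sits exactly where you locate it: the ``no intermediate structures'' claim. Your reduction is sound up to a point --- a proper reduct $\Sa N$ of $\zpp$ expanding $(\Z,+)$ is dp-minimal, hence unstable by Fact~\ref{fact:cp}, hence not weakly $\Th(\Z,+)$-minimal by Fact~\ref{fact:weak-to-dp}(1), so some definable family of unary sets is not uniformly Presburger even with external parameters. But everything after that (``the only way the modulus and offset of a Presburger fibre can depend on $a$ \ldots is through the value of $\valp$ applied to some coordinate of $a$,'' and the extraction of a relation agreeing with $\prec_p$ up to Presburger correction) is asserted rather than proved, and it is precisely the content of the cited theorem. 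The difficulty is not merely that the $p$-adic case is ``combinatorially richer'': by Fact~\ref{fact:qe-valp} the fibres are Boolean combinations of congruence classes, finite sets, and coset conditions whose moduli are powers $p^{v}$ with $v$ a valuation of a term in the parameters, so recovering $\valp$ in $\Sa N$ requires showing that non-uniformity forces the family to separate infinitely many such radii definably, while ruling out that the parameter-dependence can be absorbed into finitely many Presburger shapes plus finite perturbations; moreover the ``endpoint'' you must detect lives in the value sort, not the home sort, so the analogy with the recovery of $<$ inside $(\Z,+,<)$ does not transfer by inspection. As written, the heart of the theorem is missing.

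Two smaller points on the first claim. Fact~\ref{fact:union-reduce} only removes disjunctions, so after quantifier elimination you are left with pairs of \emph{conjunctions} of atomic formulas, not atomics; and the congruence conjuncts can translate with the parameters (distinct cosets of a fixed modulus are already $2$-inconsistent), so the ultrametric dichotomy on balls alone does not kill an inp-pattern. You need to first rewrite each instance as (dense congruence part with modulus prime to $p$ and, after case-splitting on residues of the parameters, a fixed coset) $\wedge$ (open valuational part), discard bounded pieces via Fact~\ref{fact:remove-finite}, and then argue by density that the open parts must already carry the $n$-inconsistency --- this is exactly the shape of the paper's own argument in Lemma~\ref{lem:split} and Proposition~\ref{prop:p-adic-char} for the richer structure $\Sa P$, and it is fixable but not automatic. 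Likewise ``NIP is automatic from QE'' needs an actual argument (it suffices to treat formulas $\varphi(x;y)$ with $|x|=1$, but these include mixed congruence--valuation families, not just translates of $p^k\Z$). The $(\Z,+)$-minimality sketch is essentially fine.
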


\noindent
Alouf and d'Elb\'{e}e also show that $(\Z,+,(\valp)_{p \in I})$ has dp-rank $|I|$ for any nonempty set $I$ of primes.
So if $p \neq q$ are primes then $\zpp$ and $(\Z,+,\mathrm{Val}_q)$ do not have a common dp-minimal expansion.
\newline

\noindent
So far we have described countably many dp-minimal expansions of $(\Z,+)$.
Fact~\ref{fact:basic-cyclic}, proven by Tran and Walsberg~\cite{TW-cyclic}, shows that there is an uncountable collection of dp-minimal expansions of $(\Z,+)$, no two of which are interdefinable.

\begin{fact}
\label{fact:basic-cyclic}
Suppose $\alpha,\beta \in \rz$ are irrational.
Then $\zca$ is dp-minimal.
Furthermore $\zca$ and $\zcb$ are interdefinable if and only if $\alpha$ and $\beta$ are $\Z$-linearly dependent.
\end{fact}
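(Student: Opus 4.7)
I would handle dp-minimality and the interdefinability criterion separately.

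For dp-minimality of $\zca$, write $\alpha = s + \Z$ with $s \in \R$ irrational. The idea is to exhibit $\zca$ as a reduct of a dp-minimal structure on $\Z$. Consider the expansion $\Sa M := (\R,+,<,\Z,s)$ of the real ordered group by a predicate for $\Z$ and a constant for $s$. This Presburger-enriched o-minimal structure is dp-minimal, which one can extract from weak o-minimality of $(\R,+,<,\Z)$ together with Fact~\ref{fact:weak-to-dp}(4) (alternatively, cite the literature on such dense-predicate expansions). Since $\Z$ is $\Sa M$-definable, the structure induced on $\Z$ by $\Sa M$ again has dp-rank one and so is dp-minimal. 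Inside this induced structure, $C_\alpha(k_1,k_2,k_3)$ is expressible via the fractional parts of $sk_1,sk_2,sk_3$: it holds iff the residues $sk_i - \lfloor sk_i \rfloor \in [0,1)$ are cyclically ordered. Hence $\zca$ is a reduct of the induced structure on $\Z$, and dp-minimality descends to $\zca$.

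For the interdefinability characterization, assume first that $\alpha$ and $\beta$ are $\Z$-linearly dependent, so $m\alpha + n\beta = 0$ in $\rz$ for some $(m,n) \in \Z^2 \setminus \{(0,0)\}$. Pick a generator $\gamma$ of the cyclic subgroup $\langle \alpha,\beta \rangle \subseteq \rz$ and write $\alpha = a\gamma$, $\beta = b\gamma$. Using the Bezout expression $\gcd(a,b) = ua + vb$ together with the $(\Z,+)$-definable integer-multiplication maps $k \mapsto mk$, I would define $C_\beta$ from $C_\alpha$ by combining the cyclic orders on appropriate arithmetic subprogressions of $\Z$ with finite $(\Z,+)$-definable congruence data, and symmetrically. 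Conversely, assume $\zca$ and $\zcb$ are interdefinable, and pass to a highly saturated $\zca \prec \Sa N$. The canonical standard-part map $\Sa N \to \rz$ induced by $\psi_\alpha$ and the infinitesimal subgroup equips $\rz$ with the structure $(\rz,+,C)$. Definability of $C_\beta$ in $\zca$ forces $\psi_\beta$ to factor, at the $\rz$-level, through $\psi_\alpha$ up to a continuous character $\rz \to \rz$; and continuous characters of $\rz$ are precisely the maps $x \mapsto nx$ for $n \in \Z$, which yields a $\Z$-linear relation between $\alpha$ and $\beta$.

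The main obstacle is the implication ``linearly dependent $\Rightarrow$ interdefinable". The difficulty is that $C_\alpha$ and $C_\beta$ a priori encode cyclic-order information on different (though commensurable) dense cyclic subgroups $\alpha\Z,\beta\Z$ of $\rz$; recovering one from the other inside $\zca$ requires carefully leveraging the group operation on $\Z$ to simulate the Bezout relation and to ``see" enough of the common refinement $C_\gamma$. The dp-minimality half is comparatively routine once the correct dp-minimal ambient structure is identified.
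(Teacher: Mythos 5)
Your dp-minimality argument has a fatal gap. The ambient structure $(\R,+,<,\Z,s)$ is not dp-minimal, and $(\R,+,<,\Z)$ is not weakly o-minimal: $\Z$ is definable and is not a finite union of convex sets. Concretely, the formulas $\varphi(x;y) := (y < x < y + 1/2)$ and $\phi(x;z) := (x - z \in \Z)$ violate inp-minimality — take $a_i \in \Z$ so that the intervals $(a_i,a_i+1/2)$ are pairwise disjoint and distinct $b_j \in (0,1/2)$; each row is $2$-inconsistent while $x = a_i + b_j$ witnesses every cross-consistency — so the structure has dp-rank at least two. Moreover the approach cannot be repaired by a cleverer ambient structure of this shape: any expansion of $(\R,+,<)$ in which $\Z$ is definable induces the linear order $<$ on $\Z$ alongside $C_\alpha$, and $(\Z,+,<,C_\alpha)$ is itself not dp-minimal, since by Kronecker density every $C_\alpha$-interval meets every sufficiently long $<$-interval, which again yields an inp-pattern of depth two (this is exactly why $(\Z,+,<)$ and $\zca$ have no common dp-minimal expansion; compare Fact~\ref{fact:tmc}). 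As a side issue, $x \mapsto sx$ is not definable from the constant $s$ in your $\Sa M$, so it is not even clear that the induced structure on $\Z$ defines $C_\alpha$ at all. The known proofs avoid the linear order entirely: \cite{TW-cyclic} carries out a quantifier-elimination-style analysis showing that every unary definable set in every model of $\Th\zca$ is a finite union of tmc sets and then transfers dp-minimality as in Proposition~\ref{prop:dp-cyclic} and Fact~\ref{fact:weak-to-dp}; alternatively, within this paper dp-minimality of $\zca$ follows from Proposition~\ref{prop:gh-circle}, at the cost of invoking \cite{HiGu} and Mordell--Lang. There is no soft reduction of the kind you propose; contrary to your closing remark, this half is where the real content lies.

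The interdefinability half is also problematic, though less fundamentally. For ``dependent $\Rightarrow$ interdefinable'', the subgroup $\langle \alpha,\beta\rangle$ of $\rz$ need not be cyclic (it can contain torsion, e.g.\ $\alpha = s + \Z$, $\beta = s/2 + 1/3 + \Z$), so your generator $\gamma$ need not exist; the workable route is to observe that $C_{k\alpha}(x,y,z) \Leftrightarrow C_\alpha(kx,ky,kz)$ is $\zca$-definable for every $k \neq 0$, reduce via $m\alpha = -n\beta$ to recovering $C_\beta$ from $C_{n\beta}$, and that recovery — which uses congruence data to pick out the correct preimage under $t \mapsto nt$ on $\rz$ — is exactly the step you leave vague. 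For ``interdefinable $\Rightarrow$ dependent'', your standard-part/character argument is only a sketch and quietly relies on the completion machinery of Section~\ref{section:completion} (one must know, for instance, that closures of $\zca$-definable sets are $(\rz,+,C)$-definable); a cleaner argument uses Kronecker density as in the discussion preceding Fact~\ref{fact:tmc}: if $\alpha,\beta$ are $\Z$-linearly independent then an infinite, co-infinite $C_\alpha$-interval is not a finite union of $C_\beta$-tmc sets, whereas every $\zcb$-definable unary set is such a union, so $C_\alpha$ is not $\zcb$-definable. Note also that both directions require some description of $\zcb$-definable sets, so neither is available ``for free'' without the analysis of \cite{TW-cyclic}.
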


\noindent
Fact~\ref{fact:basic-cyclic}, Fact~\ref{fact:classify-cyclic}, and dp-minimality of $(\Z,+,<)$ together show that any expansion of $(\Z,+)$ by a cyclic group order is dp-minimal.
\newline


\noindent
It is shown in \cite{TW-cyclic} that every unary definable set in every elementary extension of $\zca$ is a finite union of tmc sets.
It follows by Fact~\ref{fact:weak-to-dp} that if $\Sa Z$ expands $\zca$ and every unary definable set in every elementary extension of $\Sa Z$ is a finite union of tmc sets, then $\Sa Z$ is dp-minimal.
A converse is proven in \cite{SW-dp}.

\begin{fact}
\label{fact:simon-w}
Fix irrational $\alpha \in \rz$.
Suppose $\Sa Z$ is a dp-minimal expansion of $\zca$.
Then $\Sa Z$ is weakly $\Th\zca$-minimal (equivalently: every unary definable set in every elementary extension of $\Sa Z$ is a finite union of tmc sets).
\end{fact}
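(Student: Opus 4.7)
The strategy is to reformulate the conclusion in terms of external definability, then use inp-minimality to rule out the complement. A tmc set $a+mJ$ lies in the Boolean algebra generated by cosets $a+m\Z$ and convex sets, and conversely every such Boolean combination is a finite union of tmc sets. Cosets are already $\zca$-definable, and convex subsets are externally $\zca$-definable by the cyclic-order analog of Fact~\ref{fact:external-convex} (pairs of parameters in a sufficiently saturated elementary extension cut out any convex set). Hence ``finite union of tmc sets in $(Z^*,+,C_\alpha)$'' coincides with ``externally $\zca$-definable subset of $Z^*$''. So it suffices to show: for $\Sa Z\prec\Sa Z^*$ sufficiently saturated and any $\Sa Z^*$-definable unary $X\subseteq Z^*$, the set $X$ is externally $\zca$-definable.

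The plan is to prove the contrapositive. Assume $X$ is not externally $\zca$-definable. By Fact~\ref{fact:shelah}, $\Sh Z$ is dp-minimal, so I replace $\Sa Z$ with $\Sh Z$ and assume every convex set and every coset of a subgroup is definable in $\Sa Z$. The assumption then reads: $X$ is not a Boolean combination of convex sets and cosets. The next step is to show that this failure produces, by saturation, an infinite ``branching'' configuration: a sequence of parameters witnessing that $X$ disagrees with every finite $n$-th attempt at a tmc-decomposition. From such a sequence one reads off a pair of formulas whose parameter families are mutually indiscernible and, against $X$, arrange as a two-dimensional inp-array—one direction indexed by convex arcs $(c_i,d_i)_{C_\alpha}$ chosen at branching points of $X$, the other indexed by cosets $a_j + m\Z^*$ arising from the arithmetic fluctuation of $X$ inside each arc. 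Standard reductions (Facts~\ref{fact:union-reduce} and \ref{fact:remove-finite}) can be used to pass from the row/column formulas supplied by $X$ to a clean violation of inp-minimality.

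The hard step is this inp-pattern extraction. Because $\alpha$ is irrational, the cyclic and arithmetic structures are intertwined: a coset $a+m\Z^*$ has points densely distributed across every convex arc, so one cannot naively take convex arcs and cosets as two ``independent'' families. One has to exploit the precise way in which $X$ fails tmc-decomposability—concretely, one extracts an indiscernible array of pairs $((c_i,d_i),(a_i+m_i\Z^*))$ such that the trace of $X$ on $(c_i,d_i)\cap(a_j+m_j\Z^*)$ is consistent exactly when $i=j$ (or some similarly arranged pattern). Producing such an array from the \emph{mere} failure of ``finite union of tmc sets'' is the delicate combinatorial core of the proof; once it is in place, Fact~\ref{fact:weak-to-dp} confirms that the contradiction with dp-minimality of $\Sh Z$ is genuine, and the external $\zca$-definability of every unary $X$ follows.
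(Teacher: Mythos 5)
There is a genuine gap, and it sits exactly where you flag it yourself. Your opening reduction—that for a unary set in a saturated elementary extension, ``finite union of tmc sets'' is the same as ``trace of a $\Th\zca$-definable set with parameters from a larger model'', using external definability of convex sets and definability of cosets—is fine, but it is only the parenthetical equivalence already built into the statement of Fact~\ref{fact:simon-w}. Everything after that is a plan rather than an argument: the passage from ``$X$ is not a finite union of tmc sets'' to a configuration contradicting dp-minimality \emph{is} the theorem, and you explicitly defer it as ``the delicate combinatorial core''. Nothing in the proposal actually produces the required mutually indiscernible, $n$-inconsistent rows; the array you sketch (``the trace of $X$ on $(c_i,d_i)\cap(a_j+m_j\Z)$ is consistent exactly when $i=j$'') is not an inp-pattern at all, since a violation of inp-minimality needs each row to be $n$-inconsistent while every choice of one instance from each row is consistent, and, as you note yourself, the density of every coset in every arc is precisely what blocks the naive construction of $n$-inconsistent rows from arcs and cosets. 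The closing appeal to Fact~\ref{fact:weak-to-dp} is also misdirected: that fact transfers dp-minimality from $\Th\zca$ to a weakly minimal expansion (the converse direction) and cannot ``confirm'' a contradiction obtained from an inp-pattern.

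For comparison with the paper: the paper does not prove Fact~\ref{fact:simon-w} at all; it quotes it from \cite{SW-dp}, where it is a special case of the main theorem. The argument there is a substantially longer analysis (the paper's proof of Proposition~\ref{prop:eq} indicates the mechanism: one passes to the universal cover of $\zca$ and to the structure induced on a fundamental interval, and then analyzes unary definable sets in a dp-minimal expansion of an archimedean ordered group, controlling the interaction between convex sets and cosets through a sequence of lemmas rather than a single inp-extraction). So your high-level instinct—play the order against the arithmetic to contradict inp-minimality—is a reasonable guess at the flavor of the proof, but as written your proposal establishes only the easy reformulation and leaves the actual content unproved.
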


\noindent
In particular a dp-minimal expansion of $\shc$ cannot add new unary sets.
\newline

\noindent
Suppose $\alpha,\beta \in \rz$ are irrational and $\Z$-linearly independent.
An easy application of Kronecker density shows that if $I$ is an infinite and co-infinite $C_\alpha$-convex set then $I$ is not a finite union of $C_\beta$-tmc sets, see \cite{TW-cyclic}.
Fact~\ref{fact:tmc} follows.

\begin{fact}
\label{fact:tmc}
Suppose $\alpha,\beta \in \rz$ are irrational and $\Z$-linearly independent.
Suppose $\Sa Z_\alpha$ is a dp-minimal expansion of $\zca$ and $\Sa Z_\beta$ is a dp-minimal expansion of $(\Z,+,C_\beta)$.
If $I$ is an infinite and co-infinite $C_\alpha$-interval then $I$ is not $\Sa Z_\beta$-definable, and vice versa.
So $\Sa Z_\alpha$ defines a subset of $\Z$ which is not $\Sa Z_\beta$-definable, and vice versa.
In particular $\Sa Z_\alpha$ and $\Sa Z_\beta$ do not have a common dp-minimal expansion.
\end{fact}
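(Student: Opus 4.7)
The plan is to combine two ingredients already available in the paper. First, Fact~\ref{fact:simon-w} says that any dp-minimal expansion $\Sa Z_\beta$ of $(\Z,+,C_\beta)$ is weakly $\Th(\Z,+,C_\beta)$-minimal; concretely, every unary $\Sa Z_\beta$-definable subset of $\Z$ is a finite union of $C_\beta$-tmc sets. Second, the Kronecker-density statement flagged in the paragraph immediately preceding the fact (and proved in \cite{TW-cyclic}) asserts that whenever $\alpha,\beta \in \rz$ are irrational and $\Z$-linearly independent, no infinite and co-infinite $C_\alpha$-interval is a finite union of $C_\beta$-tmc sets.

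With these two tools in hand, the first assertion essentially writes itself. Let $I$ be an infinite and co-infinite $C_\alpha$-interval. If $I$ were $\Sa Z_\beta$-definable, the first ingredient would exhibit $I$ as a finite union of $C_\beta$-tmc sets, contradicting the second. The ``vice versa'' follows by interchanging the roles of $\alpha$ and $\beta$. For the second sentence of the fact, note that plenty of such $I$ exist: for any proper arc $J \subseteq \rz$ the set $\psi_\alpha^{-1}(J)$ is an infinite and co-infinite $C_\alpha$-interval, it is $\Sa Z_\alpha$-definable but, by what we just showed, not $\Sa Z_\beta$-definable; symmetrically on the other side.

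For the final claim, suppose $\Sa Z$ were a common dp-minimal expansion of $\Sa Z_\alpha$ and $\Sa Z_\beta$. Then $\Sa Z$ is in particular a dp-minimal expansion of $(\Z,+,C_\beta)$ that moreover defines $C_\alpha$, hence defines $I$ as above. Fact~\ref{fact:simon-w} applied to $\Sa Z$ viewed as an expansion of $(\Z,+,C_\beta)$ forces $I$ to be a finite union of $C_\beta$-tmc sets, and we derive the same contradiction as before.

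Because both the Simon-style classification of unary definable sets in dp-minimal expansions of $\zcb$ and the Kronecker lemma for incompatible cyclic orders are already in place, I do not expect a serious obstacle; the real content of the fact has been packaged into Fact~\ref{fact:simon-w}, and the only thing to check is that we may apply it not just to $\Sa Z_\beta$ itself but to any dp-minimal expansion of $\zcb$ — which is precisely the form of the statement.
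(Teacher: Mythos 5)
Your proof is correct and follows the paper's own route: the paper obtains Fact~\ref{fact:tmc} precisely by combining Fact~\ref{fact:simon-w} with the Kronecker-density observation from \cite{TW-cyclic} that no infinite and co-infinite $C_\alpha$-convex set is a finite union of $C_\beta$-tmc sets, and your handling of the ``vice versa'' and of a putative common dp-minimal expansion is exactly the intended argument. One tiny point: for the $\Sa Z_\alpha$-definable witness take a genuine $C_\alpha$-interval with integer endpoints, e.g. $\{k \in \Z : C_\alpha(0,k,1)\}$, rather than $\psi_\alpha^{-1}(J)$ for an arbitrary proper arc $J \subseteq \rz$, since for arbitrary $J$ that preimage is only externally definable in $\Sa Z_\alpha$; this is a cosmetic fix and does not affect the argument.
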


\noindent
We now describe a striking recent result of Alouf~\cite{aloug}.
We first recall Fact~\ref{fact:exists-0}, a special case of \cite[Lemma 3.1]{JaSiWa}.

\begin{fact}
\label{fact:exists-0}
Suppose $\Sa G$ is a dp-minimal expansion of a group $G$ which defines a non-discrete Hausdorff group topology on $G$.
Then $\Sa G$ eliminates $\exists^\infty$.
\end{fact}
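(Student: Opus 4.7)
The plan is to prove the contrapositive through the characterization of dp-minimality as NIP plus inp-minimality recalled in Section~\ref{section:prelim}. Assume that some formula $\varphi(x;y)$ with $|x|=1$ witnesses the failure of $\exists^\infty$-elimination: for every $n$ there is a parameter $a_n \in \Sa G$ with $n \leq |\varphi(\Sa G;a_n)| < \infty$; write $F_n := \varphi(\Sa G;a_n)$. The hypothesis on the group topology supplies a definable shrinking family $(U_t)_t$ of neighborhoods of the identity $e$ with each $U_t$ infinite (non-discreteness) and $\bigcap_t U_t = \{e\}$ (Hausdorff-ness). By Fact~\ref{fact:union-reduce} and Fact~\ref{fact:remove-finite} it suffices to produce, for arbitrarily large $k$, a violation of inp-minimality using two fixed formulas.

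For each $k$ I would construct a depth-$k$ inp-pattern from the pair
\[
\varphi_1(x;y,h) := \varphi(h^{-1}x;y), \qquad \varphi_2(x;z,t) := z^{-1}x \in U_{t}.
\]
Choose $n$ with $|F_n| \geq k$, pick distinct $c_1,\ldots,c_k \in F_n$, and apply Hausdorff-ness to select $t_\star$ so that the translates $c_j U_{t_\star}$ are pairwise disjoint. Since $U_{t_\star}$ is infinite and $F_n F_n^{-1}$ is finite, pick $h_1,\ldots,h_k \in U_{t_\star}$ with $h_i h_{i'}^{-1} \notin F_n F_n^{-1}$ for $i \neq i'$, so that the translates $h_i F_n$ are also pairwise disjoint. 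Intra-row $2$-inconsistency then holds for both rows by construction. Cross-row consistency holds because $h_i c_j \in h_i F_n \cap c_j U_{t_\star}$: the first containment is immediate, and the second follows since $c_j^{-1} h_i c_j = h_i \in U_{t_\star}$ in the abelian case.

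The main obstacle is the non-abelian case, where $c_j^{-1} h_i c_j$ need not coincide with $h_i$. Resolving it requires first shrinking $t_\star$ so that conjugation by each of the finitely many $c_j$'s carries some neighborhood $V$ of $e$ inside $U_{t_\star}$, using continuity of inner automorphisms in any topological group, and then choosing the $h_i$'s inside $V$. Since the subsequent constraints $h_i h_{i'}^{-1} \notin F_n F_n^{-1}$ are open (as $F_n$ is finite and $V$ is infinite by non-discreteness), they can still be met within $V$. Once the depth-$k$ inp-pattern is in place for arbitrary $k$, inp-minimality is violated, contradicting dp-minimality and completing the proof.
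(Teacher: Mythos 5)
Your proof is correct. Note that the paper itself does not prove this statement: it quotes it as a special case of \cite[Lemma 3.1]{JaSiWa}, so there is no in-paper argument to compare against; your construction is essentially the standard one behind that cited lemma. Concretely, building for each $k$ a depth-two pattern whose rows are (i) pairwise disjoint translates $h_iF$ of a finite set $F$ of size at least $k$ coming from the failure of $\exists^\infty$-elimination, and (ii) pairwise disjoint translates $c_jU$ of a small basic neighborhood, with cross-consistency witnessed by $h_ic_j$, is exactly the right mechanism, and your handling of the non-abelian case (shrink to $V$ with $c_j^{-1}Vc_j\subseteq U_{t_\star}$ by continuity of conjugation, then choose the $h_i$ greedily in the infinite set $V$ avoiding a finite bad set) closes the only real gap. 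Two cosmetic points: the disjointness condition for left translates should read $h_{i'}^{-1}h_i\notin F F^{-1}$ rather than $h_ih_{i'}^{-1}\notin FF^{-1}$ (immaterial, since you may simply avoid the finite set $FF^{-1}\cup F^{-1}F$), and the appeal to Facts~\ref{fact:union-reduce} and~\ref{fact:remove-finite} is unnecessary here, as you produce the violating pair of formulas directly; also recall that dp-minimality gives inp-minimality outright, so the contradiction is immediate once the pattern exists for every $k$ with $2$-inconsistent rows.
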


\noindent
Fact~\ref{fact:exists-0} shows that any dp-minimal expansion of $\zpp$ or $(\Z,+,C_\alpha)$ eliminates $\exists^\infty$.
Fact~\ref{fact:exists-1} is proven in~\cite{aloug}.

\begin{fact}
\label{fact:exists-1}
Suppose $\Sa Z$ is a dp-minimal expansion of $(\Z,+)$ which either
\begin{enumerate}
\item does not eliminate $\exists^\infty$,
\item or defines an infinite subset of $\N$.
\end{enumerate}
Then $\Sa Z$ defines $<$.
\end{fact}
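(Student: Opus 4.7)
My approach is to handle each hypothesis separately and derive the definability of $<$; once that is done, Fact~\ref{fact:toomany} forces $\Sa Z$ to be interdefinable with $(\Z,+,<)$. I would first try to reduce hypothesis~(1) to hypothesis~(2), since an infinite $\Sa Z$-definable $A \subseteq \N$ provides a concrete asymmetric anchor on which to base the construction of $<$.

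For hypothesis~(2), suppose $A \subseteq \N$ is infinite and $\Sa Z$-definable. The key asymmetry is that $-a \notin A$ for every $a \in A \setminus \{0\}$. I aim to show that $<$ is externally $\Sa Z$-definable (equivalently, $\Sh Z$-definable): then $\Sh Z$ is a dp-minimal expansion of $(\Z,+,<)$ and hence interdefinable with $(\Z,+,<)$ by Fact~\ref{fact:toomany}; every $\Sa Z$-definable subset of $\Z$ is therefore Presburger-definable, and since $A \subseteq \N$ is not $(\Z,+)$-definable, Fact~\ref{fact:conant} delivers interdefinability of $\Sa Z$ with $(\Z,+,<)$. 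To produce $<$ externally, I would pass to a saturated $\Sa Z \prec \Sa N$, examine the translates $A - a$ for $a \in A(\Sa N)$, and isolate a type-definable cut at infinity of $A$ (a nonstandard $a$ together with the chain of shifts below it). The one-sidedness of $A$ makes this cut nontrivial, and Fact~\ref{fact:external-convex} together with Fact~\ref{fact:cs-limit} converts it into an externally definable half-line, from which $<$ is immediate.

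For hypothesis~(1), assume $\Sa Z$ fails to eliminate $\exists^\infty$: there is $\varphi(x;y)$ with $|x|=1$ so that $\{a : \varphi(\Sa Z; a) \text{ is infinite}\}$ is not definable. Using inp-minimality (Facts~\ref{fact:union-reduce} and~\ref{fact:remove-finite}) to control how finite and infinite fibers can interleave, and extracting an indiscernible sequence of parameters $(a_i)$ along which $|\varphi(\Sa Z; a_i)|$ grows unboundedly through finite values, I would assemble the finite fibers into a single $\Sh Z$-definable infinite set carrying a canonical well-founded structure indexed by $i$. Translating to a base point yields an externally $\Sa Z$-definable infinite one-sided set which, after an orientation choice, sits inside $\N$, reducing to case~(2). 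The contrapositive of Fact~\ref{fact:exists-0} guarantees that under~(1) $\Sa Z$ defines no non-discrete Hausdorff group topology, blocking the symmetric alternatives arising from $C_\alpha$ or $\valp$ and channeling the extraction toward a genuinely one-sided configuration.

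The main obstacle is this (1)-to-(2) reduction: one-sidedness is not intrinsically meaningful in a group without an order, so the asymmetry has to be pulled out of the combinatorial failure of $\exists^\infty$ itself. Producing a single formula carving out a genuinely one-sided infinite set, and pinning down the correct orientation so it lies in $\N$ rather than in $-\N$, is the delicate heart of the argument, and it is there that dp-minimality, inp-minimality, and the Shelah-expansion machinery interact most subtly.
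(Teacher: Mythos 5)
First, note that the paper itself contains no proof of Fact~\ref{fact:exists-1}: it is quoted from Alouf~\cite{aloug}, where it is the main theorem and its proof is a substantial standalone argument, so there is no internal proof to compare yours against; what can be said is that your sketch does not close the gap that \cite{aloug} closes. Your outer scaffolding is sound: if $\N$ (equivalently $<$) were externally definable in $\Sa Z$, then $\Sh Z$ would be a dp-minimal expansion of $(\Z,+,<)$ by Fact~\ref{fact:shelah}, hence interdefinable with $(\Z,+,<)$ by Fact~\ref{fact:toomany}; every $\Sa Z$-definable set would then be Presburger-definable, an infinite subset of $\N$ is never $(\Z,+)$-definable, and Fact~\ref{fact:conant} would give that $\Sa Z$ defines $<$. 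The genuine gap is that the steps carrying all the content are asserted rather than proved. In case (2), the claim that the cut at infinity of $A$ ``converts into an externally definable half-line'' is the theorem in disguise, and the two facts you invoke cannot deliver it: Fact~\ref{fact:external-convex} presupposes a structure that already expands a linear order, which is exactly what you are trying to produce, so its use is circular; and Fact~\ref{fact:cs-limit} runs in the wrong direction, approximating a set already known to be externally definable by definable subsets --- it cannot certify that a given subset of $\Z$ (a half-line, or a cut of $A$) is the trace of an $\Sa N$-definable set. Nothing in your sketch rules out that no $\Sa N$-definable set traces onto such a cut at all; showing that dp-minimality forces some one-sided infinite set to be (externally) visible is precisely where the work lies, and no argument is given.

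The reduction of (1) to (2) is weaker still. Before an order is available, ``one-sided'', ``orientation'' and ``a canonical well-founded structure indexed by $i$'' have no meaning in $(\Z,+)$, so the plan to assemble finite fibers into an $\Sh Z$-definable infinite set lying in $\N$ restates the difficulty rather than resolving it --- as you yourself acknowledge. Concretely, failure of elimination of $\exists^\infty$ yields parameters with finite fibers of unbounded size, hence in a saturated $\Sa Z \prec \Sa N$ an infinite pseudo-finite fiber; but its trace on $\Z$ need not lie in $\N$ or in any translate of $\pm\N$, and the contrapositive of Fact~\ref{fact:exists-0} (no definable non-discrete Hausdorff group topology) does not by itself orient anything. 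There is also a small mismatch: an $\Sh Z$-definable infinite subset of $\N$ is not hypothesis (2) for $\Sa Z$, so you would have to rerun your case-(2) argument for $\Sh Z$ and then descend to $\Sa Z$ (which is repairable, since under (1) $\Sa Z$ cannot be interdefinable with $(\Z,+)$, so Fact~\ref{fact:conant} still applies once $\Sa Z$ is known to be a reduct of $(\Z,+,<)$). In short, the conclusion is not recoverable from the facts assembled in this survey section; both of the steps you leave open are the substance of the proof in \cite{aloug}.
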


\noindent
So $(\Z,+,<)$ is, up to interdefinibility, the only dp-minimal expansion of $(\Z,+)$ which does not eliminate $\exists^\infty$.
Conjecture~\ref{conj:top} is now natural.

\begin{conj}
\label{conj:top}
Any proper dp-minimal expansion of $(\Z,+)$ which eliminates $\exists^\infty$ defines a non-discrete group topology on $(\Z,+)$.
\end{conj}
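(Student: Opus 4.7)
The plan is to reformulate the conclusion model-theoretically and then attempt to build the topology from scratch. Fix a highly saturated $\Sa Z \prec \Sa N$. A $\Sa Z$-definable non-discrete group topology on $(\Z,+)$ corresponds to a proper type-definable subgroup $\minf \leq (N,+)$ over $\Sa Z$, cut out by a family $(U_i)_{i \in I}$ of $\Sa Z$-definable subsets of $\Z$ each containing $0$: the $\Sa Z$-definable sets that contain some $U_i$ are exactly the neighborhoods of $0$, and $\minf \neq \{0\}$ is precisely non-discreteness. So the goal is to produce such an $\minf$ from the three hypotheses ``proper,'' ``dp-minimal,'' and ``eliminates $\exists^\infty$.''

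The first step is to pin down the shape of $\Sa Z$ using the survey results. Properness plus Fact~\ref{fact:cp} forces instability; Fact~\ref{fact:exists-1} forbids any $\Sa Z$-definable infinite subset of $\N$, so every infinite $\Sa Z$-definable subset of $\Z$ is unbounded in both Presburger directions; and Fact~\ref{fact:conant} prevents $\Sa Z$ from being a reduct of $(\Z,+,<)$. Thus $\Sa Z$ must define some ``new'' unary set $X$ whose combinatorics is incompatible with being Presburger. In $\Sa N$, realize a non-standard $b$ sitting in every such $X$, translate so that $b$ replaces $0$, and consider the candidate
\[
\minf \;:=\; \bigcap \{\, U(N) : U \text{ is } \Sa Z\text{-definable},\ 0 \in U,\ U \text{ contains ``arbitrarily small'' elements in }\Sa N\,\}.
\]
Elimination of $\exists^\infty$ is exactly what makes ``arbitrarily small'' uniform across definable families, and should guarantee that the collection above is closed under finite intersection and non-principal (non-principality is where properness enters).

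Promoting this type-definable set to an actual \emph{subgroup} is where I expect the main obstacle. One needs, for each $U$ in the defining family, a $V$ in the family with $V - V \subseteq U$. A natural line of attack is a Ramsey-style inp-minimality argument applied to $\varphi(x;y,z) := (x \in U) \wedge (x = y - z)$ against a companion formula built from translates of the $V$'s, after collapsing to a single pair via Fact~\ref{fact:union-reduce} and removing the finite-fiber case via Fact~\ref{fact:remove-finite}. The delicate point is that nothing in pure dp-minimality forces the family $\{V : V - V \subseteq U\}$ to be cofinal in our filter; ruling out the bad case seems to require a genuine structure theorem for unary $\Sa Z$-definable sets --- an analogue of the tmc decomposition of Fact~\ref{fact:simon-w} that does \emph{not} presuppose a given cyclic order or valuation. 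The cleanest route would be to prove directly that a proper dp-minimal $\Sa Z$ eliminating $\exists^\infty$ must define either a cyclic group order or a non-trivial valuation-like partial order (such as $\prec_p$), which would immediately yield a non-discrete group topology via the order topology or the valuation topology; this would unify Facts~\ref{fact:ae} and~\ref{fact:basic-cyclic} with the new examples of this paper, and is, to my mind, the real content of the conjecture.
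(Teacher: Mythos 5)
There is nothing in the paper for your proposal to be measured against: the statement you were given is Conjecture~\ref{conj:top}, which the paper explicitly leaves open (it is motivated by Fact~\ref{fact:exists-1} and by Johnson's theorem for fields, but no proof or even proof sketch is offered). So the only question is whether your argument closes the conjecture, and it does not. Your reformulation is reasonable as far as it goes: a $\Sa Z$-definable non-discrete group topology corresponds, in a saturated $\Sa Z \prec \Sa N$, to a downward-directed family of infinite $\Sa Z$-definable sets containing $0$ such that for every $U$ in the family there is $V$ in the family with $V - V \subseteq U$ (equivalently, an infinite type-definable subgroup $\minf$ of $N$ cut out by $\Sa Z$-definable sets, with every finite subintersection meeting $\Z$ in an infinite set). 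But the construction you then propose has two unresolved problems. First, the defining condition of your candidate family, ``$U$ contains arbitrarily small elements in $\Sa N$,'' is not meaningful: smallness presupposes an order, valuation, or topology, which is precisely what is being constructed; without it the filter is not even well-defined, and neither directedness nor non-principality (your use of properness) is established. Second, and as you yourself flag, the group axiom is the entire content of the conjecture: nothing in dp-minimality plus elimination of $\exists^\infty$ is shown to make the sets $V$ with $V - V \subseteq U$ cofinal in any filter you can write down, and the suggested inp-minimality argument via Facts~\ref{fact:union-reduce} and~\ref{fact:remove-finite} is only named, not carried out; those facts rule out certain array configurations but give no mechanism for producing the required $V$.

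Your closing suggestion --- prove that a proper dp-minimal $\Sa Z$ eliminating $\exists^\infty$ must define a dense cyclic group order or a valuation-like order such as $\prec_p$ --- is not a route to the conjecture but a strictly stronger open statement (it is essentially the classification problem the paper is working towards, and the paper's new examples show the topology can come from characters into circle groups or into $1+p\Z_p$, so any such structure theorem would have to accommodate those). In short: you have correctly identified where the difficulty lies, but the proposal supplies no new idea at that point, so the conjecture remains exactly as open after your argument as before it.
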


\noindent
Johnson~\cite{Johnson-top} shows that a dp-minimal expansion of a field which is not strongly minimal admits a definable non-discrete field topology.
His proof makes crucial use of the fact that any dp-minimal expansion of a field eliminates $\exists^\infty$.

\subsection{Interpretations}
We describe what we know about interpretations between dp-minimal expansions of $(\Z,+)$.
We suspect that bi-interpretable dp-minimal expansions of $(\Z,+)$ are interdefinable.



\begin{prop}
\label{prop:eq}
Fix irrational $\alpha \in \rz$.
Suppose $\Sa Z$ is a dp-minimal expansion of $\zca$.
Then $\Sa Z^\mathrm{eq}$ eliminates $\exists^\infty$, so $\Sa Z$ does not interpret $(\Z,+,<)$ or $\zpp$ for any prime $p$.
\end{prop}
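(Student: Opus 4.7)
The plan breaks into three steps: establish elimination of $\exists^\infty$ for $\Sa Z$ itself, propagate it to $\Sa Z^{\mathrm{eq}}$, and derive the non-interpretation statements.

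First, since $\alpha$ is irrational, $\psi_\alpha : \Z \to \rz$ is injective with dense image, so the pullback of the quotient topology on $\rz$ endows $(\Z,+)$ with a non-discrete Hausdorff group topology whose basic open sets are $C_\alpha$-intervals, and are thus definable in $\zca$. Fact~\ref{fact:exists-0} then gives elimination of $\exists^\infty$ for $\Sa Z$.

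Second, I promote this to $\Sa Z^{\mathrm{eq}}$. Given a $\Sa Z$-definable equivalence relation $E$ on $Z^n$ and a $\Sa Z$-definable $Y \subseteq Z^n \times Z^m$ that is $E$-saturated in the first coordinate, the task is to bound the number of $E$-classes in $Y_b$ uniformly over those $b$ for which this number is finite. Suppose this fails; working in a highly saturated $\Sa Z \prec \Sa N$, for each $k$ there is $b_k \in N^m$ with $Y_{b_k}$ meeting exactly $k$ distinct $E$-classes. Applying Fact~\ref{fact:simon-w} to the one-variable slices of $Y$ and of $E$ to get tmc-decompositions of uniformly bounded complexity, and performing Ramsey-type extraction, I build two mutually indiscernible sequences whose parameters record both the $b_k$ and chosen representatives of the $E$-classes in $Y_{b_k}$. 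These should assemble into a depth-$2$ inp-pattern; Facts~\ref{fact:union-reduce} and~\ref{fact:remove-finite} then reduce the pattern to a single pair of formulas violating inp-minimality, contradicting dp-minimality of $\Sa Z$.

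The main obstacle is this second step---the reduction from imaginaries to the home sort. One must select representatives of $E$-classes in a way that both preserves mutual indiscernibility across the two coordinates of the would-be inp-pattern and records the growing count of distinct classes. Here the tmc-decomposition from Fact~\ref{fact:simon-w} is essential but must be combined with a careful selection procedure, and I expect the bulk of the technical work to lie here.

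Finally, $(\Z,+,<)^{\mathrm{eq}}$ and $\zpp^{\mathrm{eq}}$ each admit a definable family of finite sets of unbounded size---the intervals $\{y : 0 \leq y \leq x\}$ in the former and the intervals $\{m \in \N : m \leq n\}$ in the value sort of the latter. An interpretation of either structure in $\Sa Z$ would transport such a family into $\Sa Z^{\mathrm{eq}}$, contradicting the elimination of $\exists^\infty$ just established.
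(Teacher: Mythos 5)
Your steps (1) and (3) are fine and agree with the paper's intent: Fact~\ref{fact:exists-0} gives elimination of $\exists^\infty$ in the home sort, and an interpretation of $(\Z,+,<)$ or of $\zpp$ (which interprets $(\N,<)$ via the value sort) would transport a definable family of finite sets of unbounded size into $\Sa Z^{\mathrm{eq}}$. The gap is in step (2), and it is not merely a matter of ``technical work to be filled in'': the mechanism you propose cannot work as stated. You aim to contradict inp-minimality of $\Sa Z$ by assembling an inp-pattern out of parameters $b_k$ for which $Y_{b_k}$ meets exactly $k$ classes. But dp-minimality together with elimination of $\exists^\infty$ in the home sort does \emph{not} imply elimination of $\exists^\infty$ in $\Sa Z^{\mathrm{eq}}$: the structure $\zpp$ is dp-minimal, eliminates $\exists^\infty$, and admits an analogous uniform description of unary definable sets, yet $\zpp^{\mathrm{eq}}$ fails to eliminate $\exists^\infty$ (the paper points this out immediately after the proposition; the family $x \sim_a y \Leftrightarrow \valp(x-y) \geq \valp(a)$ has finite classes of unbounded number). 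So any argument whose only inputs are dp-minimality, home-sort elimination of $\exists^\infty$, and a decomposition of unary sets of bounded complexity would ``prove'' the false statement for $\zpp$ as well. Your sketch never isolates a property of the cyclically ordered setting that fails $p$-adically, which is why the ``Ramsey-type extraction'' producing a depth-$2$ inp-pattern cannot be carried out: in general there simply is no inp-pattern to be found.

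The missing idea is the paper's reduction in Fact~\ref{fact:eq-reduce}: to get elimination of $\exists^\infty$ in $^{\mathrm{eq}}$ it suffices, in a highly saturated model, to have (a) elimination of $\exists^\infty$ and (b) \emph{no definable unary equivalence relation with infinitely many infinite classes}. Point (b) is exactly the dividing line between the $C_\alpha$ case and the $\valp$ case, and it is where the real content lies. The paper proves it by passing to the structure $\Sa I$ induced on the interval $(-u,u)$ of the universal cover (so that the linearly ordered results of \cite{SW-dp} apply), invoking \cite[Lemma 8.7]{SW-dp} to write each class of a definable equivalence relation as a finite union of pieces of the form $K \cap A$ with $K$ convex and $A$ in a fixed finite definable partition, and then convexifying to produce a definable equivalence relation with convex classes and infinitely many classes --- contradicting the fact that such a relation has only finitely many infinite classes. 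Your appeal to Fact~\ref{fact:simon-w} gestures toward this structure theory, but without the reduction to (b) and the convex-class finiteness result, the argument has no target to contradict; dp-minimality alone is not that target.
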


\noindent
Note that $\zpp^{\mathrm{eq}}$ does not eliminate $\exists^\infty$ as $\zpp$ interprets $(\N,<)$.
\newline

\noindent
Given a structure $\Sa M$ we say that $\Sa M^{\mathrm{eq}}$ eliminates $\exists^\infty$ in one variable if for every definable family $(E_a : a \in M^k)$ of equivalence relations on $M$ there is $n$ such that for all $a \in M^k$ we either have $|M/E_a| < n$ or $|M/E_a| \geq \aleph_0$.
Proposition~\ref{prop:eq} requires Fact~\ref{fact:eq-reduce}, which is routine and left to the reader.

\begin{fact}
\label{fact:eq-reduce}
Let $\Sa M \prec \Sa N$ be highly saturated.
Suppose that $\Sa N$ eliminates $\exists^\infty$ and there is no $\Sa N$-definable equivalence relation on $N$ with infinitely many infinite classes.
Then $\Eq M$ eliminates $\exists^\infty$.
\end{fact}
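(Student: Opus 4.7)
The plan is to take a definable family $(E_a : a \in M^k)$ of equivalence relations on $M$, view it as an $\Sa N$-definable family on $N$ via elementarity, and extract a uniform bound on the number of classes, independent of $a$. The two hypotheses will be used in tandem: (i) converts the qualitative condition ``infinite class'' into a first-order size threshold, and this is precisely what allows (ii) to be promoted from a per-relation statement into a uniform one by saturation.

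First I would apply (i) to the formula $E_a(x;y)$, with free variable $x$ and parameters $(y,a)$: elimination of $\exists^\infty$ yields $n_0$ such that every class $[y]_{E_a}$ in $\Sa N$ has size $<n_0$ or is infinite. In particular ``$[y]_{E_a}$ has size $\geq n_0$'' is a first-order formula equivalent to ``$[y]_{E_a}$ is infinite''. Next I would bound uniformly the number of such infinite classes. Let $\phi_{n_1}(a)$ say ``$E_a$ has at least $n_1$ classes of size $\geq n_0$''. If no uniform $n_1$ worked, the partial type $\{\phi_{n_1}(a) : n_1 \in \N\}$ would be finitely satisfied in $\Sa N$ and, by high saturation, realized by some $a \in N^k$, producing infinitely many classes of size $\geq n_0$. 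By the previous step these are all infinite, contradicting (ii). So some $n_1$ works: at most $n_1$ infinite classes for every $a \in N^k$.

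Finally, let $S_a := \{x \in N : [x]_{E_a} \text{ has size} < n_0\}$, a set definable in $\Sa N$ uniformly in $a$. Applying (i) to this definable family gives $n_2$ with $|S_a| < n_2$ or $|S_a| \geq \aleph_0$ for every $a$. In the first case the number of small classes is at most $n_2$, so $|N/E_a| \leq n_1 + n_2$. In the second, $S_a$ is an infinite set partitioned into classes of size $<n_0$ and hence contains infinitely many classes, so $|N/E_a| \geq \aleph_0$. Elementarity now transfers the dichotomy to $a \in M^k$: ``$|M/E_a| \leq n_1 + n_2$'' is first-order, and ``$|M/E_a| \geq \aleph_0$'' is the conjunction over $n$ of the first-order conditions ``$E_a$ has at least $n$ classes''. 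Setting $n := n_1 + n_2 + 1$ gives the required bound in $\Eq M$. The only nontrivial step is the saturation argument in the second paragraph, and its validity depends essentially on using (i) first to make ``infinite class'' first-order before invoking (ii).
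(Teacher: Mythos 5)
Your proof is correct. The paper declares this fact ``routine'' and leaves it to the reader, and your argument --- first using elimination of $\exists^\infty$ in $\Sa N$ to make ``infinite class'' first-order, then combining saturation with the hypothesis on $\Sa N$-definable equivalence relations to get a uniform bound on the number of infinite classes, applying $\exists^\infty$-elimination once more to the union of the small classes, and transferring the resulting dichotomy back to $\Sa M$ by elementarity --- is precisely the intended routine argument, with all the delicate points (saturation over a finite parameter set, the order in which (i) and (ii) are used, and the transfer of both halves of the dichotomy) handled correctly.
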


\noindent
We now prove Proposition~\ref{prop:eq}.
We use the notation and results of \cite{SW-dp}, so the reader will need to have a copy of that paper at hand.

\begin{proof}
Let $(H,+,<,u,\pi)$ be a universal cover of $\zca$, $I : = (-u,u)$.
So let $\Sa I$ be the structure induced on $I$ by $\Sa Z$ and $\pi$.
It is shown in \cite{SW-dp} that $\Sa I$ and $\Sa Z$ define isomorphic copies of each other, so it suffices to show that $\Sa I^{\mathrm{eq}}$ eliminates $\exists^\infty$.
Let $\Sa I \prec \Sa J$ be highly saturated.
The proof of Fact~\ref{fact:exists-0} shows that $\Sa J$ eliminates $\exists^\infty$.
We show that every $\Sa J$-definable equivalence relation on $J$ has only finitely many infinite classes and apply Fact~\ref{fact:eq-reduce}.
\newline

\noindent
Suppose $E$ is a $\Sa J$-definable equivalence relation on $J$ with infinitely many infinite classes.
By \cite[Lemma 8.7]{SW-dp} there is a finite partition $\Cal A$ of $J$ into $\Sa J$-definable sets such that every $E$-class is a finite union of sets of the form $K \cap A$ for convex $K$ and $A \in \Cal A$.
Fix $A \in \Cal A$ which intersects infinitely many $E$-classes.
Note that the intersection of each $E$-class with $A$ is a finite union of convex sets.
Let $F$ be the equivalence relation on $J$ where $a < b$ are $F$-equivalent if and only if there are $a' < a < b < b'$ such that $a',b' \in A$, $a'$ and $b'$ are $E$-equivalent, and $a',b'$ lie in the same convex component of $E_{a'} \cap A$.
It is easy to see that every $F$-class is convex and there are infinitely many $F$-classes.
However, it is shown in the proof of \cite[Lemma 8.7]{SW-dp} that any definable equivalence relation on $J$ with convex equivalence classes has only finitely many infinite classes.
\end{proof}

\noindent
Fact~\ref{fact:ntp} is proven in \cite[Proposition 5.6]{big-nip}.

\begin{fact}
\label{fact:ntp}
Suppose $\Sa Z$ is an $\mathrm{NTP}_2$ expansion of $(\Z,<)$ and $\Sa G$ is an expansion of a group $G$ which defines a non-discrete Hausdorff group topology on $G$.
Then $\Sa Z$ does not interpret $\Sa G$.
So in particular an $\mathrm{NTP}_2$ expansion of $(\Z,+,<)$ does not interpret  $\zca$ for any irrational $\alpha \in \rz$ or $\zpp$ for any prime $p$.
\end{fact}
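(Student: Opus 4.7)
The plan is to derive a contradiction by exhibiting an explicit $\mathrm{TP}_2$-array in $\Sa{Z}^{\mathrm{eq}}$. Suppose $\Sa{Z}$ interprets $\Sa{G}$. Passing to $\Sa{Z}^{\mathrm{eq}}$, we may treat $G$, its group operation, and a uniformly definable neighborhood basis $(U_a : a \in Y)$ at the identity $e$ for the non-discrete Hausdorff group topology as definable. Hausdorffness gives $\bigcap_{a \in Y} U_a = \{e\}$, while non-discreteness gives $\{e\} \subsetneq U_a$ for every $a$.

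First I would pass to a sufficiently saturated $\Sa{Z} \prec \Sa{N}$ and use compactness together with the basis property to extract a strictly decreasing chain $U_{a_0} \supsetneq U_{a_1} \supsetneq \cdots$ of basic neighborhoods whose intersection in the standard sort is $\{e\}$. Then I would construct the array inductively: assuming $U_{a_0}, \ldots, U_{a_n}$ and elements $g_{k,j}$ ($k < n$, $j \in \omega$) have been chosen, use non-discreteness applied to $U_{a_{n}}$ to select elements $g_{n,0}, g_{n,1}, \ldots \in U_{a_n}$ whose translates $g_{n,j} \cdot U_{a_{n+1}}$ are pairwise disjoint and contained in $U_{a_n}$. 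Take $\varphi(x; y, z) \equiv x \in y \cdot U_z$ and let the array entries be $\varphi(x; g_{n,j}, a_{n+1})$.

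Row $2$-inconsistency follows immediately from the pairwise disjointness. The substantive step is path-consistency: for any $\eta : \omega \to \omega$, one must see that $\bigcap_n g_{n,\eta(n)} \cdot U_{a_{n+1}}$ is nonempty in $\Sa{N}$. By saturation this reduces to the finite case, i.e.\ showing $\bigcap_{n \leqslant N} g_{n,\eta(n)} \cdot U_{a_{n+1}} \neq \emptyset$ for every $N$. Here one arranges the inductive choice of the $g_{n,j}$ so that each $g_{n+1,j}$ lies inside the previously chosen $g_{n,\eta(n)} U_{a_{n+1}}$ for the relevant $\eta$; the group-topology axioms guarantee that translates of small neighborhoods are themselves small neighborhoods of points inside $g_{n,\eta(n)} U_{a_{n+1}}$, so that this inductive coherence can be maintained while still splitting into infinitely many disjoint cosets at the next level by non-discreteness. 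The resulting $\mathrm{TP}_2$-array lives in $\Sa{Z}^{\mathrm{eq}}$, hence in $\Sa{Z}$, contradicting $\mathrm{NTP}_2$.

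The main obstacle is precisely the simultaneous coordination in the inductive construction: we need the row-level choices $(g_{n,j})_j$ to be both pairwise $U_{a_{n+1}}$-separated (for row inconsistency) and suitably nested inside all previously selected path-entries (for column consistency along every possible $\eta$). Non-discreteness at every level provides infinitely many cosets to choose from, and the group-topology axioms ensure the nesting can be propagated; the fact that $\Sa{Z}$ expands $(\Z,<)$ enters through allowing the chain $(a_n)$ to be indexed in a definable way and through $\Sa{N}$-saturation being enough to complete the paths. The applications to $\zca$ and $\zpp$ then follow because the cyclic-order topology and $p$-adic topology are non-discrete Hausdorff group topologies on the respective structures.
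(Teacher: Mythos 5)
Your construction cannot work, and the quickest way to see this is that it never genuinely uses the hypothesis that $\Sa Z$ expands $(\Z,<)$: the array is built entirely inside the interpreted group from the non-discrete Hausdorff topology. If such an argument were valid it would show that \emph{every} structure defining a non-discrete Hausdorff group topology has TP$_2$, which is false --- $\zpp$, $\zca$, $\pfield)$ and $\rfield)$ are all dp-minimal or NIP (hence NTP$_2$), each defines such a topology, and each interprets itself. So the order on $\Z$ must enter in an essential way; the paper does not reprove the fact but cites \cite[Proposition 5.6]{big-nip}, where the argument exploits the interaction between the discrete order and the interpreted topological group, not the topology alone.

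The concrete step that fails is the simultaneous path-consistency. In a TP$_2$ array the parameters $g_{n,j}$, $a_{n+1}$ are fixed once and for all, and \emph{every} vertical path $\eta$ must be consistent; by compactness this forces every finite ``rectangle'' condition $\bigcap_{k \leq n} g_{k,j_k}\cdot U_{a_{k+1}} \neq \emptyset$ for \emph{all} choices $j_0,\ldots,j_n$, so each row-$(n{+}1)$ translate must meet every one of the pairwise disjoint row-$n$ translates. Your inductive remedy --- choosing $g_{n+1,j}$ inside ``the previously chosen $g_{n,\eta(n)}U_{a_{n+1}}$ for the relevant $\eta$'' --- only secures one path at a time: once $g_{n+1,j}U_{a_{n+2}}$ is placed inside a single translate $g_{n,i}U_{a_{n+1}}$, it is disjoint from all the other row-$n$ translates, so every path routing through a different index at row $n$ becomes inconsistent. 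There is no way to repair this within the topology alone (indeed in $\Q_p$ or $\rz$ no such array exists), so the proof needs a genuinely different idea that uses the structure of $(\Z,<)$.
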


\noindent
In Section~\ref{section:p-adic} we construct a dp-minimal expansion $\Sa P$ of $\zpp$ which defines addition on the value set, so in particular $\Sa P$ interprets $(\Z,+,<)$.

\section{New dp-minimal expansions of $\zca$}
\label{section:examples}

\noindent
We describe new dp-minimal expansions of $\zca$.

\subsection{Dense pairs}
We first recall Hieronymi and G\"{u}naydin~\cite{HiGu}.
Let $\bH$ be an abelian semialgebraic group with underlying set $H \subseteq \R^m$ and group operation $\oplus$, and $A$ be a subgroup of $\bH$.
Then $A$ has the \textbf{Mordell-Lang property} if for every $f \in \R[x_1,\ldots,x_{nm}]$ the set
$ \{ a \in A^n : f(a) = 0 \} $ is a finite union of sets of the form
$$\{ (a_1,\ldots,a_n) \in A^n : k_1a_1 \oplus \ldots \oplus k_na_n = b \} \quad \text{for some  } k_1,\ldots,k_n \in \Z, b \in A.$$
We say that $\bH$ is a \textbf{Mordell-Lang} group if every finite rank subgroup of $\bH$ has the Mordell-Lang property.
Fact~\ref{fact:GH} is essentially in \cite{HiGu}, but see the comments below.

\begin{fact}
\label{fact:GH}
Suppose $\bH$ is a one-dimensional connected Mordell-Lang group.
Let $A$ be a dense finite rank subgroup of $\bH$.
Then $\rfield,A)$ is $\nip$, $\Th\rfield)$ is an open core of $\Th\rfield,A)$, and every subset of $A^k$ definable in $\rfield,A)$ is a finite union of sets of the form $b \oplus n(X \cap A^k)$ for semialgebraic $X$ and $b \in A^k$.
\end{fact}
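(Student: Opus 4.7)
The plan is to verify that the hypotheses of the main results of Hieronymi--G\"{u}naydin~\cite{HiGu} apply to the pair $(\rfield,A)$ and to unpack each of the three conclusions. The Mordell-Lang property is precisely the algebraic input that controls how $A^n$ intersects semialgebraic sets: unpacking the definition, for every $f \in \R[x_1,\ldots,x_{nm}]$ the zero set
\[
\{a \in A^n : f(a) = 0\}
\]
is a finite union of cosets of subgroups of $A^n$ cut out by integer-linear equations in $\oplus$. This is the key algebraic ingredient governing the interaction of the field structure with $A$.

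For parts (1) and (2), I would invoke the main dense-pair theorem of \cite{HiGu}. In their framework, an expansion of an o-minimal field by a dense subgroup satisfying an appropriate \emph{smallness} condition is $\nip$ and admits the original o-minimal theory as its open core. Here one checks that a dense finite-rank subgroup $A$ of a one-dimensional connected Mordell-Lang group $\bH$ qualifies as small in their sense: the Mordell-Lang property prevents $A$ from admitting any semialgebraic surjection onto an interval, since any such would force infinitely many algebraic relations on $A$ incompatible with the decomposition into finitely many cosets.

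For part (3), the proof plan is to apply the quantifier-reduction (or ``description of induced structure") result from \cite{HiGu}: in the dense pair setting, every definable subset of $A^k$ arises, up to finite union and translation by elements of $A^k$, as the trace on $A^k$ of a semialgebraic set pulled back through the multiplication-by-$n$ map on $\bH^k$. The form $b \oplus n(X \cap A^k)$ is exactly the output of their description, where the integer $n$ reflects divisibility obstructions on $A$ inside $\bH$, and the Mordell-Lang property is used to ensure no exotic definable subsets of $A^k$ appear beyond this form.

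The main obstacle is that Fact~\ref{fact:GH} is not literally the statement in \cite{HiGu}: the cited paper is written under somewhat different hypotheses (for example, specific families of Mordell-Lang groups, or different formulations of smallness), so the work to be done is a translation, verifying that ``one-dimensional connected Mordell-Lang" plus ``dense finite rank" implies the hypotheses they require, and then reading off the conclusions in the form stated here. The ``comments below" presumably address exactly this translation, pointing out which lemmas of \cite{HiGu} apply directly and which need a short additional argument, in particular for the passage from their specific group setting to the abstract one-dimensional Mordell-Lang group $\bH$.
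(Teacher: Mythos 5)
Your overall strategy---obtain all three claims by citing \cite{HiGu}---is the same as the paper's, and for the first two claims ($\nip$ and open core) a direct citation is indeed all that is required: the hypotheses of \cite{HiGu} are formulated for dense finite rank subgroups of one-dimensional connected semialgebraic groups with the Mordell-Lang property, which is exactly the present setting, so your auxiliary ``smallness'' verification (via the claim that Mordell-Lang forbids a semialgebraic surjection of $A$ onto an interval) is not where any work lies.

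The genuine gap is in your treatment of the third claim. You assert that the form $b \oplus n(X \cap A^k)$ is ``exactly the output'' of the description of the induced structure in \cite{HiGu}; it is not, and the paper's point is precisely this. What \cite[Proposition 3.10]{HiGu} states is that every definable subset of $A^k$ is a finite union of sets of the form $X \cap (b \oplus nA^k)$ with $X$ semialgebraic. That statement is correct when $\bH$ is a line group but false when $\bH$ is a circle group: for an infinite and co-infinite open interval $I$ the set $2I$ (the image of the trace of $I$ under doubling) is definable in the induced structure but is not a finite union of sets of that shape---and the circle case is the only one this paper uses. Note also that your gloss of $b \oplus n(X \cap A^k)$ as ``the trace on $A^k$ of a semialgebraic set pulled back through the multiplication-by-$n$ map'' conflates image with preimage: the preimage version is just $Y \cap A^k$ for semialgebraic $Y$ (since $n^{-1}(X)$ is again semialgebraic), which is a strictly weaker and incorrect description, whereas the statement needed is about images $n(X \cap A^k)$, and the difference is exactly what fails for circle groups where multiplication by $n$ is not injective. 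So the work you defer to a ``translation of hypotheses or smallness formulations'' is in fact different: one must notice that \cite[Proposition 3.10]{HiGu} is misstated, and check that a slightly corrected version of its proof yields the decomposition into sets of the form $b \oplus n(X \cap A^k)$, which is how the paper justifies the last claim.
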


\noindent
Note that the last claim of Fact~\ref{fact:GH} shows that structure induced on $A$ by $\rfield)$ is interdefinable with the structure induced by $\rfield,A)$ are interdefinable.
\newline

\noindent
The reader will not find the exact statement of the last claim of Fact~\ref{fact:GH} in \cite{HiGu}.
It is incorrectly claimed in \cite[Proposition 3.10]{HiGu} that every subset of $A^k$ definable in $\rfield,A)$ is a finite union of sets of the form $X \cap (b \oplus nA^k)$ where $X \subseteq \bH$ is semialgebraic.
This is true when $\bH$ is a line group, but fails when $\bH$ is a circle group.
If $\bH$ is a circle group and $I$ is an infinite and co-infinite open interval in $\bH$ then $2I$ is not of this form.
A slightly corrected version of the proof of \cite[Proposition 3.10]{HiGu} yields the last statement of Fact~\ref{fact:GH}\footnote{Thanks to Philipp Hieronymi for discussions on this point.}.
\newline

\noindent
Proposition~\ref{prop:dp-cyclic} is proven in \cite{SW-dp}.

\begin{prop}
\label{prop:dp-cyclic}
Suppose $(G,+,S)$ is a cyclically order abelian group and $\Sa G$ expands $(G,+,S)$.
Suppose $|G/nG| < \aleph_0$ for all $n$.
Then $\Sa G$ is dp-minimal if and only if every unary definable set in every elementary extension of $\Sa G$ is a finite union of tmc sets.
So $\Sa G$ is dp-minimal if and only if $\Th(\Sa G)$ is weakly $\Th(G,+,S)$-minimal.
\end{prop}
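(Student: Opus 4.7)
The plan is to prove each direction by different means: the $(\Leftarrow)$ direction through weak minimality together with Fact~\ref{fact:weak-to-dp}, and the $(\Rightarrow)$ direction by constructing a depth-two inp-pattern that contradicts dp-minimality.

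For $(\Leftarrow)$, I would first show that every $S$-convex subset of $G$, and hence every tmc set, is externally definable in $(G,+,S)$: passing to the universal cover $(H,+,<,u,\pi)$, an $S$-convex set pulls back to a $<$-convex union of cosets of $u\Z$, and Fact~\ref{fact:external-convex} supplies external definability in the linearly ordered setting. Assuming every unary definable set in every elementary extension of $\Sa G$ is a finite union of tmc sets, a compactness argument produces, for each formula, a uniform bound on the number of tmc pieces required, yielding weak $\Th(G,+,S)$-minimality of $\Th(\Sa G)$. Combined with dp-minimality of $\Th(G,+,S)$ --- which transfers from dp-minimality of linearly ordered abelian groups with finite quotients along the universal cover --- Fact~\ref{fact:weak-to-dp} delivers dp-minimality of $\Sa G$.

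For $(\Rightarrow)$, assume $\Sa G$ is dp-minimal and suppose toward contradiction that some elementary extension $\Sa G \prec \Sa G^{\ast}$ carries a definable $X \subseteq G^{\ast}$ which is not a finite union of tmc sets. The plan is to use the two orthogonal structures on $G^{\ast}$ --- convex $S$-intervals and cosets of $nG^{\ast}$ --- as the rows of a depth-two inp-pattern. Writing $\theta(x)$ for the formula defining $X$ and setting
\[
\varphi(x;y_1,y_2)\,\equiv\,\theta(x) \wedge S(y_1,x,y_2), \qquad \phi_n(x;z)\,\equiv\,\theta(x) \wedge \exists w\,(x = z + nw),
\]
one aims to extract, for every $k$, some $n \geq k$, pairwise disjoint $S$-intervals $I_1,\ldots,I_k$, and cosets $c_1,\ldots,c_k$ in distinct residues modulo $n$ such that $X \cap I_i \cap (c_j + nG^{\ast}) \neq \emptyset$ for all $i,j$. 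Each row is then $2$-inconsistent by disjointness, every entry is consistent by construction, and inp-minimality fails.

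The main obstacle is the combinatorial lemma producing those intervals and cosets from failure of $X$ to be a finite union of tmc sets. Such failure can manifest in two broadly different ways: either the number of convex components of $X \cap (c+nG^{\ast})$ grows unboundedly in $n$ for some coset, or it is already unbounded for some fixed $n$ as $c$ varies; a case split together with saturation of $\Sa G^{\ast}$ and a pigeonhole argument should handle both, but the extraction is the technical heart of the proof. Once the rectangular configuration is in hand, Fact~\ref{fact:union-reduce} and Fact~\ref{fact:remove-finite} reduce the final step to routine verification that inp-minimality is violated.
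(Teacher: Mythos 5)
The paper itself contains no proof of this proposition: it is quoted from \cite{SW-dp} (``Proposition~\ref{prop:dp-cyclic} is proven in \cite{SW-dp}''), so there is no internal argument to compare against, and your sketch has to stand on its own. Your $(\Leftarrow)$ half is essentially the route the surrounding text sets up and is sound modulo routine details: tmc sets are traces of $\Th(G,+,S)$-formulas with parameters in a saturated extension, compactness gives the uniform bound needed for weak $\Th(G,+,S)$-minimality, the universal cover of $(G,+,S)$ is an ordered abelian group which again has finite quotients (so $(G,+,S)$ is dp-minimal as a reduct of an interpretable rank-one sort), and Fact~\ref{fact:weak-to-dp} finishes.

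The genuine gap is in $(\Rightarrow)$, where all of the content of the proposition lies. First, the ``combinatorial lemma'' you defer is false as a combinatorial statement: a definable $X$ in a saturated $\Sa G^{\ast}$ can fail to be a finite union of tmc sets while lying inside a single coset of $nG^{\ast}$ for \emph{every} $n$ --- for instance an infinite set contained in $\bigcap_{n} nG^{\ast}$ whose points are pairwise separated at that ``infinitesimal'' scale. For such an $X$ there are never two distinct classes modulo $n$ meeting $X$, so your coset row admits at most one consistent instance and no interval-by-coset grid can be extracted; ruling out exactly this kind of set is the heart of the theorem, and the arguments of \cite{SW-dp} do it with germs of definable families, translates of $X$, and honest-definition machinery rather than a direct two-row pattern. (Note also that for fixed $n$ there are only finitely many cosets, since $|G^{\ast}/nG^{\ast}|=|G/nG|<\aleph_0$, so your second case --- unboundedly many components ``as $c$ varies'' for fixed $n$ --- is not even well posed.) Second, even where your case split does produce a configuration, the modulus $n$ grows with the width $k$, so $\phi_n$ is not a single formula; the definition of violating inp-minimality used in this paper requires one fixed pair of formulas (and divisibility by a variable modulus is not definable in the group language), so the pattern as written does not contradict dp-minimality. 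In short, what you have deferred is not a technical verification but the theorem itself, and the specific extraction you propose cannot succeed without a substantially different second row.
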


\noindent
Let $\Sa A$ be the structure induced on $A$ by $\rfield)$.
Fact~\ref{fact:GH} shows that every $\Sa A$-definable unary set is a finite union of tmc sets, and that the same claim holds in every elementary extension of $\Sa A$.
Proposition~\ref{prop:GH-1}  follows.

\begin{prop}
\label{prop:GH-1}
If $\bH$ is a one-dimensional connected Mordell-Lang group and $A$ is a dense finite rank subgroup of $\bH$, then the structure induced on $A$ by $\rfield)$ is dp-minimal.
So if $\bH$ is a Mordell-Lang circle group and $\chi : \Z \to \bH$ is an injective character then the structure induced on $\Z$ by $\rfield)$ and $\chi$ is dp-minimal.
\end{prop}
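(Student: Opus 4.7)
My plan is to chain Fact~\ref{fact:GH} with Proposition~\ref{prop:dp-cyclic}, exactly as indicated in the paragraph just before the statement; the real work is to unpack the Hieronymi--G\"{u}naydin description in the unary case and then use o-minimality of $\rfield)$ to rewrite each piece as a finite union of tmc sets.

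First I would use Fact~\ref{fact:GH} to pass to the structure induced on $A$ by $\rfield,A)$, which is interdefinable with $\Sa A$, and write every $\Sa A$-definable subset of $A$ as a finite union of sets of the form $b \oplus n(X \cap A)$ with $b \in A$, $n \in \N$, and $X \subseteq \bH$ semialgebraic. Next I would apply Fact~\ref{fact:get-cyclic} with $\Sa R = \rfield)$ to fix the canonical semialgebraic cyclic group order $S$ on $\bH$. Since $\bH$ is one-dimensional, o-minimality of $\rfield)$ forces every semialgebraic $X \subseteq \bH$ to be a finite union of singletons and open $S$-intervals; consequently $X \cap A$ is a finite union of $S$-convex subsets $J_1, \dots, J_r$ of $A$, and $b \oplus n(X \cap A) = \bigcup_i (b \oplus n J_i)$ is, by the very definition, a finite union of tmc sets. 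Because the Hieronymi--G\"{u}naydin decomposition is uniform in parameters, the same description persists in every elementary extension of $\Sa A$, so every unary definable set there is a finite union of tmc sets.

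With this in hand I would invoke Proposition~\ref{prop:dp-cyclic}, whose hypothesis $|A/nA| < \aleph_0$ is immediate from $A$ having finite rank (and trivial when $A \cong \Z$). This yields dp-minimality of $\Sa A$ and so the first assertion. For the second assertion, an injective character $\chi : \Z \to \bH$ into a circle group sends $1$ to a non-torsion element, which by Kronecker generates a dense rank-$1$ subgroup of $\bH$; the structure induced on $\Z$ by $\rfield)$ and $\chi$ is isomorphic via $\chi$ to the induced structure on $\chi(\Z)$, so the first part applies.

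I expect the only delicate point to be the transfer of the Hieronymi--G\"{u}naydin decomposition to elementary extensions: one needs the bound on the number of tmc pieces, the integer $n$, and the defining semialgebraic formula for $X$ to depend only on the outer formula $\varphi(x;y)$ rather than on the particular instance. This should be a routine consequence of compactness given the shape of Fact~\ref{fact:GH}, but it is the one step in the argument that is not a direct citation of an already-stated result.
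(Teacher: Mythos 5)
Your proposal is correct and follows the paper's own route: combine Fact~\ref{fact:GH} with Proposition~\ref{prop:dp-cyclic} after observing that each set $b \oplus n(X \cap A)$ with $X$ semialgebraic in a one-dimensional group decomposes into finitely many tmc pieces, uniformly enough to pass to elementary extensions. The paper states this step without the details you supply (the interval decomposition of semialgebraic subsets of $\bH$, the $|A/nA|<\aleph_0$ check, and the density of $\chi(\Z)$), so your write-up is simply a fleshed-out version of the same argument.
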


\noindent
Of course Proposition~\ref{prop:GH-1} is only relevant because there are semialgebraic Mordell-Lang circle groups by the general Mordell-Lang conjecture.
This is a theorem of Faltings, Vojta, McQuillan and others, see \cite{mazur-survey} for a survey.

\begin{fact}
\label{fact:mordell-lang}
If $W$ is a semiabelian variety defined over $\C$, $V$ is a subvariety of $W$, and $\Gamma$ is a finite rank subgroup of $W(\C)$, then $\Gamma \cap V(\C)$ is a finite union of cosets of subgroups of $\Gamma$.
So $(\R_{>0},\times)$, the unit circle equipped with complex multiplication, and the real points of an elliptic curve defined over $\R$ are all Mordell-Lang groups.
\end{fact}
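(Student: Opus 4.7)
The first claim is the full Mordell-Lang conjecture for finite rank subgroups of semiabelian varieties over $\C$, and I would not attempt to reprove it. This is a deep theorem assembled from work of Faltings (abelian varieties and finitely generated subgroups over number fields), Vojta, Hindry (passing from finitely generated to finite rank), McQuillan (extending from abelian to semiabelian), and further work passing to $\C$ as base field. My plan is simply to invoke it, referring to the survey \cite{mazur-survey} for a precise statement and references.

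For the second claim, the plan is to exhibit each of the three groups as a subgroup of $W(\C)$ for a suitable semiabelian variety $W$ defined over $\C$, and then to reduce the Mordell-Lang property (in the sense of this paper) to the classical theorem. The embeddings are immediate: both $(\R_{>0},\times)$ and the unit circle $\bS$ sit inside $\G_m(\C) = \C^{\times}$ via the natural inclusion, and $\G_m$ is a torus, hence semiabelian; if $\E$ is an elliptic curve defined over $\R$, then $\E^0(\R) \subseteq \E(\R) \subseteq \E(\C)$, and $\E$ considered over $\C$ is an abelian variety, hence semiabelian.

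With these embeddings fixed, let $\bH$ denote one of the three groups, sitting inside $W(\C)$ for the appropriate semiabelian $W$, and inside $\R^m$ via its given real-algebraic realization. Suppose $A$ is a finite rank subgroup of $\bH$ and $f \in \R[x_1,\ldots,x_{nm}]$. Then $A^n$ is a finite rank subgroup of $W(\C)^n = (W^n)(\C)$. The complex zero locus of $f$ intersected with $W(\C)^n$ is the set of $\C$-points of a subvariety $V \subseteq W^n$ defined over $\R$, and $\{a \in A^n : f(a) = 0\} = A^n \cap V(\C)$. Applying Mordell-Lang to the semiabelian variety $W^n$, the subvariety $V$, and the finite rank subgroup $A^n$ yields that this intersection is a finite union of cosets of subgroups of $A^n$.

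The main obstacle is the Mordell-Lang theorem itself, which I cite as a black box. The remaining step is a bookkeeping reconciliation between the output of Mordell-Lang --- a finite union of cosets of subgroups of $A^n$ --- and the particular shape demanded by the definition, in which each summand is of the form $\{(a_1,\ldots,a_n) \in A^n : k_1 a_1 \oplus \cdots \oplus k_n a_n = b\}$. I expect this to follow from an elementary unpacking of how subgroups of the finite rank abelian group $A^n$ are described by $\Z$-linear equations, and I would verify it by a short direct computation rather than by invoking further theory.
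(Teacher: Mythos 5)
Your overall route is the paper's route: the paper gives no argument for this Fact beyond citing the Faltings--Vojta--McQuillan theorem (via \cite{mazur-survey}), and the second sentence is exactly the observation you make, namely that $(\R_{>0},\times)$ and the unit circle are subgroups of $\G_m(\C) = \C^{\times}$ while $\E^0(\R) \subseteq \E(\C)$ for $\E$ an elliptic curve over $\R$, so a finite rank subgroup $A$ of any of these is a finite rank subgroup of the complex points of a semiabelian variety and the classical theorem applies to $A^n \subseteq W^n(\C)$ and the zero locus of $f$.

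The one step that would fail as you describe it is the final ``bookkeeping reconciliation''. Mordell--Lang returns $\Gamma \cap V(\C)$ as a finite union of cosets $c \oplus H$ with $H = A^n \cap B(\C)$ for $B$ an algebraic subgroup of $W^n$; such an $H$ is in general cut out by \emph{several} integer-linear equations (for a CM elliptic curve even by equations with coefficients in the endomorphism ring), and a coset of it need not be a finite union of single-equation sets $\{a \in A^n : k_1a_1 \oplus \cdots \oplus k_na_n = b\}$. Concretely, take $\bH = (\R_{>0},\times)$, $A = 2^{\Z}$, $n = 2$, $f(x_1,x_2) = (x_1-2)^2 + (x_2-2)^2$: then $\{a \in A^2 : f(a) = 0\} = \{(2,2)\}$ is a coset of the trivial subgroup, whereas every nonempty set $\{a \in A^2 : k_1a_1 \oplus k_2a_2 = b\}$ (i.e.\ $a_1^{k_1}a_2^{k_2} = b$) with $(k_1,k_2) \neq (0,0)$ is infinite, so no finite union of such sets is a single point; thus there is no ``elementary unpacking'' realizing an arbitrary subgroup of $A^n$ as the kernel of one $\Z$-linear form. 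The harmless fix is not to convert at all: quote the Mordell--Lang property in the coset form the theorem actually delivers (as in \cite{HiGu}), or read the displayed definition as allowing finite systems of equations $k_1a_1 \oplus \cdots \oplus k_na_n = b$; with either reading your reduction is complete and agrees with what the paper intends.
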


\subsection{Specific examples}
Suppose $\bH$ is a semialgebraic group equipped with $\Cal T_\bH$.
By \cite{HrushovskiPillay} there is an open neighbourhood $U \subseteq \bH$ of the identity, an algebraic group $W$ defined over $\R$, a neighbourhood $V \subseteq W(\R)$ of the identity, and a semialgebraic local group isomorphism $U \to V$.
We say that $\bH$ is semiabelian when $W$ is semiabelian.
Suppose $\bH$ is one-dimensional.
Then $W$ is one dimensional, so we may take $W(\R)$ to be either $(\R,+)$, $(\R^\times,\times)$, the unit circle, or the real points of an elliptic curve.
In the latter three cases $\bH$ is semiabelian.
\newline

\noindent
One-dimensional semialgebraic groups were classified up to semialgebraic isomorphism by Madden and Stanton~\cite{nash-group}.
There are three families of semiabelian semialgebraic circle groups.
\newline

\noindent
We describe the first family.
Given $\lambda > 1$ we let $\G_\lambda := ([1,\lambda),\otimes_\lambda)$ where $t \otimes_\lambda t' = tt'$ when $tt' < \lambda$ and $t \otimes_\lambda t' = tt'\lambda^{-1}$ otherwise.
Let $\lambda,\eta > 1$.
The unique (up to sign) topological group isomorphism $\G_\lambda \to \G_\eta$ is $t \mapsto t^{\log_\lambda \eta}$.
So $\G_\lambda$ and $\G_\eta$ are semialgebraically isomorphic if and only if $\log_\lambda \eta \in \Q$.

\begin{lem}
\label{lem:mult-ml}
Fix $\lambda > 1$.
Suppose $A$ is a finite rank subgroup of $\G_\lambda$.
Then $\rfield,A)$ is $\nip$, $\Th\rfield)$ is an open core of $\Th\rfield,A)$, and the structure induced on $A$ by $\rfield,A)$ is dp-minimal.
\end{lem}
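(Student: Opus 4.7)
The plan is to apply Proposition~\ref{prop:GH-1} with $\bH = \G_\lambda$. Since $\G_\lambda$ is patently a one-dimensional connected semialgebraic circle group, the substantive hypothesis to verify is that $\G_\lambda$ is a Mordell-Lang group in the sense defined before Fact~\ref{fact:GH}. Once that is in place, Proposition~\ref{prop:GH-1} gives dp-minimality of the structure induced on $A$ by $\rfield)$, and Fact~\ref{fact:GH} gives that $\rfield,A)$ is $\nip$ and that $\Th\rfield)$ is an open core of $\Th\rfield,A)$, so all three conclusions follow simultaneously.

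To verify Mordell-Lang for $\G_\lambda$, first introduce the natural quotient map $\pi\colon(\R_{>0},\times) \to \G_\lambda$ sending each $t$ to its unique representative in $[1,\lambda)$. This $\pi$ is a surjective group homomorphism with kernel $\lambda^{\Z}$, piecewise semialgebraic on each interval $[\lambda^k,\lambda^{k+1})$. Given a finite rank subgroup $A \subseteq \G_\lambda$, the preimage $\tilde{A} := \pi^{-1}(A) = A\cdot\lambda^{\Z}$ is a subgroup of $(\R_{>0},\times)$ of rank at most $\rank(A)+1$, hence still of finite rank. Since $(\R_{>0},\times)$ is itself a Mordell-Lang group by Fact~\ref{fact:mordell-lang}, for every $f \in \R[x_1,\ldots,x_n]$ the set $V(f)\cap \tilde{A}^n$ is a finite union of sets of the form $\{a \in \tilde{A}^n : a_1^{k_1}\cdots a_n^{k_n} = c\}$ with $k_i \in \Z$ and $c \in \tilde{A}$.

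The hard part will be descending this decomposition from $\tilde{A}^n$ to $A^n = \tilde{A}^n \cap [1,\lambda)^n$. Each multiplicative coset $C$ arising from Mordell-Lang projects under $\pi^n$ to a $\otimes_\lambda$-coset $\pi^n(C) \subseteq A^n$, but $C \cap A^n$ is typically only one of several ``slices'' of $\pi^n(C)$, indexed by which $\lambda^{\Z}$-translate the value $a_1^{k_1}\cdots a_n^{k_n}$ falls into. To reorganize $V(f)\cap A^n$ into a finite union of single-equation $\otimes_\lambda$-cosets as required, I would enumerate the finite family of shifted polynomials $f_v(x) := f(\lambda^{v_1}x_1,\ldots,\lambda^{v_n}x_n)$ for $v \in \Z^n$ in the bounded range forced by $A \subseteq [1,\lambda)$, combine the Mordell-Lang decompositions of all the $V(f_v)\cap \tilde{A}^n$, and match each slice of each $\pi^n(C)$ to a single-equation $\otimes_\lambda$-coset. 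The combinatorial bookkeeping involved in this matching, rather than the algebraic geometry, is where the technical content of the verification lies.
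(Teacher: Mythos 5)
There is a genuine gap: the hypothesis you set out to verify is false. The group $\G_\lambda$, with its underlying set $[1,\lambda)\subseteq\R$, is \emph{not} a Mordell-Lang group in the sense defined before Fact~\ref{fact:GH}, so Proposition~\ref{prop:GH-1} cannot be invoked with $\bH=\G_\lambda$. Concretely, fix $a_0\in(1,\lambda)$ with $\theta:=\log_\lambda a_0$ irrational and let $A$ be the subgroup of $\G_\lambda$ generated by $a_0$, so that $k\mapsto a_0^{k}\lambda^{-\lfloor k\theta\rfloor}$ identifies $\Z$ with $A$. For $f(x_1,x_2)=x_1x_2-a_0$, the set $\{(a_1,a_2)\in A^2: f(a_1,a_2)=0\}$ corresponds under this identification to $\{(k,1-k):\lfloor k\theta\rfloor+\lfloor(1-k)\theta\rfloor=0\}$, i.e.\ to the set of $(k,1-k)$ with $\{k\theta\}\leq\theta$ (fractional parts); this is an infinite, co-infinite subset of the line $k_1+k_2=1$ whose set of first coordinates has density $\theta\notin\Q$. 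On the other hand, each set $\{(a_1,a_2)\in A^2: k_1a_1\oplus k_2a_2=b\}$ corresponds to a coset of a subgroup of $\Z^2$ (or is empty), and a finite union of such cosets meets the line in a finite union of points and arithmetic progressions, hence in a set of rational density. So no finite union of the required coset sets equals this polynomial zero set. This is precisely the circle-group phenomenon the paper points out in connection with \cite[Proposition 3.10]{HiGu}: your ``slices'' are cosets cut by semialgebraic inequalities (here $a_1a_2<\lambda$), and such sets are genuinely not finite unions of cosets, so the ``combinatorial bookkeeping'' you defer cannot be completed; for circle groups the correct decomposition has the form $b\oplus n(X\cap A^k)$ with $X$ semialgebraic, as in Fact~\ref{fact:GH}, not the pure coset form demanded by the Mordell-Lang property.

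The paper's proof sidesteps this by working in the universal cover rather than in $\G_\lambda$ itself. Identify $\G_\lambda$ with $\R_{>0}/\lambda^{\Z}$, let $\rho:\R_{>0}\to\G_\lambda$ be the quotient map, and set $H:=\rho^{-1}(A)$; then $H$ is a finite rank (and, for infinite $A$, dense) subgroup of $(\R_{>0},\times)$, which \emph{is} a Mordell-Lang group by Fact~\ref{fact:mordell-lang}. Fact~\ref{fact:GH} and Proposition~\ref{prop:GH-1} are applied to the line group $(\R_{>0},\times)$ and $H$, giving that $\rfield,H)$ is $\nip$, that $\Th\rfield)$ is an open core of $\Th\rfield,H)$, and that the structure induced on $H$ is dp-minimal. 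Since $A=H\cap[1,\lambda)$ is definable in $\rfield,H)$, the first two conclusions descend to $\rfield,A)$, and the structure induced on $A$ by $\rfield)$ is interdefinable with the structure induced on the definable subset $H\cap[1,\lambda)$ of $H$, hence dp-minimal. If you want to salvage your outline, this lift-and-descend argument is what must replace the direct verification of the Mordell-Lang property for $\G_\lambda$.
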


\noindent
We let $S$ be the cyclic order on $[1,\lambda)$ where $S(t,t',t'')$ if and only if either $t < t' < t''$, $t' < t'' < t$, or $t'' < t < t'$.
So $S$ is the unique (up to opposite) semialgebraic cyclic group order on $\G_\lambda$.

\begin{proof}
Identify $\G_\lambda$ with $(\R_{>0}/\lambda^{\Z},\times)$ and let $\rho$ be the quotient map $\R_{>0} \to \G_\lambda$.
So $(\R_{>0},\times,<,\lambda,\rho)$ is a universal cover of $(\G_\lambda,S)$.
Let $H := \rho^{-1}(A)$.
So $H$ is finite rank and $(H,\times,<,\lambda,\rho)$ is a universal cover of $(A,\otimes_\lambda,S)$.
As $(\R_{>0},\times)$ is a Mordell-Lang group and $H$ is dense in $\R_{>0}$, $\rfield,H)$ is $\nip$, $\Th\rfield)$ is an open core of $\Th\rfield,H)$, and the structure induced on $H$ by $\rfield,H)$ is dp-minimal.
Observe that $A$ is definable in $\rfield,H)$.
So $\rfield,A)$ is $\nip$ and $\Th\rfield)$ is an open core of $\Th\rfield,A)$.
Finally the structure induced on $A$ by $\rfield)$ is interdefinable with the structure induced on $H \cap [0,\lambda)$ by $\rfield)$.
So the structure induced on $A$ by $\rfield)$ is dp-minimal.
\end{proof}

\noindent
The unique (up to sign) topological group isomorphism $\gamma : \rz \to \G_\lambda$ is $\gamma(t + \Z) = \lambda^{t - \lfloor t \rfloor}$.
Given irrational $\alpha = s + \Z \in \rz$ we let $\chi_\alpha : \Z \to \G_\lambda$ be
$$\chi_\alpha(k) := \gamma(\alpha k) = \lambda^{sk - \lfloor sk \rfloor}$$
and let $\Sa G_{\alpha,\lambda}$ be the structure induced on $\Z$ by $\rfield)$ and $\chi_\alpha$.

\begin{prop}
\label{prop:lamb}
Let $\alpha \in \rz$ be irrational and $\lambda > 1$.
Then $\Sa G_{\alpha,\lambda}$ is a dp-minimal expansion of $\zca$.
\end{prop}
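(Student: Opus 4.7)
The plan is to apply Lemma~\ref{lem:mult-ml} to $A := \chi_\alpha(\Z) \subseteq \G_\lambda$, transfer dp-minimality back to $\Z$ along the bijection $\chi_\alpha$, and then separately verify that $\Sa G_{\alpha,\lambda}$ defines $+$ and $C_\alpha$ by recognizing them as pullbacks of semialgebraic relations on $\G_\lambda$.

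First I would check the hypotheses for Lemma~\ref{lem:mult-ml}. Writing $\alpha = s + \Z$, the group $A = \chi_\alpha(\Z)$ is cyclic, generated by $\chi_\alpha(1) = \lambda^{s - \lfloor s \rfloor}$, so it has finite rank. The lemma then gives that the structure induced on $A$ by $\rfield,A)$ is dp-minimal; by the remark following Fact~\ref{fact:GH}, this is interdefinable with the structure induced on $A$ by $\rfield)$. Because $\alpha$ is irrational, $\psi_\alpha$ is injective, and since $\gamma$ is a bijection the composite $\chi_\alpha = \gamma \circ \psi_\alpha$ is injective as well. So $\chi_\alpha$ is a bijection of $\Z$ onto $A$, and it induces an isomorphism of structures between $\Sa G_{\alpha,\lambda}$ and the structure induced on $A$ by $\rfield)$. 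Dp-minimality of $\Sa G_{\alpha,\lambda}$ follows.

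To see that $\Sa G_{\alpha,\lambda}$ expands $\zca$, note that $\chi_\alpha$ is a group homomorphism, so the graph of $+$ on $\Z$ is the preimage under $\chi_\alpha \times \chi_\alpha \times \chi_\alpha$ of the (semialgebraic) graph of $\otimes_\lambda$ on $\G_\lambda^3$, which is definable in $\Sa G_{\alpha,\lambda}$ straight from the definition of the induced structure. For the cyclic order, observe that $\gamma(t + \Z) = \lambda^{t - \lfloor t \rfloor}$ is strictly increasing on the fundamental domain $[0,1)$ and hence carries the cyclic order $C$ on $\rz$ to the cyclic order $S$ on $\G_\lambda$ (in the sense of Fact~\ref{fact:get-cyclic}). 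Consequently $C_\alpha(k,k',k'')$ holds if and only if $S(\chi_\alpha(k), \chi_\alpha(k'), \chi_\alpha(k''))$, and since $S$ is semialgebraic, $C_\alpha$ is definable in $\Sa G_{\alpha,\lambda}$.

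The main obstacle is essentially bookkeeping: tracking orientations so that the pullback of $S$ really gives $C_\alpha$ rather than its opposite (which would still suffice, but matching signs keeps the exposition clean), and keeping straight the distinction between the induced structures with and without the predicate for $A$. All the substantive model-theoretic work has been done inside Lemma~\ref{lem:mult-ml} and Fact~\ref{fact:GH}, so no further ingredients should be required.
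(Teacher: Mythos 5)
Your proposal is correct and follows exactly the route the paper intends: Proposition~\ref{prop:lamb} is stated there without a separate proof precisely because it is the combination of Lemma~\ref{lem:mult-ml} applied to the rank-one dense subgroup $\chi_\alpha(\Z)$ of $\G_\lambda$, the identification of $\Sa G_{\alpha,\lambda}$ with the structure induced on $\chi_\alpha(\Z)$ by $\rfield)$ via the injection $\chi_\alpha$, and the observation that $+$ and $C_\alpha$ are pullbacks of the semialgebraic $\otimes_\lambda$ and $S$. One tiny caveat: the remark after Fact~\ref{fact:GH} is stated for Mordell-Lang groups, which $\G_\lambda$ is not literally asserted to be, but you only need the trivial direction (the $\rfield)$-induced structure is a reduct of the $\rfield,A)$-induced one), so dp-minimality transfers regardless.
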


\noindent
Let $\bS$ be the unit circle equipped with complex multiplication.
The second family of consists of $\bS$ and other circle groups constructed from $\bS$ in roughly the same way that $\G_\lambda$ is constructed from $(\R_{>0},\times)$.
We only discuss $\bS$.
The unique (up to sign) topological group isomorphism $\gamma : \rz \to \bS$ is given by $\gamma(t + \Z) = e^{2\pi it}$.
Given irrational $\alpha = s + \Z \in \rz$ we let $\chi_\alpha : \Z \to \bS$ be
$$\chi_\alpha(k) := \gamma(\alpha k) = e^{2\pi i s k}.$$
and let $\Sa S_\alpha$ be the structure induced on $\Z$ by $\rfield)$ and $\chi_\alpha$.


\begin{prop}
\label{prop:gh-circle}
Let $\alpha \in \rz$ be irrational.
Then $\Sa S_\alpha$ is a dp-minimal expansion of $\zca$.
\end{prop}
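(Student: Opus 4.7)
The plan is to verify the hypotheses of Proposition~\ref{prop:GH-1} and then check that the induced structure $\Sa S_\alpha$ refines $\zca$. First, Fact~\ref{fact:mordell-lang} tells us directly that the unit circle $\bS$ equipped with complex multiplication is a semialgebraic Mordell-Lang group, and $\bS$ is plainly one-dimensional and connected. Since $\alpha = s + \Z$ is irrational, $\psi_\alpha$ is injective by the remarks at the start of Section~2; composing with the topological group isomorphism $\gamma : \rz \to \bS$ (which is a bijection) shows that $\chi_\alpha = \gamma \circ \psi_\alpha$ is also injective. Hence $\chi_\alpha(\Z)$ is an infinite cyclic, and therefore rank~$1$, subgroup of $\bS$. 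Kronecker density gives that $\chi_\alpha(\Z) = \{e^{2\pi i s k} : k \in \Z\}$ is dense in $\bS$.

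With these observations in hand, Proposition~\ref{prop:GH-1} applies verbatim: the structure induced on $\Z$ by $\rfield)$ and $\chi_\alpha$, which by definition is $\Sa S_\alpha$, is dp-minimal.

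It remains to check that $\Sa S_\alpha$ is an expansion of $\zca$, i.e.\ that $+$ and $C_\alpha$ are $\Sa S_\alpha$-definable. For addition, the graph $\{(z_1, z_2, z_3) \in \bS^3 : z_1 z_2 = z_3\}$ is semialgebraic, and pulling it back through $\chi_\alpha$ defines the graph of $+$ on $\Z$ because $\chi_\alpha$ is a group homomorphism. For the cyclic order, recall that $C_\alpha(k_1,k_2,k_3)$ holds iff $C(\alpha k_1, \alpha k_2, \alpha k_3)$ holds; under the topological group isomorphism $\gamma : \rz \to \bS$ the cyclic order $C$ corresponds to the (semialgebraic) cyclic order on $\bS$ given by the counterclockwise orientation, so $C_\alpha$ is the pullback under $\chi_\alpha$ of a semialgebraic ternary relation on $\bS$ and is therefore $\Sa S_\alpha$-definable.

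There is no substantial obstacle here; the argument is essentially a transcription of the proof of Proposition~\ref{prop:lamb} with the Mordell-Lang group $(\R_{>0},\times)$ (and its universal cover) replaced by $\bS$ directly, the simplification being that $\chi_\alpha(\Z)$ is already dense in $\bS$ so no universal cover step is needed.
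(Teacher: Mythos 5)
Your proof is correct and is essentially the argument the paper intends: the paper states Proposition~\ref{prop:gh-circle} without proof precisely because it follows at once from Proposition~\ref{prop:GH-1} together with Fact~\ref{fact:mordell-lang} (the unit circle is a Mordell-Lang circle group), plus the observation that $C_\alpha$ is the pullback under $\chi_\alpha$ of the semialgebraic cyclic order on $\bS$, so $\Sa S_\alpha$ expands $\zca$. Your verification of injectivity, density, and definability of $+$ and $C_\alpha$ just spells out these routine checks; no gap.
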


\noindent
The third family comes from elliptic curves.
Given an elliptic curve $\E$ defined over $\R$ we let $\E(\R)$ be the real points of $\E$.
We consider $\E$ as a subvariety of $\mathbb{P}^2$ via the Weierstrass embedding.
We let $\E^0(\R)$ be the connected component of the identity of $\E(\R)$, so $\E^0(\R)$ is a semialgebraic circle group.
The fourth family of semialgebraic circle groups consists of such $\E^0(\R)$ and circle groups constructed from $\E^0(\R)$ in roughly the same way as $\G_\lambda$ is constructed from $(\R_{>0},\times)$.
We only discuss $\E_0(\R)$.
\newline

\noindent
Fix $\lambda > 0$ and let $\Lambda$ be the lattice $\Z + i\lambda\Z$.
Let $\E_\lambda$ be the elliptic curve associated to $\Lambda$, recall that $\E_\lambda$ is defined over $\R$ and any elliptic curve defined over $\R$ is isomorphic to some $\E_\lambda$.
Given $\eta > 0$ there is a semialgebraic group isomorphism $\E^0_\lambda(\R) \to \E^0_\eta(\R)$ if and only if $\lambda/\eta \in \Q$, see \cite{nash-group}.
\newline

\noindent
Let $\wp_\lambda$ be the Weierstrass elliptic function associated to $\Lambda$ and $\mathfrak{p}_\lambda : \R \to \E^0_\lambda(\R)$ be given by $\frp(t) = [\wp_\lambda(t) : \wp'_\lambda(t) : 1 ]$.
The unique (up to sign) topological group isomorphism $\gamma : \rz \to \E^0_\lambda(\R)$ is $\gamma(t + \Z) = \frp(t)$.
Fix irrational $\alpha = s + \Z \in \rz$ and let $\chi_\alpha : \Z \to \E^0_\lambda(\R)$ be the character 
$$\chi_\alpha(k) := \gamma(\alpha k) = \frp(s k) = [\wp_\lambda(sk) : \wp'_\lambda(sk) : 1 ].$$
Let $\Sa E_{\alpha,\lambda}$ be the structure induced on $\Z$ by $\rfield)$ and $\chi_\alpha$.

\begin{prop}
\label{prop:elliptic-1}
Let $\alpha \in \rz$ be irrational and $\lambda > 0$.
Then $\Sa E_{\alpha,\lambda}$ is a dp-minimal expansion of $\zca$.
\end{prop}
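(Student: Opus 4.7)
The plan is to reduce Proposition~\ref{prop:elliptic-1} to Proposition~\ref{prop:GH-1}, exactly as in the proofs of Propositions~\ref{prop:lamb} and \ref{prop:gh-circle}. Specifically, I will verify that $\E^0_\lambda(\R)$ is a one-dimensional connected Mordell-Lang circle group, that $\chi_\alpha(\Z)$ is a dense finite rank subgroup of $\E^0_\lambda(\R)$, and finally that the resulting induced structure actually expands $\zca$ (so that calling it ``dp-minimal'' as an expansion of $\zca$ makes sense).

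First, I would observe that $\E^0_\lambda(\R)$ is a semialgebraic circle group by construction. It is even semiabelian, being the identity component of a one-dimensional real algebraic group which is an elliptic curve. Fact~\ref{fact:mordell-lang} gives that $\E_\lambda(\R)$ is Mordell-Lang. Since $\E^0_\lambda(\R)$ is a subgroup of $\E_\lambda(\R)$, any finite rank subgroup $A$ of $\E^0_\lambda(\R)$ is also a finite rank subgroup of $\E_\lambda(\R)$, and the conclusion of the Mordell-Lang property for $A$ inside $\E_\lambda(\R)$ is the same statement as for $A$ inside $\E^0_\lambda(\R)$ (the subgroups of $A$ involved in the conclusion do not depend on an ambient group). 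Hence $\E^0_\lambda(\R)$ is itself Mordell-Lang.

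Next I would check that $\chi_\alpha(\Z)$ is a dense finite rank (rank one) subgroup of $\E^0_\lambda(\R)$. Finite rank is immediate since $\chi_\alpha$ is a homomorphism from $\Z$. Since $\alpha$ is irrational, $\psi_\alpha : \Z \to \rz$ is injective and its image is dense in $\rz$ by Kronecker's theorem; pushing forward by the topological group isomorphism $\gamma : \rz \to \E^0_\lambda(\R)$ then gives that $\chi_\alpha = \gamma \circ \psi_\alpha$ is injective with dense image. Proposition~\ref{prop:GH-1} now applies and yields that the structure $\Sa E_{\alpha,\lambda}$ induced on $\Z$ by $\rfield)$ and $\chi_\alpha$ is dp-minimal.

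It remains to see that $\Sa E_{\alpha,\lambda}$ expands $\zca$. Addition on $\Z$ is recovered as the pullback by $\chi_\alpha$ of the semialgebraic group operation on $\E^0_\lambda(\R)$: since $\chi_\alpha$ is an injective homomorphism, $k_1 + k_2 = k_3$ holds in $\Z$ iff $\chi_\alpha(k_1) \oplus \chi_\alpha(k_2) = \chi_\alpha(k_3)$ holds in $\E^0_\lambda(\R)$. By Fact~\ref{fact:get-cyclic} there is a (unique up to opposite) semialgebraic cyclic group order $S$ on $\E^0_\lambda(\R)$, and by Fact~\ref{fact:unique-iso} the topological group isomorphism $\gamma$ is unique up to sign, so $\gamma$ is (up to sign) an isomorphism of cyclically ordered groups $\rzc \to (\E^0_\lambda(\R), S)$. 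Hence the pullback of $S$ by $\chi_\alpha$ is either $C_\alpha$ or its opposite, and in either case $C_\alpha$ is $\Sa E_{\alpha,\lambda}$-definable. No genuine obstacle appears; the only points requiring care are the Mordell-Lang property for $\E^0_\lambda(\R)$ (rather than just for $\E_\lambda(\R)$) and the identification of the pulled-back cyclic order with $C_\alpha$, both handled above.
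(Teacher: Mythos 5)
Your proposal is correct and follows essentially the same route the paper intends: the paper states Proposition~\ref{prop:elliptic-1} as a direct consequence of Proposition~\ref{prop:GH-1} together with Fact~\ref{fact:mordell-lang} (the real points of an elliptic curve over $\R$ form a Mordell-Lang group), with $\chi_\alpha(\Z)$ a dense rank-one subgroup of the circle group $\E^0_\lambda(\R)$. The checks you spell out explicitly (that the Mordell-Lang property passes to the connected component, density via Kronecker, and that the pullback of the semialgebraic cyclic order is $C_\alpha$ up to opposite, so $\Sa E_{\alpha,\lambda}$ indeed expands $\zca$) are exactly the details the paper leaves to the reader.
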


\subsection{Another possible family of expansions}
\label{section:another}
We describe an approach to constructing uncountably many dp-minimal expansions of each example described above.
Let $I$ be a closed bounded interval with interior.
Let $C^\infty(I)$ be the topological vector space of smooth functions $I \to \R$ where the topology is that induced by the seminorms $f \mapsto \max 
\{ |f^{(n)}(t)| : t \in I \}$.
So $C^\infty(I)$ is a Polish space.
Le Gal has shown that the set of $f \in C^\infty(I)$ such that $\rfield,f)$ is o-minimal is comeager~\cite{LGal}.

\begin{conj}
\label{conj:le-gal}
Let $\bH$ be a semialgebraic Mordell-Lang circle group, $\gamma$ be the unique (up to sign) topological group isomorphism $\rz \to \bH$, $\alpha \in \rz$ be irrational, $\chi : \Z \to \bH$ be given by $\chi(k) = \gamma(\alpha k)$, and $A := \chi(\Z)$.
There is a comeager subset $\Lambda$ of $C^\infty(I)$ (possibly depending on $\alpha$) such that if $f \in \Lambda$ then
\begin{enumerate}
    \item $\rfield,f)$ is o-minimal,
    \item if $f \neq g$ are in $\Lambda$ then $\rfield,f)$ and $\rfield,g)$ are not interdefinable.
    \item Every $\rfield,f)$-definable group is definably isomorphic to a semialgebraic group and any $\rfield,f)$-definable homomorphism between semialgebraic groups is semialgebraic.
    \item $\rfield,A)$ is $\nip$ and $\Th\rfield)$ is an open core of $\Th\rfield,A)$.
    \item Every $\rfield,A)$-definable subset of $A^k$ is a finite union of sets of the form $b \oplus n(X \cap A^k)$ for semialgebraic $X$ and $b \in A^k$.
    So in particular the structure induced on $A$ by $\rfield)$ is a dp-minimal expansion of $\zca$.
\end{enumerate}
\end{conj}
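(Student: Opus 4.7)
The plan is to take $\Lambda$ as a countable intersection of comeager subsets of $C^\infty(I)$, each handling one of (1)--(5). Property (1) is immediate from Le Gal's theorem~\cite{LGal}: let $\Lambda_0 \subseteq C^\infty(I)$ be the comeager set of $f$ for which $\rfield,f)$ is o-minimal. The rest of $\Lambda$ will be carved out of $\Lambda_0$.

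For (2), the idea is that the $\rfield)$-interdefinability class of any fixed $g \in \Lambda_0$ should be small in $C^\infty(I)$. If $f$ is $\rfield)$-interdefinable with $g$ then $f \res I$ is $\rfield,g)$-definable, and o-minimal cell decomposition in $\rfield,g)$ exhibits the collection of $\rfield,g)$-definable functions $I \to \R$ as a countable union of finite-dimensional definable families, each of which is meager in $C^\infty(I)$. Fixing a countable dense $D \subseteq \Lambda_0$, removing the interdefinability class of each $g \in D$, and then running a Baire diagonal argument to handle the $g \in \Lambda_0 \setminus D$, should produce a comeager $\Lambda_1 \subseteq \Lambda_0$ in which distinct elements are never $\rfield)$-interdefinable.

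For (3), one would combine the Hrushovski--Pillay local isomorphism theorem (used already to get Fact~\ref{fact:pillay-top}) with Fact~\ref{fact:opp} and a genericity reduction: any $\rfield,f)$-definable group is locally semialgebraic, and for generic $f$ any non-semialgebraic globalization, or non-semialgebraic homomorphism between semialgebraic groups, would single $f$ out by a definable condition depending on finitely many semialgebraic parameters, so the exceptional set is a countable union of nowhere dense families and hence meager. This gives a further comeager $\Lambda_2 \subseteq \Lambda_1$.

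The main obstacle is (4)--(5). Read literally with $\rfield,A)$, they follow directly from Fact~\ref{fact:GH} together with Fact~\ref{fact:mordell-lang} and do not depend on $f$; the substantive content of the conjecture is that the Hieronymi--G\"unaydin conclusions should persist when $\rfield)$ is replaced by $\rfield,f)$ throughout the dense pair. The hard step is a generic ``$\rfield,f)$-Mordell-Lang'' property: every $\rfield,f)$-definable subset of $\bH^n$ should meet $A^n$ in a finite union of cosets of subgroups of $A^n$. One would try to deduce this from the classical statement by a transversality argument at the level of definable families — for generic $f$, the graph of an $\rfield,f)$-definable map cannot meet the countable set $A^n$ in an essentially non-semialgebraic way — and then transport the Hieronymi--G\"unaydin proof verbatim to obtain (4) and (5). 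Once this is in hand, dp-minimality of the structure induced on $A \cong \Z$ follows from Proposition~\ref{prop:dp-cyclic} and the resulting tmc description of unary definable sets, exactly as in the proofs of Propositions~\ref{prop:lamb}, \ref{prop:gh-circle}, and \ref{prop:elliptic-1}. Intersecting the comeager set so produced with $\Lambda_2$ gives the desired $\Lambda$.
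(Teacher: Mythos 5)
First, a point of comparison: the paper does not prove this statement at all --- it is stated as Conjecture~\ref{conj:le-gal}, and the only guidance given is that item (4) should follow by verifying the hypotheses of the Gorman--Hieronymi--Kaplan framework \cite{GoHi-Pairs}. So your proposal is not an alternative to an existing argument; it is a claim on an open problem, and as written it has genuine gaps. Your reading of (4)--(5) is fair: literally, with $\rfield,A)$, they are Fact~\ref{fact:GH} plus Fact~\ref{fact:mordell-lang}, and the substantive content concerns $(\R,+,\times,f,A)$. But your ``hard step'' is misformulated: it is false that every $\rfield,f)$-definable subset of $\bH^n$ meets $A^n$ in a finite union of cosets --- an infinite, co-infinite interval of $\bH$ already meets the dense subgroup $A$ in a set that is not a finite union of cosets of subgroups. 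The Mordell--Lang input concerns algebraic subvarieties (more generally, suitably ``small'' definable sets), and what must be established for $\rfield,f)$ is that the new definable sets created by $f$ satisfy the smallness/tameness axioms of \cite{GoHi-Pairs} relative to $A$; your ``transversality at the level of definable families'' is a heuristic with no mechanism behind it, and it is exactly the open part of the conjecture. Item (3) is likewise asserted rather than argued: ``a non-semialgebraic globalization or homomorphism would single $f$ out by a definable condition, so the exceptional set is meager'' is not a computation, and the statement that every $\rfield,f)$-definable group is definably isomorphic to a semialgebraic one is itself a strong structural claim needing proof.

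There is also a concrete failure in your treatment of (2). An equivalence relation all of whose classes are meager need not admit a comeager set meeting each class at most once, so ``remove the classes of a countable dense set and run a Baire diagonal argument'' does not produce what you want. Worse, in this situation no comeager $\Lambda$ can satisfy (2) as literally stated: for a nonzero polynomial $q$ with rational coefficients, translation by the restriction of $q$ to $I$ is a homeomorphism of $C^\infty(I)$, so $\Lambda$ and its translate by $-q$ are both comeager and hence meet; any $f$ in the intersection gives distinct $f, f+q \in \Lambda$, and $\rfield,f)$ and $\rfield,f+q)$ are interdefinable since $q$ is semialgebraic. This Vitali-type obstruction shows your method cannot succeed for (2), and indeed that (2) needs to be reformulated (for instance, modulo semialgebraic differences, or at the level of the induced structures $\Sa H_{\alpha,f}$, which is how the paper actually uses the conjecture) before any genericity argument is attempted. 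In summary: (1) is Le Gal's theorem \cite{LGal}, but (2) is unobtainable as stated, (3) has no argument, and (4)--(5) --- the heart of the conjecture --- are reduced to a ``generic Mordell--Lang'' statement that is both incorrectly formulated and unproved, so the proposal does not constitute a proof.
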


\noindent
Gorman, Hieronymi, and Kaplan generalized the Mordell-Lang property to an abstract model theoretic setting~\cite{GoHi-Pairs}.
Item $(4)$ of Conjecture~\ref{conj:le-gal} should follow by verifying that the conditions in their paper are satisfied.
\newline

\noindent
Suppose Conjecture~\ref{conj:le-gal} holds.
Let $\Sa H_\alpha$ be the structure induced on $\Z$ by $\rfield)$ and $\chi$ and for each $f \in \Lambda$ let $\Sa H_{\alpha,f}$ be the structure induced on $\Z$ by $\rfield,f)$ and $\chi$.
So each $\Sa H_{\alpha,f}$ is a dp-minimal expansion of $\Sa H_\alpha$.
\newline

\noindent
It is easy to see that our expansions of $\zca$ define the same subsets of $\Z$ as $\shc$, so is $\shc$ a reduct of these expansions?
It is intuitively obvious that these expansions defines subsets of $\Z^2$ which are not definable in $\shc$, but how do we show this?
When are two of the expansions described above interdefinable?
We now develop tools to answer these questions.

\section{The o-minimal completion}
\label{section:completion}
\noindent
We associate an o-minimal expansion of $\rzc$ to a strongly dependent expansion of $\zca$.
We will show that $\shc$ is interdefinable with the structure induced on $\Z$ by $\rzc$ and $\psi_\alpha$.
It will follow that each of the dp-minimal expansions of $\zca$ describe above in fact expands $\shc$.
\newline

\noindent
We first recall the completion  of an $\nip$ expansion of a dense archimedean ordered abelian group defined in \cite{big-nip}.

\subsection{The linearly ordered case}
Suppose that $(H,+,<)$ is a dense subgroup of $(\R,+,<)$, $\Sa H$ is an expansion of $(H,+,<)$, and $\Sa H \prec \Sa N$ is highly saturated.
Let $\mfin$ be the convex hull of $H$ in $N$ and $\minf$ be the set of $a \in N$ such that $|a| < b$ for all positive $b \in H$.
We identify $\mfin/\minf$ with $\R$ so the quotient map $\st : \mfin \to \R$ is the usual standard part map.
Note that $\mfin$ and $\minf$ are both $\Sh H$-definable so we regard $\R$ as an imaginary sort of $\Sh N$.
We let $\st : \mfin^n \to \R^n$ be given by
$ \st(a_1,\ldots,a_n) = (\st(a_1),\ldots,\st(a_n)). $
Fact~\ref{fact:complete-0} is \cite[Theorem F]{big-nip}.

\begin{fact}
\label{fact:complete-0}
Suppose $\Sa H$ is $\nip$.
Then the following structures are interdefinable.
\begin{enumerate}
\item The structure $\Sq H$ on $\R$ with an $n$-ary relation symbol defining the closure in $\R^n$ of every subset of $H^n$ which is externally definable in $\Sa H$.
\item The structure on $\R$ with an $n$-ary relation symbol defining, for  each $\Sa N$-definable subset $X$ of $N^n$, the image of $\mfin^n \cap X$ under the standard part map $\mfin^n \to \R^n$.
\item The open core of the structure induced on $\R$ by $\Sh N$.
\end{enumerate}
Furthermore the structure induced on $H$ by $\Sq H$ is a reduct of $\Sh H$.
If $\Sa H$ is strongly dependent then $\Sq H$ is interdefinable with the structure induced on $\R$ by $\Sh N$.
\end{fact}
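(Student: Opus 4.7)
The plan is to show that the structures in (1), (2), (3) define the same collection of closed subsets of $\R^n$, from which interdefinability follows: (1) and (2) are by construction generated by their closed relations, and (3) is the open core, hence generated by its closed definable sets. All three obviously produce closed sets --- (1) trivially, (2) by a routine saturation argument, (3) by definition.

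For the equivalence of (1) and (2): given an externally definable $X = H^n \cap Y$ with $Y \subseteq N^n$ $\Sa N$-definable, apply Fact~\ref{fact:cs-limit} (Chernikov--Simon) to obtain an $\Sa H$-definable family $(Z_a)_{a \in H^m}$ such that every finite subset of $X$ lies in some $Z_a \subseteq X$. Then $B := \{a \in N^m : Z_a \subseteq Y\}$ and $W := \bigcup_{a \in B} Z_a$ are $\Sa N$-definable, and one checks that $\overline{X} = \st(W \cap \mfin^n)$: one inclusion is immediate since $X \subseteq W \cap \mfin^n$ and the right-hand side is closed; the other uses that a point of $W \cap \mfin^n$ lies in $Z_a$ for some near-standard parameter $a$, so its standard part is approximated by points of $Z_a \cap H^n \subseteq X$. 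Conversely, given $\Sa N$-definable $Y$, the set of $h \in H^n$ which are $\Sa N$-infinitesimally close to an element of $Y$ is externally definable in $\Sa H$ and its closure in $\R^n$ is exactly $\st(Y \cap \mfin^n)$.

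For the equivalence of (2) and (3): both $\mfin$ and $\minf$ are convex in $N$, hence externally $\Sa N$-definable by Fact~\ref{fact:external-convex}, so $\st$ is interpretable in the imaginary sorts of $\Sh N$. For (2) into (3), the preimage $\st^{-1}(\st(Y \cap \mfin^n)) \cap \mfin^n$ is the $\minf^n$-thickening of $Y \cap \mfin^n$, which is manifestly $\Sh N$-definable, so $\st(Y \cap \mfin^n)$ is definable in the induced structure on $\R$. For (3) into (2), given a closed $F \subseteq \R^n$ with $\st^{-1}(F) \cap \mfin^n$ $\Sh N$-definable, apply Fact~\ref{fact:cs-limit} inside $\Sa N$ to approximate this preimage by an $\Sa N$-definable family, and then use saturation to extract an $\Sa N$-definable $Y$ with $F = \st(Y \cap \mfin^n)$. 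Closedness of $F$ is essential here, since without it the corresponding $\Sh N$-definable preimage need not admit an $\Sa N$-definable representative whose standard part recovers $F$.

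For the remaining claims: the trace on $H^n$ of a basic relation of $\Sq H$ has the form $\{h \in H^n : \exists y \in W, \ y - h \in \minf^n\}$, which is $\Sh N$-definable; restricting to $H^n \subseteq N^n$ makes it externally definable in $\Sa H$, proving that the structure induced on $H$ by $\Sq H$ is a reduct of $\Sh H$. In the strongly dependent case, a family analysis using finite dp-rank shows that every induced relation on $\R$ by $\Sh N$ is a Boolean combination of closed induced relations, yielding interdefinability of $\Sq H$ with the full induced structure rather than merely with its open core. I expect the principal obstacle to be the (3)-into-(2) step: honestly passing from a $\Sh N$-definable preimage back to an $\Sa N$-definable set whose standard part captures $F$, where Fact~\ref{fact:cs-limit}, closedness of $F$, and saturation of $\Sa N$ must be combined with care.
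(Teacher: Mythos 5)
You should first note that the paper does not prove this statement at all: it is imported verbatim as \cite[Theorem F]{big-nip}, so there is no in-paper argument to compare yours with; the closest proofs in the text are Proposition~\ref{prop:external} and Theorem~\ref{thm:complete-induced}, which run on quantifier elimination, honest definitions \emph{plus} Marker--Steinhorn, and the metric argument of \cite[Lemma 13.5]{big-nip}. Judged on its own terms, your central $(1)\leftrightarrow(2)$ mechanism has a genuine gap: Fact~\ref{fact:cs-limit} only controls $H$-points (it gives $B\subseteq Z_a(H)\subseteq X$ for $a\in H^m$), while your set $B=\{a\in N^m: Z_a(N)\subseteq Y\}$ is a condition on $N$-points, and neither inclusion of $\cl(X)=\st(W\cap\mfin^n)$ follows. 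Concretely, let $\Sa H=(\Q,+,<)$, let $b\in N$ realize the cut at $\sqrt2$, and let $y\in N$ be a finite point with $\st(y)=\sqrt3$ such that $|y-q|$ exceeds a positive standard rational for every $q\in\Q$ (such $y$ exists by saturation, as the relevant type is finitely satisfiable in $\Q$). Take $Y=(-\infty,b)\cup\{y\}$, so $X=\Q\cap Y=\Q\cap(-\infty,\sqrt2)$. The $\Sa H$-definable family $Z_{(a,c)}=(-\infty,a)\cup\{c\}$ satisfies the conclusion of Fact~\ref{fact:cs-limit} for $X$, yet your $W$ is $(-\infty,b)\cup\{y\}$ and $\st(W\cap\mfin)=(-\infty,\sqrt2\,]\cup\{\sqrt3\}\supsetneq\cl(X)$: the step ``its standard part is approximated by points of $Z_a\cap H^n\subseteq X$'' is exactly what fails, since an instance at a nonstandard parameter can contain finite points isolated from $H$. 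The same $y$ refutes your converse recipe: for $Y'=\{y\}$ the set of $h\in H$ infinitesimally close to a point of $Y'$ is empty, while $\st(Y'\cap\mfin)=\{\sqrt3\}$; moreover that set is the trace of a type-definable, not definable, set, so its external definability would itself need proof. Repairing the forward direction requires the honest-definition form of Chernikov--Simon (a $d\in N$ with $\psi(H,d)=X$ and $\psi(N,d)\subseteq Y$, obtained through elementarity of pairs), and even that does not exclude stray finite points as the example shows, which is why the actual proofs go through a different route.

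The remaining steps are asserted rather than proved: in your $(3)\to(2)$ extraction the condition ``$Y_a\subseteq Z$'' for an externally definable $Z$ is not first-order in the parameter, so the saturation argument you describe does not close without a further idea (you flag this, but it remains open); and the strongly dependent ``furthermore'' (every relation induced on $\R$ by $\Sh N$ is a Boolean combination of closed ones) is essentially \cite[Theorem B]{big-nip}, quoted in this paper as Fact~\ref{fact:noiseless} -- it is a substantial theorem, not a consequence of ``a family analysis using finite dp-rank'' in a few lines. So the proposal is not a proof of the Fact, and the parts that fail are precisely the ones where the cited literature does genuinely hard work.
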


\noindent
The completion $\Sq H$ should be ``at least as tame" as $\Sa H$ because $\Sq H$ is interpretable in $\Sh N$.
In general $\Sh H$ is not interdefinable with the structure induced on $H$ by $\Sq H$.
Suppose $H = \R$ and $\Sa H = (\R,+,<,\Q)$, it follows from Theorem~\ref{thm:complete-induced} and the quantifier elimination for $(\R,+,<,\Q)$ that $(\R,+,<,\Q)^\square$ is interdefinable with $(\R,+,<)$.
Recall that $(\R,+,<,\Q)$ has dp-rank two~\cite{DoGo}.
We expect that if $\Sa H$ is dp-minimal then $\Sh H$ is interdefinable with the structure induced on $H$ by $\Sq H$.
Note that if $H = \R$ and $\Sa H$ is dp-minimal then $\Sa H$ is o-minimal by \cite{Simon-dp}, so by the Marker-Steinhorn theorem $\Sq H$ is the open core of $\Sa H$, so $\Sq H$ and $\Sa H$ are interdefinable as any o-minimal stucture is interdefinable with its open core.

\subsection{The cyclically ordered case}
We only work over $\zca$, but everything goes through for a cyclic order on an abelian group induced by an injective character to $\rz$.
Fix irrational $\alpha \in \rz$.
Abusing notation we let $\psi_\alpha : \Z^n \to (\R/\Z)^n$ be given by $\psi_\alpha(k_1,\ldots,k_n) = (\alpha k_1,\ldots,\alpha k_n)$.
If $\beta \in \rz$ is irrational then $C_\alpha = C_\beta$ if and only if $\alpha = \beta$, so we can recover $\psi_\alpha$ from $\zca$.
\newline

\noindent
Let $\Sa Z \prec \Sa N$ be highly saturated.
We define a standard part map $\st : N \to \rz$ by declaring $\st(a)$ to be the unique element of $\rz$ such that for all integers $k,k'$ we have $C(\alpha k,\st(a),\alpha k')$ if and only if $C_\alpha(k,a,k')$.
Note that $\st$ is a homomorphism and let $\minf$ be the kernal of $\st$.
We identify $N/\minf$ with $\rz$ and $\st$ with the quotient map.
Note that $\minf$ is convex, hence $\Sh N$-definable.
So we consider $\rz$ to be an imaginary sort of $\Sh N$.

\begin{prop}
\label{prop:complete-1}
Suppose $\Sa Z$ is $\nip$.
The following structures are interdefinable.
\begin{enumerate}
\item The structure $\Sq Z$ on $\rz$ with an $n$-ary relation symbol defining the closure in $(\rz)^n$ of $\psi_\alpha(X)$ for every $X \subseteq \Z^n$ which is externally definable in $\Sa Z$.
\item The structure on $\rz$ with an $n$-ary relation symbol defining the image  of each $\Sa N$-definable $X \subseteq N^n$ under the standard part map $N^n \to (\rz)^n$.
\item The open core of the structure induced on $\rz$ by $\Sh N$.
\end{enumerate}
Furthermore the structure induced on $\Z$ by $\Sq Z$ and $\psi_\alpha$ is a reduct of $\Sh Z$.
If $\Sa Z$ is strongly dependent then $\Sq Z$ is interdefinable with the structure induced on $\rz$ by $\Sh N$ and $\Sq Z$ is o-minimal.
\end{prop}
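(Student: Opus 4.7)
The plan is to adapt the proof of Fact~\ref{fact:complete-0} from \cite{big-nip} to the cyclically ordered setting, replacing $\R$ by $\rz$ and exploiting the fact that $\rz$ is already compact (so the finite/infinitesimal split of the linear case collapses: every $a\in N$ has a standard part). The argument proceeds by establishing the three interdefinabilities pairwise and then transferring the furthermore claims.

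First I would prove (2)$\Leftrightarrow$(3). Because $\minf$ is convex in $N$, Fact~\ref{fact:external-convex} makes it externally definable in $\Sa N$, so $\rz = N/\minf$ is a legitimate imaginary sort of $\Sh N$ and the $\st$-image $\st(Y)\subseteq(\rz)^n$ of any $\Sa N$-definable $Y\subseteq N^n$ is $\Sh N$-definable. Saturation of $\Sa N$ together with compactness of $\rz$ force $\st(Y)$ to be closed, so (2) yields relations in the open core of the structure induced on $\rz$ by $\Sh N$. The converse statement, that every closed externally definable subset of $(\rz)^n$ arises as some $\st(Y)$, follows from Fact~\ref{fact:cs-limit} by the same approximation argument as in \cite{big-nip}.

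Next I would prove (1)$\Leftrightarrow$(2). One checks directly from the definitions that $\st(k)=\psi_\alpha(k)$ for every $k\in\Z\subseteq N$. Given externally definable $X=\Z^n\cap Y$ with $Y\subseteq N^n$ $\Sa N$-definable, this gives $\psi_\alpha(X)\subseteq\st(Y)$, and since $\st(Y)$ is closed, $\cl(\psi_\alpha(X))\subseteq\st(Y)$. For the reverse inclusion one uses saturation together with the density of $\psi_\alpha(\Z)$ in $\rz$, after possibly replacing $Y$ by an $\Sh N$-definable $\minf$-thickening in order to witness every $t\in\st(Y)$ by a sequence in $X$; the $\nip$ hypothesis, via Fact~\ref{fact:cs-limit}, justifies this thickening. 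The opposite direction, producing an externally definable $X$ with $\cl(\psi_\alpha(X))=\st(Y)$ starting from $\st(Y)$, is obtained symmetrically by setting $X=\Z^n\cap Y'$ for a suitable $\minf$-neighborhood $Y'$ of $Y$.

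For the furthermore claims: the induced structure on $\Z$ by $\Sq Z$ and $\psi_\alpha$ has as its relations the preimages $\psi_\alpha^{-1}(R)$ for $R$ a relation of $\Sq Z$, and by (1) each such preimage is externally definable in $\Sa Z$, hence in $\Sh Z$. If $\Sa Z$ is strongly dependent then a direct adaptation of the corresponding step in \cite{big-nip} shows the closures in (1) already exhaust all externally definable subsets of $(\rz)^n$ in $\Sh N$, so $\Sq Z$ is interdefinable with the full induced structure; the adaptation of \cite{Simon-dp} noted earlier in the paper then yields o-minimality of $\Sq Z$. I expect the main obstacle to be the cyclic version of the thickening argument in the (2)$\Rightarrow$(1) direction: the usual linear thickening by $\mfin$-neighborhoods must be replaced by neighborhoods respecting the wraparound of $\rz$, and one must check that Fact~\ref{fact:cs-limit} produces a definable family compatible with the cyclic topology rather than only with a fundamental domain.
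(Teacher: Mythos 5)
Your treatment of the three interdefinabilities and of the reduct claim is essentially the paper's route: those parts are indeed obtained by minor modifications of the proof of Fact~\ref{fact:complete-0}, with compactness of $\rz$ simplifying the finite/infinitesimal bookkeeping, and the paper does not spell them out either. (One small slip there: from description (1) alone, $\psi_\alpha^{-1}(R)$ for $R$ the \emph{closure} of $\psi_\alpha(X)$ is a proper superset of $X$ and is not externally definable in $\Sa Z$ ``by (1)''; the reduct claim should be run through description (2)/(3) together with Fact~\ref{fact:shelah}.)

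The genuine gap is your argument for o-minimality of $\Sq Z$ when $\Sa Z$ is only strongly dependent. You propose to first get that $\Sq Z$ is the full structure induced on $\rz$ by $\Sh N$ (fine, that is the adaptation of the last clause of Fact~\ref{fact:complete-0}) and then invoke ``the adaptation of \cite{Simon-dp} noted earlier in the paper.'' But that adaptation says a \emph{dp-minimal} expansion of $\rzc$ is o-minimal; under the hypothesis of strong dependence, $\Sq Z$ is only strongly dependent, and strongly dependent expansions of $\rzc$ need not be o-minimal (e.g.\ $\rzc$ expanded by $\Q/\Z$, the cyclic analogue of $(\R,+,<,\Q)$). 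Nor is there a ``corresponding step in \cite{big-nip}'' to adapt: in the linearly ordered setting the statement is simply false, as the paper itself points out with $(\Q,+,<,\Z)^\square$ being interdefinable with $(\R,+,<,\Z)$, which is strongly dependent and not o-minimal. The o-minimality claim is exactly the point where the cyclic case diverges from the linear one and requires a new argument exploiting compactness of $\rz$: given a unary $\Sq Z$-definable $X$, one uses Fact~\ref{fact:noiseless} (from \cite[Theorem B]{big-nip}, valid under $\nip$) to see $X$ is nowhere dense or has interior in every interval, so $\bd(X)$ is nowhere dense; then the cyclic version of the Dolich--Goodrick result (Fact~\ref{fact:accum}, which is where strong dependence enters) shows $\bd(X)$ has no accumulation points; compactness of $\rz$ then makes $\bd(X)$ finite, and Fact~\ref{fact:boundary} gives that $X$ is a finite union of intervals and points. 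Your proposal, as written, would only establish o-minimality of $\Sq Z$ under the stronger hypothesis that $\Sa Z$ is dp-minimal.
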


\noindent
We expect that if $\Sa Z$ is dp-minimal then the structure induced on $\Z$ by $\Sq Z$ and $\psi_\alpha$ is interdefinable with $\Sh Z$.
All claims of Proposition~\ref{prop:complete-1} except o-minimality follow by slight modifications to the proof of Fact~\ref{fact:complete-0}.
The last claim also follows easily from the methods of \cite{big-nip}, we provide details below.
($\Sa H$ need not be o-minimal when $\Sa H$ is strongly dependent, for example $(\Q,+,<,\Z)$ is strongly dependent by \cite[3.1]{DoGo} and $(\Q,+,<,\Z)^\square$ is interdefinable with $(\R,+,<,\Z)$.)
\newline

\noindent
We need three facts to prove the last claim.
Fact~\ref{fact:boundary} is left to the reader.

\begin{fact}
\label{fact:boundary}
Suppose $X$ is a subset of $\R/\Z$.
Then $X$ is a finite union of intervals and singletons if and only if the boundary of $X$ is finite.
\end{fact}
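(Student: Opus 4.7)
The plan is to prove both directions separately, using standard point-set topology on the circle $\R/\Z$.

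For the forward implication, I will use the fact that the boundary operator satisfies $\partial(A \cup B) \subseteq \partial A \cup \partial B$. Since the boundary of any open, closed, or half-open interval in $\R/\Z$ consists of at most two points (the endpoints, if they exist), and the boundary of a singleton is itself, a finite union of such sets has finite boundary. This direction is essentially bookkeeping.

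For the reverse implication, suppose $F := \partial X$ is finite. Then $(\R/\Z) \setminus F$ is a disjoint union of finitely many open arcs $I_1, \ldots, I_m$ (with the convention that if $F = \emptyset$ then we have a single ``arc'' equal to $\R/\Z$, which may be handled separately). For each arc $I_j$, I will argue that either $I_j \subseteq X$ or $I_j \cap X = \emptyset$. The key observation is that if $p \in I_j \cap X$, then $p \notin \partial X$ and $p \in \overline{X}$, so $p \in \inte(X)$; similarly, if $p \in I_j \setminus X$, then $p \notin \overline{X}$, so $p$ has a neighborhood disjoint from $X$. Hence $I_j \cap X$ is both open and closed in $I_j$, and connectedness of the open arc $I_j$ forces it to be all of $I_j$ or empty. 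Once this dichotomy is established, I can write
\[
X = \bigcup \{ I_j : I_j \subseteq X \} \,\cup\, (X \cap F),
\]
exhibiting $X$ as a finite union of intervals (the chosen arcs) and singletons (the finitely many points of $X \cap F$).

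I do not anticipate any real obstacle; the only mildly delicate point is the edge case $F = \emptyset$, which gives $X \in \{\emptyset, \R/\Z\}$, and in the latter case one must be willing to express $\R/\Z$ as a union of two arcs to fit the convention of ``intervals'' being (open, closed, or half-open) arcs in the sense of Section~\ref{section:cyclic-orders}. The argument does not require any input beyond the definition of the boundary and connectedness of arcs in $\R/\Z$, so it is reasonable to leave this to the reader as the authors do.
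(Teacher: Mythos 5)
Your proof is correct, and since the paper explicitly leaves Fact~\ref{fact:boundary} to the reader, your argument (boundary subadditivity for the easy direction; for the converse, that $X$ meets each complementary arc of the finite boundary in a relatively clopen set, hence in all or nothing by connectedness) is precisely the standard intended one. The only cosmetic remark is that the case $|\partial X| = 1$ needs the same convention you already invoke for $\partial X = \emptyset$: the single complementary arc $\R/\Z \setminus \{q\}$ is not itself an interval in the sense of Section~\ref{section:cyclic-orders} and must likewise be split into two intervals, which changes nothing in the argument.
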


\noindent
Fact~\ref{fact:accum} is essentially a theorem of Dolich and Goodrick~\cite{DoGo}.
They only treat linearly ordered structures, but routine alternations to their proof yield Fact~\ref{fact:accum}.

\begin{fact}
\label{fact:accum}
Suppose $(G,+,S)$ is a cyclically ordered abelian group, $\Sa G$ is a strongly dependent expansion of $(G,+,S)$, and $X$ is a $\Sa G$-definable subset of $G$.
If $X$ is nowhere dense then $X$ has no accumulation points.
\end{fact}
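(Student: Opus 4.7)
The plan is to reduce to the linearly ordered case treated in Dolich--Goodrick. Let $(H,+,<,u,\pi)$ be the universal cover of $(G,+,S)$ as described in Section~\ref{section:cyclic-orders}, and identify the fundamental domain $[0,u) \subseteq H$ with $G$ via the unique section of $\pi$. Transporting $\Sa G$ along this identification yields a structure $\Sa I$ on $[0,u)$ which is interdefinable with $\Sa G$ (hence still strongly dependent) and which naturally expands the densely linearly ordered set $(\, [0,u),<\, )$. A nowhere dense $\Sa G$-definable subset $X \subseteq G$ transports to a nowhere dense $\Sa I$-definable subset $X' \subseteq [0,u)$.

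Now suppose toward contradiction that $X$ has an accumulation point $a \in G$. Because $+$ acts by $\Sa G$-definable bijections, we may translate $X$ so that $a$ corresponds to an interior point of $[0,u)$; equivalently, pre-compose the identification with a suitable group translation so that the troublesome point sits away from the boundary. In a small enough $S$-interval around $a$, the cyclic order $S$ and the linear order $<$ agree, so on this neighbourhood $X'$ is a nowhere dense definable subset of a dense linear order accumulating at an interior point, inside a strongly dependent structure. The Dolich--Goodrick argument (which proceeds by constructing, from the putative accumulation point, a definable family of small nested convex sets and an indiscernible sequence producing an ict-pattern of unbounded depth) then gives a contradiction with strong dependence of $\Sa I$, hence of $\Sa G$.

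The main obstacle, and the substance of the ``routine alterations'' the authors refer to, is to verify that Dolich--Goodrick's ict-pattern construction is genuinely local, i.e.\ that it uses only (i) small intervals around the accumulation point, (ii) Shelah-external definability of convex neighbourhoods (which still holds in the cyclic setting by the cyclic analogue of Fact~\ref{fact:external-convex}, since small cyclic intervals are convex for a local linear order), and (iii) definable family machinery that does not refer to a global linear order. Once one inspects their proof and checks that no use is made of a global minimum, maximum, or unbounded tail, the transfer to $\Sa I$ (and hence to $\Sa G$) is immediate. As a sanity check, the conclusion combined with Fact~\ref{fact:boundary} gives the expected description of definable subsets of $\rz$ as finite unions of intervals and singletons, which is the form in which the fact will be used to establish o-minimality of $\Sq Z$.
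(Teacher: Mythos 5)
Your overall strategy --- cut the cyclic order at a point (equivalently, identify $G$ with the fundamental domain $[0,u)$ of the universal cover), translate the putative accumulation point away from the cut, and then run the Dolich--Goodrick construction locally --- is exactly what the paper intends: the paper gives no argument beyond the citation and the remark that ``routine alterations'' to the Dolich--Goodrick proof suffice, so your proposal is at the same level of detail and takes the same route. The packaging via the universal cover is harmless (though ``interdefinable'' should be ``an isomorphic copy'': $\Sa I$ lives on a different universe, with $<$ on $[0,u)$ definable from $S$ and the basepoint).

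One concrete caution about how you state the key step. You reduce to ``a nowhere dense definable subset of a dense linear order accumulating at an interior point, inside a strongly dependent structure'' and assert that the Dolich--Goodrick argument then contradicts strong dependence. Read literally, that implication is false: a dense linear order expanded by a unary predicate for a convergent sequence is dp-minimal (colored linear orders are dp-minimal), hence strongly dependent, yet it defines a nowhere dense set with an accumulation point. The Dolich--Goodrick theorem is about expansions of ordered \emph{abelian groups}, and its proof uses the group operation essentially (translations and differences of elements of $X$ near the accumulation point enter into the construction of the pattern witnessing failure of strength). So the ``routine alterations'' one must verify are not primarily that their proof avoids a global minimum, maximum, or unbounded tail, but that every use of the group operation can be confined to elements small enough that, on the relevant neighbourhood of the accumulation point, the cyclic order behaves like a linear group order: addition of small elements is order-compatible and nothing wraps around the cut. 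With your checklist amended in this way the transfer is indeed the routine adaptation the paper asserts; as written, item (iii) points at the wrong potential failure, and the sentence quoted above overstates what order plus strong dependence alone can deliver.
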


\noindent
Fact~\ref{fact:noiseless} follows from \cite[Theorem B]{big-nip}.

\begin{fact}
\label{fact:noiseless}
Suppose $\Sa Z$ is $\nip$ and $X,Y$ are $\Sq Z$-definable subsets of $(\rz)^n$.
Then $X$ either has interior in $Y$ or $X$ is nowhere dense in $Y$.
\end{fact}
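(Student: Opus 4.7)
The plan is to reduce Fact~\ref{fact:noiseless} directly to \cite[Theorem B]{big-nip}, which asserts the interior-versus-nowhere-dense dichotomy for definable sets in $\nip$ expansions of a sufficiently rich ordered abelian group. The work splits into two steps: first establishing that $\Sq Z$ is itself $\nip$, and then transferring the cited theorem from the linearly ordered setting to the cyclically ordered setting $\rzc$.

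For the $\nip$ step, I would invoke part~(3) of Proposition~\ref{prop:complete-1}, which identifies $\Sq Z$ (up to interdefinability) with the open core of the structure induced on the imaginary sort $\rz = N/\minf$ by $\Sh N$, where $\Sa Z \prec \Sa N$ is highly saturated. Since $\Sa Z$ is $\nip$, so is $\Sa N$, and hence $\Sh N$ is $\nip$ by Fact~\ref{fact:shelah}. Now $\minf$ is $\Sh N$-definable, so the structure induced on the quotient $\rz$ is interpretable in $\Sh N$ and therefore $\nip$; passing to its open core, which is a reduct, preserves $\nip$. Thus $\Sq Z$ is an $\nip$ expansion of $\rzc$.

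For the dichotomy, I would apply \cite[Theorem B]{big-nip} inside $\Sq Z$ to the pair $X, Y \subseteq (\rz)^n$. If the cited theorem is formulated only for expansions of $\roag$, I would transfer it through the universal cover $\pi : \R \to \rz$: the pullbacks $\widetilde X := (\pi^n)^{-1}(X)$ and $\widetilde Y := (\pi^n)^{-1}(Y)$ are definable in the $\Z^n$-invariant expansion of $\roag$ induced via $\pi^n$ from $\Sq Z$, which inherits $\nip$ from $\Sq Z$. Theorem B then yields the dichotomy for $\widetilde X$ inside $\widetilde Y$. Since $\pi^n$ is a local homeomorphism, both ``has interior in $\widetilde Y$'' and ``is nowhere dense in $\widetilde Y$'' descend under $\pi^n$ to the corresponding statements for $X$ in $Y$, and the two options for $\widetilde X$ correspond to the two options for $X$.

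The main obstacle is the descent through the universal cover, and specifically verifying that the dichotomy is preserved by $\pi^n$. This reduces to the elementary observation that the properties in question are local: $X$ has interior in $Y$ at a point $y \in Y$ iff $\widetilde X$ has interior in $\widetilde Y$ at some (equivalently every) preimage of $y$, and $X$ is nowhere dense in $Y$ iff $\widetilde X$ is nowhere dense in $\widetilde Y$ on some fundamental domain. Once this locality is noted, the argument is a routine translation and no additional model-theoretic input beyond Proposition~\ref{prop:complete-1} and \cite[Theorem B]{big-nip} is required.
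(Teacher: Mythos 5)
The paper offers no proof of Fact~\ref{fact:noiseless} beyond the one-line attribution to \cite[Theorem B]{big-nip}: that theorem is the linearly ordered analogue (for an $\nip$ expansion $\Sa H$ of a dense subgroup of $(\R,+,<)$, any $\Sq H$-definable $X$ either has interior in $Y$ or is nowhere dense in $Y$ for $\Sq H$-definable $X,Y$), and the cyclic case is meant to follow by the same kind of routine transfer the paper invokes for Fact~\ref{fact:accum}. Your overall scheme --- deduce that $\Sq Z$ is $\nip$ from Proposition~\ref{prop:complete-1}(3), Fact~\ref{fact:shelah}, and interpretability in $\Sh N$, then move through the covering map $\pi \colon \R \to \rz$ --- is therefore aligned with the intended route, and your descent step is fine: having interior and being nowhere dense are local properties and $\pi^n$ is a local homeomorphism, so the dichotomy upstairs does pass down.

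The genuine gap is in how you license Theorem~B upstairs. You apply it to the pullback expansion of $\roag$ on the sole ground that it ``inherits $\nip$ from $\Sq Z$''. But Theorem~B cannot be the assertion that every $\nip$ expansion of $\roag$ satisfies the interior/nowhere-dense dichotomy for all definable sets: $(\R,+,<,\Q)$ is strongly dependent of dp-rank two (as recalled after Fact~\ref{fact:complete-0}, citing \cite{DoGo}), yet it defines $\Q$, which neither has interior nor is nowhere dense. The hypothesis of Theorem~B is not ``$\nip$ expansion of $\roag$'' but ``$\Sq H$-definable for some $\nip$ structure $\Sa H$ on a dense subgroup'', so $\nip$-ness of your ambient pullback structure --- which you in any case assert rather than prove; it defines $\Z$ (the preimage of a point) and properly expands $(\R,+,<,\Z)$, so this is itself not automatic --- buys you nothing by itself. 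What is missing is the identification of the pulled-back sets $(\pi^n)^{-1}(X)$ as $\Sq H$-definable, where $\Sa H$ is the structure induced on the universal cover $H = \Z + \lambda\Z$ of $\zca$ by $\Sa Z$ and the covering map $\rho$: this $\Sa H$ is $\nip$ because $\Sa Z$ and $\Sa H$ define isomorphic copies of each other by \cite{SW-dp}, exactly as exploited in the proofs of Propositions~\ref{prop:eq} and~\ref{prop:ca-complete}, and since $\pi$ is an open surjection one has $(\pi^n)^{-1}(\operatorname{Cl}(\psi_\alpha(X))) = \operatorname{Cl}(\rho^{-1}(X))$ for $X$ externally definable in $\Sa Z$, with $\rho^{-1}(X)$ externally definable in $\Sa H$; pullback then commutes with Boolean combinations and quantifiers, so every $\Sq Z$-definable set pulls back to a $\Sq H$-definable set. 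With that substitution your cover-and-descend argument goes through and Theorem~B applies legitimately; without it, the appeal to Theorem~B is unjustified as stated.
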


\noindent
We now show that if $\Sa Z$ is strongly dependent then $\Sq Z$ is o-minimal.
We let $\bd(X)$ be the boundary of a subset $X$ of $\rz$.

\begin{proof}
Let $\Sa Z$ be strongly dependent and $X$ be an $\Sq Z$-definable subset of $\rz$.
By Fact~\ref{fact:noiseless} $X$ is not dense and co-dense in any interval.
So $\bd(X)$ is nowhere dense.
By Fact~\ref{fact:accum} $\bd(X)$ has no accumulation points, so $\bd(X)$ is finite by compactness of $\rz$.
By Fact~\ref{fact:boundary} $X$ is a finite union of intervals and singletons.
\end{proof}

\noindent
There is another way to show that $\Sq Z$ is o-minimal when $\Sa Z$ is dp-minimal.
Suppose $\Sa Z$ is dp-minimal.
Then $\Sh N$ is dp-minimal, so $\Sq Z$ is dp-minimal by Proposition~\ref{prop:complete-1}.
It follows from work of Simon~\cite{Simon-dp} that an expansion of $\rzc$ is o-minimal if and only if it is dp-minimal.
\newline

\noindent
In Section~\ref{section:zca-completion} we show $\zca^{\square}$ is interdefinable with $\rzc$ and $\shc$ is interdefinable with the structure induced on $\Z$ by $\zca^{\square}$ and $\psi_\alpha$.

\subsection{The completion of $(\Z,+,C_\alpha)$}
\label{section:zca-completion}
Proposition~\ref{prop:external} shows in particular that $(\Q,+,<)^\square$ is the usual completion of $(\Q,+,<)$.

\begin{prop}
\label{prop:external}
Suppose $H$ is a dense subgroup of $(\R,+)$.
Then $(H,+,<)^\square$ is interdefinable with $(\R,+,<)$.
\end{prop}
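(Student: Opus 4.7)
The plan is to verify that $\Sq H$ and $(\R,+,<)$ are interdefinable by establishing two reducts.

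For $(\R,+,<) \leq \Sq H$: The graphs of $+$ on $H^3$ and of $<$ on $H^2$ are internally definable in $(H,+,<)$, hence externally definable. By continuity of $+$ and density of $H$ in $\R$, their closures in $\R^n$ are the graph of $+_\R$ and the graph of $\leq_\R$, respectively. Characterization $(1)$ of Fact~\ref{fact:complete-0} places both $+_\R$ and $\leq_\R$ (and hence $<_\R$) among the relations of $\Sq H$, so $(\R,+,<)$ is a reduct of $\Sq H$.

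For $\Sq H \leq (\R,+,<)$: Let $(H,+,<) \prec \Sa N$ be highly saturated. By characterization $(2)$ of Fact~\ref{fact:complete-0} it suffices to show that $\st(X \cap \mfin^n)$ is $(\R,+,<)$-definable for every $\Sa N$-definable $X \subseteq N^n$. I will appeal to the Gurevich--Schmitt quantifier elimination for ordered abelian groups, in the language expanding $\{+,<\}$ by congruence predicates modulo $k$ for each $k$: every $\Sa N$-definable $X$ is a finite union of cells, each cell being the conjunction of $\Z$-linear halfspace conditions $\ell(x) \mathbin{\circ} t$ (with $t \in N$ and $\circ \in \{<,\leq,=\}$) together with coset conditions $\ell(x) \in kN + a$ (with $k \in \N$, $a \in N$). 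Since $\st$ distributes over finite unions, I reduce to the case of a single cell. The halfspace part contributes a polyhedron in $\R^n$ cut out by $\ell(r) \mathbin{\circ} \st(t)$ when $t \in \mfin$ (and is trivial when $t \notin \mfin$), which is manifestly $(\R,+,<)$-definable. The coset conditions contribute no genuine constraint: because $kH$ is a dense subgroup of $(\R,+)$ for every $k \geq 1$, and because $\Sa N$ is highly saturated, any $x \in \mfin^n$ with $\st(x) = r$ in the relative interior of the polyhedron can be perturbed within the monad over $r$ to land in any prescribed finite intersection of cosets, provided this intersection meets $\mfin^n$ at all; when it does not, the cell contributes $\emptyset$. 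Hence $\st(X \cap \mfin^n)$ is always a finite union of $\R$-polyhedra, i.e.\ $(\R,+,<)$-definable.

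The main technical point is the density/saturation argument for the coset conditions at boundary points of the polyhedron, where one must realize a prescribed standard part while respecting strict versus weak inequalities at each bounding hyperplane. This is handled by observing that $\minf$ is a nontrivial convex subgroup of $\mfin$ containing infinitesimals of any prescribed sign (by saturation), together with the density of $kH$ in $\R$, so one can adjust any $x \in \mfin^n$ with $\st(x) = r$ by an infinitesimal offset of suitable sign to simultaneously meet the cosets and preserve each strict inequality. Everything else is routine bookkeeping in the disjunctive normal form.
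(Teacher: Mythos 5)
Your overall route coincides with the paper's: both reducts are checked, the substantive direction is handled through characterization $(2)$ of Fact~\ref{fact:complete-0} by showing $\st(X\cap\mfin^n)$ is $(\R,+,<)$-definable, and the tool is quantifier elimination for the ordered group in a language with congruence conditions. Two small remarks on that tool: what is needed (and what the paper invokes) is Weispfennig's elimination for archimedean ordered abelian groups, Fact~\ref{fact:aoag-qe}; the general Gurevich--Schmitt elimination is not of the simple halfspace-plus-congruence shape you state, though for this class the simple form is available. Also, since $N/kN$ may be infinite, a negated coset condition is not a finite union of coset conditions, so your cells must in general also carry negated congruences.

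The genuine gap is in your analysis of a single cell. You claim the image of a cell under $\st$ is the naive real polyhedron obtained by weakening its inequalities, or else empty, with emptiness detected by whether the coset part alone meets $\mfin^n$. That criterion is false: if $2H\neq H$ (e.g.\ $H=\Z+\sqrt{2}\,\Z$) take the cell $\{x : x_1=t \wedge x_1\in 2N\}$ with $t\in\mfin\setminus 2N$; the coset set meets $\mfin^n$, yet the cell is empty, so your recipe assigns it the nonempty polyhedron $\{r : r_1=\st(t)\}$ and the final union can strictly overshoot $\st(X\cap\mfin^n)$. More generally, at points where equality constraints or several inequalities are binding, the admissible perturbations are confined to the rational subspace cut out by the binding forms, and one must show the congruence set is still dense in that subspace at infinitesimal scale; this does hold, but it requires transferring elementary density statements about cosets inside integer-defined subspaces, not merely ``density of $kH$ in $\R$ plus infinitesimals of either sign,'' so your sketch does not yet cover exactly the cases where the claim is delicate. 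The paper avoids all of this with one move you are missing: since $\st(Y\cap\mfin^n)=\st(\cl(Y)\cap\mfin^n)$, one may assume $Y$ is closed, and Fact~\ref{fact:aoag-qe} then writes a closed definable set as a finite union of sets $\{a : T_1(a)\leq s_1,\ldots,T_k(a)\leq s_k\}$ with no congruence conditions and no strict inequalities; after discarding the constraints with $s_i$ infinite, the image is read off directly. Your cell-by-cell analysis can likely be completed, but the closure reduction is what makes the proof short and eliminates precisely the configurations where your stated criterion fails.
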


\noindent
Proposition~\ref{prop:external} will require the quantifier elimination for archimedean ordered abelian groups.
See Weispfennig~\cite{weis-oag} for a proof.

\begin{fact}
\label{fact:aoag-qe}
Let $(H,+,<)$ be an archimedean ordered abelian group.
Then $(H,+,<)$ admits quantifier elimination after adding a unary relation for every $nH$.
\end{fact}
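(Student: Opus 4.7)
The plan is to verify the two containments in interdefinability.

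For the easy direction, $(\R,+,<)$ is a reduct of $\Sq H$. Since $H$ is dense in $\R$, the closure in $\R^3$ of the graph of $+$ on $H$ is the graph of $+$ on $\R$; this graph is $\Sa H$-definable hence externally definable, so by definition $(1)$ of Fact~\ref{fact:complete-0} it is $\Sq H$-definable. Likewise the closure in $\R^2$ of $\{(a,b) \in H^2 : a<b\}$ is $\{(a,b) \in \R^2 : a\leq b\}$, and the closure of the diagonal of $H^2$ is the diagonal of $\R^2$, so $<$ on $\R$ is $\Sq H$-definable as a Boolean combination.

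For the converse, fix a highly saturated $\Sa H \prec \Sa N$ and recall that every ordered abelian group is $\nip$. By Fact~\ref{fact:aoag-qe} applied to $\Sa H$, both $\Sa H$ and $\Sa N$ admit quantifier elimination in the language expanded by predicates for each $nH$ (respectively each $nN$). Every $\Sa N$-definable subset of $N^n$ is therefore a Boolean combination of atoms of three kinds: linear inequalities $\ell(x) > c$, linear equalities $\ell(x) = c$, and congruences $\ell(x) \in nN + c$, where $\ell$ is an integer linear form and $c \in N$. By Shelah's theorem (Fact~\ref{fact:shelah}), every externally definable subset of $H^n$ is of the form $Y \cap H^n$ for such a $Y$; writing $Y$ in disjunctive normal form and using that closure commutes with finite unions, it suffices to show that for each cell $C$ the closure $\cl(C \cap H^n)$ in $\R^n$ is $(\R,+,<)$-definable.

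The claim I would establish is that if $C \cap H^n$ is empty then the closure is empty, and otherwise it equals the closed $\R$-polyhedron obtained from $C$ by replacing each parameter $c \in \mfin$ by $\st(c) \in \R$, relaxing every strict linear inequality to its non-strict form, and discarding every congruence and negated-congruence condition. Three density inputs underlie this. First, each $nH$ is a dense subgroup of $\R$, so any coset of $nH$ in $H$ is dense in $\R$. Second, by elementarity $nN \cap H = nH$, so a condition $\ell(x) \in nN + c$ restricted to $H^n$ is either unsatisfiable or equivalent to an internal coset condition $\ell(x) \in h + nH$ with $h \in H$. Third, for any integer linear form $\ell$ and any $c \in \mfin$, the $\R^n$-closure of $\{x \in H^n : \ell(x) > c\}$ is $\{x \in \R^n : \ell(x) \geq \st(c)\}$, regardless of the sign of the infinitesimal $c - \st(c)$.

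The main obstacle is the mixed case in which a cell contains both linear equalities (cutting down to a proper affine subspace $V \subseteq \R^n$) and congruence conditions. One must verify that $V \cap H^n$ intersected with a fixed coset of $(nH)^n$ remains dense in $V$. The strategy is to parameterize $V$ by eliminating one variable via the equation $\ell(x) = c$ (nonemptiness of $C \cap H^n$ forces the divisibility condition $c \in eH$ with $e$ the gcd of the nonzero coefficients of $\ell$) and to observe that the resulting subgroup of $H^{n-1}$ contains a translate of $(nH)^{n-1}$, so density propagates through the parameterization. Once this is established, the nonempty cell $C \cap H^n$ is dense in the claimed closed $\R$-polyhedron, which is manifestly $(\R,+,<)$-definable, completing the proof.
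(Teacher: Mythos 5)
There is a fundamental gap: you have not proved the statement at hand. The statement is Fact~\ref{fact:aoag-qe}, the quantifier-elimination theorem itself --- that an archimedean ordered abelian group $(H,+,<)$ eliminates quantifiers once a predicate for each subgroup $nH$ is added. What you prove instead is the downstream application, Proposition~\ref{prop:external} (that $\Sq H$, i.e.\ $(H,+,<)^\square$, is interdefinable with $(\R,+,<)$): your two ``containments in interdefinability,'' the standard-part analysis of cells with parameters in $\mfin$, and the density argument for cosets of $nH$ all belong to that proposition, not to the QE fact. Worse, your argument is circular with respect to the actual target: you explicitly invoke Fact~\ref{fact:aoag-qe} (``both $\Sa H$ and $\Sa N$ admit quantifier elimination in the language expanded by predicates for each $nH$'') as an input, so nothing in the proposal can count as a proof of it. Note the paper itself does not prove this fact either; it cites Weispfennig~\cite{weis-oag}.

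A genuine proof would look entirely different. One must work syntactically in the language $\{+,-,0,<\} \cup \{D_n : n \geq 1\}$ and eliminate a single existential quantifier from a conjunction of atoms normalized to the forms $kx = t$, $kx < t$, $t < kx$, and $D_n(kx - t)$ (with $k$ a positive integer and $t$ an $x$-free term), after first passing to a common coefficient for $x$. The archimedean hypothesis enters through the Robinson--Zakon regularity property: every interval of $H$ containing at least $n$ elements meets every coset of $nH$ that the congruence data permit, which is exactly what lets one replace ``there exists $x$ in this interval satisfying these congruences'' by quantifier-free conditions on the endpoints (with a case split between the dense case and the discrete case, where $(H,+,<) \equiv (\Z,+,<)$ and the elimination is Presburger's). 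None of this interval/congruence bookkeeping appears in your write-up; the density facts you do cite (e.g.\ that cosets of $nH$ are dense in $\R$ when $H$ is dense) are consequences used to exploit the QE, not ingredients of its proof, and they are unavailable in the discrete case that the statement also covers.
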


\noindent
We now prove Proposition~\ref{prop:external}.
If $T : H^n \to H$ is a $\Z$-linear function given by $T(a_1,\ldots,a_n) = k_1 a_1 + \ldots + k_n a_n$ for integers $k_1,\ldots,k_n$ then we also let $T$ denote the function $\R^n \to \R$ given by $(t_1,\ldots,t_n) \mapsto k_1 t_1 + \ldots + k_n t_n$.

\begin{proof}
Let $(H + <) \prec (N,+,<)$ be highly saturated and let $\mfin, \st : \mfin^n \to \R^n$ be as above.
As $(H,+,<)$ is $\nip$, it suffices by Fact~\ref{fact:complete-0} to suppose that $Y \subseteq N^n$ is $\Sa N$-definable and show that $\st(Y \cap \mfin^n)$ is $(\R,+,<)$-definable.
If $Z$ is the closure of $Y$ in $N^n$ then $\st(Z \cap \mfin^n) = \st(Y \cap \mfin^n)$.
So we suppose that $Y$ is closed.
\newline

\noindent
As $Y$ is closed a straightforward application of Fact~\ref{fact:aoag-qe} shows that $Y$ is a finite union of sets of the form 
$$\{ a \in N^n : T_1(a) \leq s_1,\ldots,T_k(a) \leq s_k \}$$
for $\Z$-linear $T_1,\ldots,T_k : N^n \to N$ and $s_1,\ldots,s_k \in N$.
So we may suppose that $Y$ is of this form.
If $s_i > \mfin$ then $\mfin^n$ is contained in $\{ a \in N^n : T_i(a) \leq s_i \}$ and if $s_i < \mfin$ then $\{ a \in N^n : T_i(a) \leq s_i \}$ is disjoint from $\mfin^n$.
So we suppose $s_1,\ldots,s_k\in \mfin$.
It is now easy to see that
$$ \st(Y \cap \mfin^n) = \{ a \in \R^n : T_1(a) \leq \st(s_1), \ldots , T_k(a) \leq \st(s_k) \}.$$
So $\st(Y \cap \mfin^n)$ is $(\R,+,<)$-definable.
\end{proof}

\noindent
Proposition~\ref{prop:external-1} will be used to show that $\shc$ is interdefinable with the structure induced on $\Z$ by $\rzc$ and $\psi_\alpha$.

\begin{prop}
\label{prop:external-1}
Suppose $H$ is a dense subgroup of $(\R,+)$.
Then $(H,+,<)^{\mathrm{Sh}}$ is interdefinable with the structure induced on $H$ by $(\R,+,<)$.
\end{prop}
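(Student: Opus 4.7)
The plan is to prove both inclusions by working with a highly saturated elementary extension $(H,+,<) \prec (N,+,<)$, paralleling the proof of Proposition~\ref{prop:external}. Let $\mfin \subseteq N$ be the convex hull of $H$ and $\st : \mfin \to \R$ the usual standard part map; note $H^n \subseteq \mfin^n$.

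For the direction that the induced structure is a reduct of the Shelah expansion, fix any $(\R,+,<)$-definable $X \subseteq \R^n$. By quantifier elimination for divisible ordered abelian groups, $X$ is a boolean combination of halfspaces $\{a : T(a) \leq r\}$ with $T$ a $\Z$-linear form (after clearing denominators) and $r \in \R$. Saturation of $N$ produces some $r' \in N$ realizing the same cut in $H$ as $r$. Since $T(H^n) \subseteq H$, we have $\{a \in H^n : T(a) \leq r\} = \{a \in H^n : T(a) \leq r'\}$, so $X \cap H^n$ is externally definable in $(H,+,<)$.

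For the reverse direction, let $Y \subseteq N^n$ be $(N,+,<)$-definable. By Fact~\ref{fact:aoag-qe}, $Y$ is a boolean combination of halfspaces $\{a : T(a) \leq s\}$ and coset conditions $\{a : T(a) \in nN + s\}$ with $s \in N$, $n \geq 1$, and $T$ a $\Z$-linear form; it suffices to handle each atomic piece. For a halfspace with $s \in \mfin$, setting $r := \st(s) \in \R$, a short computation using that $T(a) \in H$ for $a \in H^n$ shows that $\{a \in H^n : T(a) \leq s\}$ equals $\{a \in H^n : T(a) \leq r\}$ when $r \in H$ and $s \geq r$ in $N$, and equals $\{a \in H^n : T(a) < r\}$ in all other cases; both are $(\R,+,<)$-definable with parameter $r$. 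For $s \notin \mfin$ the intersection is trivially $H^n$ or empty. For a coset condition, elementarity of $(H,+,<) \prec (N,+,<)$ yields $nN \cap H = nH$, so: if $s \in H + nN$, write $s = h + nb$ with $h \in H$, $b \in N$, and the restriction of $\{a : T(a) \in nN + s\}$ to $H^n$ reduces to $\{a \in H^n : T(a) \equiv h \pmod{nH}\}$, already $(H,+,<)$-definable (hence in the induced structure); if $s \notin H + nN$ the restriction is empty. Taking boolean combinations exhibits $Y \cap H^n$ as an $(\R,+,<)$-definable set intersected with $H^n$.

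The main step going beyond Proposition~\ref{prop:external} is the handling of coset conditions. In Proposition~\ref{prop:external} one reduces immediately to closed $Y$ under standard parts, and the coset-of-$nN$ atoms vanish because $nN$ is dense in $N$. Here we must work with arbitrary (not necessarily closed) $Y$, so the full quantifier elimination of Fact~\ref{fact:aoag-qe} is invoked and the identity $nN \cap H = nH$ — a direct consequence of elementarity applied to the formula $\exists y\, (x = ny)$ — is used to absorb the coset conditions into the original language of $(H,+,<)$.
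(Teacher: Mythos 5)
Your argument is correct and follows essentially the same route as the paper: pass to a highly saturated $(H,+,<)\prec(N,+,<)$, apply Fact~\ref{fact:aoag-qe}, and run the standard-part case analysis on the order atoms; the only differences are that you verify the easy direction directly by realizing cuts in $N$ (where the paper cites Fact~\ref{fact:complete-0}) and that you absorb the congruence atoms via the identity $nN \cap H = nH$ (where the paper disposes of all $(N,+)$-definable pieces at once by stability of abelian groups). One small wording point: when $nH \neq H$ the set $nH$ is dense and codense in $\R$, so the coset pieces are \emph{not} traces of $(\R,+,<)$-definable sets, and your final sentence should conclude only that $Y \cap H^n$ is definable in the structure induced on $H$ by $(\R,+,<)$ --- which your parenthetical ``hence in the induced structure'' already grants and which is all the proposition requires.
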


\begin{proof}

\noindent
As $(H,+,<)$ is $\nip$ Fact~\ref{fact:complete-0} shows that the structure induced on $H$ by $(\R,+,<)$ is a reduct of $(H,+,<)^{\mathrm{Sh}}$.
We show that $(H,+,<)^{\mathrm{Sh}}$ is a reduct of the structure induced on $H$ by $(\R,+,<)$.
Suppose $(H,+,<) \prec (N,+,<)$ is highly saturated and $Y \subseteq N^n$ is $(N,+,<)$-definable.
Applying Fact~\ref{fact:aoag-qe} there is a family $(X_{ij} : 1 \leq i,j \leq k)$ of $(N,+,<)$-definable sets such that $Y = \bigcup_{i = 1}^{k} \bigcap_{j = 1}^{k} X_{ij}$ and each $X_{ij}$ is either $(N,+)$-definable or of the form $\{ a \in N^n : T(a) \leq s \}$ for some $\Z$-linear $T : N^n \to N$ and $s \in N$.
As $H^n \cap Y = \bigcup_{i = 1}^{k} \bigcap_{j = 1}^{k} (H^n \cap X_{ij})$ it is enough to show that each $H^n \cap X_{ij}$ is definable in the structure induced on $H$ by $(\R,+,<)$.
If $X_{ij}$ is $(N,+)$-definable then $H^n \cap X_{ij}$ is $(H,+)$-definable by stability of abelian groups,
So suppose $Y = \{ a \in N^n : T(a) \leq s \}$ for $\Z$-linear $T : N^n \to N$ and $s \in N$.
Let $\mfin$ and $\st$ be as above.
If $s > \mfin$ then $H^n \subseteq Y$ and if $s < \mfin$ then $H^n$ is disjoint from $Y$.
Suppose $s \in \mfin$.
If $s \geq \st(s)$ then $H^n \cap Y = \{ a \in H^n : T(a) \leq \st(s) \}$ and if $s < \st(s)$ then $H^n \cap Y = \{ a \in H^n : T(a) < \st(x) \}$.
So in each case $H^n \cap Y$ is definable in the structure induced on $H$ by $(\R,+,<)$.
\end{proof}

\noindent
We can now compute $(\Z,+,C_\alpha)^\square$.

\begin{prop}
\label{prop:ca-complete}
Fix irrational $\alpha \in \rz$.
Then $(\Z,+,C_\alpha)^\square$ is interdefinable with $(\rz,+,C)$ and $\shc$ is interdefinable with the structure induced on $\Z$ by $(\rz,+,C)$ and $\psi_\alpha$.
\end{prop}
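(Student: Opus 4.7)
The proof mimics that of Proposition~\ref{prop:external-1} for the linearly ordered case: I would prove the second claim first and deduce the first from it via Proposition~\ref{prop:complete-1}. Let $\Sa I$ denote the structure induced on $\Z$ by $\rzc$ and $\psi_\alpha$. That $\Sa I$ is a reduct of $\shc$ is routine: density of $\psi_\alpha(\Z)$ in $\rz$ together with saturation of a highly saturated $\Sa N \succ \zca$ let one replace any parameter $\beta \in \rz$ appearing in a $\rzc$-formula by some $a \in N$ with $\st(a) = \beta$, yielding an $\Sa N$-definable $Y' \subseteq N^n$ with $\Z^n \cap Y' = \psi_\alpha^{-1}(Y)$.

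For the reverse direction I would imitate the proof of Proposition~\ref{prop:external-1}, whose key ingredient is quantifier elimination for archimedean ordered abelian groups (Fact~\ref{fact:aoag-qe}). The analogue needed here is a QE for $\Th(\zca)$ in the language $\{+,-,0,1,C_\alpha\} \cup \{D_n\}_{n\geq 2}$ with $D_n$ picking out $n\Z$ --- the cyclic counterpart of Fact~\ref{fact:aoag-qe}, implicit in \cite{TW-cyclic}. Granted this, every $\Sa N$-definable subset of $N^n$ is a Boolean combination of additive-congruential atoms in $\Z$-linear terms and cyclic atoms of the form $C_\alpha(T_1(x)-a_1, T_2(x)-a_2, T_3(x)-a_3)$ with $\Z$-linear $T_i$ and $a_i \in N$. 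On $\Z^n$, the additive atoms restrict to $(\Z,+)$-definable sets by stability of abelian groups (exactly as in the proof of Proposition~\ref{prop:external-1}), while each cyclic atom restricts to the $\Sa I$-definable set obtained by replacing $a_i$ with $\st(a_i) \in \rz$; irrationality of $\alpha$ disposes of the boundary cases where $\st(a_i)$ would coincide with $\psi_\alpha(k)$ for some $k \in \Z$.

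For the first claim, Proposition~\ref{prop:complete-1}(1) exhibits $\zca^\square$ as generated by closures in $(\rz)^n$ of $\psi_\alpha$-images of externally definable $X \subseteq \Z^n$. The second claim gives $X = \psi_\alpha^{-1}(Y)$ for some $\rzc$-definable $Y$, hence $\psi_\alpha(X) = \psi_\alpha(\Z)^n \cap Y$; density of $\psi_\alpha(\Z)^n$ in $(\rz)^n$ together with the (o-minimal) cell structure of $Y$ in $\rzc$ force the closure of $\psi_\alpha(\Z)^n \cap Y$ to be $\rzc$-definable.

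The principal obstacle is establishing the cyclic quantifier elimination: cyclically ordered abelian groups lack an ambient linear order, so the back-and-forth must be routed through the universal cover of $\zca$ recalled in Section~\ref{section:cyclic-orders}. The secondary technicality is verifying that lower-dimensional pieces of $Y$ --- whose intersections with $\psi_\alpha(\Z)^n$ are cosets of subgroups, possibly empty --- contribute closures which remain $\rzc$-definable. Once the QE is in hand, the rest of the argument is bookkeeping modeled on the proof of Proposition~\ref{prop:external-1}.
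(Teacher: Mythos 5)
Your plan has two load-bearing points that are not secured. First, the entire hard direction of your argument rests on a quantifier elimination for $\Th\zca$ in $\{+,-,0,1,C_\alpha\}\cup\{D_n\}_{n\geq 2}$, which you only assert is ``implicit in \cite{TW-cyclic}''; you correctly flag it as the principal obstacle, but you do not prove it, and the paper neither proves nor needs any cyclic quantifier elimination, so as written neither claim is actually established. Second, your deduction of the first claim from the second contains a step that is false: an $\shc$-definable (indeed $(\Z,+)$-definable) set $X\subseteq\Z^n$ need not be of the form $\psi_\alpha^{-1}(Y)$ for a single $\rzc$-definable $Y$. Already $X=2\Z$ is a counterexample: a $\rzc$-definable subset of $\rz$ is a finite union of points and intervals, and since $2\alpha\Z$ and $\alpha+2\alpha\Z$ are both dense in $\rz$, no such $Y$ can satisfy $Y\cap\psi_\alpha(\Z)=\psi_\alpha(2\Z)$. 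The interdefinability in the second claim only says $X$ is definable in the induced structure, where congruence data enters through quantification, not that $X$ is a quantifier-free pullback. To run your closure argument you must first split off the coset/congruence part (which is what your QE sketch would do) and then still show that $\cl\bigl(\psi_\alpha((L+c)\cap\psi_\alpha^{-1}(Y))\bigr)$ is $\rzc$-definable for subgroups $L\leq\Z^n$; that ``secondary technicality'' is in fact most of the content of the first claim, not bookkeeping.

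For comparison, the paper's proof transfers the \emph{sets} rather than the formulas through the universal cover, so no cyclic QE is ever needed: writing $\alpha=\lambda+\Z$, $H=\Z+\lambda\Z$, $\rho=\psi_\alpha^{-1}\circ\pi$, an $\shc$-definable $X\subseteq\Z^n$ pulls back to $Y=\rho^{-1}(X)\cap[0,1)^n$, which is externally definable in $(H,+,<)$; Proposition~\ref{prop:external} (proved from Fact~\ref{fact:aoag-qe}) shows $\cl(Y)$ is $(\R,+,<)$-definable and $\pi(\cl(Y))=\cl(\psi_\alpha(X))$, giving the first claim, while Proposition~\ref{prop:external-1} shows $Y$ is definable in the structure induced on $H\cap[0,1)$ by $(\R,+,<)$, giving the hard direction of the second claim; the easy direction (which you argue by hand, with boundary cases glossed) is immediate from Proposition~\ref{prop:complete-1} once the first claim is known. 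If you want to keep your route, you must actually prove the cyclic quantifier elimination (presumably via the cover, as you suggest) and repair the $X=\psi_\alpha^{-1}(Y)$ step; otherwise the efficient fix is to do what the paper does and let the universal cover carry the externally definable sets into the linear setting where Fact~\ref{fact:aoag-qe} already applies.
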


\begin{proof}
Let $\pi$ be the quotient map $\R \to \rz$ so $(\R,+,<,1,\pi)$ is a universal cover of $(\rz,+,C)$.
Fix $\lambda \in \R$ such that $\alpha = \lambda  + \Z$, let $H := \Z + \lambda \Z$, and let $\rho : H \to \Z$ be $\rho := \psi^{-1}_\alpha \circ \pi$, so that $(H,+,<,1,\rho)$ is a universal cover of $(\Z,+,C_\alpha)$.
Let $\rho : H^n \to \Z^n$ be given by $\rho(t_1,\ldots,t_n) = (\rho(t_1),\ldots,\rho(t_n))$.
Suppose $X \subseteq \Z^n$ is $(\Z,+,C_\alpha)^\mathrm{Sh}$-definable.
Then $Y := \rho^{-1}(X) \cap [0,1)^n$ is easily seen to be externally definable in $(H,+,<)$.
Proposition~\ref{prop:external} shows that $\cl(Y)$ is $(\R,+,<)$-definable.
Observe that $\pi(\cl(Y))$ is the closure of $\psi_\alpha(X)$ in $(\rz)^n$.
So the closure of $\psi_\alpha(X)$ in $(\rz)^n$ is definable in $(\rz,+,C)$.
So $(\Z,+,C_\alpha)^\square$ is interdefinable with $(\rz,+,C)$.
\newline

\noindent
We now show that $\shc$ is interdefinable with the structure induced on $\Z$ by $\rzc$ and $\psi_\alpha$.
By Proposition~\ref{prop:complete-1} and preceding paragraph it suffices to show that $\shc$ is a reduct of the induced structure.
Again suppose that $X \subseteq \Z^n$ is an $\shc$-definable subset of $\Z^n$ and $Y := \rho^{-1}(X) \cap [0,1)^n$.
By Proposition~\ref{prop:external-1} $Y$ is definable in the structure induced on $H \cap [0,1)$ by $(\R,+,<)$.
So $\rho(Y) = X$ is definable in the structure induced on $\psi_\alpha(\Z)$ by $\rzc$.
Hence $Y$ is definable in the structure induced on $\Z$ by $\rzc$ and $\psi_\alpha$.
\end{proof}

\noindent
Corollary~\ref{cor:shelah-induced} now follows immediately, we leave the details to the reader.

\begin{cor}
\label{cor:shelah-induced}
Suppose $\bH$ is a semialgebraic Mordell-Lang circle group, $\gamma$ is the unique (up to sign) topological group isomorphism $\rz \to \bH$, $\alpha \in \rz$ is irrational, $\chi : \Z \to \bH$ is the character $\chi(k) := \gamma(\alpha k)$, and $\Sa H_\alpha$ is the structure induced on $\Z$ by $\rfield)$ and $\chi$.
Then $\Sa H_\alpha$ expands $\shc$.
So in particular $\Sa G_{\alpha,\lambda}, \Sa S_\alpha$, and $\Sa E_{\alpha,\eta}$ all expand $\shc$ for any $\lambda,\eta > 1$.
\end{cor}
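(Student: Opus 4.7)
The plan is to use Proposition~\ref{prop:ca-complete} to reduce the claim to the following assertion: for every $\rzc$-definable $X \subseteq (\rz)^n$, the pullback $\psi_\alpha^{-1}(X) := \{(k_1,\ldots,k_n) \in \Z^n : (\psi_\alpha(k_1),\ldots,\psi_\alpha(k_n)) \in X\}$ is $\Sa H_\alpha$-definable. Once this is established, applying it to the three families $\G_\lambda$, $\bS$, and $\E^0_\lambda(\R)$ (all of which are semialgebraic Mordell-Lang circle groups by Fact~\ref{fact:mordell-lang}) gives the specific consequences for $\Sa G_{\alpha,\lambda}$, $\Sa S_\alpha$, and $\Sa E_{\alpha,\eta}$.

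To establish the main assertion, I would first invoke Fact~\ref{fact:get-cyclic} to equip $\bH$ with a semialgebraic cyclic group order $S$, unique up to opposite, such that $(\bH,\oplus,S)$ is a cyclically ordered group isomorphic to $\rzc$. By Fact~\ref{fact:unique-iso} the topological group isomorphism $\gamma : \rz \to \bH$ is unique up to sign, so after possibly swapping $S$ with its opposite (equivalently, replacing $\gamma$ by $-\gamma$), I may assume that $\gamma$ is an isomorphism of cyclically ordered groups from $\rzc$ onto $(\bH,\oplus,S)$.

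Given $X \subseteq (\rz)^n$ definable in $\rzc$, its componentwise image $\gamma(X) \subseteq \bH^n$ is then definable in $(\bH,\oplus,S)$ by transport of structure along the isomorphism $\gamma$, and is therefore semialgebraic because both $\oplus$ and $S$ are. Since $\chi = \gamma \circ \psi_\alpha$, the identity
$$\psi_\alpha^{-1}(X) = \chi^{-1}(\gamma(X)) = \{(k_1,\ldots,k_n) \in \Z^n : (\chi(k_1),\ldots,\chi(k_n)) \in \gamma(X)\}$$
exhibits $\psi_\alpha^{-1}(X)$ as a set definable from an $\rfield)$-definable subset of $\bH^n$ via $\chi$, hence $\Sa H_\alpha$-definable by the very definition of the structure induced on $\Z$ by $\rfield)$ and $\chi$. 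This completes the reduction and the proof.

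I do not anticipate any serious obstacle. The only point requiring care is the orientation bookkeeping: the cyclic order on $\bH$ provided by Fact~\ref{fact:get-cyclic} is canonical only up to opposite, and the topological group isomorphism $\gamma$ is canonical only up to sign, so one must verify that these two sign choices can be coordinated so that $\gamma$ preserves rather than reverses the cyclic order. Since both ambiguities are of order two and flipping either resolves any mismatch, this is immediate.
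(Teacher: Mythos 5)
Your proposal is correct and is essentially the paper's intended argument: the paper deduces Corollary~\ref{cor:shelah-induced} ``immediately'' from Proposition~\ref{prop:ca-complete}, leaving to the reader exactly the details you supply, namely that after coordinating the sign of $\gamma$ with the orientation of the semialgebraic cyclic order from Facts~\ref{fact:get-cyclic} and~\ref{fact:unique-iso}, $\gamma$ carries every $\rzc$-definable $X \subseteq (\rz)^n$ to a semialgebraic subset of $\bH^n$, so that $\psi_\alpha^{-1}(X) = \chi^{-1}(\gamma(X))$ is among the basic relations of $\Sa H_\alpha$. (One cosmetic remark: the Mordell-Lang hypothesis is never actually used here --- it is needed only for dp-minimality, not for the ``expands'' statement --- and Fact~\ref{fact:mordell-lang} covers $(\R_{>0},\times)$ rather than $\G_\lambda$ itself, whose case the paper handles via the universal cover as in Lemma~\ref{lem:mult-ml}; neither point affects your proof.)
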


\noindent
By Fact~\ref{fact:simon-w} a dp-minimal expansion of $\shc$ cannot add new unary sets.
We suspect that any dp-minimal expansion of $\shc$ adds new binary sets.

\begin{prop}
\label{prop:new-binary}
Fix irrational $\alpha \in \rz$.
Then $\Sa G_{\alpha,\lambda}, \Sa S_\alpha$, and $\Sa E_{\alpha,\eta}$ all define a subset of $\Z^2$ which is not $\shc$-definable for any $\lambda,\eta > 1$.
\end{prop}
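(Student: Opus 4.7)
The plan is, for each of the three structures, to exhibit a closed semialgebraic $Y \subseteq \bH^2$ (where $\bH$ is the associated circle group $\bS$, $\G_\lambda$, or $\E^0_\eta(\R)$) that equals the closure of its interior and whose pullback $\chi_\alpha^{-2}(Y) \subseteq \Z^2$ is not $\shc$-definable. By Proposition~\ref{prop:ca-complete} every $\shc$-definable subset of $\Z^2$ has the form $\psi_\alpha^{-2}(Z)$ for some $\rzc$-definable $Z \subseteq (\rz)^2$. So, assuming for contradiction that $\chi_\alpha^{-2}(Y) = \psi_\alpha^{-2}(Z)$, the sets $Y' := \gamma^{-2}(Y)$ and $Z$ agree on the dense subset $\psi_\alpha(\Z)^2 \subseteq (\rz)^2$.

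Under this assumption $\chi_\alpha^{-2}(Y)$ is in particular externally definable in $\shc$, so Proposition~\ref{prop:complete-1}(1) places the closure (in $(\rz)^2$) of $\psi_\alpha(\chi_\alpha^{-2}(Y))$ in the vocabulary of $\shc^{\square}$, which is $\rzc$ by Proposition~\ref{prop:ca-complete}. Since $Y$ equals the closure of its interior so does $Y'$, and combined with density of $\psi_\alpha(\Z)^2$ this forces
\[
\cl\bigl(\psi_\alpha(\chi_\alpha^{-2}(Y))\bigr) = \cl\bigl(Y' \cap \psi_\alpha(\Z)^2\bigr) = Y'.
\]
We conclude that $Y'$ itself is $\rzc$-definable, so it suffices to choose $Y$ with $Y'$ not $\rzc$-definable.

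Concretely, I would take $Y := \{(x,y) \in \bS^2 : \Re(x) + \Re(y) \leq 1\}$ for $\Sa S_\alpha$, yielding $Y' = \{(t,u) \in (\rz)^2 : \cos(2\pi t) + \cos(2\pi u) \leq 1\}$; $Y := \{(a,b) \in [1,\lambda)^2 : a + b \leq \lambda + 1\}$ for $\Sa G_{\alpha,\lambda}$, yielding $Y'$ bounded by $\lambda^t + \lambda^u = \lambda + 1$; and an analogous set cut out by $x(P)+x(Q) \leq c$ in the Weierstrass embedding for $\Sa E_{\alpha,\eta}$. Lifting Fact~\ref{fact:aoag-qe} to $\rzc$ through the universal cover $(\R,+,<,1,\pi)$, every $\rzc$-definable subset of $(\rz)^2$ has topological frontier contained in a finite union of affine circle subvarieties $\{(t,u) : kt+lu = c\}$ with $(k,l) \in \Z^2$ and $c \in \rz$.

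The main obstacle is then to verify that the frontier of $Y'$ in each case contains no segment of any such circle subvariety. I plan to parameterize a hypothetical segment as $(t,u) = (t_0 + ls,\, u_0 - ks)$ and substitute into the defining equation of the frontier, so that the resulting identity in $s$ becomes a nontrivial linear relation among exponentials with distinct frequencies (for the trigonometric and the $\lambda^t$ cases) or among values of $\wp_\eta$ and $\wp'_\eta$ along an arithmetic progression (for the elliptic case). Linear independence of these functions then yields the desired contradiction.
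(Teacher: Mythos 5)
Your core mechanism is the one the paper uses: pull back a regular semialgebraic subset of $\bH^2$ under $\chi_\alpha$, suppose the pullback is $\shc$-definable, invoke Proposition~\ref{prop:complete-1} together with Proposition~\ref{prop:ca-complete} to make the closure of its $\psi_\alpha$-image $\rzc$-definable, and use density of $\psi_\alpha(\Z)^2$ plus regularity to conclude the set itself is $\rzc$-definable; the only structural difference is that you use regular closed sets where the paper uses regular open ones and recovers $U$ as the interior of the definable closure. Where you diverge is in the witness and the non-definability verification. The paper takes \emph{any} regular open semialgebraic set not definable in the cyclically ordered circle group --- e.g.\ an open disc --- for which non-definability is essentially immediate, since $\rzc$-definable sets are piecewise ``linear'' (boundaries lie in finitely many cosets of one-dimensional subgroups) and a disc's boundary is nowhere of that form after the coordinate change. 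You instead commit to specific level sets and defer a transcendence-type verification (independence of exponentials, trigonometric functions, or values of $\wp_\eta$); this is feasible but is the only step carrying real content, and in your write-up it remains a plan. Two corrections are needed there. First, for $\Sa G_{\alpha,\lambda}$ the literal claim that the frontier of $Y'=\{(t,u):\lambda^{t}+\lambda^{u}\le\lambda+1\}$ ``contains no segment of any circle subvariety'' is false: since $t\mapsto\lambda^{t-\lfloor t\rfloor}$ is discontinuous at $0$, the frontier of $Y'$ in $(\rz)^2$ contains segments of the cosets $\{t=0\}$ and $\{u=0\}$. What you actually need (and what holds) is that the frontier is not \emph{contained} in a finite union of such cosets; this follows from your parameterization argument applied to the curved level-curve portion, together with a Baire-type observation that a curve covered by finitely many closed cosets must contain a sub-arc of one of them. (The $\bS$ and elliptic examples avoid this seam issue --- in the elliptic case because the pole of $\wp_\eta$ at the identity keeps the set away from it.) Second, ``externally definable in $\shc$'' should read ``definable in $\shc$, hence externally definable in $\zca$'' (Fact~\ref{fact:shelah}), which is what Proposition~\ref{prop:complete-1}(1) requires. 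With these repairs your argument is correct, but the paper's choice of witness buys you out of all of the analytic work in your last step.
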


\noindent
An open subset of a topological space is \textbf{regular} if it is the interior of its closure.

\begin{proof}
We treat $\Sa G_{\alpha,\lambda}$, the other cases follow in the same way.
Let $S$ the cyclic order on $[1,\lambda)$ where $S(t,t',t'')$ if and only if either $t < t' < t''$, $t' < t'' < t$ or $t'' < t < t'$.
So $S$ is the unique (up to opposite) semialgebraic cyclic group order on $([1,\lambda),\otimes_\lambda)$.
Let $U$ be a regular open semialgebraic subset of $[1,\lambda)^2$ which is not definable in $([1,\lambda),\otimes_\lambda,S)$, e.g. an open disc contained in $[1,\lambda)^2$.
Let $V : = \chi^{-1}_\alpha(U)$, so $V$ is $\Sa G_{\alpha,\lambda}$-definable.
Suppose that $V$ is $\shc$-definable.
By Proposition~\ref{prop:ca-complete} the closure of $\chi_\alpha(V)$ is definable in $([1,\lambda),\otimes_\lambda,S)$.
As $\chi_\alpha(\Z^2)$ is dense in $[1,\lambda)^2$, the closure of $\chi_\alpha(V)$ agrees with the closure of $U$.
As $U$ is regular $U$ is the interior of the closure of $U$.
So $U$ is definable in $([1,\lambda),\otimes_\lambda,S)$, contradiction.
\end{proof}

\section{When the examples are interdefinable}
\label{section:interdef}
\noindent
In this section we describe the completions of the  dp-minimal expansions of $\zca$ constructed in Section~\ref{section:examples} and show that if two of these expansions are interdefinable then the associated semialgebraic circle groups are semialgebracially isomorphic.
\newline

\noindent
Suppose $\Sa R$ is an o-minimal expansion of $\rfield)$, $\bH$ is an $\Sa R$-definable circle group.
We say that a subgroup $A$ of $\bH$ is a \textbf{GH-subgroup} if $(\Sa R,A)$ is $\nip$, $\Th(\Sa R)$ is an open core of $\Th(\Sa R,A)$, and the structure induced on $A$ by $\Sa R$ is dp-minimal.

\begin{prop}
\label{prop:induce-complete}
Suppose $\Sa R$ is an o-minimal expansion of $\rfield)$,  $\bH$ is an $\Sa R$-definable circle group,
$\gamma$ is the unique (up to sign) topological group isomorphism $\rz \to \bH$, $\chi$ is an injective character $\Z \to \bH$, and $\Sa Z$ is the structure induced on $\Z$ by $\Sa R$ and $\chi$.
If $\chi(\Z)$ is a $\gh$-subgroup then $\Sq Z$ is interdefinable with the structure induced on $\rz$ by $\Sa R$ and $\gamma$.
So for any irrational $\alpha \in \rz$ and $\lambda > 1$:
\begin{enumerate}
\item A subset of $(\rz)^n$ is $\Sq G_{\alpha,\lambda}$-definable if and only if it is the image under the quotient map $\R^n \to (\rz)^n$ of a set of the form $$\{ ( t_1,\ldots, t_n) : (\lambda^{t_1},\ldots,\lambda^{t_n}) \in X \}$$ for a semialgebraic subset $X$ of $[1,\lambda)^n$.
\item A subset of $(\rz)^n$ is $\Sq S_\alpha$-definable if and only if it is the image under the quotient map $\R^n \to (\rz)^n$ of a set of the form
$$ \{ (t_1,\ldots,t_n) \in [0,1)^n : (e^{2\pi i t_1},\ldots,e^{2\pi i t_n}) \in X \}  $$
for a semialgebraic subset $X$ of $\bS^n$.
\item A subset of $(\rz)^n$ is $\Sq E_{\alpha,\lambda}$-definable if and only if it is an image under the quotient map $\R^n \to (\rz)^n$ of a set of the form
$$ \{ (t_1,\ldots,t_n) \in [0,1)^n : (\frp(t_1),\ldots,\frp(t_n) ) \in X \} $$
for a semialgebraic subset $X$ of $\E^0_\lambda(\R)^n$.
\end{enumerate}
\end{prop}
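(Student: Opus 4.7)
The plan is to show that the structure $\Sa M$ induced on $\rz$ by $\Sa R$ and $\gamma$ is interdefinable with $\Sq Z$. First observe that since $\chi$ is an injective character into the circle group $\bH$ and $\gamma$ is a topological group isomorphism (unique up to sign), one has $\chi = \gamma \circ \psi_\alpha$ for a unique irrational $\alpha \in \rz$; hence $\Sa Z$ expands $\zca$, and the GH-subgroup hypothesis makes $\Sa Z$ dp-minimal, and in particular strongly dependent. The structure $\Sa M$ is o-minimal because $\bH$ is a one-dimensional definable group in the o-minimal $\Sa R$.

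To set up the comparison, I would take a highly saturated elementary extension $(\Sa R, A) \prec (\Sa R^*, A^*)$, available since $(\Sa R, A)$ is $\nip$ by assumption. Transporting along $\chi$, the induced structure on $A^*$ from $\Sa R^*$ is a highly saturated $\Sa N \succ \Sa Z$. A direct check matches the cyclic standard part map $\st \colon N \to \rz$ from Section~\ref{section:completion} with the composition $A^* \hookrightarrow \bH(\Sa R^*) \to \bH \to \rz$, where the middle map is the topological standard part on the bounded definable set $\bH$ and the last is $\gamma^{-1}$. By Proposition~\ref{prop:complete-1}(2), $\Sq Z$ has as primitive relations the images $\st(X)$ for $\Sa N$-definable $X \subseteq N^n$.

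To show $\Sa M$ is a reduct of $\Sq Z$, take $Y \subseteq \bH^n$ $\Sa R$-definable. By o-minimal cell decomposition it suffices to handle open $\Sa R$-definable $Y$ and singletons; singletons are parameters in $\rz$. For open $Y$, density of $(A^*)^n$ in $\bH(\Sa R^*)^n$ together with saturation forces $\st((A^*)^n \cap Y(\Sa R^*)) = \cl_{\bH^n}(Y)$, an $\Sa R$-definable closed set whose pullback is $\Sq Z$-definable, and $\gamma^{-n}(Y)$ is then its interior, also $\Sq Z$-definable by o-minimality of $\Sq Z$. For the reverse direction, given an $\Sa N$-definable $X = (A^*)^n \cap Y^*$ with $Y^*$ definable in $(\Sa R^*, A^*)$, invoke the open-core hypothesis: after verifying that $\cl(Y^*)$ is $(\Sa R^*, A^*)$-definable, it must then be $\Sa R^*$-definable; by a Marker-Steinhorn argument (Fact~\ref{fact:ms}) adapted to $\bH$, its standard part is $\Sa R$-definable, and a careful comparison using o-minimality of $\Sq Z$ extracts $\st(X)$ from $\st(\cl(Y^*))$ together with $\Sa R$-definable approximations to the interior.

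The main obstacle is the $(\Sa R^*, A^*)$-definability of $\cl(Y^*)$ so that the open-core hypothesis applies: in the Mordell-Lang examples of interest this follows from the explicit boolean description in Fact~\ref{fact:GH} (finite unions of sets of the form $b \oplus n(X \cap A^k)$ with $X$ $\Sa R$-definable have visibly definable topological closures in $\bH^k$), while in the general GH-subgroup setting it should be extracted from a refinement of the definition. Finally, the specific computations (1)--(3) follow by applying the main statement with $\bH = \G_\lambda, \bS$, and $\E^0_\lambda(\R)$ respectively: each is a Mordell-Lang circle group (Fact~\ref{fact:mordell-lang}), so by Lemma~\ref{lem:mult-ml} (for (1)) or Proposition~\ref{prop:GH-1} (for (2) and (3)) the image $\chi_\alpha(\Z)$ is a GH-subgroup, and the explicit formulas for $\gamma$ yield the stated parametrizations.
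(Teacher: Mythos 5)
Your high-level skeleton (identify $\Sq Z$ with a closure/standard-part structure on $\bH$, then compare it with the structure induced by $\Sa R$ using density of $\chi(\Z)$, saturation, the open-core hypothesis and Marker--Steinhorn) is the same as the paper's, which proves the general Theorem~\ref{thm:complete-induced} and deduces the proposition from it. However, the direction in which you show that the structure induced on $\rz$ by $\Sa R$ and $\gamma$ is a reduct of $\Sq Z$ has a genuine gap. Cell decomposition does not reduce matters to open definable sets and singletons: cells of intermediate dimension occur, and the correct reduction is to boolean combinations of \emph{closed} definable sets. Your density-plus-saturation argument only produces $\cl(Y)$ for open definable $Y$, and the further claim that $\gamma^{-n}(Y)$ is the interior of the pullback of $\cl(Y)$ is false unless $Y$ is regular open (take $Y=\bH\setminus\{e\}$). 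More fundamentally, a closed definable set of lower dimension --- a point off $\chi(\Z)^n$, a semialgebraic curve in $\bH^2$, the graph of a definable function --- may meet $A^n$ in a set that is empty or nowhere near dense in it, so no tracing-and-closing argument of the kind you propose can recover it. This is precisely where the paper's proof of Theorem~\ref{thm:complete-induced} uses the metric argument of \cite[Lemma 13.5]{big-nip}: for closed definable $Y\subseteq X^n$ one forms the set $W$ of triples $(a,a',c)$ such that some $c'\in Y$ satisfies $\|c-c'\|<\|a-a'\|$; the closure $Z$ of the trace of $W$ on $A\times A\times A^n$ is $\Sq A$-definable, and then
$Y=\{c\in X^n:\forall a,a'\in X\ (a\neq a'\Rightarrow (a,a',c)\in Z)\}$,
which is first order over $\Sq A$ because the quantifiers range over the domain $X$. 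Some such device, replacing ``for all $\varepsilon>0$'' by quantification over pairs of domain elements, is needed; without it your argument only captures regular-closed sets and their boolean combinations, not all of the induced structure.

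The converse direction is right in spirit but should be run on the trace itself: with $X=(A^*)^n\cap Y^*$ one has $\st(X)=\st(\cl(X))$, $\cl(X)$ is closed and pair-definable, hence $\Sa R^*$-definable by the open-core hypothesis in the elementary extension, and standard parts of bounded $\Sa R^*$-definable sets are $\Sa R$-definable by a Marker--Steinhorn argument. Working with $\cl(Y^*)$ as you do is not enough: $\st(\cl(Y^*))$ can be much larger than $\st(X)$ (e.g.\ when $Y^*$ is built using the predicate for $A^*$), and the proposed ``extraction'' step is not an argument. (The paper instead argues at the level of $\Sh A$ via Fact~\ref{fact:cs-limit}, the open core, saturation and Fact~\ref{fact:ms}.) Your identification $\chi=\gamma\circ\psi_\alpha$, the matching of the cyclic standard part with $\gamma^{-1}$ composed with the topological standard part (after replacing $\bH$ by a copy as in Fact~\ref{fact:pillay-top}), and the deduction of items (1)--(3) from the general statement are fine.
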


\noindent
It follows from Proposition~\ref{prop:induce-complete} that if $\Sa Z$ is one of the expansions of $\zca$ described above then $\Sq Z$ defines an isomorphic copy of $\rfield)$, so if $\Sa Z \prec \Sa N$ is highly saturated then $\Sh N$ interprets $\rfield)$.
So $\Sa Z$ is non-modular.
An adaptation of \cite[Proposition 15.2]{big-nip} shows that $\Sa N$ cannot interpret an infinite field.
\newline

\noindent
We prove Theorem~\ref{thm:complete-induced}, a more general result on completions which covers almost all ``dense pairs".
It is easy to see that Proposition~\ref{prop:induce-complete} follows from Theorem~\ref{thm:complete-induced}, we leave the details of this to the reader.

\begin{thm}
\label{thm:complete-induced}
Let $\Sa S$ be an o-minimal expansion of $(\R,+,<)$.
Suppose $A$ is a subset of $\R^m$ such that $(\Sa S,A)$ is $\nip$ and $\mathrm{Th}(\Sa S)$ is an open core of $\mathrm{Th}(\Sa S,A)$.
Let $\Sa A$ be the structure induced on $A$ by $\Sa S$ and $X$ be the closure of $A$ in $\R^m$.
Then
\begin{enumerate}
\item the structure $\Sq A$ with domain $X$ and an $n$-ary relation symbol defining $\cl(Y)$ for each $\Sh A$-definable $Y \subseteq A^n$.
\item and the structure $\Sa X$ induced on $X$ by $\Sa S$,
\end{enumerate}
are interdefinable.
(Note that $X$ is $\Sa S$-definable.)
\end{thm}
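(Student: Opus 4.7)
The plan is to adapt the proof of Fact~\ref{fact:complete-0} to the present setting where $A \subseteq \R^m$ need not sit inside a linearly ordered group; the new ingredient is density of $A$ in $X$ together with the open core hypothesis. First, $X = \cl(A)$ is closed and $(\Sa S,A)$-definable, hence $\Sa S$-definable by the open core hypothesis, so the structure $\Sa X$ is well-defined. Combining Marker--Steinhorn (Fact~\ref{fact:ms}) applied to the o-minimal $\Sa S$ with the fact that $\Sa A$ is the induced structure, every $\Sh A$-definable subset of $A^n$ has the form $A^n \cap W$ for some $W \subseteq \R^{mn}$ that is $\Sa S$-definable; the parameters may be taken in $X$ because they originate as standard parts of tuples from $A^*$ in a saturated elementary extension.

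The inclusion $\Sq A \subseteq \Sa X$ is then immediate: for $Y = A^n \cap W$ with $W$ an $\Sa S$-definable subset of $\R^{mn}$, the closure $\cl(Y)$ is closed and $(\Sa S,A)$-definable, hence $\Sa S$-definable by the open core hypothesis; and since $\cl(Y) \subseteq \cl(A^n) = X^n$, it is $\Sa X$-definable.

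For the reverse inclusion $\Sa X \subseteq \Sq A$, by o-minimal cell decomposition every $\Sa S$-definable set is a boolean combination of closed $\Sa S$-definable sets, so it suffices to show that each closed $\Sa S$-definable $V \subseteq X^n$ is $\Sq A$-definable. Assuming $V \neq \emptyset$, I introduce for $v_1,v_2 \in X^n$ the open $\Sa S$-definable set
\[
U_{v_1,v_2} := \{q \in \R^{mn} : d(q,V) < d(v_1,v_2)\}, \qquad F_{v_1,v_2} := \cl\bigl(A^n \cap U_{v_1,v_2}\bigr),
\]
which makes $F_{v_1,v_2}$ a $\Sq A$-atomic relation in $v_1,v_2$ and the parameters of $V$. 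The key identity is
\[
V \;=\; \bigcap_{\substack{v_1,v_2\in X^n\\ F_{v_1,v_2}\neq\emptyset}} F_{v_1,v_2},
\]
which holds because (a) for $v_1 \neq v_2$ one has $V \subseteq U_{v_1,v_2}$, so density of $A^n$ in $X^n$ forces $V \subseteq F_{v_1,v_2}$; (b) $F_{v_1,v_2} \subseteq X^n \cap \overline{U_{v_1,v_2}} = \{q \in X^n : d(q,V) \leq d(v_1,v_2)\}$, so as the pair $(v_1,v_2)$ ranges over distinct points of $X^n$ the intersection collapses to $\{q \in X^n : d(q,V) = 0\} = V$; and (c) the clause ``$F_{v_1,v_2} \neq \emptyset$'' is itself $\Sq A$-definable and, by density of $A^n$ in $X^n$, equivalent to $v_1 \neq v_2$, which sidesteps any apparent circularity in quantifying over the complement of $V$.

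The main technical point is the collapse in (b), which rests on $\inf_{v_1 \neq v_2} d(v_1,v_2) = 0$ in $X^n$, i.e.\ that $X$ has no isolated points. This is automatic in the applications of interest (dense subgroups of semialgebraic circle groups), and in general reduces to a separate treatment of the $\Sa S$-definable connected component decomposition of $X$. Extension from closed $V$ to arbitrary $\Sa S$-definable $V$ via the boolean combination decomposition, and the standard verification that the parameter management from the first paragraph carries through both directions, are routine.
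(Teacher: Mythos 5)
Your proof of the direction ``every $\Sa S$-definable subset of $X^n$ is $\Sq A$-definable'' is essentially the paper's argument: the paper also reduces to closed $V$ by cell decomposition and then recovers $V$ as an intersection, over pairs of distinct points of $X$, of fibers of the closure of the $\Sa A$-definable set $\{(a,a',c) : d(c,V) < d(a,a')\}$, citing the metric argument of \cite[Lemma 13.5]{big-nip}. (Two small repairs there: the relation you call $F_{v_1,v_2}$ must be taken as the fiber over $(v_1,v_2)$ of the closure of the whole $\Sa A$-definable set, not the fiberwise closure -- these differ in general, though the containment you need still holds by continuity of the distance function; and your fallback for the isolated-point issue via ``connected component decomposition'' does not address, say, a totally disconnected $X$.)

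The genuine gap is in the other direction. You assert that every $\Sh A$-definable subset of $A^n$ is of the form $A^n \cap W$ with $W$ an $\Sa S$-definable set over real parameters, ``by Marker--Steinhorn.'' Fact~\ref{fact:ms} does not give this: it concerns sets externally definable over $\R$ with respect to elementary extensions of $\Sa S$, whereas an $\Sh A$-definable set is the trace on $A^n$ of a set defined with parameters in a saturated elementary extension of the abstract induced structure $\Sa A$; such an extension does not sit inside an elementary extension of $\Sa S$, and replacing nonstandard parameters by ``standard parts'' does not preserve traces. What you are really assuming is that $\Sh A$ is a reduct of the structure induced on $A$ by $\Sa S$ -- a statement strictly stronger than the theorem (which only claims that the \emph{closures} of $\Sh A$-definable sets are $\Sa S$-definable), proven in the paper only in special situations (Proposition~\ref{prop:external-1}, via quantifier elimination for archimedean ordered abelian groups) and, in the generality relevant here, explicitly deferred by the author to future work. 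Since your ``immediate'' derivation that $\cl(Y)$ is $\Sa S$-definable rests entirely on this claim, this half of the interdefinability is unproved. The paper's actual route is different and unavoidable here: by $\nip$ of $\Sa A$ and Fact~\ref{fact:cs-limit}, approximate the externally definable $Y$ from inside by an $\Sa A$-definable family $(Y_a : a \in A^k)$; use the open core hypothesis, at the level of theories and hence uniformly in the parameter, to obtain an $\Sa S$-definable family $(Z_b : b \in \R^l)$ realizing the closures $\cl(Y_a)$; a saturation argument then yields a set externally definable in $\Sa S$ squeezed between $Y$ and $\cl(Y)$, and only at that point is Marker--Steinhorn applied, giving that $\cl(Y)$ is $\Sa S$-definable. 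Your proposal never invokes Fact~\ref{fact:cs-limit}, which is the essential ingredient of this direction.
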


\noindent
We let
$$ \| a \| := \max \{ |a_1|,\ldots,|a_n| \} \quad \text{for all  } a = (a_1,\ldots,a_n) \in \R^n. $$
We will need a metric argument from \cite{big-nip} to show that $\Sa X$ is a reduct of $\Sq A$.
If $X = \R^m$ then one can can give a topological proof following \cite[Proposition 3.4]{field-in-Shelah}.

\begin{proof}
\noindent
We first show that $\Sa X$ is a reduct of $\Sq A$.
Suppose $Y$ is a nonempty $\Sa S$-definable subset of $X^n$.
By o-minimal cell decomposition there are definable closed subsets $E_1,F_1\ldots,E_k,F_k$ of $\R^{nm}$ such that $Y = \bigcup_{i = 1}^{k} (E_i \setminus F_i)$.
We have
$$Y = \bigcup_{i = 1}^{k} ((X^n \cap  E_i) \setminus (X^n \cap F_i))$$
so we may suppose that $Y$ is a nonempty closed $\Sa S$-definable subset of $X^n$.
Let $W$ be the set of $(a,a',c) \in X \times X \times X^n$ for which there is $c' \in Y$ satisfying $\| c - c' \| < \| a - a' \|$.
So $W \cap (A \times A \times A^n)$ is $\Sa A$-definable and $Z := \cl(W \cap (A \times A \times A^n))$ is $\Sq A$-definable.
The metric argument in the proof of \cite[Lemma 13.5]{big-nip} shows that
$$ Y = \bigcap_{a,a' \in X, a \neq a'} \{ c \in X : (a,a',c) \in Z \}.$$
(This metric argument requires $Y$ to be closed.)
So $Y$ is $\Sq A$-definable.
\newline

\noindent
We now show that $\Sa X$ is a reduct of $\Sq A$.
Suppose $Y$ is an $\Sh A$-definable subset of $A^n$.
We show that $\cl(Y) \subseteq X^n$ is $\Sa S$-definable.
As $(\Sa S,A)$ is $\nip$, $\Sa A$ is $\nip$, so an application of Fact~\ref{fact:cs-limit} yields an $\Sa A$-definable family $(Y_a : a \in A^k)$ of subsets of $A^n$ such that for every finite $B \subseteq Y$ we have $B \subseteq Y_a \subseteq Y$ for some $a \in A^k$.
As $\mathrm{Th}(\Sa S)$ is an open core of $\mathrm{Th}(\Sa S,A)$ there is an $\Sa S$-definable family $(Z_b : b \in \R^l)$ of subsets of $\R^{nm}$ such that for every $a \in A^k$ we have $\cl(Y_a) = Z_b$ for some $b \in \R^l$.
So for every finite $F \subseteq X$ there is $b \in \R^l$ such that $F \subseteq Z_b \subseteq \cl(Y)$.
A saturation argument yields an $\Sh R$-definable subset $Z$ of $X^n$ such that $Y \subseteq Z \subseteq \cl(Y)$.
An application of Fact~\ref{fact:ms} shows that $Z$ is $\Sa S$-definable, so $\cl(Z) = \cl(Y)$ is $\Sa S$-definable.
\end{proof}

\noindent
The proof of Theorem~\ref{thm:complete-induced} goes through for any expansion $\Sa S$ of $(\R,+,<)$ such that $\Sa S$ is $\nip$, every $\Sa S$-definable set is a boolean combination of definable closed sets, and $\Sh S$ is interdefinable with $\Sa S$.
So for example Theorem~\ref{thm:complete-induced} holds when $\Sa S = (\R,+,<,\Z)$.
\newline

\noindent
Our next goal is to show that if $\bH$ and $\Sa Z$ are as in Proposition~\ref{prop:induce-complete} then we can recover $\Sa R$ and $\bH$ from $\Sa Z$.
We show that we can recover $\Sa R$ and $\bH$ from $\Sq Z$.
This follows from a general correspondence between
\begin{enumerate}
\item non-modular o-minimal expansions $\Sa C$ of $\rzc$, and
\item pairs of the form $\la \Sa R, \bH \ra$, for an o-minimal expansion $\Sa R$ of $(\R,+,\times)$ and an $\Sa R$-definable circle group $\bH$.
\end{enumerate}
In this correspondence $\Sa C$ is unique up to interdefinibility, $\Sa R$ is unique up to interdefinibility, and $\bH$ is unique up to $\Sa R$-definable isomorphism.
\newline

\noindent
Suppose that $\Sa R$ is an o-minimal expansion of $(\R,+,\times)$ and $\bH$ is an $\Sa R$-definable circle group.
We consider $\bH$ as a topological group with $\Cal T_\bH$.
Let $\gamma$ be the unique (up to sign) topological group isomorphism $\rz \to \bH$.
Let $\Sa C$ be the structure induced on $\rz$ by $\Sa R$ and $\gamma$.
Note $\Sa C$ is unique up to interdefinibility.
It is easy to see that $\Sa C$ defines an isomorphic copy of $\rfield)$.
\newline

\noindent
Now suppose $\Sa C$ is a non-modular o-minimal expansion of $\rzc$.
Suppose $I$ is a non-empty open interval and $\oplus,\otimes : I^2 \to I$ are $\Sa C$-definable such that $(I,\oplus,\otimes)$ is isomorphic to $\rfield)$.
Let $\iota$ be the unique isomorphism $\rfield) \to (I,\oplus,\otimes)$.
Let $\Sa R$ be the structure induced on $\R$ by $\Sa C$ and $\iota$.
By compactness of $\rz$ there is a finite $A \subseteq \rz$ such that $(a + I : a \in A)$ covers $\rz$.
Fix a bijection $f : B \to A$ for some $B \subseteq \R$.
Let $\tau : B \times \R \to \rz$ be the surjection given by $\tau(b,t) = f(b) + \iota(t)$.
Observe that equality modulo $\tau$ is an $\Sa R$-definable equivalence relation and, applying definable choice, let $H$ be an $\Sa R$-definable subset of $B \times \R$ which contains one element from each fiber of $\tau$.
Let $\tau' : H \to \rz$ be the induced bijection and  $\boxplus$ be the pullback of $+$ by $\tau'$.
Then $\bH := (H,\boxplus)$ is an $\Sa R$-definable circle group.
Note that the expansion of $\rzc$ associated to $\la \Sa R, \bH \ra$ is interdefinable with $\Sa C$.


\begin{prop}
\label{prop:circle-unique}
For $i \in \{0,1\}$ suppose that $\Sa R_i$ is an o-minimal expansion of $\rfield)$, $\bH_i$ is an $\Sa R_i$-definable circle group, and $\Sa C_i$ is the expansion of $\rzc$ associated to $\la \Sa R_i, \bH_i \ra$.
If $\Sa C_0$ and $\Sa C_1$ are interdefinable then $\Sa R_0$ and $\Sa R_1$ are interdefinable and there is an $\Sa R_0$-definable group isomorphism $\bH_0 \to \bH_1$.
\end{prop}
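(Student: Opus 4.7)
The plan is to prove both conclusions by combining the mutual interpretability between $\Sa R_i$ and $\Sa C_i$ with the Otero--Peterzil--Pillay rigidity theorem (Fact~\ref{fact:opp}). By construction each $\Sa R_i$ is mutually interpretable with $\Sa C_i$: $\Sa R_i$ is interpreted in $\Sa C_i$ on the $\Sa C_i$-definable field $(I_i,\oplus_i,\otimes_i) \subseteq \rz$ via the isomorphism $\iota_i$, while $\Sa C_i$ is interpreted in $\Sa R_i$ on the $\Sa R_i$-definable circle group $\bH_i$ via $\gamma_i^{-1}$ (a subset of $(\rz)^n$ is $\Sa C_i$-definable exactly when its $\gamma_i^n$-image in $\bH_i^n$ is $\Sa R_i$-definable). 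Since $\Sa C_0, \Sa C_1$ are interdefinable, both $\Sa R_0, \Sa R_1$ are expansions of $\rfield)$ mutually interpretable with the common structure $\Sa C$. Fact~\ref{fact:opp} then gives that $\Sa R_0, \Sa R_1$ are interdefinable; let $\Sa R$ denote this common structure. This is the first conclusion.

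For the second conclusion both $\bH_0,\bH_1$ are $\Sa R$-definable circle groups, and I must show that the topological group isomorphism $f := \gamma_1 \circ \gamma_0^{-1} : \bH_0 \to \bH_1$ is $\Sa R$-definable. First I would work on an interval. Transport the $\Sa C$-definable field $(I_0,\oplus_0,\otimes_0)$ via each $\gamma_i$ to obtain an $\Sa R$-definable copy of $\rfield)$ on $J_i := \gamma_i(I_0) \subseteq \bH_i$; this transport is $\Sa R$-definable because $\Sa C$ is $\Sa R$-interpretable on $\bH_i$. Applying Fact~\ref{fact:opp} to $\Sa R$ provides $\Sa R$-definable field isomorphisms $\rfield) \to (J_i,\oplus'_i,\otimes'_i)$, which by rigidity of $\R$ (its only field automorphism is the identity) are unique and must coincide with $\gamma_i \circ \iota_0$. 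Hence $f|_{J_0} = (\gamma_1 \circ \iota_0) \circ (\gamma_0 \circ \iota_0)^{-1}$ is a composition of $\Sa R$-definable maps, so $\Sa R$-definable.

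Then I would extend $f$ to all of $\bH_0$ using the group structure. Since $\rz$ is covered by finitely many translates of $I_0$, the set $\bH_0$ is covered by finitely many translates of $J_0$ by a finite $\Sa R$-definable set $A_0 \subseteq \bH_0$. Partition $\bH_0$ into $\Sa R$-definable pieces $E_a \subseteq a + J_0$ for $a \in A_0$, and on each $E_a$ set $f(x) := f(a) + f|_{J_0}(x - a)$, using the finite collection $\{f(a) : a \in A_0\} \subseteq \bH_1$ as parameters. Gluing these gives $f$ as an $\Sa R$-definable function, and since $f$ is bijective and a group homomorphism we get the desired $\Sa R$-definable group isomorphism.

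The main obstacle is the identification of $f|_{J_0}$ with the specific $\Sa R$-definable field isomorphism produced by Fact~\ref{fact:opp}. This requires verifying that $f|_{J_0}$ is itself a field isomorphism between the transported field structures on $J_0$ and $J_1$, which is immediate from $f \circ \gamma_0 = \gamma_1$ and the definition of $(\oplus'_i,\otimes'_i)$, and then pinning down the identification via uniqueness of the field isomorphism (the rigidity of $\R$'s field automorphism). Once this rigid identification is established, the extension of $f$ from the interval to all of $\bH_0$ is a routine application of the group structure and definable choice.
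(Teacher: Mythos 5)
Your proof is correct, and while the first half coincides with the paper's argument, the second half takes a genuinely different route. For interdefinability of $\Sa R_0$ and $\Sa R_1$ you do exactly what the paper does: each $\Sa R_i$ is mutually interpretable with $\Sa C_i$, hence with the common structure, and Fact~\ref{fact:opp} finishes. For the isomorphism $\bH_0 \to \bH_1$ the paper stays inside $\Sa C_0$: it fixes pushforward copies $\Sa F_i$ of $\Sa R$ on intervals $I_i \subseteq \rz$ and pushforward copies $\bH_{ij}$ of the groups, composes the $\Sa C_0$-definable isomorphism $\bH_{00} \to \rz$ with the ($\Sa C_1$-, hence $\Sa C_0$-definable) isomorphism $\rz \to \bH_{11}$, converts $\bH_{11}$ into the $\Sa F_0$-copy $\bH_{01}$ via an identification $I_1 \to I_0$ of the two internal fields supplied by Fact~\ref{fact:opp}, and then transfers the resulting $\Sa C_0$-definable isomorphism $\bH_{00} \to \bH_{01}$ back to $\Sa R$ through the bi-interpretation of $\Sa F_0$ with $\Sa C_0$. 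You instead work directly in $\Sa R$: transporting a single $\Sa C$-definable interval field along both $\gamma_0$ and $\gamma_1$ yields $\Sa R$-definable fields on arcs $J_i \subseteq \bH_i$, and the key new ingredient is rigidity of $\R$ (no nontrivial field automorphisms), which forces the isomorphisms provided by Fact~\ref{fact:opp} to coincide with $\gamma_i \circ \iota_0$; hence the canonical topological isomorphism $\gamma_1 \circ \gamma_0^{-1}$ is $\Sa R$-definable on $J_0$, and the homomorphism property plus a finite cover of $\bH_0$ by translates of $J_0$ extends it definably to all of $\bH_0$. What your route buys: it identifies a specific definable isomorphism, namely the unique-up-to-sign topological one (exactly the map the proof of Theorem~\ref{thm:combine-1} later exploits), and it replaces the paper's rather terse transfer step through the bi-interpretation by an explicit gluing argument in $\Sa R$. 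Two minor remarks: definable choice is not actually needed for the partition into the pieces $E_a$ (successive set differences of the translates of $J_0$ suffice), and in the gluing you should write the translates with the group operations of $\bH_0$ and $\bH_1$ rather than $+$, but this is purely notational.
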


\begin{proof}
\noindent
It is easy to see that $\Sa C_0$ is bi-interpretable with $\Sa R_0$ and $\Sa C_1$ is bi-interpretable with $\Sa R_1$.
So if $\Sa C_0$ and $\Sa C_1$ are interdefinable then $\Sa R_0$ and $\Sa R_1$ are bi-interpretable, hence interdefinable by Fact~\ref{fact:opp}.
So we suppose $\Sa R_0 = \Sa R_1$ and denote $\Sa R_0$ by $\Sa R$.
\newline

\noindent
For each $i \in \{0,1\}$ let $I_i$ be an interval in $\rz$ and $\iota_i$ be a bijection $\R \to I_i$ such that $\Sa R$ is interdefinable with the structure induced on $\R$ by $\Sa C_i$ and $\iota_i$.
Let $\Sa F_i$ be the pushforward of $\Sa R$ by $\iota_i$ for $i \in \{0,1\}$.
So $\Sa F_0$ is a $\Sa C_0$-definable copy of $\Sa R$ and $\Sa F_1$ is a $\Sa C_1$-definable copy of $\Sa R$.
Let $\bH_{00}, \bH_{01}$ be the pushforward of $\bH_0, \bH_1$ by $\iota_0$, respectively.
Likewise, let $\bH_{10}, \bH_{11}$ be the pushforward of $\bH_0, \bH_1$ by $\iota_1$, respectively.
So $\bH_{00}$, $\bH_{01}$ are $\Sa F_0$-definable copies of $\bH_0$, $\bH_1$, respectively, and $\bH_{10}$, $\bH_{11}$ are $\Sa F_1$-definable copies of $\bH_0$, $\bH_1$, respectively.
Given $i \in \{0,1\}$ let $\gamma_i$ be a $\Sa C_i$-definable group isomorphism $\bH_{i0} \to \rz$.
(Note that $\Sa C_0$ a priori does not
define a group isomorphism from $\bH_{01}$ to $\rz$, likewise for $\Sa C_1$ and $\bH_{10}$.)
\newline

\noindent
Now suppose that $\Sa C_0$ and $\Sa C_1$ are interdefinable.
We show that $\bH_0$ and $\bH_1$ are $\Sa R$-definably isomorphic.
It suffices to show that $\bH_{00}$ and $\bH_{01}$ are $\Sa F_0$-definably isomorphic.
As $\Sa F_0$ and $\Sa C_0$ are bi-interpretable it is enough to produce a $\Sa C_0$-definable group isomorphism $\bH_{00} \to \bH_{01}$.
As $\Sa C_0$ and $\Sa C_1$ are interdefinable $\gamma^{-1}_1 \circ \gamma_0$ is a $\Sa C_0$-definable group isomorphism $\bH_{00} \to \bH_{11}$.
By Fact~\ref{fact:opp} there is a $\Sa C_0$-definable bijection $\xi : I_1 \to I_0$ which induces an isomorphism (up to interdefinibility) from $\Sa F_1$ to $\Sa F_0$.
Let $\zeta$ be the $\Sa C_0$-definable group isomorphism $\bH_{11} \to \bH_{01}$ induced by $\xi$.
Then $\zeta \circ \gamma_1 \circ \gamma^{-1}_0$ is a $\Sa C_0$-definable group isomorphism $\bH_{00} \to \bH_{01}$.
\end{proof}

\noindent
Theorem~\ref{thm:combine-1} classifies our examples up to interdefinibility.

\begin{thm}
\label{thm:combine-1}
Let $\Sa R_0,\Sa R_1,\bH_0,\bH_1$ be as in Proposition~\ref{prop:circle-unique}.
Fix irrational $\alpha \in \rz$.
For each $i \in \{0,1\}$ let  $\gamma_i : \rz \to \bH_i$ be the unique (up to sign) topological group isomorphism, $\chi_i : \Z \to \bH_i$ be given by $\chi_i(k) := \gamma_i(\alpha k)$, and $\Sa Z_i$ be the structure induced on $\Z$ by $\Sa R_i$ and $\chi_i$.
Suppose $\chi_i(\Z)$ is a $\gh$-subgroup for $i \in \{0,1\}$.
Then $\Sa Z_0$ and $\Sa Z_1$ are interdefinable if and only if $\Sa R_0$ and $\Sa R_1$ are interdefinable and there is an $\Sa R_0$-definable group isomorphism $\bH_0 \to \bH_1$.
\end{thm}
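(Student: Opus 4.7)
The reverse direction follows routinely from Fact~\ref{fact:unique-iso}. Suppose $\Sa R_0$ and $\Sa R_1$ are interdefinable and $\xi : \bH_0 \to \bH_1$ is an $\Sa R_0$-definable (hence $\Sa R_1$-definable) group isomorphism. By Fact~\ref{fact:pillay-top}, $\xi$ is a homeomorphism with respect to $\Cal T_{\bH_0}$ and $\Cal T_{\bH_1}$, so $\xi \circ \gamma_0$ is a topological group isomorphism $\rz \to \bH_1$. By Fact~\ref{fact:unique-iso}, $\xi \circ \gamma_0 = \pm \gamma_1$, and consequently $\xi \circ \chi_0 = \pm \chi_1$. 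Therefore a subset $Y$ of $\Z^n$ is $\Sa Z_1$-definable if and only if $Y = \{(k_1,\ldots,k_n) \in \Z^n : (\chi_0(k_1),\ldots,\chi_0(k_n)) \in \pm \xi^{-1}(X)\}$ for some $\Sa R_1$-definable $X \subseteq H_1^n$. Since $\pm \xi^{-1}(X)$ is $\Sa R_0$-definable, $Y$ is $\Sa Z_0$-definable. The converse is symmetric.

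For the forward direction, the plan is to pass to the o-minimal completions and invoke the rigidity result Proposition~\ref{prop:circle-unique}. Suppose $\Sa Z_0$ and $\Sa Z_1$ are interdefinable. Both structures expand $\zca$, so the standard part map $\st : N \to \rz$ defined in Section~\ref{section:completion} depends only on $\alpha$; in particular an elementary extension $\Sa Z_i \prec \Sa N_i$ may be chosen compatibly so that the associated infinitesimals $\minf$ and the imaginary sort $\rz$ coincide on both sides. Since the Shelah expansion is an invariant of interdefinibility, it follows from Proposition~\ref{prop:complete-1} that the completions $\Sq Z_0$ and $\Sq Z_1$, viewed as structures on $\rz$, are interdefinable.

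Now apply Proposition~\ref{prop:induce-complete}, which uses the hypothesis that $\chi_i(\Z)$ is a GH-subgroup: for each $i \in \{0,1\}$, $\Sq Z_i$ is interdefinable with the structure induced on $\rz$ by $\Sa R_i$ and $\gamma_i$. But this is precisely the expansion $\Sa C_i$ of $\rzc$ associated to the pair $\la \Sa R_i,\bH_i\ra$ in the sense of Section~\ref{section:rzc}. So $\Sa C_0$ and $\Sa C_1$ are interdefinable, and Proposition~\ref{prop:circle-unique} yields that $\Sa R_0$ and $\Sa R_1$ are interdefinable and that there is an $\Sa R_0$-definable group isomorphism $\bH_0 \to \bH_1$.

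The main obstacle is the coherence issue in the first step of the forward direction: one must check that the completions $\Sq Z_0$ and $\Sq Z_1$ really are interdefinable as concrete structures on $\rz$ (as opposed to merely isomorphic), which requires that the ambient data $\psi_\alpha$, $\minf$, and $\st$ are canonically recoverable from $\zca$ (and hence from either $\Sa Z_i$). This is where the precise formulation of $\Sq Z$ via characterization $(3)$ of Proposition~\ref{prop:complete-1} as the open core of the structure induced on $\rz$ by $\Sh N$ is essential: it makes $\Sq Z$ a genuine reduct datum rather than a choice. Once this is granted, the remainder of the argument is a pure chase of definitions through Propositions~\ref{prop:induce-complete} and \ref{prop:circle-unique}.
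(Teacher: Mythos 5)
Your proof is correct and takes essentially the same route as the paper: the easy direction via uniqueness (up to sign) of the topological group isomorphism $\rz \to \bH_1$, and the forward direction by passing to the completions $\Sq Z_0$, $\Sq Z_1$ and invoking Propositions~\ref{prop:induce-complete} and~\ref{prop:circle-unique}. Your additional remarks on why interdefinibility of $\Sa Z_0$ and $\Sa Z_1$ forces interdefinibility of $\Sq Z_0$ and $\Sq Z_1$ merely spell out a step the paper asserts directly.
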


\begin{proof}
Suppose that $\Sa R_0$ and $\Sa R_1$ are interdefnable and $\xi : \bH_0 \to \bH_1$ is an $\Sa R_0$-definable group isomorphism.
Then $\xi \circ \gamma_0$ is the unique (up to sign) topological group isomorphism $\rz \to \bH_1$.
So after possibly replacing $\xi$ with $-\xi$ we have $\gamma_1 = \xi \circ \gamma_0$, hence $\chi_1 = \xi \circ \chi_0$.
It easily follows that $\Sa Z_0$ and $\Sa Z_1$ are interdefinable.
\newline

\noindent
Suppose $\Sa Z_0$ and $\Sa Z_1$ are interdefinable.
Then $\Sq Z_0$ and $\Sq Z_1$ are interdefinable.
By Proposition~\ref{prop:induce-complete} the expansions of $\rzc$ associated to $\la \Sa R_0, \bH_0 \ra$ and $\la \Sa R_1, \bH_1 \ra$ are interdefinable.
Applying Proposition~\ref{prop:circle-unique} see that $\Sa R_0$ and $\Sa R_1$ are interdefinable and there is an $\Sa R_0$-definable group isomorphism $\bH_0 \to \bH_1$.
\end{proof}

\noindent
We now see that we have constructed uncountably many dp-minimal expansions of each $\shc$.
Corollary~\ref{cor:to-Z} follows from Theorem~\ref{thm:combine-1} and the classification of one-dimensional semialgebraic groups described above.

\begin{cor}
\label{cor:to-Z}
Fix irrational $\alpha \in \rz$ and let $\lambda,\eta > 1$.
Then
\begin{enumerate}
\item no two of $\Sa G_{\alpha,\lam}, \Sa S_\alpha, \Sa E_{\alpha,\eta}$ are interdefinable,
\item $\Sa G_{\alpha,\lambda}$ and $\Sa G_{\alpha,\eta}$ are interdefinable if and only if $\lambda/\eta \in \Q$,
\item $\Sa E_{\alpha,\lambda}$ and $\Sa E_{\alpha,\eta}$ are interdefinable if and only if $\lambda/\eta \in \Q$.
\end{enumerate}
\end{cor}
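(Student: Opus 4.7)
The plan is to apply Theorem~\ref{thm:combine-1} in each case with $\Sa R_0 = \Sa R_1 = \rfield)$. First I would verify the hypothesis that $\chi_\alpha(\Z)$ is a $\gh$-subgroup of each of the relevant circle groups $\G_\lambda$, $\bS$, and $\E^0_\lambda(\R)$. Each of these is a semialgebraic Mordell--Lang circle group by Fact~\ref{fact:mordell-lang}, and since $\alpha$ is irrational the cyclic group $\chi_\alpha(\Z)$ is a dense finite rank subgroup. Fact~\ref{fact:GH} then implies that $\chi_\alpha(\Z)$ is a $\gh$-subgroup in each case, so Theorem~\ref{thm:combine-1} applies to any pair of the structures $\Sa G_{\alpha,\lambda}$, $\Sa S_\alpha$, $\Sa E_{\alpha,\eta}$. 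Since $\Sa R_0 = \Sa R_1 = \rfield)$ is trivially interdefinable with itself, the question of interdefinibility of two of our structures reduces entirely to whether the associated circle groups admit an $\rfield)$-definable (i.e.\ semialgebraic) group isomorphism.

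With this reduction in place, parts (1), (2), (3) each become purely a statement about the classification of semialgebraic circle groups up to semialgebraic isomorphism, which was already recorded in Section~\ref{section:examples}. For (2), the displayed formula $t \mapsto t^{\log_\lambda \eta}$ exhibits the unique (up to sign) topological isomorphism $\G_\lambda \to \G_\eta$, and this map is semialgebraic precisely when the exponent is rational, which is the stated condition. For (3), the cited result of Madden--Stanton~\cite{nash-group} gives the semialgebraic isomorphism criterion $\lambda/\eta \in \Q$ for $\E^0_\lambda(\R)$ and $\E^0_\eta(\R)$. For (1), I would argue that $\G_\lambda$, $\bS$, and any $\E^0_\eta(\R)$ are pairwise non-isomorphic as semialgebraic groups: by Madden--Stanton they lie in three distinct families of one-dimensional semialgebraic circle groups, distinguished by the shape of the underlying algebraic group (multiplicative, compact form of $\mathrm{SO}_2$, and elliptic curve respectively), and a semialgebraic group isomorphism between any two would pass through the local-group correspondence of \cite{HrushovskiPillay} to give an isomorphism of the underlying algebraic groups, which is impossible.

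The main obstacle is really just verifying that no semialgebraic isomorphism exists between circle groups coming from different algebraic origins in part (1); once one invokes the Madden--Stanton classification (or equivalently the algebraic-group rigidity given by Hrushovski--Pillay), the remaining content is bookkeeping. The forward direction in (2) and (3) (interdefinability from $\lambda/\eta \in \Q$, or more precisely $\log_\lambda \eta \in \Q$ for the $\G_\lambda$ family) is immediate from Theorem~\ref{thm:combine-1}: given a semialgebraic isomorphism $\xi : \bH_0 \to \bH_1$ one composes $\xi$ with the topological isomorphism $\gamma_0$ to identify $\chi_0$ and $\chi_1$ up to sign, yielding interdefinability of the induced structures on $\Z$.
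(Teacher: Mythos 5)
Your route is the paper's: the paper disposes of Corollary~\ref{cor:to-Z} in one line, as a combination of Theorem~\ref{thm:combine-1} with the Madden--Stanton classification of one-dimensional semialgebraic groups recalled in Section~\ref{section:examples}, and your reduction (take $\Sa R_0 = \Sa R_1 = \rfield)$, so interdefinability of the induced structures on $\Z$ becomes semialgebraic isomorphism of the circle groups) together with the isomorphism criteria for the three families is exactly that argument. Your parenthetical reading of item (2) as $\log_\lambda \eta \in \Q$ is also the right one: it is what the classification stated in Section~\ref{section:examples} gives for $\G_\lambda$ and $\G_\eta$, and it is what Theorem~\ref{thm:combine-1} transfers to interdefinability, whereas the condition $\lambda/\eta \in \Q$ belongs to the elliptic family in item (3).

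The one step that does not survive as written is your verification of the $\gh$-subgroup hypothesis for the family $\Sa G_{\alpha,\lambda}$. Fact~\ref{fact:mordell-lang} covers $(\R_{>0},\times)$, the unit circle, and $\E(\R)$ (hence $\E^0_\lambda(\R)$), but it says nothing about $\G_\lambda$, and $\G_\lambda$ in fact fails the Mordell--Lang property as literally defined: take $A = \chi_\alpha(\Z)$ and $c \in A$ with $1 < c < \lambda$; the zero set of $xy - c$ meets $A^2$ in $\{(a, c \ominus a) : a \in A,\ 1 \leq a \leq c\}$, an infinite and co-infinite piece of the coset $\{a \otimes_\lambda a' = c\}$, and since the only subgroups of $A \cong \Z$ are $\{0\}$ and the dense subgroups $nA$, this piece is not a finite union of sets of the form $\{k_1 a \oplus k_2 a' = b\}$. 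This is precisely why the paper does not apply Fact~\ref{fact:GH} to $\G_\lambda$ directly but instead proves Lemma~\ref{lem:mult-ml} by lifting to the universal cover $(\R_{>0},\times)$. The repair is immediate: for the $\G_\lambda$ case cite Lemma~\ref{lem:mult-ml} (which is exactly the statement that finite rank subgroups of $\G_\lambda$ are $\gh$-subgroups), keeping Fact~\ref{fact:GH} with Fact~\ref{fact:mordell-lang} for $\bS$ and $\E^0_\lambda(\R)$ as you do; with that citation fixed, the rest of your argument, including the non-isomorphism across the three families in (1), goes through as in the paper.
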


\noindent
Suppose for the rest of this section that Conjecture~\ref{conj:le-gal} holds.
Fix irrational $\alpha \in \rz$.
Suppose that $\bH$ is a semialgebraic Mordell-Lang circle group, $\gamma : \rz \to \bH$ is the unique (up to sign) topological group isomorphism, and $\chi : \Z \to \bH$ is the character $\chi(k) := \gamma(\alpha k)$.
Let $\Sa H_\alpha$ be the structure induced on $\Z$ by $\rfield)$ and $\chi$.
For any $f \in \Lambda$ let $\Sa H_{\alpha,f}$ be the structure induced on $\Z$ by $\rfield,f)$ and $\chi$.
Then $\Sa H_{\alpha,f}$ is dp-minimal and $\Sq H_{\alpha,f}$ is interdefinable with the structure induced on $\rz$ by $\rfield,f)$ and $\gamma$.
It follows that by Proposition~\ref{prop:induce-complete} that $\Sa H_{\alpha,f}$ is a proper expansion of $\Sa H_{\alpha}$ and if $f,g$ are distinct elements of $\Lambda$ then $\Sa H_{\alpha,f}$ and $\Sa H_{\alpha,g}$ are not interdefinable.
\newline

\noindent
In this way, still assuming Conjecture~\ref{conj:le-gal}, we can produce produce two dp-minimal expansions of $\Sa H_{\alpha}$ which do not have a common $\nip$ expansion.
Let $h \in C^\infty(I)$ be such that $\rfield,h)$ is not $\nip$.
(For example one can arrange that $I = [0,1]$ and $\{ t \in I : f(t) = 0 \}$ is $0 \cup \{ 1/n : n \geq 1 \}$.)
As $\Lambda$ is comeager an application of the Pettis lemma~\cite[Theorem 9.9]{kechris} implies that there are $f,g \in \Lambda$ and $t > 0$ such that $f - g = th$.
So after rescaling $h$ we suppose $f - g = h$.
Suppose that $\Sa Z$ is an $\nip$ expansion of both $\Sa H_{\alpha,f}$ and $\Sa H_{\alpha,g}$.
Then $\Sq Z$ is $\nip$.
An easy argument using the first part of the proof of Theorem~\ref{thm:complete-induced} shows that $\rfield,f,g)$ is interpretable in $\Sq Z$, contradiction.
(This kind of argument was previously used by Le Gal~\cite{LGal} to show that there are two o-minimal expansions of $\rfield)$ which are not reducts of a common o-minimal structure.)

\section{Dp-minimal expansions of $\zpp$}
\label{section:p-adic}
\noindent
Throughout $p$ is a fixed prime.
To avoid mild technical issues we assume $p \neq 2$.
(Add a reference for dp-minimality of $\Z_p$.)

\subsection{A proper dp-minimal expansion of $\zpp$}
We apply work of Mariaule.
The first and third claims of Fact~\ref{fact:mari-main} are special cases of the results of \cite{Ma-adic}.
The second claim follows from Mariaule's results and a general theorem of Boxall and Hieronymi on open cores~\cite{BoxallH}.
Recall that $1 + p\Z_p$ is a subgroup of $\Z^\times_p$.

\begin{fact}
\label{fact:mari-main}
Suppose that $A$ is a finitely generated dense subgroup of $(1 + p\Z_p,\times)$.
Then $\pring,A)$ is $\nip$, $\Th\pring)$ is an open core of $\Th\pring,A)$, and every $\pring,A)$-definable subset of $A^k$ is of the form $X \cap Y$ where $X$ is an $(A,\times)$-definable subset of $A^k$ and $Y$ is a semialgebraic subset of $\Z^k_p$.
\end{fact}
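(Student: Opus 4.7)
The strategy is to derive all three claims essentially by hypothesis-checking against the cited works. For claims (1) and (3), I would invoke the main dense-pairs theorem of \cite{Ma-adic}. Mariaule's setup requires $A$ to be a finite-rank subgroup of $\Q^\times_p$ satisfying a $p$-adic Mordell--Lang property: for every $\Q_p$-variety $V \subseteq \Q^n_p$, the intersection $V \cap A^n$ is a finite union of translates of subgroups of $A^n$. Since $A \subseteq 1 + p\Z_p$ is finitely generated, it automatically has finite rank; the $p$-adic logarithm $\log_p : (1+p\Z_p,\times) \to (p\Z_p,+)$ is an analytic group isomorphism which sends $A$ to a subgroup of $(p\Z_p,+)$ of the same rank. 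The Mordell--Lang property for such $A$ inside a power of $\G_m$ is the $p$-adic analog of Fact~\ref{fact:mordell-lang}, and this is precisely the Diophantine input proved (or invoked) in \cite{Ma-adic}, ultimately descending from Raynaud-type theorems on subvarieties of semiabelian varieties.

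Once Mordell--Lang is in hand, claims (1) and (3) are Mariaule's theorem almost verbatim: $\pring,A)$ is $\nip$, and every $\pring,A)$-definable subset of $A^k$ has the stated form $X \cap Y$. This is structurally the $p$-adic analog of the last assertion of Fact~\ref{fact:GH}; the proof proceeds by induction on formulas in the pair, with Delon's quantifier elimination (Fact~\ref{fact:delon}) for $\pring)$ used at the base case to control parameters coming from outside $A$.

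For claim (2), I would apply the general open-core criterion of Boxall and Hieronymi \cite{BoxallH}. Their framework takes as input an $\nip$ expansion of a geometric base structure together with a distinguished ``small'' predicate, and, under hypotheses of the flavor of (1) and (3), yields that the open core of the pair theory coincides with the base theory. Concretely, combining (3) with Delon's theorem shows that any $\pring,A)$-definable subset of $\Q^n_p$ is, up to a semialgebraic piece, controlled by the coordinate-wise membership predicate for $A$; and since $A$ is nowhere dense in $\Z_p$ (being countable), the closure of such a set in $\Q^n_p$ already comes from $\pring)$. Translating to elementary extensions via saturation, $\Th\pring)$ is an open core of $\Th\pring,A)$.

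The main obstacle is the $p$-adic Mordell--Lang input: this is the Diophantine core of Mariaule's paper, and it is considerably more delicate than its complex counterpart because one cannot blindly import Faltings--Vojta. Everything else in the argument is a formal unpacking of the dense-pairs framework together with Delon's quantifier elimination and the Boxall--Hieronymi criterion.
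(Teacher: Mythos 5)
Your proposal takes essentially the same route as the paper, which gives no independent argument: it simply notes that claims (1) and (3) are special cases of Mariaule's results in \cite{Ma-adic} and that claim (2) follows from those results together with the Boxall--Hieronymi open core theorem \cite{BoxallH}, exactly the hypothesis-checking you describe. Your extra commentary on how Mariaule's proof works (the diophantine input, Delon's theorem) is not needed for the deduction and plays no role in the paper's treatment.
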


\noindent
We let $\pexp$ be the $p$-adic exponential, i.e.
$$ \pexp(a) := \sum_{n = 0}^{\infty} \frac{a^n}{n!} \quad \text{for all  } a \in p\Z_p.$$
(The sum does not converge off $p\Z_p$.)
$\pexp$ is a topological group isomorphism $(p\Z_p,+) \to (1 + p\Z_p,\times)$. 
So $a \mapsto \pexp(pa)$ is a topological group isomorphism $(\Z_p,+) \to (1 + p\Z_p,\times)$.
It is easy to see that
$$\valp(\pexp(a) - 1) = \valp(a) \quad \text{for all  } a \in p\Z_p.$$
So for all $a \in \Z_p$ we have
$$ \valp(\pexp(pa) - 1) = \valp(pa) = \valp(a) + 1. $$
Define $\mathbf{v}(b) = \valp(b - 1) - 1$ for all $b \in 1 + p\Z_p$, so $a \mapsto \pexp(pa)$ is an isomorphism $(\Z_p,+,\valp) \to (1 + p\Z_p, \times, \mathbf{v})$.
\newline

\noindent
We let $\chi : \Z \to \Z^{\times}_p$ be the character $\chi(k) := \pexp(pk)$ and let $\Sa P$ be the structure induced on $\Z$ by $\pring)$ and $\chi$.
Note that $\Sa P$ expands $\zpp$ because $\chi$ is an isomorphism $\zpp \to (\chi(\Z),\times,\mathbf{v})$.
Let
$$ \chi(k_1,\ldots,k_n) = (\chi(k_1),\ldots,\chi(k_n)) \quad \text{for all  } (k_1,\ldots,k_n) \in \Z^n. $$

\noindent
There are $\Sa P$-definable subsets of $\Z$ which are not $\zpp$-definable.
Consider $\N \cup \{\infty\}$ as the value set of $\zpp$.
It follows from the quantifier elimination for $\zpp$ that the structure induced on $\N \cup \{\infty\}$ by $\zpp$ is interdefinable with $(\N \cup \{\infty\}, <)$.
So $\valp^{-1}(2\N)$ is $\Sa P$-definable and not $\zpp$-definable.
Let $E$ be the set of $a \in 1 + p\Z_p$ such that $\mathbf{v}(a) \in 2\N$.
Note that if $b,c \in 1 + p\Z_p$ then $\chi^{-1}(bE) = \chi^{-1}(cE)$ if and only if $b = c$.
So $\Sa P$ defines uncountably many subsets of $\Z$, in constrast $\zpp$ defines only countably many subsets of $\Z$.

\begin{prop}
\label{prop:p-adic-char}
$\Sa P$ is dp-minimal.
\end{prop}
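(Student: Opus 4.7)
The plan is to follow the template of Proposition~\ref{prop:GH-1} in the $p$-adic setting, using Fact~\ref{fact:mari-main} as the substitute for Fact~\ref{fact:GH}.

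First I would observe that $\chi$ is injective: $\pexp$ is a bijection $p\Z_p \to 1 + p\Z_p$ and $\Z \hookrightarrow \Z_p$, so $\chi$ identifies $\Z$ with $A := \chi(\Z) = \pexp(p\Z)$. Hence $\Sa P$ is interdefinable (via $\chi$) with the structure $\Sa A$ induced on $A$ by $\pring)$. The first substantive step is to verify the hypotheses of Fact~\ref{fact:mari-main}: $A$ is cyclic (hence finitely generated), and $A$ is dense in $(1+p\Z_p,\times)$, since $\Z$ is dense in $\Z_p$, hence $p\Z$ is dense in $p\Z_p$, and $\pexp$ is continuous and open as a topological group isomorphism $(p\Z_p,+) \to (1 + p\Z_p,\times)$. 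Applying Fact~\ref{fact:mari-main} then gives that $\pring,A)$ is $\nip$ and that every $\pring,A)$-definable subset of $A^k$ has the form $X \cap Y$ with $X \subseteq A^k$ definable in $(A,\times)$ and $Y \subseteq \Z_p^k$ semialgebraic. Since $A$ is itself $\pring,A)$-definable, the structure induced on $A$ by $\pring,A)$ is interdefinable with $\Sa A$, and NIP passes to reducts/induced structures, so $\Sa A$ (and therefore $\Sa P$) is NIP.

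For inp-minimality I would prove that $\Sa P$ is weakly $\Th\zpp$-minimal and then invoke Fact~\ref{fact:weak-to-dp}(4) together with dp-minimality of $\zpp$. By Fact~\ref{fact:delon} externally $\pring)$-definable subsets of $\Z_p$ are already $\pring)$-definable, so weak $\Th\zpp$-minimality reduces to showing that for every semialgebraic $Y \subseteq \Z_p$ the preimage $\chi^{-1}(Y)$ is $\zpp$-definable: then Fact~\ref{fact:mari-main} applied in one variable says every $\Sa P$-definable (even externally definable) subset of $\Z$ is a boolean combination of a $(\Z,+)$-definable set (coming from the $(A,\times)$-factor $X$, via the group isomorphism $\chi$) and such a pullback. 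The key metric identity that drives this is
\[
\valp(\chi(k) - \chi(k')) = \valp(\pexp(p(k - k')) - 1) = \valp(p(k-k')) = \valp(k-k') + 1,
\]
so $\chi$ is an isometric embedding (up to a shift by $1$) of $(\Z,\valp)$ into $(\Z_p,\valp)$ landing in $1+p\Z_p$. Pullbacks of $p$-adic balls centred on points of $A$ are therefore balls in $(\Z,\valp)$.

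To push this from balls to arbitrary semialgebraic $Y$, I would reduce via Macintyre quantifier elimination to the case $Y = \{a \in \Z_p : \valp(f(a)) \geq m\}$ or $Y = \{a : f(a) \in \Q_p^{\times n}\}$ for a polynomial $f \in \Z_p[x]$. For each such $Y$, the set $\chi^{-1}(Y)$ is invariant under translation by the kernel of $\chi \bmod p^m$, which has finite index in $\Z$; on each coset of this index one can evaluate $f \circ \chi$ and apply the isometry identity above together with Hensel-type expansions of $\pexp(pk)$ to express $\chi^{-1}(Y)$ as a finite boolean combination of congruence conditions and $\valp$-balls, all $\zpp$-definable.

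The main obstacle will be this last point: showing that pullback of an arbitrary semialgebraic unary set is $\zpp$-definable, rather than just pullback of a ball. A cleaner alternative, which I would try first, is to bypass the explicit QE computation by proving an abstract $p$-adic analogue of Proposition~\ref{prop:dp-cyclic} for expansions $\Sa G$ of $\zpp$ (``if every unary set in every elementary extension is a finite union of cosets of $p^n\Z$ intersected with $\valp$-balls, then $\Sa G$ is dp-minimal''), and then read off the required closure property of unary sets directly from Fact~\ref{fact:mari-main} applied in one variable combined with the isometry identity above. Either route concludes with Fact~\ref{fact:weak-to-dp}(4) to upgrade weak minimality to dp-minimality, yielding Proposition~\ref{prop:p-adic-char}.
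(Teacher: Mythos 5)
The NIP half of your argument is fine and matches the paper (Fact~\ref{fact:mari-main} gives NIP of $(\Z_p,+,\times,A)$, hence of the induced structure). The gap is in the inp-minimality half: your reduction to weak $\Th(\zpp)$-minimality cannot work, because $\Sa P$ is \emph{not} weakly $\Th(\zpp)$-minimal. The paper points at the obstruction just before the proposition: with $E = \{a \in 1+p\Z_p : \mathbf{v}(a) \in 2\N\}$, which is semialgebraic, the pullback $\chi^{-1}(E) = \valp^{-1}(2\N)$ is $\Sa P$-definable but not $\zpp$-definable. So your claim that ``for every semialgebraic $Y \subseteq \Z_p$ the preimage $\chi^{-1}(Y)$ is $\zpp$-definable'' is false; your metric identity only handles balls centred at points of $\chi(\Z)$, and general unary semialgebraic sets (those built from the $P_n$-predicates, such as ``$\valp(a-1)$ even'') pull back to sets like $\valp^{-1}(2\N)$ that escape $\zpp$. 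Nor is external definability a way out: by Fact~\ref{fact:qe-valp}, the trace on $\Z$ of any $\lval$-formula with parameters in an elementary extension is, up to a finite set, a boolean combination of congruence classes and valuative balls $\{k : \valp(k-c) \geq r\}$ with $c \in \Z_p$ and standard radius $r$; taking $k = p^{2j}N$ and $k' = p^{2j+1}N$ for large $j$ (with $N$ the relevant modulus) shows $\valp^{-1}(2\N)$ is not of this form, so it is not even externally $\zpp$-definable. The same example defeats your fallback route: the hypothesis of your proposed $p$-adic analogue of Proposition~\ref{prop:dp-cyclic} (``every unary set is a finite union of cosets of $p^n\Z$ intersected with balls'') simply fails for $\Sa P$, so there is nothing to ``read off'' from Fact~\ref{fact:mari-main}.

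This is precisely why the paper does not argue via weak minimality over $\zpp$ but proves inp-minimality directly: Lemma~\ref{lem:split} decomposes a unary formula into a conjunction of an $\lab$-part which (off a bounded case handled by Facts~\ref{fact:union-reduce} and~\ref{fact:remove-finite}) defines a $p$-adically \emph{dense} set $(k\Z+l)\setminus A$ with $\valp(k)=0$, and a quantifier-free $\linduce$-part which, after Fact~\ref{fact:vis}, may be taken to define an \emph{open} set; then density versus openness transfers $n$-inconsistency to the open pullback parts alone, contradicting Lemma~\ref{lem:vis-1}, which rests on inp-minimality of $(\Z_p,+,\times)$ and density of $\chi(\Z)$ in $1+p\Z_p$. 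If you want to salvage your outline, you would have to replace ``weakly $\Th(\zpp)$-minimal'' by minimality relative to the richer structure $\Sq P$ (cf.\ Conjecture~\ref{conj:p-adic} and Proposition~\ref{prop:p-adic-gen}), not relative to $\zpp$.
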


\noindent
Proposition~\ref{prop:p-adic-char} requires some preliminaries.
A formula $\vartheta(x;y)$ is \textbf{bounded} if  $\Sa P \models \forall y \exists^{\leq n} x \vartheta(x;y)$ for some $n$.
Let $\lab$ be the language of abelian groups together with unary relations $(D_n)_{ n \geq 1}$ and $\lval$ be the expansion of $\lab$ by a binary relation $\preccurlyeq_p$
We let $D_n$ define $n\Z$ and declare $k \preccurlyeq_p k'$ if and only if $\valp(k) \leq \valp(k')$.
Fact~\ref{fact:qe-valp} was independently proven by Alouf and d'Elb\'{e}e~\cite{AldE}, Mariaule~\cite{Mar}, and Guignot~\cite{Guinot}.

\begin{fact}
\label{fact:qe-valp}
$\zpp$ has quantifier elimination in $\lval$.
\end{fact}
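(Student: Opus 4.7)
The plan is to prove quantifier elimination by the standard Presburger-style method of eliminating a single existential quantifier in front of a conjunction of literals. By induction on formula complexity, and distributing $\exists$ over $\vee$ via disjunctive normal form, it suffices to eliminate $\exists x$ from primitive formulas $\exists x\,\bigwedge_i \phi_i(x,\bar y)$ where each $\phi_i$ is a literal of $\lval$. Terms in $\lval$ are $\Z$-linear combinations, so each such literal has one of the forms $cx + t(\bar y) = 0$, $D_n(cx + t(\bar y))$, or $c_1 x + t_1(\bar y) \preccurlyeq_p c_2 x + t_2(\bar y)$, possibly negated.

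First I would dispatch the easy literals in a Presburger-style fashion. If an equality $cx = t$ appears, I would substitute $x := t/c$, producing a new $D_c(t)$ literal and reducing the rest; after this reduction one may assume no equality literal involves $x$. The divisibility literals $D_n(cx+t)$ combine by CRT: replacing $n$ by the $\mathrm{lcm}$ of the moduli and clearing leading coefficients, they cut out a single congruence class of $x$ modulo some $N$, and the question of whether there is an $x$ in that class is a divisibility condition on $\bar y$, expressible by the $D_n$ predicates.

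The heart of the argument is eliminating $x$ from the $\preccurlyeq_p$-literals. I would exploit the ultrametric inequality: for $a,b\in\Z$ with $a+b\neq 0$, $\valp(a+b)\geq\min(\valp(a),\valp(b))$ with equality when $\valp(a)\neq\valp(b)$. Given a literal $c_1 x + t_1 \preccurlyeq_p c_2 x + t_2$, I would case-split on the $\preccurlyeq_p$-relations among the four atoms $c_1 x, c_2 x, t_1, t_2$; in every case, the ultrametric inequality (and its equality case when the two $\valp$-values differ) rewrites the compound literal as a Boolean combination of simpler literals of the forms $\valp(x)\preccurlyeq_p v(\bar y)$, $\valp(x) = v(\bar y)$, or a literal not involving $x$. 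After this reduction, the constraints on $x$ say that $\valp(x)$ lies in a union of intervals bounded by $\valp$ of parameter terms, together with a congruence class modulo some $N$.

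Finally I would combine these by taking $N$ to be a multiple of $p^M$ for $M$ large enough to capture the $\preccurlyeq_p$-constraints through the predicates $D_{p^k}$, and use a CRT argument: the existence of an integer $x$ with specified $p$-adic valuation and specified residue modulo $N$ reduces to a conjunction of divisibility and $\preccurlyeq_p$ conditions on $\bar y$. The main obstacle I expect is Step three: managing the many case distinctions in the ultrametric analysis while verifying that the resulting conditions on $\bar y$ remain expressible in the language $\lval$, rather than requiring a richer predicate such as actual equality of valuations $\valp(t_1) = \valp(t_2)$ (which is fortunately captured as $t_1 \preccurlyeq_p t_2 \wedge t_2 \preccurlyeq_p t_1$), or quantification over the value group. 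Careful bookkeeping of these cases, together with the observation that $\zpp$ has no $0$-definable endomorphisms outside $\Z$-multiplication, should make the elimination close under the given language.
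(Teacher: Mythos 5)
The paper does not prove this fact; it cites the independent proofs of Alouf--d'Elb\'ee, Mariaule, and Guignot, so your proposal can only be judged on its own terms. Judged that way, the heart of your argument (your third step) has a genuine gap. You claim that a literal $c_1x+t_1\preccurlyeq_p c_2x+t_2$ can, after case-splitting on the relative valuations of $c_1x, c_2x, t_1, t_2$, be rewritten as a Boolean combination of conditions comparing $\valp(x)$ with valuations of parameter terms. This is exactly where the ultrametric inequality fails you: in the case $\valp(c_ix)=\valp(t_i)$ it gives only the one-sided bound $\valp(c_ix+t_i)\geq\valp(c_ix)$, and the actual value of $\valp(c_ix+t_i)$ is not determined by $\valp(x)$ and $\valp(t_i)$ at all -- it depends on how many initial $p$-adic digits of $c_ix$ agree with those of $-t_i$. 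Concretely, the formula $y\preccurlyeq_p x-y$ defines, for fixed $y$, the coset $\{x\in\Z:\valp(x-y)\geq\valp(y)\}$ of $p^{\valp(y)}\Z$, a ball centered at $y$ whose modulus grows with $y$. This set is not of your proposed normal form ``$\valp(x)$ in a union of intervals bounded by parameter valuations, intersected with a congruence class modulo a fixed $N$,'' so your final CRT step is answering the wrong nonemptiness question.

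The missing idea is that the solution set in $x$ of a conjunction of $\preccurlyeq_p$-literals must be tracked as a Boolean combination of balls whose \emph{centers} are (rational multiples of) parameter terms and whose radii are valuations of parameter terms, together with congruence conditions; eliminating $\exists x$ then requires deciding nonemptiness of such a configuration, which forces you to compare the pairwise distances of the centers with the radii. Those comparisons are again atomic $\lval$-conditions on the parameters, so quantifier elimination does go through, but this center-versus-radius bookkeeping (the ``swiss cheese'' analysis familiar from valued fields and valued abelian groups) is the actual content of the cited proofs, and your outline never engages with it. The remark about $\zpp$ having no $0$-definable endomorphisms beyond $\Z$-multiplication plays no role in quantifier elimination and does not repair this step.
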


\noindent We let $\linduce$ be the language with an $n$-ary relation symbol defining $\chi^{-1}(X)$ for each semialgebraic $X \subseteq \Z^n_p$.
So $Y \subseteq \Z^n$ is quantifier free $\linduce$-definable if and only if $Y = \chi^{-1}(X)$ for semialgebraic $X \subseteq \Z^n_p$.
Take $\Sa P$ to be an $(\lval \cup \linduce)$-structure.
\newline

\noindent
We first give a description of unary $\Sa P$-definable sets.

\begin{lem}
\label{lem:split}
Suppose $\varphi(x;y), |x| = 1$ is an $(\lval \cup \linduce)$-formula. 
Then $\varphi(x;y)$ is equivalent to a finite disjunction of formulas of the form $\varphi_1(x;y) \land \varphi_2(x;y)$ where $\varphi_2(x;y)$ is a quantifier free $\linduce$-formula and $\varphi_1(x;y)$ is an $\lab$-formula such that either $\varphi_1(x;y)$ is bounded or there are integers $k, l$ such that $\valp(k) = 0$ and for every $b \in \Z^{|y|}$, $\varphi_1(\Z;b) = (k\Z + l) \setminus A$ for finite $A$.
\end{lem}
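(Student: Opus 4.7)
Proof plan.

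The plan is to carry out the argument in three steps: reduce $\varphi$ to a conjunction of an $\lab$-piece and a quantifier-free $\linduce$-piece via Mariaule's description of $\pring,A)$-definable sets, apply Presburger quantifier elimination to decompose the $\lab$-piece, and finally use CRT to push the $p$-primary part of each modulus into $\linduce$.

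First I would apply Fact~\ref{fact:mari-main} to $A := \chi(\Z)$, which is cyclic (hence finitely generated) and dense in $(1+p\Z_p,\times)$ (density follows since $p\Z$ is dense in $p\Z_p$ and $\pexp$ is a homeomorphism $p\Z_p \to 1+p\Z_p$). This gives that every $\pring,A)$-definable subset of $A^k$ is of the form $X\cap Y$ with $X$ an $(A,\times)$-definable set and $Y$ semialgebraic. Pulling back via the group isomorphism $\chi:(\Z,+)\to(A,\times)$ and invoking compactness for uniformity in parameters, $\varphi(x;y)$ is equivalent to a conjunction $\varphi^{(1)}(x;y)\wedge\varphi^{(2)}(x;y)$ with $\varphi^{(1)}$ an $\lab$-formula and $\varphi^{(2)}$ a quantifier-free $\linduce$-formula. (Note that $\preccurlyeq_p$ is itself quantifier-free $\linduce$-definable via $k\preccurlyeq_p k'\iff\valp(\chi(k)-1)\leq\valp(\chi(k')-1)$, so the presence of $\preccurlyeq_p$ in $\lval$ adds nothing beyond what $\linduce$ already supplies.)

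Next, using quantifier elimination for $(\Z,+)$ in $\lab$, I would put $\varphi^{(1)}$ in disjunctive normal form. Each disjunct is a conjunction of linear equations $kx+\sum m_iy_i=0$, divisibility conditions $D_n(kx+\sum m_iy_i)$, and their negations. A disjunct is of type (i) if it contains a linear equation with a nonzero coefficient of $x$, in which case it defines at most one value of $x$ per $y$ and hence is bounded in the sense of the lemma; otherwise it is of type (ii), with constraints on $x$ only via divisibility, defining for each $y$ a union of finitely many cosets of some fixed $M\Z$ (for $M$ the lcm of the moduli) minus a uniformly bounded finite set. Splitting type (ii) disjuncts by coset representative $l_0\in\{0,\dots,M-1\}$ and by residue class of $y$ modulo $M$ reduces to sub-disjuncts contributing $(M\Z+l_0)\setminus A_y$ on a specified $y$-residue class.

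For each such sub-disjunct I factor $M=p^aM'$ with $\gcd(M',p)=1$. By CRT, $M\Z+l_0=(p^a\Z+l_0)\cap(M'\Z+l_0)$. Let $\varphi_1(x;y):=(x\equiv l_0\pmod{M'})\wedge(x\notin A_y)$; this is an $\lab$-formula of the claimed big-cofinite form with $k:=M'$ (so $\valp(k)=0$) and $l:=l_0$. The complementary condition $x\equiv l_0\pmod{p^a}$ rewrites as $\chi(x)\in\chi(l_0)(1+p^{a+1}\Z_p)$, the $\chi$-preimage of a semialgebraic set, so is quantifier-free $\linduce$; conjoining it with $\varphi^{(2)}$, and with any remaining $y$-residue conditions that can be translated through $\chi$ in this way, gives the quantifier-free $\linduce$-formula $\varphi_2(x;y)$.

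The main obstacle is uniformity in $y$: the Presburger decomposition produces $y$-only residue conditions modulo $M'$ that are $\lab$-definable but not quantifier-free $\linduce$-definable when $\gcd(M',p)=1$, so they cannot simply be pushed into $\varphi_2$. The resolution is to distribute the disjunction finely over residue classes of $y$ modulo $M'$ and to handle, for each class, the ``wrong-class'' values of $y$ by a separate bounded disjunct (e.g.\ with $\varphi_1\equiv\mathrm{false}$, satisfying the bounded clause with $n=0$), leaving the intended big-cofinite $\varphi_1$ to capture only the nonempty contributions. The bookkeeping is routine but is the one genuinely delicate point in the argument.
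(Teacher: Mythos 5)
Your route is the paper's route almost step for step: apply Mariaule's theorem to $A=\chi(\Z)$ and pull back through $\chi$ to split $\varphi$ into an $\lab$-part and a quantifier-free $\linduce$-part (valuation atoms, and later the $p$-power part of each modulus, absorbed into $\linduce$ via conditions like $\valp(x-l)\geq a$), then decompose the $\lab$-part using quantifier elimination for $(\Z,+)$ with the $D_n$'s into disjuncts that either contain an equation with nonzero $x$-coefficient (hence bounded) or constrain $x$ only by congruences and finitely many inequations. Those steps are fine. The gap is exactly the point you label routine bookkeeping: the congruence conditions modulo an $M'$ prime to $p$ that involve only $y$ (or involve $y$ nontrivially). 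Your patch --- split by the residue of $y$ mod $M'$ and handle wrong-class $b$ by an extra bounded disjunct with $\varphi_1\equiv\mathrm{false}$ --- cannot work, because adding disjuncts only adds points, while the problem created by deleting $y\equiv c\pmod{M'}$ from $\varphi_1$ is over-coverage: with $\varphi_1(x;y):=(x\in M'\Z+l_0)\wedge(x\notin A_y)$ the fiber $\varphi_1(\Z;b)$ is a cofinite subset of the coset for \emph{every} $b$, including wrong-class $b$, and its intersection with $\varphi_2(\Z;b)$ is in general nonempty (take $\varphi(x;y)=D_2(y)$, where the disjunct puts no condition on $x$ at all); no empty or bounded extra disjunct removes these spurious points. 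Keeping $y\equiv c\pmod{M'}$ inside $\varphi_1$ instead makes $\varphi_1(\Z;b)=\emptyset$ for wrong-class $b$, violating the clause ``for every $b$, $\varphi_1(\Z;b)=(k\Z+l)\setminus A$ with $A$ finite''; and it cannot be moved into $\varphi_2$, for the density reason you yourself give.

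In fact no bookkeeping can rescue the statement read literally: for $\varphi(x;y)=D_2(y)$, any decomposition of the stated form would produce a semialgebraic subset of $\Z_p$ (namely the set of $c$ such that the union over the unbounded disjuncts of their $\linduce$-fibers at $c$ has nonempty interior in $1+p\Z_p$) containing $\chi(2\Z)$ and disjoint from $\chi(2\Z+1)$; this is impossible since both sets are dense in $1+p\Z_p$ and a one-variable semialgebraic set is a finite union of points and open sets. So the obstacle you isolated is genuine; the paper's own proof passes over it inside the unproved claim that every $\lab$-formula with $|x|=1$ splits into bounded disjuncts and disjuncts of the form $(x\in k'\Z+l)\wedge\neg(\text{bounded})$, which already fails for $D_2(y)$. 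The workable repair is to weaken the target rather than fight the $y$-conditions: allow $\varphi_1(\Z;b)$ to be \emph{either empty or} of the form $(k\Z+l)\setminus A$ with $A$ finite, keeping the prime-to-$p$ congruences (including those involving $y$) inside $\varphi_1$. Your decomposition proves this weakened version verbatim, and it still suffices for the only application, Proposition~\ref{prop:p-adic-char}: the instances $\varphi(x;a_i)$ occurring in an inp-pattern are consistent, so the corresponding fibers $\varphi_1(\Z;a_i)$ are nonempty, hence equal to $(k\Z+l)\setminus A_i$ with $\valp(k)=0$, and their intersection is still $p$-adically dense, which is all that argument uses.
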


\noindent
The condition $\valp(k) = 0$ ensures that each $\varphi_1(\Z;b)$ is dense in the $p$-adic topology.

\begin{proof}
By Fact~\ref{fact:mari-main} we may suppose $\varphi(x;y) = \varphi_1(x;y) \land \varphi_2(x;y)$ where $\varphi_1$ is an $\lval$-formula and $\varphi_2$ is a quantifier free $\linduce$-formula.
By Fact~\ref{fact:qe-valp} we may suppose
$$ \varphi_1(x;y) = \bigvee_{i = 1}^{m} \bigwedge_{j = 1}^{m} \vartheta_{ij}(x;y)$$
where each $\vartheta_{ij}(x;y)$ is an atomic $\lval$-formula.
So we may suppose
$$ \varphi(x;y) = \bigvee_{i = 1}^{m} \left( \varphi_2(x;y) \land \bigwedge_{j = 1}^{m} \vartheta_{ij}(x;y) \right). $$
So we suppose $\varphi(x;y)$ is of the form $\varphi_2(x;y) \land \bigwedge_{j = 1}^{m} \vartheta_{j}(x;y)$ where each $\vartheta_j(x;y)$ is an atomic $\lval$-formula.
After possibly rearranging there is $0 \leq m' \leq m$ such that
\begin{enumerate}
\item if $1 \leq j \leq m'$ then $\vartheta_j(x;y)$ is of the form $g \preccurlyeq_p h$ or $g \prec_p h$ where $g,h$ are $\lab$-terms in the variables $x,y$, and
\item if $m' < j \leq m$ then $\vartheta_j(x;y)$ is an atomic $\lab$-formula.
\end{enumerate}
Note that any formula of type $(1)$ is equivalent to a quantifier free $\linduce$-formula.
Now $$ \left( \varphi_2(x;y) \land \bigwedge_{j = 1}^{m'} \vartheta_j(x;y) \right) \land \bigwedge_{j = m'+1}^{m} \vartheta_j(x;y).$$
The formula inside the parentheses is equivalent to a quantifier free $\linduce$-formula.
So we suppose that $\varphi(x;y)$ is for the form $\varphi_1(x;y) \land \varphi_2(x;y)$ where $\varphi_1(x;y)$ is an $\lab$-formula and $\varphi_2(x;y)$ is an quantifier free $\linduce$-formula.
An easy application of quantifier elimination shows that $\varphi_1(x;y)$ is equivalent to a formula of the form $\bigvee_{i  = 1}^{m} \theta_i(x;y)$ 
where for each $i$ either:
\begin{enumerate}
\item $\theta_i(x;y)$ is bounded, or
\item there are integers $k \neq 0,l$ and an $\lab$-formula $\theta'_i(x;y)$ such that
$\theta_i(x;y)$ is equivalent to $(x \in k'\Z + l) \land \neg\theta'_i(x;y)$ and $\theta'_i(x;y)$ is bounded.
\end{enumerate}
Applying the same reasoning as above we may suppose that $\varphi_1(x;y)$ satisfies $(1)$ or $(2)$ above.
If $\varphi_1(x;y)$ is bounded then we are done.
So fix integers $k',l$ and bounded $\varphi'_1(x;y)$ such that $\varphi_1(x;y)$ is equivalent to $(x \in k'\Z + l) \land \neg \varphi'_1(x;y)$.
Let $v := \valp(k')$ and $k := k'/p^v$.
So $k'\Z + l = (p^v\Z + l) \cap (k\Z + l)$ and $\varphi_1(x;y)$ is logically equivalent to 
$$ [\valp(x - l) \geq v] \land [x \in k\Z + l] \land \neg\varphi'_1(x;y). $$
After replacing $\varphi_1(x;y)$ with $[x \in k\Z + l] \land \neg \varphi'_1(x;y)$ and replacing $\varphi_2(x;y)$ with $[\valp(x - l) \geq v] \land \varphi_2(x;y)$ we may suppose that for every $b \in \Z^{|y|}$, $\varphi_1(\Z;b)$ agrees with $(k\Z + l) \setminus A$ for finite $A$.
\end{proof}

\noindent
We also need Fact~\ref{fact:vis}, a consequence of the quantifier elimination for $\pring)$.

\begin{fact}
\label{fact:vis}
Suppose that $\phi(x;y), |x| = 1$ is a formula in the language of rings.
Then there are formulas $\phi_1(x;y), \phi_2(x;y)$ such that 
\begin{enumerate}
\item $\phi(x;y)$ and $\phi_1(x;y) \vee \phi_2(x;y)$ are equivalent in $\pring)$,
\item $\phi_1(\Z_p;b)$ is finite and $\phi_2(\Z_p;b)$ is open for every $b \in \Z^{|y|}_p$.
\end{enumerate}
\end{fact}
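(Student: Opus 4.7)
The plan is to apply the quantifier elimination for $\pring)$ in Macintyre's language (the ring language expanded by the predicates $P_n(z) \equiv \exists w\,(w^n = z)$ for $n \geq 2$), and then sort the atoms of the resulting quantifier-free formula into a ``uniformly finite'' piece and a ``uniformly open'' piece.

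First, by Macintyre's quantifier elimination, $\phi(x;y)$ is equivalent in $\pring)$ to a quantifier-free formula in Macintyre's language, which we place in disjunctive normal form $\bigvee_i \bigwedge_j \theta_{ij}(x;y)$ whose literals are polynomial equalities $f(x,y) = 0$, their negations, power predicates $P_n(g(x,y))$, and their negations. For each polynomial $f(x,y)$ appearing in such a literal, let $\mathrm{Triv}_f(y)$ be the quantifier-free condition asserting that $f(x,y) \equiv 0$ as a polynomial in $x$ (i.e., each coefficient of a power of $x$ vanishes). Split into the finitely many cases determined by the truth pattern of the family $\{\mathrm{Triv}_f(y)\}_f$, and within each case replace each trivialized equality by $\top$ and each trivialized inequality by $\bot$.

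Within a fixed case, every literal defines a subset of $\Z_p$ of the form $U \cup F$ with $U$ open and $F$ finite, uniformly in $y$: a non-trivial equality $f(x,y) = 0$ is a finite set of roots; a non-trivial inequality $f(x,y) \neq 0$ is cofinite and hence open (as $\Z_p$ is Hausdorff); and $P_n(g(x,y))$ for non-trivial $g$ decomposes as $\{P_n(g(x,y)) \wedge g(x,y) \neq 0\} \cup \{g(x,y) = 0\}$, whose first component is open (since $P_n$ is clopen in $\Q_p^\times$ by Hensel's lemma) and whose second is finite; symmetrically for $\neg P_n(g)$. Sets of the form $U \cup F$ are closed under finite intersections and unions (distribute: open $\cap$ open is open, open $\cap$ finite is finite, finite $\cap$ finite is finite), so each DNF disjunct, and the whole formula, defines such a set with explicit open and finite parts given by uniform formulas.

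Aggregate the open parts across all disjuncts and cases into $\phi_2(x;y)$, and the finite parts into $\phi_1(x;y)$; since $P_n$ and $\mathrm{Triv}_f$ are ring-definable, $\phi_1$ and $\phi_2$ can be taken as ring formulas. The main obstacle is uniformity — in particular, the fact that $P_n$ fails to be open at $0 \in \Z_p$, which forces us to shift the possibly non-open behavior at zeros of $g$ into the finite part. The case split on $\mathrm{Triv}_f$ is what makes this uniform, reducing the analysis to the situation where every polynomial appearing in an equality is non-identically-zero in $x$.
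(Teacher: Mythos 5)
Your proposal is correct, and it takes the route the paper itself indicates: the paper states this Fact without proof, attributing it to the quantifier elimination for $(\Z_p,+,\times)$, and your argument via Macintyre's quantifier elimination — sorting literals into a finite part (zero sets of polynomials that are nontrivial in $x$) and an open part (nonvanishing loci and preimages of the clopen sets $P_n\cap\Q_p^\times$), with the $\mathrm{Triv}_f$ case split securing uniformity in the parameter — supplies exactly the omitted details.
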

\noindent

\noindent
Lemma~\ref{lem:vis-1} follows from inp-minimality of $\pring)$ and the fact that $\chi(X)$ is dense in $1 + p\Z_p$.
We leave the verification to the reader.

\begin{lem}
\label{lem:vis-1}
Suppose that $\varphi(x;y), \phi(x;y), |x| = 1$ are quantifier free $\linduce$-formulas such that $\varphi(\Z;b)$ and $\phi(\Z;b)$ are both open in the $p$-adic topology for every $b \in \Z^{|y|}$.
Then $\varphi(x;y)$ and $\phi(x;y)$ cannot violate inp-minimality.
\end{lem}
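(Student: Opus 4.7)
The plan is to argue by contradiction: suppose $\varphi(x;y)$ and $\phi(x;y)$ violate inp-minimality in $\Sa P$ at some level $n$, and transfer the resulting inp-pattern across $\chi$ into $\pring)$, contradicting the inp-minimality of $\pring)$. Since $\varphi, \phi$ are quantifier free $\linduce$-formulas, I can write $\varphi(\Z;b) = \chi^{-1}(X_{\chi(b)})$ and $\phi(\Z;b) = \chi^{-1}(Y_{\chi(b)})$ for uniformly $\pring)$-definable families $(X_u)_{u \in \Z_p^{|y|}}$ and $(Y_u)_{u \in \Z_p^{|y|}}$ of subsets of $\Z_p$. Applying Fact~\ref{fact:vis} I decompose these uniformly as $X_u = X_u^{\mathrm{fin}} \cup X_u^{\mathrm{op}}$ and $Y_u = Y_u^{\mathrm{fin}} \cup Y_u^{\mathrm{op}}$, with finite and open parts respectively, and then set $X'_u := X_u^{\mathrm{op}} \cap (1+p\Z_p)$ and $Y'_u := Y_u^{\mathrm{op}} \cap (1+p\Z_p)$. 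The families $X'$ and $Y'$ are again uniformly $\pring)$-definable, and each member is an open subset of $1+p\Z_p$.

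For each $k$, fix witnesses $a_1,\ldots,a_k, b_1,\ldots,b_k \in \Z^{|y|}$ to the inp-violation in $\Sa P$, and I claim that the parameters $\chi(a_i), \chi(b_j)$ witness an inp-pattern of depth $n$ and length $k$ for the pair $X', Y'$ in $\pring)$. The $n$-inconsistency of the rows is nearly automatic from density of $\chi(\Z)$ in $1+p\Z_p$: if $|I|=n$ and $\bigcap_{i\in I} X'_{\chi(a_i)}$ were non-empty, it would be a non-empty open subset of $1+p\Z_p$, hence contain some $\chi(k_0)$ with $k_0 \in \Z$, giving $k_0 \in \bigcap_{i\in I}\varphi(\Z;a_i)$ and contradicting $n$-inconsistency of the original row in $\Sa P$. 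The argument for the $\phi$-row is identical.

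The main step, and what I expect to be the hardest, is showing that $X'_{\chi(a_i)} \cap Y'_{\chi(b_j)} \neq \emptyset$ for all $i,j$. From the hypothesis $\varphi(\Z;a_i) \cap \phi(\Z;b_j) \neq \emptyset$ and the openness assumption, this intersection is a non-empty open subset of $\Z$ in the $p$-adic topology, so I can pick $k_0$ in it together with $v \in \N$ such that $k_0 + p^v\Z \subseteq \varphi(\Z;a_i) \cap \phi(\Z;b_j)$; then $\chi(k_0 + p^v\Z) \subseteq X_{\chi(a_i)} \cap Y_{\chi(b_j)}$. Because $\pexp$ is a topological group isomorphism $(p\Z_p,+) \to (1+p\Z_p,\times)$ and $\Z$ is dense in $\Z_p$, the set $\chi(k_0 + p^v\Z) = \chi(k_0)\cdot \pexp(p^{v+1}\Z)$ is dense in the non-empty open perfect subset $\chi(k_0)(1+p^{v+1}\Z_p) \subseteq 1+p\Z_p$. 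Removing the finite sets $X_{\chi(a_i)}^{\mathrm{fin}}$ and $Y_{\chi(b_j)}^{\mathrm{fin}}$ leaves infinitely many points of $\chi(k_0+p^v\Z)$ in $X'_{\chi(a_i)} \cap Y'_{\chi(b_j)}$, as required. Letting $k$ vary produces inp-patterns of arbitrary length for the fixed pair $X', Y'$ in $\pring)$, contradicting its inp-minimality.
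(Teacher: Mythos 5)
Your proof is correct and is essentially the verification the paper leaves to the reader: you transfer the inp-pattern through $\chi$ and contradict inp-minimality of $\pring)$ using the density of $\chi(\Z)$ in $1+p\Z_p$, which are exactly the two ingredients the paper cites. Your use of Fact~\ref{fact:vis} to split off the finite parts before transferring is the right way to handle the point that openness of $\varphi(\Z;b)$ in $\Z$ does not make the underlying semialgebraic subsets of $\Z_p$ open.
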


\noindent
We now prove Proposition~\ref{prop:p-adic-char}.

\begin{proof}
We equip $\Z$ with the $p$-adic topology.
Fact~\ref{fact:mari-main} shows that $\Sa P$ is $\nip$ so it is enough to show that $\Sa P$ is inp-minimal.
Suppose towards a contradiction that $\varphi(x;y)$, $\phi(x;z)$, and $n$ violate inp-minimality.
Applying Lemma~\ref{lem:split}, Fact~\ref{fact:union-reduce}, and Fact~\ref{fact:remove-finite} we may suppose there are $\varphi_1(x;y), \varphi_2(x;y)$ and $k,l$ such that
\begin{enumerate}
\item $\varphi(x;y) = \varphi_1(x;y) \land \varphi_2(x;y)$,
\item $\varphi_2(x;y)$ is a quantifier free $\linduce$-formula, and
\item $\valp(k) = 0$ and for all $b \in \Z^{|y|}$, $\varphi_1(\Z;b) = (k\Z + l) \setminus A$ for finite $A$.
\end{enumerate}
Applying Fact~\ref{fact:vis} we get $\linduce$-formulas $\varphi'_2(x;y)$ and $\varphi''_2(x;y)$ such that $\varphi'_2(x;y)$ is bounded, $\varphi''_2(\Z;b)$ is open for all $b \in \Z^{|y|}$, and $\varphi_2(x;y) = \varphi'_2(x;y) \vee \varphi''_2(x;y)$.
Applying Facts~\ref{fact:union-reduce} and \ref{fact:remove-finite} we may suppose that $\varphi_2(\Z;b)$ is open for all $b \in \Z^{|y|}$.
\newline

\noindent
We have reduced to the case when $\varphi(x;y) = \varphi_1(x;y) \land \varphi_2(x;y)$ where $\varphi_2(x;y)$ is a quantifier free $\linduce$-formula such that each $\varphi_2(\Z;b)$ is open and there are $k,l$ such that $\valp(k) = 0$ and for all $b \in \Z^{|y|}$ we have $\varphi_1(\Z;b) = (k\Z + l) \setminus A$ for finite $A$.
By the same reasoning we may suppose that there are $\phi_1(x;z), \phi_2(x;z)$ and $k',l'$ which satisfy the same conditions with respect to $\phi(x;z)$.
\newline

\noindent
We show that $\varphi_2(x;y), \phi_2(x;z)$ and $n$ violate inp-minimality and thereby obtain a contradiction with Lemma~\ref{lem:vis-1}.
Fix $a_1,\ldots,a_m \in \Z^{|y|}$ and $b_1,\ldots,b_m \in \Z^{|z|}$  such that $\varphi(x;a_1),\ldots,\varphi(x;a_m)$ and $\phi(x;b_1),\ldots,\phi(x;b_m)$ are both $n$-inconsistent and $\Sa P \models \exists x \varphi(x;a_i) \land \phi(x;a_j)$ for all $i,j$.
So $\Sa P \models \exists x \varphi_2(x;a_i) \land \phi_2(x;b_j)$ for all $i,j$.
It suffices to show that $\varphi_2(x;a_1),\ldots,\varphi_2(x;a_m)$ and $\phi_2(x;b_1),\ldots,\phi_2(x;b_m)$ are both $n$-inconsistent.
We prove this for $\varphi_2$, the same argument works for $\phi_2$.
Fix a subset $I$ of $\{1,\ldots,m\}$ such that $|I| = n$.
Let $U := \bigcap_{i \in I} \varphi_2(\Z;a_i)$ and $F := \bigcap_{i \in I} \varphi_1(\Z;a_i)$.
So $F \cap U$ is empty as $\varphi(x;a_1),\ldots,\varphi(x;a_m)$ is $n$-inconsistent.
Observe that $U$ is open and  $F = (k\Z + l) \setminus A$ for finite $A$.
So $F$ is dense in $\Z$ as $\valp(k) = 0$.
So $F \cap U$ is the intersection of a dense set and an open set, so $U$ is empty.
Thus $\varphi_2(x;a_1),\ldots\varphi_2(x;a_m)$ is $n$-inconsistent.
\end{proof}

\section{The $p$-adic completion}
\label{section:p-adic-completion}
\noindent
Among other things we show that $\pfield)$ is interpretable in the Shelah expansion of a highly saturated elementary extension of $\Sa P$, so $\Sa P$ is non-modular.
\newline

\noindent
We construct a $p$-adic completion $\Sq Z$ of a dp-minimal expansion $\Sa Z$ of $\zpp$.
We show that $\Sq Z$ is dp-minimal, but in contrast with the situation over $\zca$ we do not obtain an explicit description of unary definable sets.
So we first show that definable sets and functions in dp-minimal expansions of $\zp$ behave similarly to definable sets and functions in o-minimal structures.

\subsection{Dp-minimal expansions of $\zp$}
Let $\Sa Y$ expand $\zp$.

\begin{fact}
\label{fact:sw}
Suppose $\Sa Y$ is dp-minimal.
Then the following are satisfied for any $\Sa Y$-definable subset $X$ of $\Z_p^n$ and $\Sa Y$-definable function $f : X \to \Z_p^m$.
\begin{enumerate}
\item $X$ is a boolean combination of $\Sa Y$-definable closed subsets of $\Z_p^n$.
\item If $n = 1$ then $X$ is the union of a definable open set and a finite set.
\item The dp-rank of $X$, the $\acl$-dimension of $X$, and the maximal $0 \leq d \leq n$ for which there is a coordinate projection $\pi : \Z_p^m \to \Z_p^d$ such that $\pi(X)$ has interior are all equal.
(We denote the resulting dimension by $\dim X$.)
\item There is a $\Sa Y$-definable $Y \subseteq X$ such that $\dim X \setminus Y < \dim X$ and $f$ is continuous on $Y$.
\item The frontier inequality holds, i.e. $\dim \cl(X) \setminus X < \dim X$.
\end{enumerate}
Furthermore the same properties hold in any elementary extension of $\Sa Y$.
\end{fact}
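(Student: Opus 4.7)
The plan is to establish (1)--(5) in the order (2), (1), (3), (4), (5), closely imitating the treatment of dp-minimal expansions of densely linearly ordered abelian groups in \cite{SW-dp} (which itself builds on \cite{Simon-dp}). The key replacement is that the order topology is exchanged for the $p$-adic topology on $\Z_p$, a definable non-discrete Hausdorff group topology admitting a definable basis of compact open subgroups.

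I would first prove (2). By Fact~\ref{fact:exists-0}, $\Sa Y$ eliminates $\exists^\infty$. The proof of Fact~\ref{fact:accum} uses only that the ambient topology is a non-discrete Hausdorff group topology definable in a strongly dependent structure, so it transfers to $\Sa Y$: every nowhere dense $\Sa Y$-definable $X \subseteq \Z_p^1$ has no accumulation points and hence is finite by compactness of $\Z_p$ together with elimination of $\exists^\infty$. The boundary of an arbitrary definable $X \subseteq \Z_p$ is then nowhere dense, hence finite, so $X = \inte(X) \cup (X \cap \partial X)$ is an open set plus a finite set, giving (2). Item (1) then follows by induction on $n$: for a coordinate projection $\pi : \Z_p^n \to \Z_p^{n-1}$, apply (2) uniformly to the family of fibers of $X$ to represent $X$ fiberwise as an open set plus a finite set; then apply the induction hypothesis to the image and assemble a boolean combination of closed definable sets, using Fact~\ref{fact:cs-limit} to handle uniformity.

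For (3) the main obstacle is showing that $\acl$ defines a geometric pregeometry on $\Z_p$ so that $\acl$-dimension is additive. My plan is to adapt the canonical-basis arguments from Johnson's analysis of dp-minimal fields \cite{Johnson-thesis} to the valued group setting, using (2) and the definable non-discrete group topology to verify exchange. Granted this, dp-rank equals $\acl$-dimension by the standard generic-point argument for dp-minimal pregeometric structures, and both agree with projection-dimension by induction on $n$ using (1)--(2) to reduce a dimension drop to a fiberwise topological statement. With (3) in hand, item (4) is routine: the discontinuity locus of $f$ is definable and contained in the projection of the topological boundary of the graph of $f$, a definable set of dimension strictly less than $\dim X$ by (1) and (3). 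Item (5) then follows by fiberwise analysis, since if $\dim \cl(X) \setminus X$ equalled $\dim X$, some coordinate projection of $\cl(X) \setminus X$ would have interior in $\Z_p^{\dim X}$, forcing the same projection of $X$ to be nowhere dense of top dimension, in contradiction with (3). Finally, each of (1)--(5) is either first-order expressible or a statement about definable families that is preserved under elementary equivalence with enough saturation, so the properties transfer to every elementary extension of $\Sa Y$.
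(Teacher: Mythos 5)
The paper itself gives no proof of this statement: it is quoted verbatim as a special case of the results of \cite{SW-tame}, so your sketch is in effect an attempt to reprove that paper, and it omits its hardest step. The gap is the sentence in which you pass from ``nowhere dense definable subsets of $\Z_p$ are finite'' to ``the boundary of an arbitrary definable $X \subseteq \Z_p$ is nowhere dense.'' That inference is precisely the assertion that no definable set is dense and codense in a ball (equivalently, that every infinite definable subset of $\Z_p$ has interior), and it is the crux of the whole fact, not a consequence of the ingredients you invoke. Strong dependence, elimination of $\exists^\infty$, and a definable non-discrete Hausdorff group topology cannot suffice: $\zpp$ has all of these (it is even dp-minimal, and the $p$-adic topology on $\Z$ is a definable group topology), yet for odd $p$ the set $2\Z$ is definable, dense and codense, so its boundary is everything --- which is exactly why the paper remarks that every item of the fact fails over $\zpp$. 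So your argument, as written, proves a false statement; the missing ingredient is the interior-or-nowhere-dense dichotomy for definable sets in dp-minimal expansions of $\zp$, which in \cite{SW-tame} requires real work and uses features special to $\Z_p$ (compactness/completeness of the uniformity, the fact that finite-index subgroups of $(\Z_p,+)$ are open) rather than the abstract topological-group hypotheses you list. Note that in the cyclically ordered case the paper inserts exactly this ingredient as Fact~\ref{fact:noiseless}, imported from \cite{big-nip}, before applying the accumulation-point argument to the boundary; you mirror that proof but drop this step, and no $p$-adic analogue is supplied.

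A secondary problem is your route to item (3). The fact does not assert the exchange property, and you should not route the dimension theory through it: exchange for dp-minimal expansions of valued abelian groups is genuinely delicate (it can fail, cf.\ \cite{AV-couple,kuhlmann-contractions}), and the paper proves it over $\zp$ separately, in Proposition~\ref{prop:exchange}, using a dichotomy from \cite{SW-tame} together with separability of $\Z_p$ --- not by ``adapting Johnson's canonical-basis arguments,'' which rely on field multiplication that is absent here. In \cite{SW-tame} the equality of dp-rank, $\acl$-dimension and the topological dimension is obtained from additivity of dp-rank and the topological analysis, independently of whether $\acl$ gives a pregeometry. Items (1), (4), (5) are sketched at a level that would be acceptable only after these two issues are repaired.
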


\noindent
Fact~\ref{fact:sw} is a special case of the results of \cite{SW-tame}.
Every single item of Fact~\ref{fact:sw} fails in $\zpp$ because of the presence of dense and co-dense definable sets.
\newline


\noindent
There are dp-minimal expansions of valued groups in which algebraic closure does not satisfy the exchange property \cite{AV-couple,kuhlmann-contractions}, but this cannot happen over $\zp$

\begin{prop}
\label{prop:exchange}
Suppose $\Sa Y$ is dp-minimal.
Then $\Sa Y$ is a geometric structure, i.e. $\Sa Y$ eliminates $\exists^\infty$ and algebraic closure satisfies the exchange property.
\end{prop}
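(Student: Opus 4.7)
The plan splits naturally. For elimination of $\exists^\infty$, $\Sa Y$ expands $\zp$ and hence defines the $p$-adic topology, which is a non-discrete Hausdorff group topology on $(\Z_p,+)$; Fact~\ref{fact:exists-0} then delivers elimination of $\exists^\infty$ at once.

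For exchange, I plan a dimension count based on Fact~\ref{fact:sw}. Assume $a \in \acl(Bc) \setminus \acl(B)$ and suppose, toward a contradiction, that $c \notin \acl(Ba)$. Pick a $B$-formula $\varphi(x,y)$ such that $\varphi(\Z_p,c)$ is finite and $a \in \varphi(\Z_p,c)$. Using the first part, fix $n$ uniformly bounding $|\varphi(\Z_p,y)|$ whenever this is finite, and set $Y := \{y : |\varphi(\Z_p,y)| \leq n\}$, a $B$-definable set containing $c$. Form $G := \{(x,y) : y \in Y \wedge \varphi(x,y)\} \subseteq \Z_p^2$. This $B$-definable set contains $(a,c)$ and has $y$-fibers of size at most $n$; in particular $G$ has empty interior in $\Z_p^2$, and $\dim G \leq 1$ by Fact~\ref{fact:sw}(3). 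The $Ba$-definable fiber $G^a$ contains $c$, so if $G^a$ is finite then $c \in G^a \subseteq \acl(Ba)$, contradicting our assumption. Hence $G^a$ is infinite and has non-empty interior by Fact~\ref{fact:sw}(2). The $B$-definable set $D := \{x : G^x \text{ is infinite}\}$ contains $a$; if $D$ were finite we would obtain $a \in \acl(B)$, again a contradiction. So $D$ is infinite, and Fact~\ref{fact:sw}(2) supplies a non-empty open $U \subseteq D$.

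To close, I must derive $\dim G \geq 2$, contradicting $\dim G \leq 1$. For each $x \in U$, $G^x$ contains an open ball; partitioning $U$ by the radius of such a ball and using a Baire-type count together with Fact~\ref{fact:sw}(2), there exist $k_0 \in \Z$ and a non-empty open $U' \subseteq U$ such that for every $x \in U'$ some $y$ satisfies $y + p^{k_0}\Z_p \subseteq G^x$. Selecting such a $y = y(x)$ definably, using the Skolem functions available over $\zp$, and shrinking to a $B$-definable subset on which $y$ is continuous by Fact~\ref{fact:sw}(4), the ultrametric property then forces $y(x) + p^{k_0}\Z_p = y(x_0) + p^{k_0}\Z_p$ for all $x$ close enough to a fixed $x_0 \in U'$. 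This produces an open rectangle $(x_0 + p^N\Z_p) \times (y(x_0) + p^{k_0}\Z_p) \subseteq G$ for large $N$, contradicting the fact that $G$ has empty interior in $\Z_p^2$. The main obstacle is exactly this last step, the production of a uniform open rectangle in $G$ from mere fiberwise openness of $G^x$ on an open base; this is the $p$-adic analogue of the fiber dimension theorem, and the cleanest route may be to quote the relevant cell decomposition from \cite{SW-tame} rather than to run the continuity argument from scratch.
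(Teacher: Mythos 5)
Your first half coincides with the paper: elimination of $\exists^\infty$ comes from Fact~\ref{fact:exists-0}, exactly as in the paper's proof. For exchange your route is genuinely different. The paper does not argue directly; it invokes the dichotomy of \cite[Proposition 5.2]{SW-tame}, by which either exchange holds or there is a definable open $U \subseteq \Z_p$ and a definable family $F \subseteq U \times \Z_p$ with finite fibers that is locally constant in the parameter yet has infinitely many distinct fibers, and it kills the second horn by observing that separability of $\Z_p$ makes $\bigcup_{a \in U} F_a$ a countably infinite definable subset of $\Z_p$, contradicting Fact~\ref{fact:sw}(2). You instead run a direct ``finite fibers in one direction versus open fibers in the other'' argument inside $\Z_p^2$, using only Facts~\ref{fact:exists-0} and \ref{fact:sw}; the setup (the set $G$ with $y$-fibers of size at most $n$, hence empty interior, the infinitude of $G^a$, the open set $U$ of $x$ with $G^x$ infinite, and the countable-union argument producing a fixed radius $p^{-k_0}$ on an open $U'$) is all correct, and what you buy is independence from the dichotomy of \cite{SW-tame}, at the cost of having to pass from fibrewise interior to an open rectangle.

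The one step I would not accept as written is the definable selection of $y(x)$ ``using the Skolem functions available over $\zp$'': $\Sa Y$ is an arbitrary dp-minimal expansion of $\zp$, and neither the paper nor the facts you quote provide definable choice for such structures (definable Skolem functions are known for $\pring)$, not for expansions of the valued group), so this step is a genuine gap as stated. Fortunately you need neither it nor the continuity step via Fact~\ref{fact:sw}(4): once the radius is fixed at $p^{-k_0}$, the balls $y + p^{k_0}\Z_p$ range over the finitely many cosets $C$ of $p^{k_0}\Z_p$, so $U'$ is the union of the finitely many definable sets $U'_C := \{ x \in U' : C \subseteq G^x \}$; one of them is infinite, hence by Fact~\ref{fact:sw}(2) contains a nonempty open $V$, and then $V \times C \subseteq G$ is an open rectangle, contradicting the fact that $G$ has empty interior (equivalently, contradicting the bound $|G^y| \leq n$ on the $y$-fibers; the inequality $\dim G \leq 1$ is never needed). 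With that replacement your argument is complete and self-contained; note only the small point that exchange must hold in elementary extensions as well, which is covered by the final sentence of Fact~\ref{fact:sw}.
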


\begin{proof}
Elimination of $\exists^\infty$ follows from Fact~\ref{fact:exists-0}.
We show that algebraic closure satisfies exchange.
By \cite[Proposition 5.2]{SW-tame} exactly one of the following is satisfied,
\begin{enumerate}
\item algebraic closure satisfies exchange, or
\item there is definable open $U \subseteq \Z_p$, definable $F \subseteq U \times \Z_p$ such that each $F_a$ is finite, for every $a \in U$ there is an open $a \in V \subseteq U$ such that $F_b = F_a$ for all $b \in V$, and the family $(F_a : a \in U)$ contains infinitely many distinct sets.
\end{enumerate}
Suppose $(2)$ holds.
Let $E$ be the set of $(a,b) \in U^2$ such that $F_a = F_b$.
Then $E$ is a definable equivalence relation, every $E$-class is open, and there are infinitely many $E$-classes.
Suppose $A \subseteq U$ contains exactly one element from each $E$-class.
As $\Z_p$ is separable $|A| = \aleph_0$.
Let $D := \bigcup_{a \in U} F_a = \bigcup_{a \in A} F_a$.
So $D \subseteq \Z_p$ is definable and $|D| = \aleph_0$.
This contradicts Fact~\ref{fact:sw}$(2)$.
\end{proof}

\noindent
Finally, Fact~\ref{fact:p-min} is proven in \cite{SW-dp}.

\begin{fact}
\label{fact:p-min}
A dp-minimal expansion of $\pring)$ is $\pring)$-minimal.
\end{fact}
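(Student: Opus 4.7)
The plan is to reduce the claim to Delon's theorem (Fact~\ref{fact:delon}) by proving the weaker statement that $\Sa Y$ is weakly $\pring)$-minimal, i.e., every $\Sa Y$-definable $X \subseteq \Z_p$ is externally definable in $\pring)$. Once this is established, Fact~\ref{fact:delon} upgrades external definability in $\pring)$ to definability in $\pring)$, and $\pring)$-minimality follows.

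To establish weak $\pring)$-minimality, I would use Fact~\ref{fact:sw} to decompose any $\Sa Y$-definable $X \subseteq \Z_p$ into manageable pieces. Applying Fact~\ref{fact:sw}(1) we may assume $X$ is closed. Then $\bd(X) = X \setminus \Int(X)$ is a closed $\Sa Y$-definable subset of $\Z_p$ with empty interior; by Fact~\ref{fact:sw}(2) every $\Sa Y$-definable subset of $\Z_p$ is the union of an open set and a finite set, so a closed one with empty interior is finite. This yields a disjoint decomposition $X = \Int(X) \sqcup F$ with $\Int(X)$ open $\Sa Y$-definable and $F$ a finite $\Sa Y$-definable subset of $\Z_p$. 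It suffices to show each piece is externally definable in $\pring)$.

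For the finite part: Proposition~\ref{prop:exchange} gives that algebraic closure satisfies exchange in $\Sa Y$, and combined with the dimension theory of Fact~\ref{fact:sw}(3) (dp-rank equals $\acl$-dimension equals projection-dimension) and Fact~\ref{fact:sw}(4) (generic continuity) one argues that $\acl_{\Sa Y}$ agrees with $\acl_{\pring)}$ on tuples from $\Z_p$: a point of $\acl_{\Sa Y}(a) \setminus \acl_{\pring)}(a)$ would arise as $g(a)$ for some $\Sa Y$-definable function $g$ continuous on a large open set yet not $\pring)$-definable there, and a dimension count using Fact~\ref{fact:sw}(3) applied to the graph of $g$ would force dp-rank to exceed projection-dimension. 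Hence the elements of $F$ are $\pring)$-algebraic over the defining parameters, and $F$ is $\pring)$-definable.

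For the open part $U = \Int(X)$, its boundary is contained in $F$ and hence is finite and $\pring)$-definable by the previous step. The main obstacle is showing that $U$ itself is externally definable in $\pring)$. My strategy is: pass to a highly saturated $\Sa Y^{\ast} \succ \Sa Y$ with $\pring)$-reduct $\Sa Z^{\ast} \succ \pring)$, use Fact~\ref{fact:cs-limit} to approximate $U$ from within by a $\Sa Y$-definable family, and then use cell decomposition in $\pring)$ to replace each approximant by a $\pring)$-definable set with the same trace on any given finite subset of $\Z_p$; a saturation argument then yields a single $\pring)$-formula externally defining $U$ in $\Sa Z^{\ast}$. The heart of this step is showing that dp-minimality of $\Sa Y$ over $\pring)$ rules out open definable sets such as $\{x \in \Z_p : \valp(x) \text{ is even}\}$, which is open with finite boundary $\{0\}$ but is not externally definable in $\pring)$; this is where the full strength of Fact~\ref{fact:sw}, Proposition~\ref{prop:exchange}, and the rigidity of $\pring)$ is needed. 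Finally, Fact~\ref{fact:delon} converts external definability in $\pring)$ to $\pring)$-definability, completing the proof.
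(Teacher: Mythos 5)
First, note that the paper does not prove this statement at all: Fact~\ref{fact:p-min} is quoted with a citation to \cite{SW-dp}, so there is no in-paper argument to compare yours against; your proposal has to stand on its own, and it does not.

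The structural part of your argument is fine but carries no weight. Reducing to closed sets via Fact~\ref{fact:sw}(1), observing that a closed definable set with empty interior is finite by Fact~\ref{fact:sw}(2), and splitting $X = \Int(X) \sqcup F$ is all correct; but the finite piece $F$ is $\pring)$-definable for the trivial reason that ``definable'' allows parameters, so the entire discussion of $\acl$, exchange, and dimension counting is unnecessary. More importantly, the reduction through ``weak $\pring)$-minimality'' buys nothing: by Delon's theorem (Fact~\ref{fact:delon}) externally definable and definable coincide for $\pring)$, so weak $\pring)$-minimality is literally equivalent to $\pring)$-minimality, and you have reduced the problem to itself. All of the content of the theorem is therefore concentrated in the step you label ``the main obstacle,'' namely that a definable open $U$ with finite boundary must be semialgebraic --- and this is exactly the step you do not prove. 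Knowing that $U$ is open with finite boundary is essentially no information in $\Z_p$: for any $S \subseteq \N$ the set $\{x \in \Z_p : \valp(x) \in S\}$ is open with boundary contained in $\{0\}$, and for non-eventually-periodic $S$ it is not semialgebraic (nor, by Fact~\ref{fact:delon}, externally so). Ruling such sets out of a dp-minimal expansion of the ring is the actual theorem.

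The strategy you sketch for that step would not work as stated. Fact~\ref{fact:cs-limit} approximates an \emph{externally} definable set from inside by a definable family of the \emph{same} structure; here $U$ is already $\Sa Y$-definable, so the approximants are just $\Sa Y$-definable sets again, and invoking ``cell decomposition in $\pring)$ to replace each approximant by a $\pring)$-definable set'' presupposes precisely the conclusion that $\Sa Y$-definable subsets of $\Z_p$ are governed by $\pring)$-cells. The argument is circular, and you acknowledge as much by saying the full strength of Fact~\ref{fact:sw}, Proposition~\ref{prop:exchange} and the ``rigidity'' of $\pring)$ ``is needed'' without supplying it. A further small inaccuracy: your illustrative example $\{x \in \Z_p : \valp(x) \text{ even}\}$ is in fact semialgebraic for odd $p$ (it is the union of two square classes, $(\Q^\times_p)^2 \cup u(\Q^\times_p)^2$ for a non-residue unit $u$, intersected with $\Z_p$), so it does not exhibit the phenomenon you need to exclude; sets of the form $\valp^{-1}(S)$ for non-definable $S \subseteq \N$ do. As it stands, the proposal establishes only the easy reductions and leaves the theorem unproved; for a complete argument you should consult the proof in \cite{SW-dp}.
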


\noindent
It is an open question whether the theory of a dp-minimal expansion of $\pring)$ is $\Th\pring)$-minimal 
(equivalently: $P$-minimal).

\subsection{The $p$-adic completion}
Suppose $\Sa Z$ is an expansion of $\zpp$.
Let $\Sa S \prec \Sa N$ be highly saturated.
We define a standard part map $\st : N \to \Z_p$ by declaring $\st(a)$ to be the unique element of $\Z_p$ such that for all non-zero integers $k,k'$ we have $\valp(a - k) \geq k'$ if and only if $\valp(\st(a) - k) \geq k'$.
Note that $\st$ is a homomorphism and let $\minf$ be the kernal of $\st$.
We identify $N/\minf$ with $\Z_p$ and identify  $\st$ with the quotient map.
Note that $\minf$ is the set of $a \in N$ such that $\st(a) \geq k$ for all integers $k$, so $\minf$ is externally definable and we consider $\Z_p$ as an imaginary sort of $\Sh N$.

\begin{prop}
\label{prop:p-adic-complete}
Suppose $\Sa Z$ is $\nip$.
Then the following are interdefinable.
\begin{enumerate}
\item The structure $\Sq Z$ on $\Z_p$ with an $n$-ary relation symbol defining the closure in $\Z^n_p$ of every $\Sh Z$-definable subset of $\Z^n$.
\item The structure on $\Z_p$ with an $n$-ary relation symbol defining the image of each $\Sa N$-definable subset of $N^n$ under the standard part map $N^n \to \Z^n_p$.
\item The open core of the structure induced on $\Z_p$ by $\Sh N$.
\end{enumerate}
The structure induced on $\Z$ by $\Sq Z$ is a reduct of $\Sh Z$.
If $\Sa Z$ is dp-minimal then $\Sq Z$ is interdefinable with the structure induced on $\Z_p$ by $\Sh N$.
\end{prop}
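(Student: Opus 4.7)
The proof is a $p$-adic adaptation of Fact~\ref{fact:complete-0} (proved in \cite{big-nip}) and Proposition~\ref{prop:complete-1}; I sketch the main steps and indicate where the $p$-adic setting matters.

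First I would prove the equivalence of (1) and (2) via the identity $\st(Y') = \cl(Y' \cap \Z^n)$ (closure taken in $\Z_p^n$), for every $\Sa N$-definable $Y' \subseteq N^n$. The inclusion $\cl(Y' \cap \Z^n) \subseteq \st(Y')$ is immediate from saturation: any $a$ in the closure is witnessed by a consistent partial type demanding arbitrarily large $p$-adic agreement with elements of $Y'$, and its realization lies in $Y'$ and has standard part $a$. For the reverse inclusion, given $b \in Y'$ with $\st(b) = a$, one must exhibit standard integer tuples in $Y' \cap \Z^n$ arbitrarily close to $a$; this is where NIP enters, through Fact~\ref{fact:cs-limit} to approximate $Y' \cap \Z^n$ by an $\Sa Z$-definable family, together with Fact~\ref{fact:delon} (Delon) in place of Marker--Steinhorn to produce the required integer tuples. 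Because $\Z_p$ is compact and $\st$ is defined on all of $N$, no analogue of the restriction to $\mfin$ is needed, and the argument is in fact slightly simpler than in the linearly ordered setting.

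The equivalence of (2) and (3) is then routine: $\st$-images are closed in $\Z_p^n$, so every set in (2) is a closed set in the structure induced on $\Z_p$ by $\Sh N$; conversely every closed induced subset of $\Z_p^n$ has the form $\st(Y')$ by a converse of the previous paragraph. For the claim that the structure induced on $\Z$ by $\Sq Z$ is a reduct of $\Sh Z$, note that any $\Sq Z$-definable $X \subseteq \Z_p^n$ is a boolean combination of sets $\cl(Y)$ with $Y \subseteq \Z^n$ externally definable in $\Sa Z$, say $Y = Y' \cap \Z^n$ with $Y' \subseteq N^n$ an $\Sa N$-definable witness. Picking any $c \in N$ with $c > \N$, one checks
\[
\cl(Y) \cap \Z^n \;=\; \{\, a \in \Z^n : \exists b \in N^n\,(b \in Y' \wedge \valp(a - b) \geq c)\,\},
\]
so $\cl(Y) \cap \Z^n$ is externally definable in $\Sa Z$, and extending by boolean combinations handles arbitrary $X$.

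The subtler point is the final assertion in the dp-minimal case, where the open core of the induced structure must coincide with the full induced structure. My plan is to verify that the structure $\Sa I$ induced on $\Z_p$ by $\Sh N$ is itself dp-minimal and then to apply Fact~\ref{fact:sw}(1) to conclude that every $\Sa I$-definable subset of $\Z_p^n$ is a boolean combination of closed $\Sa I$-definable sets; since the open core already contains every closed $\Sa I$-definable set, this forces $\Sq Z$ to be interdefinable with $\Sa I$. The step I expect to be the main obstacle is precisely the transport of dp-minimality through the imaginary quotient $\Z_p = N/\minf$: $\Sh N$ is dp-minimal by Fact~\ref{fact:shelah}, but one must check that dp-minimality descends to the induced structure on this imaginary sort, analogously to what is done in the cyclic setting preceding Proposition~\ref{prop:complete-1}.
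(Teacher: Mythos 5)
Your treatment of the final claim is the same as the paper's: the structure induced on $\Z_p$ by $\Sh N$ is dp-minimal because $\Sh N$ is (Fact~\ref{fact:shelah}), so by Fact~\ref{fact:sw}(1) every set definable in it is a boolean combination of closed definable sets, hence it is interdefinable with its open core, i.e.\ with $\Sq Z$; the descent of dp-minimality to the imaginary sort $N/\minf$, which you flag as the main obstacle, is standard since dp-rank does not increase under surjective definable maps from the home sort. The problem is with the earlier parts. Your key identity $\st(Y') = \cl(Y' \cap \Z^n)$ for \emph{every} $\Sa N$-definable $Y'$ is false: take $Y'$ to be any definable set with no standard points, e.g.\ a singleton $\{b\}$ with $b \in N \setminus \Z$; then $\cl(Y' \cap \Z^n) = \emptyset$ while $\st(Y') = \{\st(b)\}$. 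Your proposed repair of the reverse inclusion cannot work, because Fact~\ref{fact:cs-limit} (honest definitions) and Fact~\ref{fact:delon} constrain externally definable subsets of $\Z^n$, the small model, and say nothing that would force an arbitrary $\Sa N$-definable $Y'$ to contain standard points near each of its elements. The interdefinability of $(1)$ and $(2)$ is not a set-by-set identity; it requires the more careful argument of \cite{big-nip} (the proof of Fact~\ref{fact:complete-0}), which the paper invokes and adapts rather than reproves.

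The same confusion infects your argument that the structure induced on $\Z$ by $\Sq Z$ is a reduct of $\Sh Z$. The displayed formula fails: the witness $b \in Y'$ with $\valp(a-b) \geq c$ may be nonstandard with no standard points of $Y'$ nearby, so the right-hand side can contain points of $\Z^n \setminus \cl(Y)$ (e.g.\ $Y = \emptyset$ with witness $Y' = \{p^{c'}\}$, $c' \geq c$, puts $0$ in the right-hand side). Moreover, an arbitrary $\Sq Z$-definable subset of $\Z_p^n$ is obtained from the basic relations by first-order definability, not merely by boolean combinations, so even a corrected formula for $\cl(Y) \cap \Z^n$ would not suffice. The clean route is the one implicit in the paper: $\Sq Z$ is (up to interdefinability) interpretable in $\Sh N$ on the imaginary sort $N/\minf$, every $\Sh N$-definable subset of $N^n$ is externally definable in $\Sa N$ (Fact~\ref{fact:shelah} applied to $\Sa N$), and the trace on $\Z^n$ of a set externally definable in $\Sa N$ is externally definable in $\Sa Z$, hence $\Sh Z$-definable; this handles quantifiers and parameters uniformly.
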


\noindent
So in particular $\Sq Z$ is dp-minimal when $\Sa Z$ is dp-minimal.
All claims of Proposition~\ref{prop:p-adic-complete} except the last follow by easy alternations to the proof of Fact~\ref{fact:complete-0}.
\newline

\noindent
We prove the last claim of Proposition~\ref{prop:p-adic-complete}.

\begin{proof}
Suppose $\Sa Z$ is dp-minimal.
We show that $\Sq Z$ is interdefinable with the structure induced on $\Z_p$ by $\Sh N$.
It suffices to show that the induced structure on $\Z_p$ is interdefinable with its open core.
The structure induced on $\Z_p$ by $\Sh N$ is dp-minimal as $\Sh N$ is dp-minimal.
So by Fact~\ref{fact:sw} any $\Sh N$-definable set is a boolean combination of closed $\Sh N$-definable sets.
\end{proof}

\noindent
One can show that $\zpp^\square$ is interdefinable with $\zp$.
We omit this for the sake of brevity.
\newline

\noindent
We now give the $p$-adic analogue of Theorem~\ref{thm:complete-induced}.
The proof is essentially the same as that of Theorem~\ref{thm:complete-induced} so we leave the details to the reader.
(One applies Fact~\ref{fact:delon} at the same point that Fact~\ref{fact:ms} is applied in the proof of Theorem~\ref{thm:complete-induced}.)

\begin{prop}
\label{prop:p-adic-dense-pair}
Suppose that $A$ is a subset of $\Z_p^n$, $\pring,A)$ is $\nip$, and $\Th\pring)$ is an open core of $\Th\pring,A)$.
Let $\Sa A$ be the structure induced on $A$ by $\pring)$ and $X$ be the closure of $A$ in $\Z_p^n$.
Then 
\begin{enumerate}
\item The structure $\Sq A$ with domain $X$ and an $n$-ary relation for the closure in $X^n$ of each $\Sh A$-definable subset of $A^n$,
\item and the structure $\Sa X$ induced on $X$ by $\pring)$,
\end{enumerate}
are interdefinable.
(Note that $X$ is semialgebraic.)
\end{prop}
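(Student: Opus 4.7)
The plan is to follow the two-direction argument of Theorem~\ref{thm:complete-induced} essentially verbatim, substituting the $p$-adic sup-norm $\|a\| := \max_i |a_i|_p$ for the Euclidean sup-norm and invoking Fact~\ref{fact:delon} where Fact~\ref{fact:ms} was used. Throughout write $\Sa S := \pfield)$ and let $X$ be the closure of $A$ in $\Z_p^n$; $X$ is semialgebraic because $\Th(\Sa S)$ is an open core of $\Th(\Sa S,A)$, so the closure of the (externally) definable set $A$ inside $\Z_p^n$ is $\Sa S$-definable.

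For the direction that $\Sa X$ is a reduct of $\Sq A$, start with a semialgebraic $Y \subseteq X^n$. Cell decomposition for $\Sa S$ expresses every semialgebraic set as a boolean combination of closed semialgebraic sets, so we may reduce to the case that $Y$ is closed in $X^n$. Define
\[
W := \{(a,a',c) \in X \times X \times X^n : \exists c' \in Y,\; \|c - c'\| < \|a - a'\|\}.
\]
Then $W \cap (A \times A \times A^n)$ is $\Sa A$-definable (it is the trace on $A^{n+2}$ of a semialgebraic set), so $Z := \cl(W \cap (A \times A \times A^n))$ is $\Sq A$-definable. The metric argument of \cite[Lemma 13.5]{big-nip} transfers verbatim to the ultrametric on $\Z_p^n$, using density of $A$ in $X$ and closedness of $Y$, to yield
\[
Y = \bigcap_{a,a' \in X,\; a \neq a'} \{ c \in X : (a,a',c) \in Z \},
\]
so $Y$ is $\Sq A$-definable.

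For the direction that $\Sq A$ is a reduct of $\Sa X$, fix an $\Sh A$-definable $Y \subseteq A^n$; the goal is to show that $\cl(Y) \subseteq X^n$ is semialgebraic. Since $(\Sa S,A)$ is $\nip$, so is $\Sa A$, and Fact~\ref{fact:cs-limit} yields an $\Sa A$-definable family $(Y_a : a \in A^k)$ such that every finite subset of $Y$ lies in some $Y_a \subseteq Y$. Because $\Th(\Sa S)$ is an open core of $\Th(\Sa S,A)$, there is an $\Sa S$-definable family $(Z_b : b \in \Z_p^\ell)$ of subsets of $\Z_p^n$ such that every $\cl(Y_a)$ equals some $Z_b$; thus for every finite $F \subseteq \cl(Y)$ there is $b$ with $F \subseteq Z_b \subseteq \cl(Y)$. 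A saturation argument then produces a subset $Z \subseteq X^n$ that is externally definable in $\Sa S$ with $Y \subseteq Z \subseteq \cl(Y)$. Now apply Fact~\ref{fact:delon} in place of Fact~\ref{fact:ms} to conclude that $Z$ is semialgebraic, so $\cl(Y) = \cl(Z)$ is $\Sa X$-definable.

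The main subtlety, and the only real deviation from the o-minimal proof, is confirming that the metric argument of \cite[Lemma 13.5]{big-nip} goes through in the ultrametric setting. Nothing in that argument uses anything stronger than the triangle inequality, density of $A$ in $X$, and closedness of $Y$, all of which are available here; in fact the strong triangle inequality $\|u+v\| \leq \max(\|u\|,\|v\|)$ only simplifies the estimates. A secondary point to check is semialgebraic cell decomposition in the reduction to closed $Y$, which is standard for $\pfield)$. No $p$-adic-specific tameness beyond Fact~\ref{fact:delon} is required.
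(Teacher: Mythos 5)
Your proposal is correct and follows essentially the same route as the paper, which itself proves Proposition~\ref{prop:p-adic-dense-pair} by declaring the argument of Theorem~\ref{thm:complete-induced} to carry over verbatim, with Fact~\ref{fact:delon} invoked exactly where Fact~\ref{fact:ms} was used. Your additional checks (the ultrametric version of the metric argument from \cite{big-nip} and the boolean-combination-of-closed-sets reduction for $p$-adic semialgebraic sets) are precisely the details the paper leaves to the reader.
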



\noindent
Fact~\ref{fact:mari-main} and Proposition~\ref{prop:p-adic-dense-pair} together easily yield Proposition~\ref{prop:mari-completion}.

\begin{prop}
\label{prop:mari-completion}
The completion $\Sq P$ of $\Sa P$ is interdefinable with the structure induced on $\Z_p$ by $\pring)$ and $a \mapsto \pexp(pa)$.
So a subset of $\Z_p^n$ is $\Sq P$-definable if and only if it is of the form $\{ (a_1,\ldots,a_n) \in \Z_p^n : (\pexp(pa_1),\ldots,\pexp(pa_n)) \in X\}$ for a semialgebraic subset $X$ of $(\Z^{\times}_p)^n$.
\end{prop}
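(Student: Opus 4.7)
The plan is to apply Fact~\ref{fact:mari-main} and Proposition~\ref{prop:p-adic-dense-pair} after transferring everything through the character $\chi$. Set $A := \chi(\Z) = \pexp(p\Z)$, the cyclic subgroup of $(1 + p\Z_p, \times)$ generated by $\pexp(p)$, so $A$ is finitely generated. Since $a \mapsto \pexp(pa)$ is a topological group isomorphism $(\Z_p,+) \to (1 + p\Z_p,\times)$ carrying the dense subgroup $\Z \subseteq \Z_p$ onto $A$, the image $A$ is dense in $1 + p\Z_p$. Fact~\ref{fact:mari-main} therefore applies and yields that $\pring,A)$ is $\nip$ with $\Th\pring)$ an open core of $\Th\pring,A)$.

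Next let $\Sa A$ be the structure induced on $A$ by $\pring)$. The bijection $\chi: \Z \to A$ is, by construction of $\Sa P$, an isomorphism onto $\Sa A$ in the sense that the two structures are interdefinable through $\chi$; so $\chi$ also induces corresponding isomorphisms $\Sh P \to \Sh A$ and $\Sq P \to \Sq A$, where $\Sq A$ is the completion from Proposition~\ref{prop:p-adic-complete} applied to $\Sa A$. Because the closure of $A$ in $\Z_p$ is $1 + p\Z_p$, Proposition~\ref{prop:p-adic-dense-pair} identifies $\Sq A$ with the structure induced on $1 + p\Z_p$ by $\pring)$.

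Finally, I will verify that the homeomorphism $\pexp(p\cdot): \Z_p \to 1 + p\Z_p$ extends $\chi$ and intertwines the two completions. Concretely, if $\Sa P \prec \Sa N$ is highly saturated and $\Sa A \prec \Sa N'$ is the corresponding extension under $\chi$, then the standard part map $\st_P: N \to \Z_p$ and the standard part map $\st_A : N' \to 1 + p\Z_p$ fit into a commuting square with $\chi$ and $\pexp(p\cdot)$, because $\chi$ is a topological (equivalently valued) group isomorphism from $\Z$ onto $A$. Hence closures of $\Sh P$-definable subsets of $\Z^n$ in $\Z_p^n$ correspond coordinatewise under $\pexp(p\cdot)$ to closures of $\Sh A$-definable subsets of $A^n$ in $(1 + p\Z_p)^n$. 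Combining with the previous paragraph, $\Sq P$ is interdefinable with the pullback under $\pexp(p\cdot)$ of the structure induced on $1 + p\Z_p$ by $\pring)$, which unfolds to the structure induced on $\Z_p$ by $\pring)$ and $a \mapsto \pexp(pa)$. The main (minor) obstacle is verifying this commuting square, which reduces to the elementary observation that $\chi$ is a valued group isomorphism, via the identity $\valp(\chi(k) - 1) = \valp(k) + 1$ used to define $\Sa P$.
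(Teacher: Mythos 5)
Your proposal is correct and follows exactly the route the paper takes: the paper's proof is precisely that Fact~\ref{fact:mari-main} (applied to the finitely generated dense subgroup $A=\pexp(p\Z)$ of $1+p\Z_p$) together with Proposition~\ref{prop:p-adic-dense-pair} yield the result, with the transfer along the homeomorphism $a\mapsto\pexp(pa)$ left implicit. Your verification of the commuting square of standard part maps just spells out the details the paper leaves to the reader.
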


\noindent
Proposition~\ref{prop:mari-completion} shows that $\Sq P$ defines an isomorphic copy of $\pfield)$.
So if $\Sa P \prec \Sa N$ is highly saturated then $\Sh N$ interprets $\pfield)$, hence $\Sa P$ is non-modular.
We expect that $\Sa N$ does not interpret an infinite field, but we do not have a proof.

\subsection{A $p$-adic completion conjecture}
\begin{conj}
\label{conj:p-adic}
Suppose $\Sa Z$ is a dp-minimal expansion of $\zpp$.
Then the structure induced on $\Z$ by $\Sq Z$ is interdefinable with $\Sh Z$ and every $\Sh Z$-definable subset of $\Z^n$ is of the form $X \cap Y$ where $X$ is a $\Sq Z$-definable subset of $\Z^n_p$ and $Y$ is a $(\Z,+)$-definable subset of $\Z^n$.
\end{conj}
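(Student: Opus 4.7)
The plan is to reduce the interdefinability clause of the conjecture to the decomposition clause. Since $\Sa N$ expands $\zpp$, the addition on $N$ pushes forward through the standard part map $\st$ to addition on $\Z_p$, so by Proposition~\ref{prop:p-adic-complete} (using the second description of $\Sq Z$) the group $(\Z_p,+)$ is $\Sq Z$-definable; consequently every $(\Z,+)$-definable subset of $\Z^n$ is already definable in the structure induced on $\Z$ by $\Sq Z$. Combined with the same proposition's statement that this induced structure is a reduct of $\Sh Z$, the decomposition clause immediately yields the interdefinability clause.

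For the decomposition clause, fix a highly saturated $\Sa Z \prec \Sa N$. By Fact~\ref{fact:shelah}, $\Sh Z$ is dp-minimal, so Fact~\ref{fact:sw}(1) applied in an elementary extension writes every $\Sh Z$-definable set as a Boolean combination of closed $\Sh Z$-definable sets; it therefore suffices to treat a closed $X \subseteq \Z^n$. Form the $p$-adic closure $\overline{X} \subseteq \Z_p^n$; by the first description of $\Sq Z$ in Proposition~\ref{prop:p-adic-complete}, $\overline{X}$ is $\Sq Z$-definable. The remaining task is to produce a $(\Z,+)$-definable $Y \subseteq \Z^n$ with $X = (\overline{X} \cap \Z^n) \cap Y$. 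I would attempt this by induction on the $\acl$-dimension $\dim X$ supplied by Fact~\ref{fact:sw}(3): the top-dimensional stratum of $X$ is $p$-adically open up to a lower-dimensional set by Fact~\ref{fact:sw}(2), and the lower-dimensional remainder is handled inductively via the frontier inequality Fact~\ref{fact:sw}(5). On the open stratum, I would pursue a cell-decomposition argument modelled on Lemma~\ref{lem:split}: inside a sufficiently small $p$-adic cell, the discrepancy $(\overline{X} \cap \Z^n) \setminus X$ should be a finite Boolean combination of cosets $a + n\Z^n$, captured uniformly by the required $Y$.

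The principal obstacle is exactly this uniform coset decomposition. No analogue of Lemma~\ref{lem:split} is available for an arbitrary dp-minimal expansion of $\zpp$: we have neither a quantifier elimination nor an explicit axiomatisation, only dp-minimality, $(\Z,+)$-minimality of $\zpp$ (Fact~\ref{fact:ae}), and the tameness of Fact~\ref{fact:sw}. Making the coset correction uniform in parameters amounts to a family version of a cell-decomposition for $\Sh Z$ relative to $\Sq Z$, of essentially the same strength as the conjecture itself. In the established case $\Sa Z = \Sa P$, this uniform statement is forced by Fact~\ref{fact:mari-main} together with Lemma~\ref{lem:split}; in the general dp-minimal setting one would likely need a new structural input — perhaps a relative quantifier elimination for $\Sh Z$ over the pair $(\Sq Z, (\Z,+))$, or an elimination-of-imaginaries result strong enough to transfer uniform finiteness from $\Sq Z$-definable families to families of arithmetic type.
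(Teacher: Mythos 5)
The statement you are proving is stated in the paper as Conjecture~\ref{conj:p-adic}: the paper offers no proof of it, only a converse (Proposition~\ref{prop:p-adic-gen}) and the remark that the archimedean analogue is established elsewhere. So there is no argument of the paper to compare yours against, and your proposal does not close the gap either: as you say yourself, the ``uniform coset decomposition'' step is missing, and that step is not a technical refinement but essentially the entire content of the conjecture. In the one situation where the paper carries out a decomposition of this shape, namely the specific structure $\Sa P$, the input is the external theorem of Mariaule (Fact~\ref{fact:mari-main}) together with quantifier elimination for $\zpp$ (Lemma~\ref{lem:split}); nothing of this kind is available for an arbitrary dp-minimal expansion of $\zpp$, which is precisely why the statement is left as a conjecture. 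Your first paragraph, reducing the interdefinability clause to the decomposition clause via the fact that $(\Z_p,+)$ is $\Sq Z$-definable and that the induced structure is a reduct of $\Sh Z$ (Proposition~\ref{prop:p-adic-complete}), is correct, but it is the easy half.

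Beyond the admitted gap, one intermediate step as written is false. You apply Fact~\ref{fact:sw} to $\Sh Z$, but that fact concerns dp-minimal expansions of $\zp$, i.e.\ structures on $\Z_p$, and the paper states explicitly that every item of it fails over $\zpp$ because of dense and codense definable sets. Concretely, for $p \neq 3$ the set $3\Z$ is $\Sa Z$-definable (hence $\Sh Z$-definable) and is dense and codense in the $p$-adic topology on $\Z$; a boolean combination of closed sets has nowhere dense boundary, so $3\Z$ is not a boolean combination of closed definable subsets of $\Z$, and your opening reduction ``it suffices to treat a closed $X$'' already fails. The same applies to your use of the dimension, openness, and frontier statements of Fact~\ref{fact:sw}(2),(3),(5) for subsets of $\Z^n$: these tools live on the $\Sq Z$-side (which is a dp-minimal expansion of $\zp$), not on the $\Sh Z$-side. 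Note also that even granting a reduction to closed sets, boolean combinations are problematic for the target statement, since the class of sets of the form $X \cap Y$ is not obviously closed under unions or complements. In short: the reduction in your first paragraph is sound, but the second and third paragraphs are a plan whose crucial ingredient is exactly the open problem, with at least one step that provably fails in the stated form.
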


\noindent
The analogue of Conjecture~\ref{conj:p-adic} for dp-minimal expansions of divisible archimedean ordered groups is proven in \cite{SW-dp}.
We can prove a converse to Conjecture~\ref{conj:p-adic}.

\begin{prop}
\label{prop:p-adic-gen}
Let $\Sa Y$ be an expansion of $\zp$ and $\Sa Z$ be the structure induced on $\Z$ by $\Sa Y$.
Suppose $\Sa Y$ is dp-minimal and every $\Sa Z$-definable subset of $\Z^n$ is of the form $X \cap Y$ where $X$ is a $\Sa Y$-definable subset of $\Z^n_p$ and $Y$ is a $(\Z,+)$-definable subset of $\Z^n$.
Then $\Sa Z$ is dp-minimal.
\end{prop}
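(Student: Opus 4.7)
The plan is to establish $\nip$ and inp-minimality of $\Sa Z$ separately, modeled on the proof of Proposition~\ref{prop:p-adic-char}. $\nip$ is immediate: $\Sa Y$ is dp-minimal and hence $\nip$, and the structure induced on any subset by an $\nip$ structure is $\nip$, since traces of definable families preserve VC-dimension bounds.

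For inp-minimality, I would assume towards a contradiction that $\varphi(x;y)$, $\phi(x;z)$ with $|x|=1$ and some $n$ violate inp-minimality in $\Sa Z$, and reduce to the case where $\varphi(x;y) = \psi(x;y)\wedge\theta(x;y)$ and $\phi(x;z)=\psi'(x;z)\wedge\theta'(x;z)$, with $\psi,\psi'$ formulas in the language of $\Sa Y$ and $\theta,\theta'$ formulas in the language of $(\Z,+)$. Concretely: pass (in a saturated extension) to an infinite inp-pattern, apply the decomposition hypothesis instance-by-instance to get pointwise decompositions indexed by a countable set of formula pairs, and Ramsey-extract an infinite subpattern whose instances all use a single pair; Fact~\ref{fact:union-reduce} lets us drop any disjunctive noise that arises.

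Next, analyze $\theta$ (and $\theta'$) using Presburger-style quantifier elimination for $(\Z,+)$: every unary $(\Z,+)$-definable set is a finite boolean combination of cosets $k\Z+l$, so by Facts~\ref{fact:union-reduce} and~\ref{fact:remove-finite} we may assume each $\theta(\Z;a) = (k\Z+l)\setminus A_a$ for fixed integers $k,l$ and finite $A_a$. Writing $k=p^v k'$ with $\valp(k')=0$, the condition $x \in p^v\Z+l$ equals $\valp(x-l)\geq v$ and is $\Sa Y$-definable; absorb it into $\psi$, arranging $\valp(k)=0$ so that each $\theta(\Z;a)$ is dense in $\Z_p$. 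For the $\Sa Y$-part, Fact~\ref{fact:sw}(2) says every $\Sa Y$-definable subset of $\Z_p$ is the union of a $\Sa Y$-definable open set and a finite set; one more application of Facts~\ref{fact:union-reduce} and~\ref{fact:remove-finite} reduces us to the case where $\psi(\Z_p;a)$ and $\psi'(\Z_p;b)$ are open for all parameters.

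The contradiction is a density argument. For any $|I|=n$, the relation $\bigcap_{i\in I}\varphi(\Z;a_i)=\emptyset$ rewrites as $U\cap D=\emptyset$ with $U:=\bigcap_{i\in I}\psi(\Z_p;a_i)$ open in $\Z_p$ and $D:=\bigcap_{i\in I}\theta(\Z;a_i)=(k\Z+l)\setminus\bigcup_{i\in I}A_i$ dense in $\Z_p$. An open subset of $\Z_p$ disjoint from a dense subset is empty, so $U=\emptyset$; thus $\psi(x;a_1),\ldots,\psi(x;a_k)$ is $n$-inconsistent in $\Sa Y$, and similarly for $\psi'$. Pairwise consistency of $\varphi$ and $\phi$ instances on $\Z$ lifts trivially to pairwise consistency of $\psi$ and $\psi'$ instances on $\Z_p$ because $\Z\subseteq\Z_p$. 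Hence $\psi(x;y),\psi'(x;z)$ and $n$ violate inp-minimality in $\Sa Y$, contradicting dp-minimality of $\Sa Y$. The main obstacle is the first step: upgrading the pointwise decomposition hypothesis to the uniform form required for the inp-reduction, since the hypothesis is a meta-level statement whose passage to saturated extensions is not automatic; the remaining steps essentially recapitulate the proof of Proposition~\ref{prop:p-adic-char}.
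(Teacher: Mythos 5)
Your route is the paper's: rerun the proof of Proposition~\ref{prop:p-adic-char} with $\Sa Y$ in place of $\pring)$, using Fact~\ref{fact:sw}(2) where that proof used Fact~\ref{fact:vis}, and contradicting inp-minimality of $\Sa Y$ where that proof invoked Lemma~\ref{lem:vis-1}; your Presburger normal form for the $(\Z,+)$-part, the absorption of the $p^v$-congruence into the $\Sa Y$-part, and the final dense-meets-open argument are exactly the steps of Lemma~\ref{lem:split} and Proposition~\ref{prop:p-adic-char}. Two justifications, however, are off. First, the $\nip$ step: it is false in general that the structure induced on a subset of an $\nip$ structure is $\nip$ -- definable sets of the induced structure involve quantification over the subset and need not be traces of $\Sa Y$-definable sets (that is precisely what the displayed hypothesis is for). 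The correct argument is the paper's one-liner: by hypothesis every $\Sa Z$-definable set is the intersection of an $\Sa Y$-definable set with a $(\Z,+)$-definable set, both $\nip$, and $\nip$ formulas are closed under conjunction.

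Second, your treatment of the uniformity issue does not work as described. You pass to a saturated extension of $\Sa Z$, take an infinite inp-pattern, and ``apply the decomposition hypothesis instance-by-instance''; but the hypothesis concerns $\Sa Z$-definable subsets of $\Z^n$, i.e.\ the standard model, and says nothing about instances $\varphi(x;a)$ with $a$ from a saturated extension, so this application is not licensed (and the Ramsey extraction over ``a countable set of formula pairs'' is also unjustified: in the intended applications the language of $\Sa Y$ has a symbol for every $\Sa Y$-definable set, hence is uncountable). The paper never leaves the standard model: its definition of violating inp-minimality asks only, for each $k$, for finitely many parameters from $\Z$ itself, where the hypothesis applies directly. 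What genuinely remains -- and you are right to flag it -- is that the hypothesis must be read uniformly, at the level of formulas rather than of individual definable sets, in order to produce the fixed $\Sa Y$-formulas $\psi,\psi'$ whose instances violate inp-minimality of $\Sa Y$. The paper adopts that reading implicitly (just as Fact~\ref{fact:mari-main} is used formula-wise in the proof of Lemma~\ref{lem:split}); your saturation-plus-Ramsey device is not a substitute for it, so either assume the formula-level form of the hypothesis or drop the saturated-model detour and argue with finite patterns over $\Z$ as the paper does.
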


\begin{proof}
$\nip$ formulas are closed under conjunctions so $\Sa Z$ is $\nip$.
So it suffices to show that $\Sa Z$ is inp-minimal.
Inspection of the proof of Proposition~\ref{prop:p-adic-char} reveals that our proof on inp-minimality for $\Sa P$ only uses the following facts about $\pring)$:
\begin{enumerate}
    \item $\pring)$ is inp-minimal, and
    \item every definable unary set in every elementary extension of $\Z_p$ is the union of a finite set and a definable open set.
\end{enumerate}
It follows from Fact~\ref{fact:sw} that any dp-minimal expansion of $\zp$ satisfies $(2)$.
So the proof of Proposition~\ref{prop:p-adic-char} shows that $\Sa Z$ is inp-minimal.
\end{proof}

\section{$p$-adic elliptic curves?}
\label{section:elliptic}
\noindent
We give a conjectural construction of uncountably many dp-minimal expansions of $\zpp$.
Fix $\beta \in p\Z_p$.
Then $\beta^\Z$ is a closed subgroup of $\Q^{\times}_p$.
It is a well-known theorem of Tate~\cite{tate-elliptic} that there is an elliptic curve $\E_\beta$ defined over $\Q_p$ and a surjective $p$-adic analytic group homomorphism $\xi_\beta : \Q^{\times}_p \to \E_\beta(\Q_p)$ with kernel $\beta^\Z$.
Note that $\xi_\beta$ is injective on $1 + p\Z_p$ as $(1 + p\Z_p) \cap \beta^\Z = \{1\}$.
We let $\chi_\beta$ be the injective $p$-adic analytic homomorphism $(\Z_p,+) \to \E_\beta(\Q_p)$ given by  $\chi_\beta(a) := \xi_\beta(\pexp(pa))$, $\Sa Y_\beta$ be the structure induced on $\Z_p$ by $\pfield)$ and $\chi_\beta$, and $\Sa E_\beta$ be the structure induced on $\Z$ by $\pfield)$ and $\chi_\beta$.
So $\Sa E_\beta$ is the structure induced on $\Z$ by $\Sa Y_\beta$.

\begin{prop}
\label{prop:p-adic-elliptic}
$\Sa Y_\beta$ expands $\zp$ and $\Sa E_\beta$ expands $\zpp$.
\end{prop}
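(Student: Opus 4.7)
\noindent
The strategy is to show that $\Sa Y_\beta$ defines both $+$ and $\preccurlyeq_p$ on $\Z_p$, which gives that $\Sa Y_\beta$ expands $\zp$; since $\Sa E_\beta$ is by definition the restriction of $\Sa Y_\beta$ to $\Z \subseteq \Z_p$, it will then automatically expand $\zpp$, handling both halves of the proposition at once.

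For addition, the group law $\oplus$ on $\E_\beta(\Q_p)$ is given by rational functions in projective coordinates, hence is semialgebraic in $\pfield)$. Since $\chi_\beta$ is an injective homomorphism, the graph of $+$ on $\Z_p$ coincides with $\{(a,b,c) \in \Z_p^3 : \chi_\beta(a) \oplus \chi_\beta(b) = \chi_\beta(c)\}$, which is $\Sa Y_\beta$-definable by the definition of the induced structure. From $+$ one obtains each subgroup $p^k\Z_p$ (as the image of $p^k$-fold addition) as an $\Sa Y_\beta$-definable set.

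The heart of the argument is definability of $\preccurlyeq_p$. I will exploit that $\chi_\beta$ is $p$-adic analytic with nonzero derivative at $0$: $\pexp$ is a $p$-adic analytic isomorphism $p\Z_p \to 1 + p\Z_p$ with derivative $1$ at the origin, and $\xi_\beta$ is $p$-adic analytic near $1 \in \Q_p^\times$ with nonzero derivative there, being a local isomorphism. Pick an affine chart of $\E_\beta$ containing $e$ in which $e$ is the origin and some rational function $t$ serves as a local uniformizer. Then $t \circ \chi_\beta$ admits a power series expansion $c_1 a + c_2 a^2 + \cdots$ with $c_1 \in \Q_p^\times$, and standard $p$-adic convergence bounds yield an $N$ such that for all $a \in p^N\Z_p$ the linear term dominates, giving $\valp(t(\chi_\beta(a))) = \valp(c_1) + \valp(a)$. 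Consequently, for all $a,b \in p^N\Z_p$,
\[
\valp(a) \leq \valp(b) \iff \valp(t(\chi_\beta(a))) \leq \valp(t(\chi_\beta(b))),
\]
and the right-hand side is the pullback via $\chi_\beta$ of a semialgebraic relation on $\E_\beta(\Q_p)^2$, hence $\Sa Y_\beta$-definable on $(p^N\Z_p)^2$. For pairs $(a,b)$ not both in $p^N\Z_p$, the valuation of the outlying element lies in the finite set $\{0,1,\ldots,N-1\}$; since each $\{a : \valp(a) = j\} = p^j\Z_p \setminus p^{j+1}\Z_p$ is $\Sa Y_\beta$-definable, a finite case analysis over the possible pairs of valuations determines $\preccurlyeq_p$ on the complement, yielding an $\Sa Y_\beta$-definable relation on all of $\Z_p^2$.

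The main technical point is the analytic estimate ensuring the dominance of the linear term over all higher-order terms, uniformly for $a \in p^N\Z_p$ with $N$ chosen large. This follows from the lower bounds on $\valp(c_k)$ enforced by convergence of $t \circ \chi_\beta$ on a neighborhood of $0$: it suffices to take $N$ large enough that $\valp(c_k) + (k-1)N > \valp(c_1)$ for every $k \geq 2$, which is possible because the sequence $(\valp(c_1) - \valp(c_k))/(k-1)$ is bounded above. All remaining steps are routine manipulations of induced structures and semialgebraic sets.
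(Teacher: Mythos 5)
Your proof is correct, and its overall strategy is the same as the paper's: definability of $+$ is immediate because the group law on $\E_\beta$ is semialgebraic, and the substantive point is that near the identity the parametrization shifts valuations by a constant, so that the comparison $\valp(a)\leq\valp(a')$ pulls back from a semialgebraic relation. You differ in how you implement both halves of that step. For the local statement, the paper passes to the Weierstrass affine chart, makes an affine change of coordinates so that the differential of the chart-composed parametrization at $0$ is an isometry onto the tangent space, and then quotes Fact~\ref{fact:p-adic-isometry} to conclude that the map is an isometry on $p^n\Z_p$; you instead expand $t\circ\chi_\beta$ in a local uniformizer and prove dominance of the linear term directly from the convergence bounds, obtaining $\valp(t(\chi_\beta(a)))=\valp(c_1)+\valp(a)$ on $p^N\Z_p$. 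That is a more elementary, self-contained route (no appeal to local isometries of analytic submanifolds), at the cost of an explicit ultrametric estimate, and it is just as valid. For the globalization, the paper avoids any case analysis via a rescaling trick: $\valp(a)\leq\valp(a')$ if and only if $(p^na,p^na')\in X$, where $X$ is the pulled-back semialgebraic relation and multiplication by $p^n$ is definable from $+$; your finite case analysis over the definable sets $p^j\Z_p$ also works but is slightly longer. Two small points to tidy: restrict the relation $\valp(t(P))\leq\valp(t(Q))$ to a semialgebraic set on which $t$ is regular (harmless, since $\chi_\beta(p^N\Z_p)$ lies in such a neighbourhood of the identity for large $N$), and when deducing the claim about $\Sa E_\beta$ note explicitly that your defining formulas restrict correctly to $\Z$: the graph of $+$ and the pulled-back valuation comparison are atomic in the induced structure, and the sets $p^j\Z_p$ trace to $p^j\Z$, which are existentially definable over $\Z$ from the graph of $+$. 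Restriction of an induced structure does not automatically inherit arbitrary definable sets, though here, exactly as in the paper's reduction to the first claim, it does.
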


\noindent
Proposition~\ref{prop:p-adic-elliptic} requires some $p$-adic metric geometry.
We let
$$ \valp(a) = \min \{\valp(a_1),\ldots,\valp(a_m) \} \quad \text{for all  } a = (a_1,\ldots,a_m) \in \Q^m_p. $$
If $X,Y$ are subsets of $\Q^m_p$ then $f : X \to Y$ is an isometry if $f$ is a bijection and
$$ \valp( f(a) - f(a') ) = \valp(a - a') \quad \text{for all  } a,a' \in X. $$
Suppose $X,Y$ are $p$-adic analytic submanifolds of $\Q^m_p$.
We let $T_a X$ be the tangent space of $X$ at $a \in X$.
Given a $p$-adic analytic map $f : X \to Y$ we let $\jac f(a) : T_a X \to T_{f(a)} Y$ be the differential of $f$ at $a \in X$.

\begin{fact}
\label{fact:p-adic-isometry}
Suppose $f : X \to Y$ is a $p$-adic analytic map between $p$-adic analytic submanifolds $X,Y$ of $\Q^m_p$.
Fix $a \in X$ and set $b := f(a)$.
Suppose that $\jac f(a)$ is an isometry $T_a X \to T_b Y$.
Then there is an open neighbourhood $U$ of $p$ such that $f(U)$ is open and $f$ gives an isometry $U \to f(U)$.
\end{fact}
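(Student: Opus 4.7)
The plan is to reduce to a Taylor expansion plus an ultrametric argument, followed by a Hensel-style contraction for openness. First I would choose local $p$-adic analytic charts around $a$ and $b$ to replace $X,Y$ by open neighbourhoods of $0$ in $\Q_p^d$ with $a = b = 0$; here $d$ is the common local dimension, equal because $\jac f(a)$, being an isometry of tangent spaces, is in particular a linear bijection.

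Next, write $f$ as a convergent power series at $0$. Expanding around an arbitrary base point $x'$ gives
$$ f(x) - f(x') = \jac f(x')(x - x') + R(x,x'), $$
where $R$ collects the Taylor terms of order $\geq 2$ in $x - x'$. Standard $p$-adic power-series estimates (using that each $x^\alpha - (x')^\alpha$ is divisible by coordinates of $x - x'$) yield a constant $C$ such that $\valp(R(x,x')) \geq 2\valp(x - x') - C$ on a fixed small ball about $0$. Separately, $x' \mapsto \jac f(x')$ is continuous, and in suitable coordinates a linear endomorphism of $\Q_p^d$ is an isometry iff its matrix lies in $\mathrm{GL}_d(\Z_p)$, which is open in $\mathrm{M}_d(\Z_p)$; so after shrinking, $\jac f(x')$ remains an isometry for every $x'$ in the ball, and $\valp(\jac f(x')(x - x')) = \valp(x - x')$.

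Shrink once more to force $\valp(x - x') \geq N$ with $2N - C > N$. The ultrametric triangle inequality then gives
$$\valp(f(x) - f(x')) = \valp(\jac f(x')(x - x')) = \valp(x - x'),$$
so $f$ restricted to the resulting ball $U$ is an isometry onto its image. For openness of $f(U)$, I would take $U$ to be the closed (hence clopen) ball of radius $p^{-N}$ about $0$ and claim $f(U)$ is the closed ball $B$ of the same radius about $0$ in $\Q_p^d$. Given $y \in B$, the map $g(x) := x - \jac f(0)^{-1}(f(x) - y)$ sends $U$ into itself and is a contraction, by the same remainder bound, so by the $p$-adic fixed-point theorem it has a unique fixed point $x \in U$, which satisfies $f(x) = y$.

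The real work lies in the Taylor remainder estimate: one must pin down explicit bounds on the valuations of the power-series coefficients of $f$ on a polydisc, after which the factorisation of $x^\alpha - (x')^\alpha$ in terms of $x - x'$ supplies the extra factor of $\valp(x - x')$ beyond the linear term. Everything else is a routine matter of shrinking the neighbourhood.
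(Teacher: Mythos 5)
The paper itself gives no proof of this fact: it cites Gl\"{o}ckner's Proposition 7.1 for the case where $X,Y$ are open subsets of $\Q^m_p$, and reduces the submanifold case to the open case using the fact (cited from Halupczok) that a $d$-dimensional $p$-adic analytic submanifold of $\Q^m_p$ is locally \emph{isometric} to $\Q^d_p$. Your Taylor-remainder-plus-contraction argument is, in substance, a correct and self-contained proof of the open case: the estimate $\valp(R(x,x')) \geq 2\valp(x-x') - C$, the identification of the isometries of $\Q^d_p$ with $\Gl_d(\Z_p)$ together with its openness, the resulting equality $\valp(f(x)-f(x')) = \valp(x-x')$ on a small ball, and the fixed-point argument showing $f$ maps a small clopen ball onto the ball of the same radius are all sound. (For the contraction you also need $\jac f(x')$ congruent to $\jac f(a)$ to positive precision, not only the remainder bound, but that is again continuity of $\jac f$ plus shrinking, which you have already invoked.)

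The gap is in your first step, and it is exactly the point the paper outsources to Halupczok. Both the hypothesis (that $\jac f(a)$ is an isometry $T_aX \to T_bY$ for the norms induced from $\Q^m_p$) and the conclusion (that $f$ is an isometry $U \to f(U)$ for the metric induced from $\Q^m_p$) are metric statements about the ambient space, and an arbitrary analytic chart destroys them: showing that a coordinate representative $\psi \circ f \circ \varphi^{-1}$ is a local isometry of $\Q^d_p$ says nothing about distances measured inside $\Q^m_p$, and the hypothesis on $\jac f(a)$ does not even transfer to the representative, unless the charts $\varphi,\psi$ are themselves isometries whose differentials at $a$ and $b$ are isometries of tangent spaces. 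So you must produce \emph{isometric} analytic charts: after an ambient coordinate change by an element of $\Gl_m(\Z_p)$ (an isometry of $\Q^m_p$) one may assume $T_aX$ is spanned by the first $d$ coordinate vectors; locally $X$ is the graph of an analytic map $g$ with $\jac g(a) = 0$, and the same second-order estimate you use for $f$ shows the coordinate projection $X \to \Q^d_p$ is an isometry near $a$ whose differential at $a$ is an isometry; similarly for $Y$ at $b$. With this supplement your reduction, and hence the whole argument, goes through, and is in fact more self-contained than the paper's citation-only treatment.
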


\noindent
See \cite[Proposition 7.1]{Glckner2006} for a proof of Fact~\ref{fact:p-adic-isometry} when $X,Y$ are open subsets of $\Q^m_p$.
This generalizes to $p$-adic analytic submanifolds as any $d$-dimensional $p$-adic analytic submanifold of $\Q^m_p$ is locally isometric to $\Q^d_p$, see for example \cite[5.2]{Halupczok} (Halupczok only discusses smooth $p$-adic algebraic sets but everything goes through for $p$-adic analytic submanifolds).
\newline

\noindent
We now prove Proposition~\ref{prop:p-adic-elliptic}.

\begin{proof}
To simplify notion we drop the subscript ``$\beta$".
It is enough to prove the first claim.
It is easy to see that $\Sa Y$ defines $+$.
We need to show that the set of $(a,a') \in \Z^2_p$ such that $\valp(a) \leq \valp(a')$ is definable in $\Sa E$.
Note that if $A$ is a finite subset of $\E(\Q_p)$ and $f : \E(\Q_p) \setminus A \to \Q^m_p$ is a semialgebraic injection then $\Sa E$ is interdefinable with the structure induced on $\Z_p$ by $\pfield)$ and $f \circ \chi$.
So we can replace $\E(\Q_p)$ and $\chi$ with $f(\E(\Q_p) \setminus A)$ and $f \circ \chi$.
\newline

\noindent
We consider $\E(\Q_p)$ as a subset of $\mathbb{P}^2(\Q_p)$ via the Weierstrass embedding.
Let $\iota : \Q^2_p \to \mathbb{P}^2(\Q_p)$ be the inclusion $\iota(a,a') = [a : a' : 1]$, $U$ be the image of $\iota$, and $E := \iota^{-1}(\E(\Q_p))$.
Recall that $\E(\Q_p) \setminus U$ is a singleton and $E$ is a $p$-adic analytic submanifold of $\Q^2_p$.
Let $\zeta : \Z_p \to E$ be $\zeta := \iota^{-1} \circ \chi$.
So $\Sa E$ is interdefinable with the structure induced on $\Z_p$ by $\pfield)$ and $\zeta$.
\newline

\noindent
Let $e := \zeta(0)$ and identify $T_0\Z_p$ with $\Q_p$.
Note that $\jac\zeta(0)$ is a bijection $\Q_p \to T_e E$.
After making an affine change of coordinates if necessary we suppose
$\jac \zeta(0)$ is an isometry $\Q_p \to T_e E$.
Applying Fact~\ref{fact:p-adic-isometry} we obtain $n$ such that the restriction of $\zeta$ to $p^n\Z_p$ is an isometry onto its image. 
So for all $a \in \Z_p$ we have
$$ \valp(\zeta(p^n a) - e) = \valp(p^n a - 0) =  \valp(a) + n.  $$
So for all $a,a' \in \Z_p$ we have
$$ \valp(a) \leq \valp(a') \quad \text{if and only if} \quad \valp(\zeta(p^n a) - e) \leq \valp(\zeta(p^n a') - e). $$
Let $X$ be the set of $(a,a') \in \Z^2$ such that $\valp(\zeta(a) - e) \leq \valp(\zeta(a') - e)$, so $X$ is definable in $\Sa E$.
So for all $(a,a') \in \Z^2_p$ we have $\valp(a) \leq \valp(a')$ if and only if $(p^n a, p^n a') \in X$.
So $\{ (a,a') \in \Z^2_p : \valp(a) \leq \valp(a') \}$ is definable in $\Sa Y$.
\end{proof}

\noindent
We denote the group operation on $\E_\beta(\Q_p)$ by $\oplus$.

\begin{conj}
\label{conj:tate}
Suppose $A$ is a finite rank subgroup of $\E_\beta(\Q_p)$.
Then $\pfield,A)$ is $\nip$, $\Th\pfield)$ is an open core of $\Th\pfield,A)$, and every $\pfield,A)$-definable subset of $A^k$ is of the form $X \cap Y$ where $X$ is an $(A,\oplus)$-definable subset of $A^k$ and $Y$ is a semialgebraic subset of $\E_\beta(\Q_p)^k$.
\end{conj}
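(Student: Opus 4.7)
The plan is to mirror Mariaule's strategy for $(1+p\Z_p,\times)$ underlying Fact~\ref{fact:mari-main}, with the Tate uniformization $\xi_\beta : \Q_p^\times \to \E_\beta(\Q_p)$ serving as the bridge between the multiplicative and elliptic settings. The main arithmetic input is a Mordell-Lang property for $\E_\beta(\Q_p)$ with respect to finite rank subgroups; the three conclusions should then be extracted by applying the abstract dense-pairs framework of Gorman and Hieronymi \cite{GoHi-Pairs}, combined with Delon's Fact~\ref{fact:delon}.

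First I would establish the Mordell-Lang step: for every $\Q_p$-algebraic subvariety $V$ of $\E_\beta^n$, show that $V(\Q_p)\cap A^n$ is a finite union of cosets of subgroups of $A^n$. Since $\E_\beta$ is an abelian variety over a field of characteristic zero, the classical Faltings-Vojta-Hindry-Raynaud theorem should apply, but one must be careful because the points of $A$ may be transcendental over $\Q$ and the statement needs to hold at the level of $\Q_p$-points. An alternative route, more in the spirit of Mariaule, is to lift to the multiplicative setting via $B := \xi_\beta^{-1}(A)$, which is a finite rank subgroup of $\Q_p^\times$ containing $\beta^\Z$ with $B/\beta^\Z \cong A$; however, the preimage of an algebraic subvariety of $\E_\beta^n$ under $\xi_\beta^n$ is only $p$-adic analytic and $(\beta^\Z)^n$-invariant, so one would need to invoke a rigid-analytic Mordell-Lang statement such as Gubler's tropical Mordell-Lang or Scanlon's work on analytic Manin-Mumford.

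Once Mordell-Lang is in hand, the second step is to verify the hypotheses of \cite{GoHi-Pairs} for the host $\pfield)$ and predicate $A$. The host-side inputs are that $\pfield)$ is $\nip$, geometric, eliminates imaginaries, and satisfies Delon's theorem that $\pfield)^{\mathrm{Sh}}$ is interdefinable with $\pfield)$. The predicate-side inputs are Mordell-Lang together with density of $A$ in the relevant topology (trivial cases can be handled separately). Their main theorem then packages $\nip$ of $\pfield,A)$, the fact that $\Th\pfield)$ is an open core of $\Th\pfield,A)$, and the desired normal form: every $\pfield,A)$-definable subset of $A^k$ is of the form $X \cap Y$ for $X$ an $(A,\oplus)$-definable subset of $A^k$ and $Y$ a semialgebraic subset of $\E_\beta(\Q_p)^k$. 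This last extraction should follow by the same induction on formula complexity as in \cite[Proposition 3.10]{HiGu}.

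The main obstacle is making the Gorman-Hieronymi framework fit the $p$-adic elliptic setting. In Mariaule's multiplicative case the dense subgroup sits inside $1+p\Z_p$, which is globally parametrized by $p\Z_p$ via $\pexp$; the elliptic case has no comparably clean global algebraic parametrization, since $\xi_\beta$ is only analytic. The difficulty is twofold: first, securing a Mordell-Lang statement in a form compatible with the dense-pairs machinery (potentially requiring an analytic rather than merely algebraic input, since one would like to combine it with transfer through Tate), and second, checking that the axiomatic framework of \cite{GoHi-Pairs} was set up with enough flexibility to accommodate $p$-adic hosts and analytic predicates. It is the combination of these two obstacles, rather than the expected model-theoretic endgame, which prevents the conjecture from being stated as a theorem.
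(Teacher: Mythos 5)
You were asked to prove a statement that the paper itself presents as Conjecture~\ref{conj:tate}: the paper contains no proof of it, and the surrounding text makes clear it is open (``We expect that\dots'', ``Conjecture~\ref{conj:tate} should hold for any one-dimensional $p$-adic semialgebraic group satisfying a Mordell-Lang condition''). So there is nothing in the paper to compare your argument against, and your proposal --- which you yourself frame as a plan with two unresolved obstacles --- does not close the gap; it is the same roadmap the author gestures at (Mariaule's analysis of $(1+p\Z_p,\times)$ as in Fact~\ref{fact:mari-main}, the Tate uniformization $\xi_\beta$, and the abstract dense-pair framework of \cite{GoHi-Pairs}), not a proof.

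Two concrete points about where the real difficulty sits. First, the purely algebraic Mordell-Lang input is not the bottleneck you make it out to be: the Faltings--Vojta--McQuillan theorem (Fact~\ref{fact:mordell-lang}) holds for finite rank subgroups of semiabelian varieties over any algebraically closed field of characteristic zero, so one can apply it to $A \subseteq \E_\beta(\Q_p) \subseteq \E_\beta(\C_p)$ directly (using an abstract isomorphism of $\C_p$ with $\C$ if one insists on the statement over $\C$); transcendence of the points of $A$ over $\Q$ is irrelevant, and no rigid-analytic Mordell-Lang (Gubler, Scanlon) is needed for the group-theoretic statement about $V(\Q_p)\cap A^n$. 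Second, and this is the genuine gap, the model-theoretic conclusions do not follow from Mordell-Lang plus a citation of \cite{GoHi-Pairs}: Mariaule's theorem for subgroups of $(1+p\Z_p,\times)$ and the G\"unaydin--Hieronymi results behind Fact~\ref{fact:GH} rest on substantial additional work (near model completeness/back-and-forth analysis of the pair, and for the open core the Boxall--Hieronymi criteria of \cite{BoxallH}), and verifying those hypotheses --- or the axioms of \cite{GoHi-Pairs} --- for the Tate curve, where the uniformization $a \mapsto \xi_\beta(\pexp(pa))$ is only $p$-adic analytic rather than semialgebraic, is precisely the content that is missing. Your proposal defers exactly this step (``checking that the axiomatic framework \dots was set up with enough flexibility''), so what you have is a reasonable strategy consistent with the paper's expectations, but the statement remains a conjecture.
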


\noindent
Suppose Conjecture~\ref{conj:tate} holds.
Under this assumption, Proposition~\ref{prop:p-adic-gen} shows that $\Sa E_\beta$ is dp-minimal, an application of Proposition~\ref{prop:p-adic-dense-pair} shows that $\Sa E_\beta^{\square}$ is interdefinable with $\Sa Y_\beta$, and an adaptation the proof of Theorem~\ref{thm:combine-1} shows that if $\Sa E_\alpha$ and $\Sa E_\beta$ are interdefinable then there is a semialgebraic group isomorphism $\E_\alpha(\Q_p) \to \E_\beta(\Q_p)$.
So we obtain an uncountable collection of dp-minimal expansions of $\zpp$ no two of which are interdefinable.
\newline

\noindent
Conjecture~\ref{conj:tate} should hold for any one-dimensional $p$-adic semialgebraic group satisfying a Mordell-Lang condition.
One dimensional $p$-adic semialgebraic groups are classified in ~\cite{lopez-one-dim}.
\newline

\noindent
Suppose that $\bH$ is a one-dimensional $\pfield)$-definable group.
By \cite{HrushovskiPillay} there is an open subgroup $V$ of $\bH$, a one-dimensional abelian algebraic group $W$ defined over $\Q_p$, an open subgroup $U$ of $W(\Q_p)$, and a $\pfield)$-definable group isomorphism $V \to U$.
So we suppose that $\bH$ is $W(\Q_p)$, so in particular $\bH$ is a $p$-adic analytic group.
Let $e$ be the identity of $\bH$ and identify $T_e\bH$ with $\Q_p$.
For sufficiently large $n$ there is an open subgroup $U$ of $\bH$ and a $p$-adic analytic group isomorphism $\Xi : (p^n\Z_p,+) \to U$, this $\Xi$ is the Lie-theoretic exponential, see \cite[Corollary 19.9]{p-adic-lie-groups}.
Let $\Sa H$ be the structure induced on $\Z$ by $\pfield)$ and $k \mapsto \Xi(p^n k)$.
It follows in the same way as above that $\Sa H$ expands $\zpp$.
We expect that if $\bH$ is semiabelian then $\Sa H$ is dp-minimal and $\Sq H$ is interdefinable with the structure induced on $\Z_p$ by $\pfield)$ and $a \mapsto \Xi(p^n a)$.

\section{A general question}
\label{section:gen-ques}
\noindent
We briefly discuss the following question raised to us by Simon: Is there an abstract approach to $\Sq Z$?
There are many ways in which one might try to make this more precise.
For example: Given a sufficiently well behaved $\nip$ structure $\Sa M$ (perhaps dp-minimal, perhaps distal, perhaps expanding a group) can one construct a canonical structure $\Sq M$ containing $\Sa M$ such that $\Sh M$ is the structure induced on $M$ by $\Sq M$, $\Sq M$ is somehow ``close to o-minimal", and $\Sq M$ is not too ``big" relative to $\Sa M$?
In the completion of an $\nip$ expansion $\Sa H$ of an archimedean ordered abelian group, $\mfin$ is $\bigvee$-definable, $\minf$ is $\bigwedge$-definable, and the resulting logic topology on $\R$ agrees with the usual topology.
The same thing happens for the other completions discussed above.
So perhaps there is a highly saturated $\Sa M \prec \Sa N$, a set $X$ which both externally definable and $\bigvee$-definable in $\Sa N$, an equivalence relation $E$ on $X$ which is both externally definable and $\bigwedge$-definable in $\Sa N$, such that $\Sq M$ is the structure induced on $X/E$ by $\Sh N$.
\newline

\noindent
The completions defined above are not always the ``right" notion.
Let $P$ be the set of primes and fix $q \in P$.
Consider $(\Z,+,(\valp )_{ p \in P})$ as an expansion of $(\Z,+,\valq)$, one can show that $(\Z,+,(\valp)_{p \in P})^\square$ is interdefinable with $\zq$.
However the ``right" completion of $(\Z,+,(\valp )_{ p \in P} )$ is $(\widehat{\Z},+,(\triangleleft_p)_{ p \in P})$ where $(\widehat{\Z},+)$ is the profinite completion $\prod_{p \in P} (\Z_p,+)$ of $(\Z,+)$ and we have $a \triangleleft_p b$ if and only if $\valp(\pi_p(a)) < \valp(\pi_p(b))$, where $\pi_p$ is the projection $\widehat{\Z} \to \Z_p$.
Likewise, if $I$ is a $\Z$-linearly independent subset of $\R \setminus \Q$ then the completion of $(\Z,+,(C_\alpha)_{\alpha \in I})$ should be the torus $( (\rz)^I,+,S_\alpha)$ where we have $S_\alpha(a,b,c)$ if and only if $C(\pi_\alpha(a),\pi_\alpha(b),\pi_\alpha(c))$ where $\pi_\alpha$ is the projection $(\rz)^I \to \rz$ onto the $\alpha$th coordinate.
\newline

\noindent
When $\Sa M$ expands a group it is tempting to try to define $\Sq M$ via general ideas from $\nip$ group theory.
Fix irrational $\alpha \in \rz$, let $\Sa Z$ be a dp-minimal expansion of $\zca$, and $\Sa Z \prec \Sa N$ be highly saturated.
One can show that $\Sa N^{0}/\Sa N^{00}$ is isomorphic as a topological group to $\rz$ and it seems likely that $\Sq Z$ is interdefinable with the structure on $\rz$ with an $n$-ary relation defining the image of $X \cap (\Sa N^0)^n$ under the quotient map $(\Sa N^0)^n \to (\rz)^n$, for each $\Sa N$-definable $X \subseteq N^n$.
But this breaks in the $p$-adic setting, if $\zpp \prec \Sa N$ is highly saturated then $\Sa N^{0} = \Sa N^{00}$.
\newline

\noindent
Is there some general class of ``complete" structures for which $\Sa M$ and $\Sq M$ are interdefinable?
Suppose $H$ is a dense subgroup of $(\R,+)$ and $\Sa H$ is an $\nip$ expansion of $(H,+,<)$.
Then $\Sa H$ and $\Sq H$ are interdefinable if and only if $H = \R$ and $\Sa H$ is interdefinable with the open core of $\Sh H$.
The Marker-Steinhorn theorem shows that these conditions are satisfied when $\Sa H$ is an o-minimal expansion of $(\R,+,<)$.
If $\Sa H$ is o-minimal then $\Sq H$ is (up to interdefinibility) the unique elementary extension of $\Sa H$ which expands $(\R,+,<)$ \cite[13.2.1]{big-nip}.
(Laskowski and Steinhorn~\cite{LasStein} showed that there is such an extension.)
If $\Sa H$ is weakly o-minimal then $\Sq H$ is an elementary expansion of $\Sa H$ if and only if $\Sa H$ is o-minimal.
Should $\Sa M$ be ``complete" if $\Sq M$ is (up to interdefinability) an elementary extension of $\Sa M$?
If $\Sa Z$ is a dp-minimal expansion of $\zp$ then must $\Sq Z$ and $\Sa Z$ be interdefinable, i.e. is there a $p$-adic Marker-Steinhorn generalizing Fact~\ref{fact:delon}?
\newline

\bibliographystyle{abbrv}
\bibliography{NIP}
\end{document}